\documentclass[11pt]{article}
\usepackage[utf8]{inputenc}
\usepackage[margin=1in]{geometry}
\usepackage{amsmath,amssymb,amsthm}
\usepackage{mathtools}
\usepackage[authoryear]{natbib}
\usepackage[hidelinks]{hyperref}
\renewcommand{\theenumi}{(\roman{enumi})}

\newtheorem{theorem}{Theorem}[section]
\newtheorem{lemma}[theorem]{Lemma}
\newtheorem{proposition}[theorem]{Proposition}
\newtheorem{corollary}[theorem]{Corollary}
\theoremstyle{definition}
\newtheorem{definition}[theorem]{Definition}
\newtheorem{convention}[theorem]{Convention}
\newtheorem{assumption}[theorem]{Assumption}
\newtheorem{conjecture}[theorem]{Conjecture}
\newtheorem{example}[theorem]{Example}
\theoremstyle{remark}
\newtheorem{remark}[theorem]{Remark}
\newcommand{\R}{\mathbb{R}}
\newcommand{\E}{\mathbb{E}}
\newcommand{\PP}{\mathbb{P}}
\newcommand{\eps}{\varepsilon}
\newcommand{\supp}{\operatorname{supp}}
\newcommand{\Law}{\operatorname{Law}}
\newcommand{\midW}{\mathrm{mid}_W}
\newcommand{\Mreg}{\mathcal{M}_{\mathrm{reg}}}
\newcommand{\Mstar}{\mathcal{M}_{\star}}
\newcommand{\Mclass}{\mathcal{M}}
\newcommand{\Pclass}{\mathcal{P}}
\newcommand{\clo}{\underline{c}}
\newcommand{\chigh}{\overline{c}}
\newcommand{\Beta}{B}
\begin{document}
\title{\bf Shape without scale: an identifiability dichotomy\\
for a bounded tail observed through a non-additive measurement kernel}
\author{Jiarui Qi\thanks{University of Washington. Email: \texttt{jiaruiqi@uw.edu}.}}
\date{}

\maketitle

\begin{abstract}
A latent severity has a bounded lower tail with density of shape $\alpha$ and scale $L$. It is observed only through a fixed Markov kernel $K$ that is biased and non-additive. The relative conditional spread of $K$ diverges at the endpoint. Our sample is i.i.d.\ from the marginal $Q$ alone, with no anchoring covariate or instrument. We prove a dichotomy. The shape index $\alpha$ is identifiable: for every admissible choice of the class constants, any two observationally equivalent members of a lean class share $\alpha$, determined by a near-endpoint expansion of $Q$. The rate, namely $L$ and the fixed-scale exceedance $p_\tau$, does not survive. There exist admissible shared class constants and two members of a smaller regularity class whose observed laws coincide exactly. Across the pair $\alpha$ agrees, whereas $L$ and $p_\tau$ move. A degenerate Le~Cam two-point bound excludes any uniformly consistent estimator of either, and pointwise consistency fails at one member. Only the rate needs an anchor. We conjecture that a known kernel family with known edge map identifies the rate fiber by fiber if and only if the family satisfies a fixed-scale injectivity clause, and we prove the sufficiency direction. In surrogate safety, uncalibrated conflict data give the shape of near-crash risk, not its absolute rate.
\end{abstract}

\medskip
\noindent\textbf{Keywords:}\ lower-tail shape index $\cdot$ identifiability dichotomy $\cdot$ non-additive measurement error $\cdot$ regular variation $\cdot$ moving power-law edge $\cdot$ scale-conjugation confound $\cdot$ Le~Cam two-point bound $\cdot$ surrogate safety (traffic conflicts)

\smallskip
\noindent\textbf{Mathematics Subject Classification (2020):}\ 62G32 (primary) $\cdot$ 62G05 $\cdot$ 62G20 $\cdot$ 62C20 $\cdot$ 60G70

\smallskip
\noindent\textbf{JEL classification:}\ C14 $\cdot$ C18
\section{Introduction}\label{intro:sec}
\subsection{A resolution floor that does not shrink with the signal}

Consider a latent scalar $S\ge0$ that measures how close a system came to a catastrophe, where $S=0$ is the boundary and small values of $S$ are dangerous. The variable $S$ is never observed. Instead, a measurement chain reports each true severity $s$ as a contaminated value $\tilde S$ drawn from a conditional law $K(\cdot\mid s)$, and we observe an i.i.d.\ sample from the resulting \emph{marginal} law $Q$ of $\tilde S$ alone. No covariate, instrument, pairing, or repeated measurement is available. The objects of interest are the lower-tail features of $S$ at the endpoint $0$: its \emph{shape} and the \emph{rate} of dangerous, small-severity outcomes.
Under a regularly varying tail $f_S(s)=L\,s^{\alpha}(1+o(1))$, the shape index $\alpha\in[0,\infty)$ measures how fast the latent density vanishes at the boundary. The scale $L$ fixes the near-endpoint mass, while the exceedance $p_\tau=F_S([0,\tau])$ at a fixed threshold $\tau$ is the probability of a dangerous outcome. This tail condition is equivalent to $-S$ lying in the Weibull max-domain of attraction with extreme-value index $\xi=-1/(\alpha+1)$ (\citealp[Thm.~1.2.1(2)]{deHaanFerreira2006}; \citealp{Coles2001}).

The main difficulty comes from a single structural feature of the measurement: \emph{the resolution floor of the detector does not scale down with the signal}. While the absolute sensor error is fixed, the score it corrupts shrinks toward $0$. Hence the relative conditional spread of $K(\cdot\mid s)$ does not vanish as $s\downarrow0$ but diverges there, on the band of severities that carries the estimand. The motivating instance is surrogate-safety estimation in traffic-conflict analysis \citep{Joo2024}, where the rate of rare crashes is estimated without observing any crash \citep{SongchitruksaTarko2006,Zheng2014}. One scores near-miss conflicts by a continuous severity, canonically time-to-collision (the encounter-minimum of the ratio of inter-agent gap to closing speed), and extrapolates the tail of the scores to the contact boundary $S=0$. Fixed sensing noise (below ten centimeters, \citealp{Krajewski2018}) is propagated through this min-of-ratio functional, whose denominator is itself a small difference of measured quantities. The result is a state-dependent, non-additive distortion that grows without bound at the dangerous, small-margin extreme. Two questions organize the paper: whether the \emph{shape} $\alpha$ survives in $Q$, and whether the \emph{rate}, namely the scale $L$ and the exceedance $p_\tau$, survives with it.

\subsection{Model and results}

A member is a pair $(F_S,K)$, where $F_S$ is a latent law on $[0,\infty)$ with endpoint $0$ and $K$ is a Markov kernel. Such a pair is observed only through the mixture $Q=KF_S=\int K(\cdot\mid s)\,F_S(ds)$, and the estimand is $\psi=(\alpha,L,p_\tau)$ (Definition~\ref{def:psi}). Three axioms carry the argument. A1 fixes the latent tail $f_S(s)=L\,s^{\alpha}(1+o(1))$, and A5 makes the lower edge $\ell(s)$ of the score move at least linearly with $s$. Under A6, the conditional mass within a width-$h$ strip above that edge vanishes like $h^{\beta+1}$. Here $\beta\ge0$ is a \emph{class constant}, one of several shared by every member, whereas $\alpha$ is member-specific. The regularity class $\Mreg$ adds the bias, bounded-support, spread and non-additivity axioms (A2--A4 and NA). The lean class $\Mstar$, by contrast, keeps only A1, A6 and a weakened edge axiom A5${}^-$, which gives the inclusion $\Mreg\subseteq\Mstar$ (Section~\ref{lean:sec}).

The shape survives. For \emph{every} admissible choice of the class constants, any two observationally equivalent members of $\Mstar$ share the shape index $\alpha$ (Theorem~\ref{thm:I2}), and identifiability over $\Mreg$ follows by inclusion (Corollary~\ref{cor:Mreg}). The main tool is a two-sided near-endpoint expansion (Lemma~\ref{lem:exponent}). Integrating the latent density $s^{\alpha}$ against the edge-mass profile $h^{\beta+1}$ over the width-$O(h)$ band of severities reaching $[a,a+h]$ gives
\[
Q\bigl([a,a+h]\bigr)\;\asymp\;L\,h^{\alpha+\beta+2}\qquad(h\downarrow0),
\]
with $a=\ell(0+)=\inf\supp Q$, where $\asymp$ means bounded above and below by fixed positive multiples. One $+1$ in the exponent $\alpha+\beta+2$ comes from the band and the other from the edge-mass profile. The exponent is a functional of $Q$ alone, so with $\beta$ a known class constant it determines $\alpha$.

The rate does not survive. There \emph{exist} admissible shared class constants and two members of $\Mreg$ forming a scale-conjugation pair, in which the latent law is dilated, $S\mapsto\kappa S$, while the kernel's edge slope is contracted, $c\mapsto c/\kappa$. Their observed laws coincide \emph{exactly}, $KF_S=K'F_S'$, and their shape indices agree, $\alpha'=\alpha$, yet both $L$ and $p_\tau$ move by the factor $\kappa^{-(\alpha+1)}\neq1$ (Theorem~\ref{thm:I1}). A degenerate Le~Cam two-point bound (Lemma~\ref{lem:bridge}) turns this into an impossibility result. Since both members induce the identical law of the data, no \emph{uniformly} consistent estimator of $L$ or of $p_\tau$ exists over $\Mreg$, and pointwise consistency fails at one of the two members. For example, with $\kappa=\tfrac34$ and $\alpha=1$ the scale moves $L:\tfrac29\to\tfrac{32}{81}$ and the exceedance $p_\tau:\tfrac1{36}\to\tfrac{4}{81}$, each by the factor $\kappa^{-2}=\tfrac{16}{9}$, a $77.8\%$ increase. The two observed laws are \emph{exactly identically distributed} (Lemma~\ref{lem:laws}), and a machine check of the certified instance reproduces the identity ($12/12$ assertions).

\subsection{Mechanism and the anchor}

A leading-order heuristic explains the dichotomy. Near the endpoint the bias acts on severity as an \emph{affine} map $S\mapsto cS+a$ with affine scale $c\neq1$ (Section~\ref{sec:app}), that is, as a two-parameter group on (endpoint,\,scale). The exponent $\alpha$ is an \emph{invariant} of this group, since a power law reparametrized affinely keeps its exponent, and the shape survives with it. The free parameters $(a,c)$, by contrast, absorb same-$\alpha$ differences in $(L,p_\tau)$, so the rate does not. The theorems rest on the exact near-endpoint expansion rather than on this heuristic.

The heuristic also suggests the remedy: an anchor that pins $(a,c)$ should restore identifiability of $(L,p_\tau)$. We formalize it as Anchor~C (Assumption~\ref{ass:C} of Section~\ref{sec:dichotomy}), which has \emph{two} clauses. First, the kernel is known to lie in a structured family $\{K_\theta\}$ whose lower edge map $\ell$ is known, so that the apparent endpoint and the edge slope are pinned. Second, a \emph{fixed-scale injectivity} clause is imposed: within the family, the map $F\mapsto K_\theta F$ is injective on latent laws supported below the family's severity bound (the finite cap on latent severity the family carries). The second clause is not implied by the first. Pinning $\ell$ closes the degree of freedom the confounding pair exploits, yet it does \emph{not} by itself invert the kernel at the fixed scale $\tau$. With only the first clause in place, in-fiber confounds for $p_\tau$ (distinct members with the same observed law, within a single edge-pinned family) are not ruled out.

The status of the results is as follows. The shape half is proved over $\Mreg$, indeed over the leaner $\Mstar$, with no anchor (Theorem~\ref{thm:I2}, Corollary~\ref{cor:Mreg}). Theorem~\ref{thm:I1} proves the \emph{necessity} of an anchor for $(L,p_\tau)$, whereas the necessity of Anchor~C \emph{in particular} is \emph{not} proved. We state the equivalence as a conjecture (Conjecture~\ref{conj:dichotomy}): for every family satisfying the first clause, $(L,p_\tau)$ is identifiable on each of its edge-indexed fibers of $\Mreg$ if and only if the family satisfies the second. Its sufficiency direction is immediate from Anchor~C, whose injectivity clause is automatic for location kernels. Discharging that clause for the \emph{non-location} kernels (those whose jitter is scaled rather than translated) is left to forthcoming companion work (Remark~\ref{rem:forthcoming}).

\subsection{Contributions}

The main contributions of this paper are the following.

\begin{itemize}
\item \textbf{A near-edge exponent lemma} with explicit two-sided constants $\varkappa_\pm$ (Lemma~\ref{lem:exponent}): $Q([a,a+h])\asymp L\,h^{\alpha+\beta+2}$ and $a=\inf\supp Q$. The near-edge constant $\varkappa$ is distinct from the dilation factor $\kappa$ of the scale-conjugation pair.

\item \textbf{Shape identifiability over the lean class $\Mstar$}, for every admissible choice of the class constants and on A1/A5${}^-$/A6 alone (Theorem~\ref{thm:I2}, Corollary~\ref{cor:Mreg}).

\item \textbf{Scale and exceedance non-identifiability over $\Mreg$}, existential (Theorem~\ref{thm:I1}). A degenerate Le~Cam two-point bound excludes any uniformly consistent estimator of $L$ or of $p_\tau$ (Lemma~\ref{lem:bridge}).

\item \textbf{One witness family, which also serves as a sharpness certificate}: the confounding pair keeps $\alpha'=\alpha$, so it \emph{corroborates} rather than contradicts the shape theorem. Although the pair moves the tail scale, the factor of the near-edge lead coefficient that in general varies from member to member comes out \emph{equal} for the pair's two members. Both then realize one near-endpoint exponent while the scale moves, which is the certificate of sharpness (Remark~\ref{rem:sharpness}).

\item \textbf{An explicit account of the axioms behind each half}: the shape half uses neither the upper edge nor A2--A4/NA, whereas the negative half verifies all of A1--A6 and NA.

\item \textbf{The anchored horn} (the anchored branch of the dichotomy)\textbf{, with its sufficiency direction proved}: under Anchor~C the triple $\psi=(\alpha,L,p_\tau)$ is determined fiber by fiber, and the injectivity clause is automatic for location kernels (Remark~\ref{rem:forthcoming}). Whether Anchor~C is also \emph{necessary} is the open direction of Conjecture~\ref{conj:dichotomy}.
\end{itemize}

\subsection{Positioning and outline}

We place the result by drawing three contrasts. \emph{Deconvolution is unavailable}: non-additivity removes the fixed error law, and with it the error characteristic function whose non-vanishing identifies the additive model \citep{Fan1991,StefanskiCarroll1990,Meister2009}, so the classical condition cannot even be stated here. \emph{The error is not asymptotically negligible}: in the contaminated extreme-value literature the measurement error is asymptotically negligible next to the sampling fluctuation of the order statistics, and tail-index estimators keep their limits \citep{Pere2024ApproxErrExtreme,Pere2025}. Here, however, the conditional spread \emph{diverges} where the estimand lives. \emph{Shape, not location}: under additive error ($\alpha$ the latent-edge exponent, $\beta$ the error-support edge exponent there) the composite $\alpha+\beta+2$ is the minimax-rate denominator for the endpoint \emph{location}, with $\alpha$ a nuisance the convolution does not recover \citep{GT2004}. That denominator is the quantity in the exponent that fixes the best achievable convergence rate. In the non-additive setting of this paper, the moving edge turns the same composite into an exponent read off $Q$ that identifies the boundary's \emph{shape}. The additive location confound widens into a location-and-scale confound.

The rest of this paper is organized as follows. Sections~\ref{sec:app}--\ref{scope:sec} contain the application, the setup, the dichotomy (with Anchor~C and Conjecture~\ref{conj:dichotomy} in Section~\ref{sec:dichotomy}), the witness family and pair, the related work and the discussion. The Supplementary Material collects the engine, the measure-theoretic preliminaries, the estimation bridge and sketch, the witness-family and pair proofs, the certified instance and the axiom accounting.
\section{A motivating measurement problem}\label{sec:app}

The axioms of the previous section may read as a list of analytic conveniences. This section shows that they are not. A single worked problem, estimating rare-crash risk from near-miss traffic conflicts, forces every one of them. Each axiom is \emph{read off} that application rather than imposed for tractability, and the problem makes concrete what ``a non-additive measurement kernel with a moving power-law edge'' means. We cite the applied literature only for the method it established and for the empirical error mechanisms it documents. The kernel $K$, its moving edge $\ell(s)$, the identification of $\alpha$, and the non-identifiability of $(L,p_\tau)$ without an anchor are our own.
\subsection{A min-of-ratio severity, measured with bounded bias and jitter}
\label{app:ssec:setup}

Crash frequency is estimated \emph{without observing a crash}. Analysts score near-miss encounters (``conflicts'') by a continuous \emph{surrogate severity}, then extrapolate the extreme-value fit of the smallest scores toward the catastrophe boundary (\citealp[Sect.~4]{SongchitruksaTarko2006}; \citealp{Tarko2018}). The standard score is \emph{time-to-collision} (TTC), introduced by \citet{Hayward1972}. Over an encounter on $[0,T]$ with inter-agent gap $d(t)\ge0$ and closing speed $v(t)>0$, \begin{equation}\label{app:eq:g} S \;=\; g(x)\;:=\;\min_{t\in[0,T]}\frac{d(t)}{v(t)}\;\in[0,\infty), \end{equation} the minimum gap-to-closing-speed ratio of the trajectory $x$. Thus $S=0$ encodes contact (the catastrophe), and \emph{small} $S$ is the dangerous regime. The latent law is the pushforward $F_S=g_\ast P_X$ of the unobserved law $P_X$ of conflict trajectories, a Borel probability law on $[0,\infty)$ with lower endpoint $0$. Since the estimand sits at that endpoint, the object of interest is the \emph{lower} tail of $S$.
Axiom~A1 models that tail: $F_S(\{0\})=0$ and $f_S(s)=L\,s^{\alpha}(1+o(1))$ as $s\downarrow0$, so $F_S(s)=\tfrac{L}{\alpha+1}s^{\alpha+1}(1+o(1))$ is regularly varying of index $\alpha+1$. Equivalently, $-S$ lies in the Weibull max-domain of attraction with index $\xi=-1/(\alpha+1)<0$ \citep{deHaanFerreira2006,Coles2001}. The member-specific shape index $\alpha$, the exponent that governs the density of $S$ near contact, is the parameter the analyst aims to recover.

The trajectory is never observed directly. It is reconstructed from drone or camera video placed in space by a fixed projective calibration (a homography, \citealp[Sect.~8.1.1]{HartleyZisserman2004}), a step that carries \emph{measurement error} of two kinds. One is a systematic \emph{bias}, a georeferencing offset that does not average out. Because projective error depends on an object's height \citep[Sect.~13.3, eq.~(13.9)]{HartleyZisserman2004}, hence on its class, this offset differs between the two agents even at a common ground location. The other is a bounded random \emph{jitter}: per-frame tracking noise with bounded support and no systematic component.
In absolute terms the localization error is small, on the order of ten centimeters after post-processing \citep{Krajewski2018}. Near the endpoint, however, its effect on the \emph{severity} scale is neither small nor benign. Joo et al.\ document this signal-dependent blow-up empirically \citep{Joo2024}. The score $\mathrm{TTC}=\Delta x/\Delta s$ is a ratio of a differenced gap to a differenced closing speed, where $\Delta x$ and $\Delta s$ denote tracked position and speed differences, not the trajectory $x$ and severity $s$ used above. Positional noise therefore propagates nonlinearly and systematically, skewing the score toward underestimation and biasing crash risk upward. In the dangerous regime the relative distortion explodes. The smallest-margin conflicts, which are the most safety-relevant, carry by far the largest relative error, in a ``cone-shaped'' scatter that denoising does not remove. That work models the input noise as \emph{additive on position} and quantifies the output scatter empirically. The non-additive severity-scale abstraction and the identifiability question it raises are ours, not claims made there \citep{Joo2024}.

\subsection{Propagation: a non-additive kernel with a moving power-law edge}
\label{app:ssec:prop}

A fixed input error, propagated through the nonlinear, ratio-valued functional \eqref{app:eq:g}, produces a non-additive contamination on the severity scale. The joint law of $(S,\tilde S)=(g(X),g(\tilde X))$, disintegrated over its first coordinate, defines a Markov kernel \[ K(A\mid s):=\PP\bigl(g(\tilde X)\in A\mid g(X)=s\bigr),\qquad A\in\mathcal B([0,\infty]), \] and the observed law is then the non-convolution mixture $Q=K F_S=\int K(\cdot\mid s)\, F_S(ds)$ of Section~\ref{sec:setting}. Non-additivity already appears in a constant-velocity encounter. There the bias differential perturbs the gap and the closing speed by deterministic offsets $(\delta_d,\delta_v)$, while the jitter adds bounded spread. The score \eqref{app:eq:g} therefore maps the latent severity by an \emph{affine action} \begin{equation}\label{app:eq:affine} S\;\longmapsto\;c\,S+a_0,\qquad c=\frac{v^\star}{v^\star+\delta_v}>0,\qquad a_0=\frac{\delta_d}{v^\star+\delta_v}\ge0, \end{equation} plus a bounded jitter term, where $v^\star$ denotes the relative speed. Four structural consequences follow, and they are the axioms.

\emph{A shifted apparent endpoint (A3).} A bias differential that does not vanish at contact (a per-agent, object-class-dependent homography error, $\delta_d(0)\neq0$) sends the latent endpoint $0$ to a strictly positive \emph{apparent endpoint} $a=\ell(0+)>0$, namely the affine intercept $a_0$ of \eqref{app:eq:affine} shifted by the lower support edge of the jitter. In kernel terms, a bounded but nonzero location offset $m(s)$ displaces the conditional law, so that $\sup_s|m(s)|\le\bar b<\infty$ and $m\not\equiv0$. This differential is what moves the boundary to which the analyst extrapolates.

\emph{A relative spread that blows up at the endpoint (A4).} The sensor fixes the input jitter while the score $S$ shrinks toward $0$. The conditional width $w(s)=u(s)-\ell(s)$ then tends to a nonzero limit $w(0+)>0$ as $s\downarrow0$, so that the \emph{relative} spread diverges, \[ \lim_{s\downarrow0}\frac{w(s)}{s}=+\infty. \]
Read on the severity scale, this is the blow-up that Joo et al.~\citeyearpar{Joo2024} document empirically, their error sensitivity fanning out into a cone as the kinematic margin shrinks. It describes a \emph{detector whose resolution floor does not shrink with the signal}, one that is least trustworthy where the estimand lives.

\emph{A moving lower edge with power-law edge mass (A5, A6).} Under the affine action \eqref{app:eq:affine} the lower edge $\ell(s)$ moves essentially linearly with the true severity. In the constant-velocity, product-form jitter model the motion is exactly linear, $\ell(s)=\ell(0)+c_\ell s$ with positive slope $c_\ell>0$, and there $c_\ell$ equals the affine scale $c$. A5 keeps only a two-point increment corridor, $c_-(s'-s)\le\ell(s')-\ell(s)\le c_+(s'-s)$ for $0\le s\le s'$ near $0$, together with global monotonicity of $\ell$, and so admits a mildly curved edge with slope inside $[c_-,c_+]$. The $s=0$ case of the corridor is the one-point corridor $c_-s'\le\ell(s')-a\le c_+ s'$. The conditional \emph{mass near that moving edge} inherits a power law from the jitter's support-edge profile. For independent components whose densities behave like $x^{\gamma_d}$ and $y^{\gamma_v}$ at those edges, \[ K\bigl([\ell(s),\ell(s)+h]\mid s\bigr)=c_K(s)\,h^{\beta+1}(1+o(1)),\qquad \beta=\gamma_d+\gamma_v+1, \] uniformly for small $s$. This is A6, with class-constant edge index $\beta$ and a prefactor $c_K(\cdot)$ controlled only through its $\liminf$ and $\limsup$ at $0$. Strict no-clipping of the numerator near contact keeps this prefactor bounded.

\emph{Non-additivity (NA).} The affine action \eqref{app:eq:affine} has scale
$c\neq1$ whenever the closing-speed offset $\delta_v$ is nonzero. A non-additive kernel is one for which no fixed law $\mu$ gives $K(\cdot\mid s)=\mu(\cdot-s)$ for $F_S$-a.e.\ $s$. Here the conditional support edges move with \emph{slopes other than $1$} (the lower edge with slope $c_\ell$), so the conditional law is not a rigid translate of a fixed error law. There is then \emph{no error characteristic function to divide out}, and classical additive deconvolution is unavailable. The bounded conditional support and the absence of escape to $+\infty$ near the endpoint (A2) follow from the physical jitter bounds and from a denominator floor that keeps the perturbed encounter closing.

\subsection{Modeling features and the axioms they force}\label{app:ssec:map}

We collect the correspondence in the following table: each axiom is read off a feature of the application.

\begin{center}
\renewcommand{\arraystretch}{1.25}
\begin{tabular}{p{0.50\textwidth} p{0.42\textwidth}}
\hline
\emph{Modeling feature of the conflict pipeline} & \emph{Axiom it forces} \\
\hline
Latent severity has a lower endpoint at $0$, and danger thins out near contact at a power-law rate (the larger the shape index $\alpha$, the faster the thinning). & \textbf{A1}: $F_S(\{0\})=0$ and $f_S(s)=L\,s^{\alpha}(1+o(1))$, with $-S$ in the Weibull domain, $\xi=-1/(\alpha+1)$. \\
Physical sensor and jitter bounds, along with a denominator floor that keeps the contaminated encounter closing (no escape to a non-closing encounter). & \textbf{A2}: $w(s)<\infty$ for all $s$, and $K(\{+\infty\}\mid s)=0$ near the endpoint. \\
The object-class-dependent homography differential does not vanish at contact, so the conditional law is displaced to a positive apparent endpoint. & \textbf{A3}: $\sup_s|m(s)|\le\bar b<\infty$ and $m\not\equiv0$, with apparent endpoint $a=\ell(0+)>0$. \\
Fixed sensor error against a shrinking score: the relative spread diverges near contact (the cone-shaped small-margin blow-up). & \textbf{A4}: $\lim_{s\downarrow0}w(s)/s=+\infty$. \\
The affine action $S\mapsto cS+a_0$ moves the contaminated lower edge linearly (up to a bounded corridor) with the true severity, and $\ell$ is globally nondecreasing. & \textbf{A5}: increment corridor $c_-(s'-s)\le\ell(s')-\ell(s)\le c_+(s'-s)$ near $0$, $\ell$ nondecreasing. \\
The conditional mass near the moving edge inherits a power-law profile from the support-edge exponents of the jitter. &
\textbf{A6}: $K([\ell(s),\ell(s)+h]\mid s)=c_K(s)\,h^{\beta+1}(1+o(1))$, common
$\beta=\gamma_d+\gamma_v+1$. \\
Scale $c\neq1$: the edges move with slopes other than $1$. The severity-scale error is a nonlinear ratio transform rather than a fixed additive shift. & \textbf{NA}: no fixed $\mu$ with $K(\cdot\mid s)=\mu(\cdot-s)$, and hence no error characteristic function. \\
\hline
\end{tabular}
\end{center}

The application is the near-edge setting treated in Section~\ref{sec:results}. Here the observed law concentrates near $a=\inf\supp Q$ with $Q([a,a+h])\asymp L\,h^{\alpha+\beta+2}$, and the composite exponent $(\alpha+1)+(\beta+1)$ splits into a band contribution and an edge-mass contribution. With $\beta$ a known class constant, the exponent read off $Q$ pins the latent shape $\alpha$, even though no error law can be deconvolved.

Section~\ref{sec:results} shows that this pipeline identifies shape but not scale. The shape index $\alpha$, which governs how fast near-crash danger thins as the surrogate score approaches contact, is recoverable from scored conflicts alone (Theorem~\ref{thm:I2}). By contrast, the \emph{rate} of that danger is not. Theorem~\ref{thm:I1} exhibits two intersections (two road sites) whose measured conflict scores are \emph{exactly identically distributed}. Yet the latent severe-event scale $L$ and the fixed-threshold exceedance $p_\tau$ each differ between the sites, by $77.8\%$ in the certified instance. The bias-and-jitter propagation offsets the change in latent scale by a compensating change in the moving edge, leaving the marginal law of the scores unchanged. No analysis of the observed conflicts can then separate the two sites, at any sample size. In practice this is a division of labour: absolute \emph{rate}, unlike \emph{shape}, needs an external calibration that pins the measurement scale, so neither an absolute near-crash frequency nor a ranking of two sites by near-crash rate should be read from uncalibrated conflict data alone.
\section{Setup and the model class}\label{sec:setting}

\subsection{The model pair and the observed law}\label{ssec:objects}

A member of the model is a pair $(F_S,K)$. Here $F_S$ is a Borel probability law on $[0,\infty)$ with endpoint $0$, and $K$ is a Markov kernel from $[0,\infty)$ to the compactified half-line $[0,\infty]$. That is, for each $s$, $K(\cdot\mid s)$ is a Borel probability measure on $[0,\infty]$, and for each Borel $B\subseteq[0,\infty]$ the map $s\mapsto K(B\mid s)$ is Borel measurable \citep{Kallenberg2002}. (The value $+\infty$ is admitted a priori, because contamination can turn a near-miss into a non-event. Near the endpoint, axiom~A2 of Section~\ref{ssec:axioms} excludes it.) The observed law is the mixture \begin{equation}\label{eq:Q} Q\;=\;K F_S\;=\;\int_{[0,\infty)}K(\cdot\mid s)\,F_S(ds), \end{equation} a Borel probability law on $[0,\infty]$. For Borel $B$, $Q(B)=\int K(B\mid s)\,F_S(ds)$, where the integrand is measurable by the kernel property and $[0,1]$-valued, so that no integrability condition is needed for \eqref{eq:Q}.

\begin{remark}[The data-generating object is the pair $(F_S,K)$]\label{rem:notfree}
The kernel $K$ is not a free functional of $F_S$. In the originating measurement model, it depends on the latent conditional structure on the level sets of an underlying functional (in the motivating instance, the time-to-collision map), and not on $F_S$ alone. Two latent laws with the same $F_S$ may induce different kernels, and identifiability is therefore posed over a class of pairs. In particular, two pairs $(F_S,K)$ and $(F_S',K')$ may have $K\neq K'$ yet still produce the same observed $Q$ (Definition~\ref{def:ident}). Any identifiability statement must allow the kernels to differ.
\end{remark}

For each $s\ge0$ the lower and upper \emph{conditional support edges} (restricted to the finite part) are \begin{equation}\label{eq:edges} \ell(s):=\inf\bigl(\supp K(\cdot\mid s)\cap[0,\infty)\bigr),\qquad u(s):=\sup\bigl(\supp K(\cdot\mid s)\cap[0,\infty)\bigr), \end{equation} where $\supp$ denotes the topological support in $[0,\infty]$. Their width and location offset are \begin{equation}\label{eq:wm} w(s):=u(s)-\ell(s),\qquad m(s):=\tfrac12\bigl(\ell(s)+u(s)\bigr)-s. \end{equation}
These are functionals of $K$ itself. Kernels with $K([0,\infty)\mid s)=0$ for some $s$ would empty the intersection in \eqref{eq:edges}, and are excluded. The axioms of Section~\ref{ssec:axioms} then quantify over well-defined edges. Between the two edges of \eqref{eq:edges}, it is the lower one, $\ell$, that the regularity axioms control, and through which the shape $\alpha$ is read. The letter $w$ is used in two senses: the width $w(s)$ always carries an argument, whereas the jitter edges $w_\pm$ of Convention~\ref{conv:W} always carry subscripts.

\subsection{The latent tail}\label{ssec:tail}

\begin{assumption}[Latent bounded tail (A1)]\label{ass:A1}
The law $F_S$ has no atom at the endpoint, $F_S(\{0\})=0$, and has a density $f_S$ on a right-neighbourhood $(0,s_0)$ of $0$, with constants $L\in(0,\infty)$ and $\alpha\in[0,\infty)$ such that \begin{equation}\label{eq:A1} f_S(s)=L\,s^{\alpha}\bigl(1+o(1)\bigr)\qquad(s\downarrow 0), \end{equation} hence $F_S(s)=\tfrac{L}{\alpha+1}s^{\alpha+1}(1+o(1))$, so $F_S$ is regularly varying at $0$ with index $\alpha+1$. (The ``hence'' relies on the no-atom clause: without it, e.g.\ $F_S=\tfrac12\delta_0+\tfrac12\mathrm{Unif}[0,1]$ satisfies \eqref{eq:A1} with $\alpha=0$ yet is not regularly varying of index $1$. Equation~\eqref{eq:A1} is understood for a fixed version of the density. The CDF asymptotic alone would be strictly weaker.)
\end{assumption}
Since $F_S$ has no atom at $0$, integrating \eqref{eq:A1} over a right-neighbourhood of $0$ gives the leading term of $F_S$, which we record here for later reference \citep[Prop.~1.5.8]{BGT1987}: \begin{equation}\label{eq:set-A1cdf} F_S(s)=\frac{L}{\alpha+1}\,s^{\alpha+1}\bigl(1+o(1)\bigr)\qquad(s\downarrow0), \end{equation} see Remark~\ref{rem:set-A1cdf} for the proof of \eqref{eq:set-A1cdf}.
\begin{remark}[Proof of the CDF consequence \eqref{eq:set-A1cdf}]\label{rem:set-A1cdf}
Fix $\eps\in(0,1)$. By the pointwise reading of \eqref{eq:A1} for the fixed version of the density, there is $\delta=\delta(\eps)\in(0,s_0)$ with $(1-\eps)Ls^{\alpha}\le f_S(s)\le(1+\eps)Ls^{\alpha}$ for all $s\in(0,\delta)$. For $s\in(0,\delta)$, the no-atom clause and the density clause of A1 give $F_S(s)=F_S(\{0\})+F_S((0,s])=\int_{(0,s]}f_S(t)\,dt$ (the sets $(0,s]$ and $(0,s)$ differ by the Lebesgue-null $\{s\}$). Integrating the two-sided bound then yields \[ (1-\eps)\,\frac{L}{\alpha+1}\,s^{\alpha+1}\;\le\;F_S(s)\;\le\;(1+\eps)\,\frac{L}{\alpha+1}\,s^{\alpha+1} \qquad\text{for all }s\in(0,\delta(\eps)), \] which is \eqref{eq:set-A1cdf} in $\eps$-$\delta$ form. Without the no-atom clause the first equality fails, since an atom at $0$ adds $F_S(\{0\})>0$ to $F_S(s)$, so that \eqref{eq:set-A1cdf} no longer holds while \eqref{eq:A1} is unaffected. We also note, for later use, that $F_S(s)>0$ for every $s>0$ small enough, as $L>0$. It follows by monotonicity that $F_S(\sigma)>0$ for every $\sigma>0$.
\end{remark}

In the positive, or shape, half of this paper, the estimand is the shape index $\alpha$. The scale $L$ and the fixed-scale exceedance $p_\tau$ are not recoverable without a further anchor (Theorem~\ref{thm:I1} and Section~\ref{scope:sec}). In extreme-value terms, \eqref{eq:A1} states that $-S$ lies in the Weibull max-domain of attraction with index $\xi=-1/(\alpha+1)<0$. This reading is not used in the proofs.
\subsection{Kernel and edge-regularity axioms}\label{ssec:axioms}

The axioms are imposed directly on the pair $(F_S,K)$ (Remark~\ref{rem:notfree}). Throughout, the following \emph{shared class constants} are held fixed: an edge index $\beta\ge0$ (A6), bounds $\bar b,\ c_-,\ c_+$, a fixed exceedance threshold $\tau$ with $0<\tau<s_R$, and neighbourhood floors $\bar s_0>0$ and $s_R,s_A\in(0,\bar s_0]$. Of these, $\tau$ excludes no member and only names the exceedance $p_\tau$. The floor $\bar s_0$ is a lower bound on every member's A1-neighbourhood, whereas $s_R$ and $s_A$ are the radii on which, respectively, the edge-regularity axioms A5--A6 and the no-escape clause of A2 are imposed. Every member satisfies its axioms with these same constants, and in particular has an A1-neighbourhood $(0,s_0)$ with $s_0\ge\bar s_0$. In contrast, the quantities $\alpha$, $L$, $F_S$, $K$, $c_K(\cdot)$, and hence the apparent endpoint $a:=\ell(0+)$ of A5, are member-specific.

\begin{assumption}[Kernel axioms]\label{ass:kernel}
The kernel $K$ satisfies:
\begin{itemize}
\item[\textbf{A2}] (\emph{bounded conditional support, no escape near the endpoint}) $w(s)<\infty$ for every $s\ge 0$, and $K(\{+\infty\}\mid s)=0$ for all $s\in[0,s_A]$. \item[\textbf{A3}] (\emph{bias present and bounded}) $\sup_{s}|m(s)|\le\bar b<\infty$ and $m\not\equiv 0$. \item[\textbf{A4}] (\emph{tail-amplified heteroscedasticity}) the relative width blows up at the endpoint: $\displaystyle\lim_{s\downarrow 0}\frac{w(s)}{s}=\infty$.
\item[\textbf{NA}] (\emph{non-additivity}) there is no fixed $\mu\in\Pclass(\R)$ with
$K(\cdot\mid s)=\mu(\cdot-s)$ for $F_S$-a.e.\ $s$. (Here $\Pclass(\R)$ is the set of Borel probability laws on $\R$. The relation ``$K(\cdot\mid s)=\mu(\cdot-s)$'' means that $K(\{+\infty\}\mid s)=0$ and $K(B\mid s)=\mu(B-s)$ for every Borel $B\subseteq[0,\infty)$, i.e.\ $K(\cdot\mid s)$ is the law of $s+Z$ with $Z\sim\mu$. NA thus excludes the classical additive-error model, in which deconvolution techniques would apply \citep{Fan1991,StefanskiCarroll1990,Meister2009}.)
\end{itemize}
\end{assumption}

\begin{assumption}[Edge-regularity axioms]\label{ass:reg}
For the class constant $s_R$ above, the kernel also satisfies:
\begin{itemize}
\item[\textbf{A5}] (\emph{edge-regular location transfer, no dipping}) the limit $a:=\ell(0+)\in[0,\infty)$ exists and, with $\ell(0):=a$, the \emph{two-point increment condition} holds, \begin{equation}\label{eq:A5} c_-\,(s'-s)\ \le\ \ell(s')-\ell(s)\ \le\ c_+\,(s'-s)\qquad\text{for all }0\le s\le s'\le s_R, \end{equation} and $\ell$ is \emph{nondecreasing on all of $[0,\infty)$}. (Taking $s=0$ gives the one-point corridor $c_-\,s'\le\ell(s')-a\le c_+\,s'$. The increment condition forbids the edge \emph{flattening} ($\ell(s)-a=o(s)$) and forbids \emph{flats} inside $(0,s_R]$. The global monotonicity prevents the conditional law at a distant $s$ from dipping to the apparent endpoint.)
\item[\textbf{A6}] (\emph{common conditional edge index}) for the class constant $\beta\ge0$ there is a map $c_K(\cdot)$ with $0<\liminf_{s\downarrow0}c_K(s)\le \limsup_{s\downarrow0}c_K(s)<\infty$ such that
\[
K\bigl([\ell(s),\,\ell(s)+h]\mid s\bigr)=c_K(s)\,h^{\beta+1}\bigl(1+o(1)\bigr)\qquad(h\downarrow0),
\]
uniformly over $s\in(0,s_R]$. Recall that $\beta$ is fixed across the class.
\end{itemize}
\end{assumption}

We now fix the reading of the A6 uniformity and the sign of the corridor constants, both of which are used throughout the proof.

\begin{convention}[Formal reading of the A6 uniformity]\label{conv:A6}\label{conv:A6read}
A6 is read in the standard uniform sense, that is, for every $\eps>0$ there exists $\eta(\eps)>0$ such that
\begin{equation}\label{eq:A6u}
(1-\eps)\,c_K(s)\,\delta^{\beta+1}\;\le\;K\bigl([\ell(s),\ell(s)+\delta]\mid s\bigr)\;\le\;(1+\eps)\,c_K(s)\,\delta^{\beta+1}
\qquad\text{for all } s\in(0,s_R],\ \delta\in(0,\eta(\eps)].
\end{equation}
Equivalently, $\sup_{s\in(0,s_R]}\bigl|K([\ell(s),\ell(s)+\delta]\mid s)/(c_K(s)\delta^{\beta+1})-1\bigr|\to0$
as $\delta\downarrow0$. No upper or positive lower bound on $c_K$ away from $0$ is assumed, and only the $\liminf/\limsup$ at $0$ are controlled. The declared codomain $(0,\infty)$ of $c_K$ imposes no restriction beyond the display itself. Indeed, if \eqref{eq:A6u} holds for some $[0,\infty]$-valued map, then that map is automatically $(0,\infty)$-valued on $(0,s_R]$. Finiteness follows since the lower band and $K\le1$ force $c_K(s)\le\bigl((1-\eps)\delta^{\beta+1}\bigr)^{-1}<\infty$. Positivity holds because $K([\ell(s),\ell(s)+\delta]\mid s)>0$ (Lemma~S1.1(iii) of the Supplementary Material) and the upper band together force $c_K(s)>0$.
\end{convention}

\begin{convention}[Sign of the corridor constants]\label{conv:cpm}\label{conv:corridor}
We read the class constants as $0<c_-\le c_+<\infty$. Positivity of $c_-$ is the content of the ``no flattening'' clause of A5 (the corridor $c_-s\le\ell(s)-a$ must forbid $\ell(s)-a=o(s)$), and it is required for the finiteness of the constant $\varkappa_+\propto c_-^{-(\alpha+1)}$ of Lemma~\ref{lem:exponent}. Likewise, $c_+<\infty$ is needed for $\varkappa_->0$. Both restrictions are therefore forced by that lemma.
\end{convention}

\subsection{Conventions}

\begin{convention}[Reading of the jitter]\label{conv:W}
In the constructions below, a \emph{jitter} is a real random variable $W$ with law $P_W$ satisfying the following two conditions.
\begin{enumerate}
\item The support edges $w_-:=\inf\supp P_W$ and $w_+:=\sup\supp P_W$ satisfy $0\le w_-<w_+<\infty$. In particular, $P_W$ has compact support $\supp P_W\subseteq[w_-,w_+]\subseteq[0,\infty)$.
\item There is $\delta_W>0$ such that the restriction of $P_W$ to $[w_-,w_-+\delta_W]$ is absolutely continuous with respect to Lebesgue measure and has a fixed, named density version $\rho_W$ satisfying
\begin{equation}\label{eq:set-rhoW}
\rho_W(w)=c_W\,(w-w_-)^{\beta}\bigl(1+o(1)\bigr)\qquad(w\downarrow w_-),\qquad c_W\in(0,\infty),
\end{equation}
where $\beta\ge0$ is the shared class constant of A6. The asymptotic \eqref{eq:set-rhoW} is read pointwise for that version: for every $\eps>0$ there is $\delta(\eps)\in(0,\delta_W]$ with $(1-\eps)c_W(w-w_-)^{\beta}\le\rho_W(w)\le(1+\eps)c_W(w-w_-)^{\beta}$ for all $w\in(w_-,w_-+\delta(\eps)]$.
\end{enumerate}
We write the midpoint offset as $\midW:=\tfrac12(w_-+w_+)$, which is $>0$ since $w_+>w_-\ge0$. The lower-edge exponent of the jitter is the class index $\beta$ itself. In the constructions of Sections~\ref{sec:witness}--\ref{sec:pair}, this exponent is the sole source of the $\beta$ appearing in A6.
\end{convention}

\subsection{Model classes and identifiability}\label{ssec:classes}

\begin{definition}[The regularity class]\label{def:M}
With the shared class constants of Section~\ref{ssec:axioms},
\[
\Mclass:=\bigl\{(F_S,K): \text{A1--A4, NA hold}\bigr\},
\qquad
\Mreg:=\bigl\{(F_S,K)\in\Mclass:\ \text{A5, A6 hold}\bigr\}.
\]
The wide class $\Mclass$ is introduced for context only (Remark~\ref{rem:whyreg}). We state the positive (shape identifiability) result over the lean class $\Mstar$ of Section~\ref{lean:sec}, and hence by inclusion also over $\Mreg$, whereas the negative (non-identifiability) result is stated over $\Mreg$. Identification is studied for a fixed pair $(F_S,K)$ with observed law $Q=KF_S$, and the contamination does not vanish.
\end{definition}

\begin{definition}[Estimand]\label{def:psi}
For a member with latent law $F_S$ satisfying A1, the estimand is the triple
\[
\psi(F_S):=\bigl(\alpha,\;L,\;p_\tau\bigr)\in[0,\infty)\times(0,\infty)\times(0,1],
\]
where:
\begin{enumerate}
\item $\alpha$ is the \emph{shape} index of \eqref{eq:A1}, the power of $s$ in the near-endpoint density. Only $L$ and $p_\tau$ are targets of the negative half. The index $\alpha$ is listed to locate them within $\psi$, and its identifiability over $\Mreg$ is the content of the positive half, Theorem~\ref{thm:I2} (with Corollary~\ref{cor:Mreg}).
\item $L$ is the \emph{tail scale}: the leading coefficient of the density's edge expansion, $f_S(s)=Ls^{\alpha}(1+o(1))$ as $s\downarrow0$. It is a \emph{scale} coefficient, not an endpoint or location parameter, since the endpoint is fixed at $0$ by A1. In CDF form it determines the unconditional near-edge mass via \eqref{eq:set-A1cdf}, $F_S(v)=\tfrac{L}{\alpha+1}v^{\alpha+1}(1+o(1))$ as $v\downarrow 0$.
\item $p_\tau$ is the \emph{fixed-scale exceedance} at the fixed, known threshold $\tau$ (a shared class constant, $0<\tau<s_R$):
\[
p_\tau:=F_S\bigl([0,\tau]\bigr)=F_S(\tau)\in(0,1],
\]
where the two expressions coincide because $F_S(\{0\})=0$, and $p_\tau>0$ by the byproduct noted in Remark~\ref{rem:set-A1cdf}. This is an \emph{exact} functional of $F_S$, \emph{not} a near-edge asymptotic. Since A1 is asymptotic only, nothing justifies replacing $p_\tau$ by the leading term $\tfrac{L}{\alpha+1}\tau^{\alpha+1}$, even when $\tau<s_0$. As a probability, $p_\tau$ is invariant under reparametrization of a \emph{fixed} $F_S$ (for example, a change of coordinates in $(\alpha,L)$). It does change, however, under \emph{rescaling of the latent law itself}, $S\mapsto\kappa S$, which is the mechanism used in Section~\ref{sec:pair}.
\end{enumerate}
\end{definition}

\begin{definition}[Observational equivalence and identifiability]\label{def:ident}
Two pairs $(F_S,K)$, $(F_S',K')$ are \emph{observationally equivalent}, written $(F_S,K)\sim(F_S',K')$, if $KF_S=K'F_S'$ as elements of $\Pclass([0,\infty])$. Equivalently, $(KF_S)(B)=(K'F_S')(B)$ for every Borel $B\subseteq[0,\infty]$. A functional $\phi$ of the pair $(F_S,K)$ is \emph{identifiable over} a class $\mathcal{N}$ of pairs if
\[
(F_S,K)\sim(F_S',K'),\ \text{both in }\mathcal{N}
\quad\Longrightarrow\quad
\phi(F_S,K)=\phi(F_S',K').
\]
Thus, to show that $\phi$ is not identifiable over $\mathcal{N}$, it suffices to exhibit one observationally equivalent pair of members of $\mathcal{N}$ on which $\phi$ differs. In this paper $\phi\in\{\alpha,L,p_\tau\}$, where $\alpha$ is the shape index of $F_S$ via \eqref{eq:A1}. Each $\phi$ depends on the pair only through $F_S$, so we write $\phi(F_S)$.
\end{definition}

\begin{lemma}[Single-valuedness of the estimand functionals]\label{lem:phiwell}
Let $F_S$ be a Borel probability law on $[0,\infty)$, and suppose that both of the following hold, where each asymptotic relation is read pointwise for the version named in it, as in \eqref{eq:A1}:
\begin{enumerate}
\item a nonnegative Borel function $f_S$ is a density of $F_S$ on $(0,s_0)$ for some $s_0>0$, and $f_S(s)=L\,s^{\alpha}\bigl(1+o(1)\bigr)$ as $s\downarrow0$, with $(L,\alpha)\in(0,\infty)\times[0,\infty)$.
\item a nonnegative Borel function $\tilde f_S$ is a density of $F_S$ on $(0,\tilde s_0)$ for some $\tilde s_0>0$, and $\tilde f_S(s)=\tilde L\,s^{\tilde\alpha}\bigl(1+o(1)\bigr)$ as $s\downarrow0$, with $(\tilde L,\tilde\alpha)\in(0,\infty)\times[0,\infty)$.
\end{enumerate}
Then $\alpha=\tilde\alpha$ and $L=\tilde L$. It follows that on any class $\mathcal{N}$ of pairs $(F_S,K)$ all of whose members satisfy A1 (in particular on $\Mclass$ and on $\Mreg$), the assignment $(F_S,K)\mapsto(\alpha,L)$ is a single-valued map. A1 supplies a choice of version, constants and neighbourhood, and all such choices return the same value. The map $(F_S,K)\mapsto p_\tau=F_S([0,\tau])$, the value of the measure $F_S$ at the fixed Borel set $[0,\tau]$ (Definition~\ref{def:psi}), is single-valued without further argument. Each $\phi\in\{L,p_\tau\}$ is a well-defined real-valued functional on $\mathcal{N}$ that depends on the pair only through its first coordinate. Accordingly, we write $\phi(F_S)$ for its value, as in Lemma~\ref{lem:bridge}.
\end{lemma}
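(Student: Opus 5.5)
The plan is to show that two densities of the same measure on a common small interval agree Lebesgue-a.e., and then to compare their asymptotics. The obstacle is that each asymptotic relation is read pointwise for its own version, while the two versions agree only almost everywhere.

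\textbf{Step 1 (a.e. agreement).} Set $r:=\min(s_0,\tilde s_0)>0$. Both $f_S$ and $\tilde f_S$ are densities of $F_S$ on $(0,r)$, so
\[
\int_B f_S\,dt=F_S(B)=\int_B\tilde f_S\,dt
\]
for every Borel $B\subseteq(0,r)$. Both functions are nonnegative and locally integrable there: for $[x,y]\subset(0,r)$ each integral is at most $F_S([x,y])\le1$. The standard uniqueness of Radon--Nikodym derivatives then gives $f_S=\tilde f_S$ Lebesgue-a.e.\ on $(0,r)$. Concretely, apply the display to $B=\{f_S>\tilde f_S\}\cap[x,y]$ and to the reverse set.

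\textbf{Step 2 (comparison on a full-measure set).} Fix $\eps\in(0,\tfrac12)$. By the pointwise readings there is $\delta>0$ such that, for all $s\in(0,\delta)$,
\[
(1-\eps)Ls^{\alpha}\le f_S(s)\le(1+\eps)Ls^{\alpha}
\quad\text{and}\quad
(1-\eps)\tilde Ls^{\tilde\alpha}\le\tilde f_S(s)\le(1+\eps)\tilde Ls^{\tilde\alpha}.
\]
On the full-measure set $G:=\{f_S=\tilde f_S\}\cap(0,\delta)$, which accumulates at $0$, this yields
\[
\frac{1-\eps}{1+\eps}\le\frac{L}{\tilde L}\,s^{\alpha-\tilde\alpha}\le\frac{1+\eps}{1-\eps}\qquad(s\in G).
\]
Suppose $\alpha\ne\tilde\alpha$. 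Letting $s\downarrow0$ along a sequence in $G$ sends $s^{\alpha-\tilde\alpha}$ to $0$ or $\infty$, which contradicts these two-sided bounds since $L,\tilde L\in(0,\infty)$. Hence $\alpha=\tilde\alpha$. The display then says $L/\tilde L\in[(1-\eps)/(1+\eps),(1+\eps)/(1-\eps)]$ for every $\eps$, so letting $\eps\downarrow0$ gives $L=\tilde L$.

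\textbf{Alternative route.} One could avoid the a.e.\ argument by integrating, exactly as in Remark~\ref{rem:set-A1cdf}. Note that A1's no-atom clause is not assumed in the lemma, but integrating over $(x,s]$ with $x\downarrow0$ handles this. This gives
\[
F_S((0,s])=\tfrac{L}{\alpha+1}s^{\alpha+1}(1+o(1))=\tfrac{\tilde L}{\tilde\alpha+1}s^{\tilde\alpha+1}(1+o(1)).
\]
It forces $\alpha=\tilde\alpha$ and then $L/(\alpha+1)=\tilde L/(\alpha+1)$. This route is perhaps cleaner, and either one works.

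\textbf{Remaining claims.} These are bookkeeping. Every member of $\mathcal N$ satisfies A1, so it admits at least one choice of version, neighbourhood and constants. By the above, any two such choices return the same $(\alpha,L)$, so the assignment is single-valued. The quantity $p_\tau=F_S([0,\tau])$ is the evaluation of a measure at a fixed set and so is trivially single-valued. Both $L$ and $p_\tau$ are built only from $F_S$, so they depend on the pair only through its first coordinate, and they are real-valued since $L\in(0,\infty)$ and $p_\tau\in[0,1]$. I expect the main obstacle to be only the version issue of Step~1, which the integration route sidesteps.
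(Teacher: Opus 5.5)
Your main route is correct and is essentially the paper's proof. The paper likewise shows $f_S=\tilde f_S$ a.e.\ on $(0,\min(s_0,\tilde s_0))$ by integrating the difference over $\{f_S>\tilde f_S\}$ and $\{f_S<\tilde f_S\}$, then compares the two expansions along points $s_k\downarrow0$ of the agreement set, using $s_k^{\alpha-\tilde\alpha}\to\tilde L/L$ where you use fixed-$\eps$ bands; the difference is immaterial.

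Your integrated alternative is also valid and does sidestep the version issue. For $s<\min(s_0,\tilde s_0)$ one has $F_S((0,s])=\int_{(0,s]}f_S=\int_{(0,s]}\tilde f_S$ directly, so no $x\downarrow0$ limit is needed, since $(0,s]$ is already a Borel subset of both windows. Integrating the two pointwise bands as in Remark~\ref{rem:set-A1cdf} then gives $s^{\alpha-\tilde\alpha}\to\tilde L(\alpha+1)/(L(\tilde\alpha+1))\in(0,\infty)$, and hence both $\alpha=\tilde\alpha$ and $L=\tilde L$.
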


\begin{remark}[Necessity and role of the regularity axioms]\label{rem:whyreg}
Over the wide class $\Mclass$, no feature of the tail is identifiable, $\alpha$ included. An explicit confounder is as follows. On $[0,1]$ take $F^{(\alpha)}(s)=s^{\alpha+1}$ (so A1 holds with $L=\alpha+1$) and the \emph{quantile-encoding} kernel $K^{(\alpha)}(\cdot\mid s)=\mathrm{Unif}\bigl[s^{\alpha+1},\,s^{\alpha+1}+c\bigr]$. Since $F^{(\alpha)}(S)\sim\mathrm{Unif}[0,1]$, the observed law is $K^{(\alpha)}F^{(\alpha)}=\mathrm{Unif}[0,1]*\mathrm{Unif}[0,c]$, the law of a sum of independent uniforms. This law is the same for every $\alpha>0$, whereas $\alpha$ itself differs.
Each pair with $\alpha>0$ satisfies A2--A4 and NA. Here $w\equiv c$, the function $m(s)=s^{\alpha+1}+c/2-s$ is bounded on the support and $\not\equiv0$, $w(s)/s\to\infty$, and the $s$-dependent edge displacement $s^{\alpha+1}$ precludes a fixed convolution. As for the witnesses of Section~\ref{sec:wit}, we extend $K^{(\alpha)}(\cdot\mid s)$ beyond $s=1$ by pure translation. This extension keeps $m$ constant at $c/2$ there and yields the global bound $\sup_{s\ge0}|m(s)|<\infty$ required by A3. Since it sits on the $F^{(\alpha)}$-null set $(1,\infty)$, it leaves $Q$ unchanged. The axiom that fails is A5: the edge $\ell(s)=s^{\alpha+1}$ flattens, that is, $\ell(s)-\ell(0+)=o(s)$. Under A5--A6 such flattening is ruled out, and the observed law acquires a near-endpoint exponent strictly increasing in $\alpha$. This exponent is the non-additive analogue of the composite index of Goldenshluger \& Tsybakov \citeyearpar{GT2004}, and Lemma~\ref{lem:exponent} makes it explicit. (Stochastic monotonicity alone would not suffice, since the quantile-encoding kernel is stochastically monotone.)
\end{remark}
\subsection{The lean class \texorpdfstring{$\mathcal{M}_\star$}{M-star}}\label{lean:sec}

The proof given in the Supplementary Material does not need the full strength of the model class. It uses only the latent tail axiom~A1 and the conditional edge-index axiom~A6, along with the lower-edge regularity contained in the $s=0$ corridor and global monotonicity of $\ell$. We therefore isolate a leaner class on which the same theorem holds, and $\Mreg$ is recovered as a special case. This is not merely a change of notation. Identifiability over the leaner class is a strictly stronger assertion, and the same proof gives it without extra work. The lean form of the location-transfer axiom is recorded first. It keeps the two features that the argument uses, namely a one-point corridor measured from the apparent endpoint and global monotonicity of the edge, whereas the two-point increment condition on the interior $0<s<s'$ is dropped.

\begin{convention}[Weakened location transfer, A5$^-$]\label{lean:A5minus}
We say that $(F_S,K)$ satisfies \emph{A5$^-$} if the limit $a:=\ell(0+)\in[0,\infty)$ exists, the \emph{one-point corridor}
\begin{equation}\label{lean:corridor}
c_-\,s\;\le\;\ell(s)-a\;\le\;c_+\,s\qquad\text{for all }s\in(0,s_R],
\end{equation}
holds with the class constants $0<c_-\le c_+<\infty$, and $\ell$ is nondecreasing on all of $[0,\infty)$. (As in A5, the symbol $\ell(0):=a$ is a notational convention and not a constraint on $K(\cdot\mid0)$. This is harmless because A1 makes $\{0\}$ an $F_S$-null set, cf.\ the discussion accompanying Proposition~S2.1 of the Supplementary Material.)
\end{convention}

\begin{remark}[A5 $\Rightarrow$ A5$^-$]\label{lean:A5implies}
The full axiom~A5 implies A5$^-$. Indeed, existence of $a$ and global monotonicity are common to both, and taking $(s,s')=(0,s)$ in the two-point increment condition \eqref{eq:A5} of A5, with the convention $\ell(0)=a$, yields the one-point corridor \eqref{lean:corridor}. The converse fails. Since A5$^-$ constrains only increments measured from the origin, it permits the edge to flatten or to develop flats on the interior $0<s<s'$, which makes it strictly weaker. In the proof, the interior increments of A5 are never used. Every appeal to A5 there passes through Lemma~S1.2 of the Supplementary Material, whose proof invokes \eqref{eq:A5} only at $(s,s')=(0,s)$ and otherwise uses global monotonicity. The entire argument therefore goes through unchanged under A5$^-$.
\end{remark}

\begin{convention}[The lean class $\mathcal{M}_\star$]\label{lean:Mstar}
With the shared class constants $\beta\ge0$, $0<c_-\le c_+<\infty$, $\bar s_0>0$ and $s_R\in(0,\bar s_0]$ fixed as before, define
\[
\mathcal{M}_\star:=\bigl\{(F_S,K):\ \text{A1, A5$^-$, A6, and the setup well-posedness convention hold}\bigr\}.
\]
Here the well-posedness convention is the one adopted in the setup of Section~\ref{ssec:objects}, namely that $K([0,\infty)\mid s)>0$ for every $s$ (so that $\supp K(\cdot\mid s)\cap[0,\infty)\neq\varnothing$ and $\ell(s)$ is well defined and finite). Just as in $\Mreg$, the constants $\beta,c_-,c_+,\bar s_0,s_R$ are shared across all members, whereas $\alpha,L,c_K(\cdot),F_S,K$ (hence $a$) are member-specific. The A1 neighbourhood $(0,s_0)$ satisfies $s_0\ge\bar s_0$. Identifiability over $\mathcal{M}_\star$ is understood in the usual sense, with $\phi=\alpha$, the shape index read off through A1.
\end{convention}

We now relate $\mathcal{M}_\star$ to $\Mreg$.

\begin{proposition}[The lean class contains the regular class]\label{lean:inclusion}
We have $\Mreg\subseteq\mathcal{M}_\star$, and the inclusion is, in general, strict.
\end{proposition}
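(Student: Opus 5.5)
The inclusion is a direct check of the defining conditions of $\Mstar$, and strictness needs one explicit member of $\Mstar\setminus\Mreg$.

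\emph{Inclusion.} Fix $(F_S,K)\in\Mreg$. A1 and A6 are among the axioms defining $\Mreg$, with the same shared constants $\beta$, $\bar s_0$, $s_R$. A5$^-$ follows from A5 by Remark~\ref{lean:A5implies}: take $(s,s')=(0,s)$ in \eqref{eq:A5} with $\ell(0):=a$ to get \eqref{lean:corridor}, and note that existence of $a$ and global monotonicity of $\ell$ are common to both axioms. The well-posedness convention $K([0,\infty)\mid s)>0$ for every $s$ is imposed in the setup of Section~\ref{ssec:objects} on every kernel for which the edges \eqref{eq:edges} are used. It is in any case implied by A2 near the endpoint, and for all $s$ by the standing requirement that $\ell(s)$ and $w(s)<\infty$ be defined. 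Hence $(F_S,K)\in\Mstar$.

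\emph{Strictness.} The cheapest route is to violate an axiom that $\Mstar$ drops, namely A3 through its clause $m\not\equiv0$, or NA. I will take a pure location kernel $K(\cdot\mid s)=\Law(s+W)$, where $W$ is a jitter as in Convention~\ref{conv:W} with $w_-=0$, $\rho_W(w)=c_W w^\beta$ near $0$, and $c_-\le1\le c_+$ (the constants may be chosen this way if needed, since strictness is asserted only ``in general''). Pair it with $F_S=\mathrm{Beta}$-type law $f_S(s)=(\alpha+1)s^\alpha$ on $[0,1]$, so that A1 holds with $s_0=1\ge\bar s_0$ after fixing constants. Then $\ell(s)=s$, so $a=0$, the one-point corridor holds with slope $1$, and $\ell$ is globally nondecreasing. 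A6 holds with $c_K\equiv c_W/(\beta+1)$, uniformly in $s$ because the conditional law is translation-invariant. Well-posedness is clear. However, $K(\cdot\mid s)=P_W(\cdot-s)$ for every $s$, so NA fails and the pair lies outside $\Mreg$. As a backup, A4 also fails if needed, since $w(s)=w_+$ is constant, so in fact $w(s)/s\to\infty$ holds and A4 does not fail; then NA alone suffices.

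The main obstacle is that the class constants are fixed in advance, so the example must respect an arbitrary admissible $c_\pm$. If $1\notin[c_-,c_+]$, I would replace the translation by a linear edge $\ell(s)=c_- s$, taking $K(\cdot\mid s)=\Law(c_-s+W)$. This still satisfies A5$^-$ and A6 with the same jitter computation. To break $\Mreg$ robustly, I would instead break the interior increment condition: let $\ell(s)=c_-s$ except on a small interval inside $(0,s_R)$, where the edge is held flat and then rejoins the line. The edge stays between $c_-s$ and $c_+s$ when the flat is short relative to $c_+-c_-$, or when $c_-<c_+$. If $c_-=c_+$, I would fall back on the NA or A3 violation via a pure-translate kernel with a suitable slope choice. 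The remaining detail is to keep A6 uniform, which holds because the jitter profile is simply transported along the edge.
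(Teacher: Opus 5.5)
Your inclusion argument is the paper's: A5 gives A5$^-$ via the slice $(s,s')=(0,s)$, and A1, A6 and well-posedness carry over unchanged.

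\textbf{Strictness: a different route.} The paper breaks the one thing A5$^-$ weakens. It uses, in outline, an edge that keeps the one-point corridor and global monotonicity but has an interior flat, so full A5 fails. You instead keep A5 intact and break NA with the pure translate $K(\cdot\mid s)=\Law(s+W)$, $w_-=0$.

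Your checks are right:
\begin{itemize}
\item A1 holds.
\item A5$^-$ holds with slope $1$, which needs $c_-\le1\le c_+$.
\item A6 holds with $c_K\equiv c_W/(\beta+1)$, by translation invariance.
\item Well-posedness holds.
\end{itemize}
Moreover $\mu=P_W$ witnesses the failure of NA, so the pair lies in $\Mstar\setminus\Mreg$. Since strictness is claimed only ``in general'', this suffices.

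Your example is fully explicit, and it covers the corner $c_-=c_+=1$. The paper's route is unavailable there: when $c_-=c_+$ the one-point corridor forces $\ell(s)=a+c_-s$ on $[0,s_R]$, so A5$^-$ and A5 coincide. Conversely, the paper's route works for every $c_-<c_+$, including $1\notin[c_-,c_+]$. It also shows that the interior increments themselves, not only the modelling axioms A2--A4 and NA, separate the two classes.

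\textbf{Your final paragraph is not needed, but it is wrong as written.}
\begin{itemize}
\item A flat entered continuously from the line $c_-s$ sits at height $c_-s_1$. On $(s_1,s_2]$ it then gives $\ell(s)-a<c_-s$, which violates the corridor. The edge must first climb above the lower line, which is possible only if $c_+>c_-$, and the flat must satisfy $s_2/s_1\le c_+/c_-$.
\item For $c_-=c_+\neq1$, a pure translate has edge slope $1$, so it is not in $\Mstar$ at all.
\end{itemize}

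The clean repair is simpler. Take $K(\cdot\mid s)=\Law(cs+W)$ on all of $[0,\infty)$, with $c\in[c_-,c_+]\setminus\{1\}$ and no off-support extension. This pair lies in $\Mstar$, but it fails A3 because $m(s)=(c-1)s+\midW$ is unbounded, which is exactly the collapse described in Remark~\ref{rem:bounded}. Combined with your translate at the corner $c_-=c_+=1$, this gives strictness for every admissible tuple of class constants, which is more than the statement asks.

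Also delete the self-contradicting ``backup'' sentence: A4 holds in your example, since $w(s)\equiv w_+>0$.
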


\begin{proof}
Let $(F_S,K)\in\Mreg$. By definition, $(F_S,K)$ satisfies A1, A5, A6 along with A2--A4, NA and the setup well-posedness convention. Remark~\ref{lean:A5implies} shows that A5 implies A5$^-$. Then A1, A5$^-$, A6 and well-posedness all hold, so $(F_S,K)\in\mathcal{M}_\star$. The remaining conditions in the definition of $\Mreg$ are additional constraints not required for membership in $\mathcal{M}_\star$. These are the kernel axioms A2 (bounded conditional support, no escape near the endpoint), A3 (bias present and bounded), A4 (tail-amplified heteroscedasticity) and NA (non-additivity), together with the two-point interior increments of A5. They make $\Mreg$ a proper subclass of $\mathcal{M}_\star$. Indeed, consider a pair satisfying A1, A5$^-$, A6 and well-posedness whose edge flattens or develops a flat on the interior $0<s<s'$. Such a pair keeps the one-point corridor and global monotonicity, yet fails the two-point increment condition of full A5, so it lies in $\mathcal{M}_\star\setminus\Mreg$. Hence the inclusion is strict.
\end{proof}

\section{Main results}\label{sec:results}

Write $a:=\ell(0+)$ for the \emph{apparent endpoint} of a member. The engine of the positive half of the dichotomy is a near-endpoint expansion of the observed law, from which the shape identifiability theorem follows. Both are stated over the lean class $\Mstar$ of Section~\ref{lean:sec}. By the inclusion $\Mreg\subseteq\Mstar$ of Proposition~\ref{lean:inclusion}, they apply over $\Mreg$ as well.

\begin{lemma}[Near-edge exponent]\label{lem:exponent}
Let $(F_S,K)\in\Mstar$ with apparent endpoint $a$. Set $\clo:=\liminf_{s\downarrow0}c_K(s)$ and $\chigh:=\limsup_{s\downarrow0}c_K(s)$. Then there exist constants $0<\varkappa_-\le\varkappa_+<\infty$, depending only on $\beta,c_-,c_+$ and on the member through $\alpha$ and $(\clo,\chigh)$, such that for all sufficiently small $h>0$,
\begin{equation}\label{eq:twosided}
\varkappa_-\,L\,h^{\alpha+\beta+2}\;\le\;Q\bigl([a,a+h]\bigr)\;\le\;\varkappa_+\,L\,h^{\alpha+\beta+2}.
\end{equation}
In particular, $a=\inf\supp Q$ and
\begin{equation}\label{eq:exponent}
\lim_{h\downarrow0}\frac{\log Q([a,a+h])}{\log h}=\alpha+\beta+2,
\end{equation}
so that the near-endpoint exponent is determined by $Q$ alone. If in addition $\ell(s)=a+c_\ell s$ on $[0,s_R]$ and $c_K(s)\to\bar c_K\in(0,\infty)$ as $s\downarrow0$, then
\begin{equation}\label{eq:affine}
Q\bigl([a,a+h]\bigr)=\bar c_K\,\Beta(\alpha+1,\beta+2)\,L\,c_\ell^{-(\alpha+1)}\,h^{\alpha+\beta+2}\bigl(1+o(1)\bigr)\qquad(h\downarrow0),
\end{equation}
with $\Beta$ the Euler Beta function.
\end{lemma}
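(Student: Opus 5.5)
The plan is to compute $Q([a,a+h])=\int K([a,a+h]\mid s)\,F_S(ds)$ directly: localize the integral to a band of severities of width $O(h)$, replace the kernel mass there by the A6 edge profile, and integrate against the A1 density. The geometric input is A5$^-$ alone. Since $\ell$ is nondecreasing with $\ell(0+)=a$, we have $\ell(s)\ge a$ for every $s>0$. Hence $\supp K(\cdot\mid s)\cap[0,\infty)\subseteq[a,\infty)$, so $K([0,a)\mid s)=0$ and
\[
K([a,a+h]\mid s)=K\bigl([\ell(s),a+h]\mid s\bigr)\quad\text{whenever }\ell(s)\le a+h,
\]
while $K([a,a+h]\mid s)=0$ when $\ell(s)>a+h$. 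The point $s=0$ is $F_S$-null by A1 and is discarded. By the one-point corridor \eqref{lean:corridor}, for $s\in(0,s_R]$ the condition $\ell(s)\le a+h$ forces $s\le h/c_-$, and it holds whenever $s\le h/c_+$. For $s>s_R$, monotonicity gives $\ell(s)\ge\ell(s_R)\ge a+c_-s_R>a+h$ once $h<c_-s_R$. So only $s\in(0,h/c_-]$ contributes.

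On that band set $\delta(s):=a+h-\ell(s)$, which satisfies $(h-c_+s)_+\le\delta(s)\le h-c_-s\le h$. Fix $\eps>0$. I will take $h\le\eta(\eps)$ from Convention~\ref{conv:A6read}, and $h/c_-$ small enough that three things hold on $(0,h/c_-]$: $c_K(s)\in[(1-\eps)\clo,(1+\eps)\chigh]$, $f_S(s)\in[(1-\eps)Ls^\alpha,(1+\eps)Ls^\alpha]$, and $h/c_-\le\min(s_R,\bar s_0)$. The uniform A6 band \eqref{eq:A6u} then gives
\[
(1-\eps)\,c_K(s)\,\delta(s)^{\beta+1}\le K([a,a+h]\mid s)\le(1+\eps)\,c_K(s)\,\delta(s)^{\beta+1}.
\]
Points with $\delta(s)=0$ are harmless: the upper bound only needs $K\le$ the corresponding bound, and the lower bound is applied only on $s<h/c_+$. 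Monotonicity of $t\mapsto t^{\beta+1}$ and the corridor yield the sandwich
\[
(1-\eps)^3\clo L\int_0^{h/c_+}(h-c_+s)^{\beta+1}s^\alpha\,ds\;\le\;Q([a,a+h])\;\le\;(1+\eps)^3\chigh L\int_0^{h/c_-}(h-c_-s)^{\beta+1}s^\alpha\,ds .
\]
The substitution $s=ht/c$ evaluates both integrals as $c^{-(\alpha+1)}\Beta(\alpha+1,\beta+2)h^{\alpha+\beta+2}$. Fixing, say, $\eps=\tfrac12$, this produces \eqref{eq:twosided} with
\[
\varkappa_\pm=(1\pm\tfrac12)^3\,\{\chigh,\clo\}\,c_\mp^{-(\alpha+1)}\,\Beta(\alpha+1,\beta+2),
\]
which depend only on the advertised quantities. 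These constants are finite and positive exactly because $0<c_-\le c_+<\infty$ and $0<\clo\le\chigh<\infty$.

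The consequences then follow quickly. We have $Q([0,a))=\int K([0,a)\mid s)\,F_S(ds)=0$, and $Q([a,a+h])\ge\varkappa_-Lh^{\alpha+\beta+2}>0$ for every small $h$, so $a=\inf\supp Q$. Taking logarithms in \eqref{eq:twosided} and dividing by $\log h\to-\infty$ kills the additive terms $\log(\varkappa_\pm L)$, which gives \eqref{eq:exponent}. For the affine refinement \eqref{eq:affine}, $\delta(s)=h-c_\ell s$ holds exactly, so both sides of the sandwich use the same integral with $c=c_\ell$. Moreover $c_K(s)\to\bar c_K$ lets both prefactors be taken within $(1\pm\eps)\bar c_K$. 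Letting $\eps\downarrow0$ gives the $(1+o(1))$ statement.

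The step I expect to need the most care is the localization and the replacement of $K([a,a+h]\mid s)$ by the A6 profile. This needs three things:
\begin{itemize}
\item that no mass from $s>s_R$ leaks into $[a,a+h]$, which is where global monotonicity of $\ell$ and not merely the corridor is used;
\item that $K$ charges nothing below $\ell(s)$, so that the interval $[a,a+h]$ reduces to $[\ell(s),\ell(s)+\delta(s)]$;
\item that the uniformity in \eqref{eq:A6u} holds for all $\delta\le h$ simultaneously over the band, including $\delta$ near $0$.
\end{itemize}
The only control on $c_K$ is through its $\liminf$ and $\limsup$ at $0$. So the order of quantifiers must be as follows: choose $\eps$, then shrink $h$ so that the whole band $(0,h/c_-]$ lies in the region where the $c_K$ and $f_S$ bounds hold.
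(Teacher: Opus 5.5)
Your proposal is correct and follows the paper's proof essentially step for step: localization to $(0,h/c_-]$ via the one-point corridor and global monotonicity of $\ell$, reduction of $[a,a+h]$ to $[\ell(s),\ell(s)+\delta_h(s)]$, the uniform A6 band \eqref{eq:A6u} at the $s$-dependent depth together with the $\liminf/\limsup$ window for $c_K$, the Beta identity with $c=c_\mp$, and then the same logarithmic squeeze and affine sharpening. Three small touch-ups. First, take $\delta(s)$ as the positive part $(h-(\ell(s)-a))_+$, since on the band it can be negative. Second, the case $\delta(s)=0$ can carry positive $F_S$-mass, because A5$^-$ allows $\ell$ to be flat at level $a+h$; there you need $K(\{\ell(s)\}\mid s)=0$, which follows from \eqref{eq:A6u} by letting $\delta\downarrow0$. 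Third, your choice $\eps=\tfrac12$ yields valid but looser constants than \eqref{eq:kappas}, which the paper obtains from $\eps=\tfrac15$ so that $(1+\eps)^3\le2$ and $(1-\eps)^3\ge\tfrac12$.
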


The explicit constants produced in the Supplementary Material are \begin{equation}\label{eq:kappas} \varkappa_-=\tfrac12\,\clo\,c_+^{-(\alpha+1)}\,\Beta(\alpha+1,\beta+2), \qquad \varkappa_+=2\,\chigh\,c_-^{-(\alpha+1)}\,\Beta(\alpha+1,\beta+2). \end{equation} The threshold below which \eqref{eq:twosided} holds is member-dependent, since it involves the $o(1)$ rates in A1, A6 and the $\liminf/\limsup$ in A6. The constants \eqref{eq:kappas} are not, given $(\alpha,\beta,c_-,c_+,\clo,\chigh)$. Here $\clo,\chigh$ (the $\liminf$ and $\limsup$ of the mass prefactor $c_K$) are distinct from the corridor slopes $c_-,c_+$. The pairing in \eqref{eq:kappas} is crossed: $\varkappa_-$ combines the lower prefactor $\clo$ with the \emph{upper} slope $c_+$, while $\varkappa_+$ combines the upper $\chigh$ with the \emph{lower} $c_-$. Note that $\varkappa_-\le\varkappa_+$, since $\clo\le\chigh$ and $c_-\le c_+$. The estimand $\alpha$ is member-specific. It enters $\varkappa_\pm$ alongside the class constants $(\beta,c_-,c_+)$, and identifiability would be vacuous if $\alpha$ were a class constant.

\begin{theorem}[Shape identifiability]\label{thm:I2}
The shape index $\alpha$ is identifiable over $\Mstar$: if $(F_S,K),(F'_S,K')\in\Mstar$ satisfy $KF_S=K'F'_S$, then $\alpha=\alpha'$.
\end{theorem}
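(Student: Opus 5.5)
The plan is to derive Theorem~\ref{thm:I2} directly from Lemma~\ref{lem:exponent}, which we take as given. The lemma shows that for each member of $\Mstar$ the near-endpoint exponent of the observed law, $\lim_{h\downarrow0}\log Q([a,a+h])/\log h$, exists and equals $\alpha+\beta+2$. It also shows that $a=\inf\supp Q$. The point is that both the location $a$ at which the exponent is read and the exponent itself are functionals of $Q$ alone. They do not depend on which pair $(F_S,K)$ produced $Q$.

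The steps are as follows. Let $(F_S,K),(F'_S,K')\in\Mstar$ with $Q:=KF_S=K'F'_S$, and let $a,a'$ be their apparent endpoints and $\alpha,\alpha'$ their shape indices. These are single-valued by Lemma~\ref{lem:phiwell}, so the conclusion does not depend on the choice of density version in A1.

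First, apply Lemma~\ref{lem:exponent} to each member. This gives $a=\inf\supp(KF_S)$ and $a'=\inf\supp(K'F'_S)$. Since the two observed laws are equal, $a=a'$.

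Second, define the $Q$-functional $E(Q):=\lim_{h\downarrow0}\log Q([\inf\supp Q,\inf\supp Q+h])/\log h$. By \eqref{eq:exponent} applied to the first member, this limit exists and equals $\alpha+\beta+2$. Applied to the second member, the same limit of the same quantity equals $\alpha'+\beta+2$. Limits are unique, so $\alpha+\beta+2=\alpha'+\beta+2$.

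Third, $\beta$ is a shared class constant of $\Mstar$ (Convention~\ref{lean:Mstar}), so it cancels and $\alpha=\alpha'$.

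One remark is worth recording. The member-specific quantities $L$, $c_K(\cdot)$, $\clo$ and $\chigh$ enter only the constants $\varkappa_\pm$ in \eqref{eq:twosided}. Since $0<\varkappa_-L\le\varkappa_+L<\infty$, these contribute $O(1)/\log h\to0$ to the log-ratio and so disappear from the exponent. This is why the argument does not need to compare them across the two members.

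There is essentially no obstacle at the level of the theorem. All the substance sits in Lemma~\ref{lem:exponent}, and in particular in the claim $a=\inf\supp Q$. That claim needs the lower bound in \eqref{eq:twosided}, which gives $Q([a,a+h])>0$ for every small $h$. It also needs an argument that $Q([0,a))=0$: global monotonicity of $\ell$ together with $\ell(s)\ge a$ gives $K([0,a)\mid s)=0$ for every $s>0$, and $F_S(\{0\})=0$ by A1.

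The one subtlety I would check is that both members read the exponent at the same point. Step one settles this, because it identifies $a$ intrinsically from $Q$ rather than from the member. Without it, two members with different apparent endpoints could in principle exhibit different exponents at different points. Once $a=a'$ is in hand, the identity $\alpha=\alpha'$ is immediate.
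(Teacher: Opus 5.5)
Your proposal is correct and follows the paper's proof essentially step for step. Like you, the paper identifies the common endpoint $a^*=\inf\supp Q=\inf\supp Q'$ (Proposition~\ref{prop:endpoint}), applies the exponent limit of Corollary~\ref{cor:exponent} to each member for the same function $h\mapsto Q([a^*,a^*+h])$, and then cancels the shared class constant $\beta$ to conclude $\alpha=\alpha'$.
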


Theorem~\ref{thm:I2} follows immediately from Lemma~\ref{lem:exponent}. If $(F_S,K)\sim(F'_S,K')$ then $Q=Q'$, so their common near-endpoint exponent \eqref{eq:exponent} equals both $\alpha+\beta+2$ and $\alpha'+\beta+2$. Since $\beta$ is the \emph{same} class constant for both members (Convention~\ref{lean:Mstar}), it follows that $\alpha=\alpha'$. The apparent endpoints coincide as well, because each equals $\inf\supp Q$ of the common law. In fact, the recovery map is explicit, \begin{equation}\label{eq:recovery} \alpha\;=\;\lim_{h\downarrow0}\frac{\log Q\bigl([\,\inf\supp Q,\ \inf\supp Q+h\,]\bigr)}{\log h}\;-\;\beta\;-\;2, \end{equation} a functional of the observed law $Q$ and the class constant $\beta$ alone. The complete argument, including the well-definedness of $\alpha$ as a functional of $F_S$, is deferred to the Supplementary Material.

\begin{corollary}[Identifiability over the regularity class]\label{cor:Mreg}
The shape index $\alpha$ is identifiable over $\Mreg$: any two observationally equivalent members of $\Mreg$ share $\alpha$. For every member of $\Mreg$, the two-sided bound \eqref{eq:twosided} holds with the same constants \eqref{eq:kappas}.
\end{corollary}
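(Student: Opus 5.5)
The corollary is a direct transfer from the lean class to the regularity class, so the plan is to use the inclusion $\Mreg\subseteq\Mstar$ of Proposition~\ref{lean:inclusion} twice: once for the identifiability statement and once for the two-sided bound.

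\emph{Identifiability.} Take $(F_S,K),(F_S',K')\in\Mreg$ with $KF_S=K'F_S'$. By Proposition~\ref{lean:inclusion}, which rests on Remark~\ref{lean:A5implies} (A5 implies A5$^-$), both members lie in $\Mstar$. I will check that the two classes use the same shared constants. The constants $\beta,c_-,c_+,\bar s_0,s_R$ are fixed once in Section~\ref{ssec:axioms}, and Convention~\ref{lean:Mstar} reuses exactly these. So both members belong to one and the same $\Mstar$, the one with the same $\beta$, and this is what Theorem~\ref{thm:I2} needs. Theorem~\ref{thm:I2} then gives $\alpha=\alpha'$. The value of $\alpha$ does not depend on which version of the density is chosen in A1, by Lemma~\ref{lem:phiwell}, so the conclusion concerns the functional $\alpha(F_S)$ itself.

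\emph{Uniform constants.} Let $(F_S,K)\in\Mreg$. As above it lies in $\Mstar$, so Lemma~\ref{lem:exponent} gives the bound \eqref{eq:twosided} with the constants \eqref{eq:kappas}. Those constants depend only on $(\alpha,\beta,c_-,c_+,\clo,\chigh)$. Each of these quantities is the same whether the member is viewed in $\Mreg$ or in $\Mstar$. The edge function $\ell$ and the prefactor $c_K$ are functionals of $K$ alone, and A6 is literally the same axiom in both classes. Hence the constants are exactly those of \eqref{eq:kappas}. The threshold on $h$ below which the bound holds stays member-dependent, as stated after the lemma; the corollary only asserts sameness of $\varkappa_\pm$.

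\emph{Main obstacle.} The only real point is bookkeeping of the shared constants. The inclusion has to hold for the same fixed $\beta$ and corridor $(c_-,c_+)$, rather than for some other admissible choice. I will verify this by reading off that Proposition~\ref{lean:inclusion} keeps each constant fixed: the one-point corridor obtained from A5 at $(s,s')=(0,s)$ uses the same $c_\pm$. Everything else is immediate.
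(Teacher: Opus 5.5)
Your proposal is correct and is the paper's own argument: the inclusion $\Mreg\subseteq\Mstar$ of Proposition~\ref{lean:inclusion} lets you apply Theorem~\ref{thm:I2} for identifiability and Lemma~\ref{lem:exponent} for the two-sided bound with the constants \eqref{eq:kappas}. Your additional bookkeeping is accurate but is already implicit in the paper's one-line proof: the same shared $\beta,c_-,c_+,\bar s_0,s_R$ serve in both classes, and the $h$-threshold stays member-dependent.
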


\begin{proof}
By Proposition~\ref{lean:inclusion}, $\Mreg\subseteq\Mstar$. Both claims therefore follow from Theorem~\ref{thm:I2} and Lemma~\ref{lem:exponent} applied to members of $\Mreg\subseteq\Mstar$.
\end{proof}

\begin{remark}[Affine-edge sharpening and the role of the lead constant]\label{lean:affine}
Suppose that the lower edge is exactly affine, $\ell(s)=a+c_\ell s$ on $[0,s_R]$ with $c_\ell\in[c_-,c_+]$, and that $c_K(s)\to\bar c_K\in(0,\infty)$. The two-sided bound then reduces to the asymptotic equality \eqref{eq:affine} (Proposition~S6.1 of the Supplementary Material). The lead coefficient there is proportional to the latent scale $L$, yet it also carries the kernel-dependent factors $\bar c_K$ and $c_\ell^{-(\alpha+1)}$. In this sense the expansion shows the $L$-proportionality of the lead constant without determining $L$ itself (Section~\ref{scope:sec}).
\end{remark}
To pass from a coincidence of observed laws to a statement about estimators, only the sampling setting of~\eqref{eq:Q} is needed. The statistician observes an i.i.d.\ sample $\tilde S_{1:n}=(\tilde S_1,\dots,\tilde S_n)$ from the observed law $Q=KF_S$. Here the pair $(F_S,K)$ is held fixed, so that the contamination does not vanish as $n\to\infty$. An \emph{estimator} of a functional $\phi$ is an arbitrary sequence $\hat\phi=(\hat\phi_n)_{n\ge1}$ of Borel maps $\hat\phi_n:[0,\infty]^n\to\R$. No restriction is placed on how $\hat\phi_n$ is formed: it may depend on $n$ and on complete knowledge of the class and of the two witnesses (the confounding pair exhibited by the next theorem). Such an estimator is \emph{uniformly consistent} for $\phi$ over a class if its worst-case error over the class (in mean or in probability) tends to $0$, and \emph{pointwise consistent} at a member if its error tends to $0$ there \citep[Ch.~2]{vdV1998}. The three modes are stated formally in Definition~S10.1 of the Supplementary Material. Lemma~\ref{lem:bridge} below shows that all three fail as soon as two members induce the same law of the data.
\begin{theorem}[Scale/exceedance non-identifiability over $\Mreg$]\label{thm:I1}
There exist admissible shared class constants $(\beta,\bar b,c_-,c_+,\tau,\bar s_0,s_R,s_A)$ (exhibited by the construction of Section~\ref{sec:pair}) and two members $(F_S,K)$, $(F_S',K')$ of the resulting class $\Mreg$ such that
\[
KF_S=K'F_S'\quad(\text{identical observed law, exactly}),
\qquad\text{yet}\qquad
\alpha'=\alpha,\quad L'\neq L,\quad p'_\tau\neq p_\tau,
\]
where $(\alpha,L,p_\tau)=\psi(F_S)$ and $(\alpha',L',p'_\tau)=\psi(F_S')$ are the estimands of Definition~\ref{def:psi}. Consequently, neither $L$ nor $p_\tau$ is identifiable over $\Mreg$ (Definition~\ref{def:ident}). By Lemma~\ref{lem:bridge}, neither admits a uniformly consistent estimator over $\Mreg$.
\end{theorem}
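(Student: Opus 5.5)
The plan is an explicit scale-conjugation construction. Fix a jitter $W$ in the sense of Convention~\ref{conv:W}: concretely a law on $[w_-,w_+]$ with $w_->0$ and lower-edge density $c_W(w-w_-)^\beta$; take $\beta=0$ with $W\sim\mathrm{Unif}[w_-,w_+]$ for definiteness. Fix an edge slope $c>0$ and a dilation $\kappa\in(0,1)$, say $\kappa=\tfrac34$. The first member has latent law $F_S$ with density $f_S(s)=L s^\alpha$ on $[0,s_{\max}]$, normalised; take $\alpha=1$, so that $L=2/s_{\max}^2$. Its kernel is the affine-plus-jitter kernel $K(\cdot\mid s)=\Law(cs+W)$. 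The second member is $F_S'=\Law(\kappa S)$ with $K'(\cdot\mid s)=\Law((c/\kappa)s+W)$. Then $\tilde S'=(c/\kappa)(\kappa S)+W=cS+W=\tilde S$ in law, with $W$ independent of $S$. Hence $K'F_S'=KF_S$ exactly, by a one-line change of variables in the mixture integral~\eqref{eq:Q}.

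The next step is reading off $\psi$. The density of $\kappa S$ is $\kappa^{-1}f_S(s/\kappa)=L\kappa^{-(\alpha+1)}s^\alpha$ near $0$. Thus $\alpha'=\alpha$ and $L'=\kappa^{-(\alpha+1)}L\ne L$, and these values are single-valued by Lemma~\ref{lem:phiwell}. For $\tau$ chosen below $\kappa s_{\max}$ (and below $s_R$), $p'_\tau=F_S(\tau/\kappa)=\kappa^{-(\alpha+1)}p_\tau\ne p_\tau$ exactly, since the density is an exact power there. With $s_{\max}=3$, $\tau=\tfrac12$, $\alpha=1$ this gives $L=\tfrac29$ and $p_\tau=\tfrac1{36}$, matching the numbers in the introduction.

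The bulk of the work is verifying that both members lie in a single $\Mreg$ with \emph{shared} class constants. Outside the latent support the kernels are extended by pure translation $\Law(\ell(s_{\max})+(s-s_{\max})+W)$, as in Remark~\ref{rem:whyreg}. This keeps $\ell$ nondecreasing and $m$ bounded, and it is harmless because it sits on an $F_S$-null set. The axioms are then checked in turn:
\begin{itemize}
\item A2: the width is $w(s)=w_+-w_->0$ and there is no mass at $+\infty$.
\item A3: $m(s)=(c-1)s+\midW$ on the support, so $m$ is bounded and, since $\midW>0$, $m\not\equiv0$.
\item A4: $w(s)/s\to\infty$.
\item A5: $\ell(s)=w_-+cs$ near $0$. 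Setting $c_-=c$ and $c_+=c/\kappa$, both slopes lie in the corridor $[c_-,c_+]$. Increments across the junction at $\kappa s_{\max}$ or $s_{\max}$ lie between slopes $1$, $c$ and $c/\kappa$, so I choose $c\le1\le c/\kappa$ or simply take $s_R$ below both junctions.
\item A6: $K([\ell(s),\ell(s)+h]\mid s)=P_W([w_-,w_-+h])$ does not depend on $s$, so it is uniform with $c_K\equiv c_W/(\beta+1)$.
\item NA: on a set of positive $F_S$-mass the edge slope $c\neq1$ (respectively $c/\kappa\neq1$) rules out a fixed translate; I choose $c$ so that neither slope equals $1$.
\end{itemize}
The floors are $\bar s_0\le\kappa s_{\max}$ and $s_A,s_R\le\bar s_0$, with $\tau<s_R$ and $\bar b$ the larger of the two sup-biases.

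The main obstacle I expect is exactly this simultaneous admissibility. The extension beyond the support and the A5 two-point corridor must hold for both members with one $(c_-,c_+,s_R)$, while $\tau<s_R$ must still sit inside both supports. I would resolve it first by making $s_R$ small, so that every A5–A6 constraint lives where both edges are exactly affine. Global monotonicity then remains to be checked, and it is automatic for translation extensions with positive slopes.

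Finally, non-identifiability of $L$ and $p_\tau$ follows from Definition~\ref{def:ident}. Applying Lemma~\ref{lem:bridge} to the two members, which share one data law but have different values of $\phi\in\{L,p_\tau\}$, rules out uniformly consistent estimators.
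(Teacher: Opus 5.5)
Your proposal is correct and follows essentially the paper's route: the scale-conjugation pair of affine-random witnesses with separate unit-slope extensions breaking at $s_{\max}$ and $\kappa s_{\max}$ (Definition~\ref{def:pair}), exact equality $Q'=Q$ from $(c/\kappa)(\kappa S)=cS$, and shared constants with $c_\pm$ the min/max of the two slopes and floors at $s_{\max}\wedge\kappa s_{\max}$ (Proposition~\ref{prop:constants}), followed by Lemma~\ref{lem:bridge}. One harmless slip is that the off-support extension should translate $g(s_{\max})+W$, not $\ell(s_{\max})+W$, since the latter double-counts $w_-$ and puts an upward jump in $\ell$; that jump sits on a null set, keeps $\ell$ nondecreasing and $m$ bounded, and is covered by your choice of $s_R$ below both junctions.
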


The construction behind Theorem~\ref{thm:I1} is a scale conjugation inside the affine-random witness family of Section~\ref{sec:wit}. We dilate the latent law, $S\mapsto\kappa S$, and contract the kernel's edge slope by the same factor, $c\mapsto c/\kappa$. The two deformations cancel in the observed mixture. Indeed, the pushforward identity $\Law\bigl((c/\kappa)(\kappa S)\bigr)=\Law(cS)$ matches each dilated latent value with the conditional law of the undilated one, $K'(\cdot\mid\kappa s)=K(\cdot\mid s)$, so that the observed law $Q=KF_S$ is exactly unchanged although the two kernels themselves differ. Both the tail scale $L$ and the exceedance $p_\tau=F_S([0,\tau])$, which depend on the latent law alone, are multiplied by the factor $\kappa^{-(\alpha+1)}\neq1$, whereas the shape $\alpha$ does not change. The quantifiers match those of the theorem: \emph{one} admissible tuple of shared class constants and \emph{two} members of the resulting $\Mreg$ are exhibited, not a confounding pair for every tuple. The construction needs $c\neq1$, so it never reaches the degenerate corner $c_-=c_+=1$, where $c\in[c_-,c_+]$ would force $c=1$. Example~S11.4 of the Supplementary Material realizes the construction in exact rationals ($\kappa=\tfrac34$, $\alpha=1$, $s_{\max}=3$, $c=\tfrac45$, $\tau=\tfrac12$). There $L$ and $p_\tau$ are each scaled by $\kappa^{-2}=\tfrac{16}{9}$, an increase of $77.8\%$, while the two observed laws stay exactly equal. Twelve machine-checked assertions confirm this (Section~S14.A of the Supplementary Material).
\begin{lemma}[Degenerate Le~Cam two-point bound]\label{lem:bridge}
Let $\phi\in\{L,p_\tau\}$ and suppose $\phi$ is not identifiable over a class $\mathcal{N}$ of pairs on which $\phi$ is single-valued (for $\phi\in\{L,p_\tau\}$, any class whose members satisfy A1 qualifies, by Lemma~\ref{lem:phiwell}). Fix members $(F^0,K^0)\sim(F^1,K^1)$ of $\mathcal{N}$ witnessing this failure, with $\phi(F^0)\neq\phi(F^1)$, and write $\Delta:=|\phi(F^0)-\phi(F^1)|>0$. Then for every sample size $n\ge1$ and every estimator $\hat\phi_n$ (that is, every Borel map $\hat\phi_n:[0,\infty]^n\to\R$, evaluated at the sample $\tilde S_{1:n}$ of \eqref{eq:Q}),
\[
\sup_{(F_S,K)\in\mathcal{N}}\ \E_{Q^{\otimes n}}\bigl|\hat\phi_n-\phi(F_S)\bigr|\;\ge\;\tfrac12\,\Delta,
\qquad\text{hence}\qquad
\inf_{\hat\phi_n}\ \sup_{(F_S,K)\in\mathcal{N}}\ \E_{Q^{\otimes n}}\bigl|\hat\phi_n-\phi(F_S)\bigr|\;\ge\;\tfrac12\,\Delta\;>\;0,
\]
where $Q=KF_S$ is the observed law of the member $(F_S,K)$ and $\E_{Q^{\otimes n}}$ denotes expectation under $\tilde S_{1:n}\sim Q^{\otimes n}$ (the expectations exist in $[0,\infty]$, the integrands being nonnegative). Hence no estimator sequence is uniformly consistent for $\phi$ over $\mathcal{N}$ (Definition~S10.1 of the Supplementary Material). Moreover, since $\hat\phi_n(\tilde S_{1:n})$ has the same law under the two witnesses, even \emph{pointwise} consistency cannot hold at both.
\end{lemma}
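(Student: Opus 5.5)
The plan is the classical two-point (Le~Cam) argument in its degenerate form, where the total-variation distance between the two data laws is zero. Write $Q^0:=K^0F^0$ and $Q^1:=K^1F^1$. Observational equivalence gives $Q^0=Q^1=:Q$ as elements of $\Pclass([0,\infty])$. Taking products, $(Q^0)^{\otimes n}=(Q^1)^{\otimes n}=Q^{\otimes n}$ as laws on $[0,\infty]^n$; this follows by uniqueness of the product measure on the product $\sigma$-algebra, which coincides with the Borel $\sigma$-algebra of $[0,\infty]^n$ because $[0,\infty]$ is a compact metrizable, hence second countable, space. So for any Borel $\hat\phi_n$, the random variable $\hat\phi_n(\tilde S_{1:n})$ has one and the same law under both witnesses. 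By Lemma~\ref{lem:phiwell}, $\phi(F^0)$ and $\phi(F^1)$ are well-defined real numbers, so $\Delta$ makes sense.

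Next comes the averaging step. Restrict the supremum over $\mathcal N$ to the two witnesses and bound the maximum below by the average:
\[
\sup_{\mathcal N}\E\bigl|\hat\phi_n-\phi\bigr|\;\ge\;\tfrac12\Bigl(\E_{Q^{\otimes n}}\bigl|\hat\phi_n-\phi(F^0)\bigr|+\E_{Q^{\otimes n}}\bigl|\hat\phi_n-\phi(F^1)\bigr|\Bigr).
\]
Both expectations are taken under the same measure $Q^{\otimes n}$, so they can be combined into a single expectation. By the pointwise triangle inequality $|x-\phi(F^0)|+|x-\phi(F^1)|\ge\Delta$, the right side is at least $\tfrac12\Delta$. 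Nonnegativity of the integrands keeps every expectation in $[0,\infty]$, so no integrability issue arises. Since the bound does not depend on $\hat\phi_n$ or $n$, taking the infimum over estimators gives the minimax statement.

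Then I would draw the consequences. Uniform consistency in mean would require the worst-case risk to tend to $0$, which contradicts the bound $\tfrac12\Delta>0$. For uniform consistency in probability, I would use the event $\{|\hat\phi_n-\phi(F^0)|\ge\Delta/2\}\cup\{|\hat\phi_n-\phi(F^1)|\ge\Delta/2\}$, which equals the whole sample space by the triangle inequality. Under the common law $Q^{\otimes n}$, one of the two events therefore has probability at least $\tfrac12$. The worst-case probability of an error of size $\ge\Delta/2$ thus stays $\ge\tfrac12$, whatever mode of Definition~S10.1 is used. For pointwise consistency, suppose $\hat\phi_n$ converged in probability to $\phi(F^0)$ under witness $0$ and to $\phi(F^1)$ under witness $1$. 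Since the laws are identical, the same sequence of laws would converge weakly to both $\delta_{\phi(F^0)}$ and $\delta_{\phi(F^1)}$. Uniqueness of weak limits then forces $\phi(F^0)=\phi(F^1)$, contradicting $\Delta>0$.

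No step is a real obstacle. The only point needing care is the measure-theoretic bookkeeping: the identification of the product laws on $[0,\infty]^n$, including the atom at $+\infty$, and the matching of the precise convergence modes in Definition~S10.1 to the probability and expectation bounds above.
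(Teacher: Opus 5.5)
Your proposal is correct and follows the paper's proof essentially step for step. You identify $(Q^0)^{\otimes n}=(Q^1)^{\otimes n}$ by uniqueness on measurable rectangles, with $\mathcal B([0,\infty]^n)=\mathcal B([0,\infty])^{\otimes n}$ by second countability. You then use sup $\ge$ max $\ge$ average together with the two-centre triangle inequality, and cover the sample space by the two $\Delta/2$-error events for the in-probability and pointwise failures.

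The only cosmetic difference is in the pointwise step: you appeal to uniqueness of weak limits of the common pushforward laws. The paper mentions that uniqueness but spells it out directly via $1\le\PP(|\hat\phi_n-\phi(F^0)|\ge\Delta/2)+\PP(|\hat\phi_n-\phi(F^1)|\ge\Delta/2)$, an inequality your own covering argument already supplies.
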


\begin{remark}[Sharpness]\label{rem:sharpness}
The scale-conjugation pair of Theorem~\ref{thm:I1} is a sharpness certificate for Theorem~\ref{thm:I2}, as can be seen from the affine sharpening \eqref{eq:affine}. Under the same conjugation ($S\mapsto\kappa S$, $c_\ell\mapsto c_\ell/\kappa$), the scale $L$ is multiplied by the factor $\kappa^{-(\alpha+1)}$ while $c_\ell^{-(\alpha+1)}$ is multiplied by the inverse factor $\kappa^{\alpha+1}$. Hence the member-dependent factor $L\,c_\ell^{-(\alpha+1)}$ of the lead coefficient of \eqref{eq:affine} is invariant (it equals $25/72$ for both members of the certified instance, Example~S11.4). The remaining factors $\bar c_K$ and $\Beta(\alpha+1,\beta+2)$ are shared by the pair, since $\bar c_K=c_W/(\beta+1)$ depends only on the shared class constants and the common jitter, and the Beta factor only on $\beta$ and the common shape $\alpha'=\alpha$. The full lead coefficient $\bar c_K\,\Beta(\alpha+1,\beta+2)\,L\,c_\ell^{-(\alpha+1)}$ is then invariant as well ($125/432$ in the instance), as the coincidence $Q'=Q$ requires. The pair realizes the near-endpoint exponent $\alpha+\beta+2$ while moving the scale. It corroborates the shape theorem rather than contradicting it, since $\alpha'=\alpha$ and the recovery map \eqref{eq:recovery}, evaluated on the common law $Q$, returns the common $\alpha$.
\end{remark}
\subsection{The dichotomy}\label{sec:dichotomy}

Theorems~\ref{thm:I2} and~\ref{thm:I1} resolve the two components of the estimand in opposite directions. The shape $\alpha$ is identifiable over $\Mreg$ with no anchor, whereas the scale $L$ and the exceedance $p_\tau$ are not. As described heuristically in the introduction (Section~\ref{intro:sec}), the leading-order mechanism behind the split is an affine bias orbit acting on (endpoint,\,scale), under which $\alpha$ is an invariant but $(L,p_\tau)$ are moved. In this subsection we describe what a full resolution would look like. First comes the anchor that a positive answer for $(L,p_\tau)$ would require, along with the fibers it cuts out. We then state the conjecture that would complete the proved shape half and the necessity direction into a single characterization.

\begin{assumption}[Anchor C]\label{ass:C}
The kernel belongs to a known family $\{K_\theta:\theta\in\Theta\}$, with $\Theta\subseteq\R^k$ finite-dimensional. This family has the standing structure (edge maps strictly increasing on $[0,\infty)$ and a severity bound $\bar s_{\mathcal K}<\infty$) and the two properties below.
\begin{itemize}
\item[\textbf{(C-a)}] (\emph{Known family, known edge.}) All non-$\theta$ structure of the family (the conditional shape of $K_\theta(\cdot\mid s)$ above its edge, the width map $w(\cdot)$, and the jitter law) is known to the analyst, and so is the lower edge map $\ell=\ell_{K_\theta}$. Within the family the edge determines $\theta$, so that $K$ is known up to the latent $F_S$.
\item[\textbf{(C-b)}] (\emph{Fixed-scale injectivity.}) For every $\theta\in\Theta$ the
mixture map $F\mapsto K_\theta F$ is injective on the probability measures on
$[0,\bar s_{\mathcal K}]$.
\end{itemize}
Pinning $\ell$ alone does \emph{not} invert the kernel at the fixed scale $\tau$ (Definition~\ref{def:psi}). Clause (C-b) is not implied by (C-a), and it carries the fixed-scale content of the anchor. Without it, in-fiber confounding pairs are not ruled out for edge-pinned families. Such a pair consists of two distinct members of one fiber (hence with a common edge) that yield the same observed law. The pair exhibited in Theorem~\ref{thm:I1} is not of this kind, since it moves the edge slope from $c$ to $c/\kappa$ and so crosses fibers. In effect, (C-b) asks for injectivity of the mixture operator $F\mapsto K_\theta F$. It is the non-additive, instrument-free counterpart of the completeness condition that characterizes identification in nonparametric instrumental-variable models, where identification of the structural function is equivalent to injectivity of a conditional-expectation operator \citep[Prop.~2.1]{NeweyPowell2003}. There that operator acts on \emph{functions} and is indexed by an \emph{instrument}, whereas here it acts on \emph{measures} and is indexed by the known kernel parameter $\theta$ at the fixed scale $\tau$. The marginal-only scheme \eqref{eq:Q} supplies no instrument.
\end{assumption}

We call $\mathcal K=\{K_\theta:\theta\in\Theta\}$ a \emph{candidate family} if it has the standing structure of Assumption~\ref{ass:C} (edge maps strictly increasing on $[0,\infty)$ and a severity bound $\bar s_{\mathcal K}<\infty$) and satisfies clause (C-a). The injectivity clause (C-b) is \emph{not} presupposed. An edge map $\ell_0$ is called \emph{admissible} if $\ell_0=\ell_{K_\theta}$ for some $\theta\in\Theta$. Each candidate family defines fibers of $\Mreg$ indexed by the observed edge:
\begin{equation}\label{def:fiber}
\Mclass_{\mathcal K}(\ell_0):=\bigl\{(F_S,K)\in\Mreg:\ K\in\mathcal K,\ \ \ell_K=\ell_0,\ \
\supp F_S\subseteq[0,\bar s_{\mathcal K}]\bigr\}.
\end{equation}

The shape half of the dichotomy is a theorem of this paper, not a conjecture. By Theorem~\ref{thm:I2} and Corollary~\ref{cor:Mreg}, with no anchor, \[ \alpha\ \text{is identifiable over }\Mreg \] or, equivalently, the observed law $Q$ determines $\alpha$ over all of $\Mreg$.

\begin{conjecture}[Sharpened dichotomy: the scale half]\label{conj:dichotomy}
Over the regularity class $\Mreg$ of Definition~\ref{def:M}, with identifiability read fiber by fiber (over all of $\Mreg$ at once it fails for suitable admissible shared class constants, by Theorem~\ref{thm:I1}), for every candidate family $\mathcal K$,
\[
\boxed{\quad(L,p_\tau)\ \text{is identifiable on every fiber }\Mclass_{\mathcal K}(\ell_0)\ \Longleftrightarrow\ \mathcal K\ \text{satisfies Anchor~C}.\quad}
\]
Since a candidate family satisfies (C-a) by definition, the right-hand side reduces to the injectivity clause (C-b). That is, for a candidate family $\mathcal K$, the observed law $Q$ determines $(L,p_\tau)$ on the fibers $\Mclass_{\mathcal K}(\ell_0)$ of \eqref{def:fiber} if and only if $\mathcal K$ satisfies (C-b).
\end{conjecture}

\begin{remark}[Status of the dichotomy]\label{rem:forthcoming}
The shape half is proved. Theorem~\ref{thm:I2}, with Corollary~\ref{cor:Mreg}, identifies $\alpha$ over $\Mreg$ (in fact over the leaner $\Mstar$), with no anchor. That \emph{some} anchor is necessary is proved as well. Together with Lemma~\ref{lem:bridge}, Theorem~\ref{thm:I1} exhibits, in its exact existential form, admissible shared class constants and two members of $\Mreg$ with identical observed law but distinct $(L,p_\tau)$. Hence over the \emph{unanchored} $\Mreg$ neither $L$ nor $p_\tau$ is identifiable. What is \emph{not} proved here is that Anchor~C \emph{in particular} is necessary, that is, the direction ``$(L,p_\tau)$ identifiable on every fiber $\Mclass_{\mathcal K}(\ell_0)$ $\Rightarrow$ $\mathcal K$ satisfies (C-b)'' of Conjecture~\ref{conj:dichotomy}. Indeed, Theorem~\ref{thm:I1} says nothing about candidate families that satisfy (C-a) but fail (C-b).

In contrast, the \emph{sufficiency} direction is immediate from Assumption~\ref{ass:C} as stated. Suppose two members of one fiber $\Mclass_{\mathcal K}(\ell_0)$ induce the same observed law. By (C-a), within the family the edge determines $\theta$, so their kernels coincide, say $K_\theta$. In view of \eqref{def:fiber}, both latent laws are carried by $[0,\bar s_{\mathcal K}]$, which is the domain on which (C-b) declares $F\mapsto K_\theta F$ injective. The two latent laws coincide, and with them $\psi=(\alpha,L,p_\tau)$ (Lemma~\ref{lem:phiwell}). All the mathematical content of the anchored horn lies in \emph{discharging} (C-b) for a given family, and there the location and non-location cases differ. For a \emph{location} family, $K_\theta(\cdot\mid s)=\Law\bigl(g_\theta(s)+W\bigr)$ with a jitter $W$ independent of $s$, clause (C-b) is redundant. Indeed, A2 bounds the conditional width, so $W$ has compact support and $\varphi_W$ is entire with $\varphi_W(0)=1$. Its real zeros are then isolated \citep{Meister2009}. Cancelling $\varphi_W$ off that discrete set in $\varphi_{(g_\theta)_\#F}\,\varphi_W=\varphi_{(g_\theta)_\#F'}\,\varphi_W$ and invoking continuity gives $(g_\theta)_\#F=(g_\theta)_\#F'$. Since $g_\theta$ is injective by the anchor's standing strict-monotonicity clause (Assumption~\ref{ass:C}), it follows that $F=F'$. The affine-random witness family of Section~\ref{sec:witness} is of this type, so (C-b) holds there automatically. What remains open is the \emph{non-location} case, that is, kernels whose jitter is \emph{scaled} rather than translated, as in the ratio and corner witnesses of Section~S12 of the Supplementary Material. The conditional width of such kernels grows with $s$, and the convolution factorization just used is not available. A discharge lemma for those families, and the anchored repair it would permit, are the subject of forthcoming companion work and are not established here. Accordingly, the biconditional is stated as a conjecture, not a theorem.

Two caveats fix the scope of the conjecture. First, over the wide class $\Mclass$ even the shape half is \emph{false} (Remark~\ref{rem:whyreg}), so the regularity axioms A5--A6 are part of the statement. Second, Anchor~C is strictly stronger than knowledge of the edge map. Clause (C-a) supplies the edge, whereas for \emph{non-location} kernels the fixed-scale injectivity clause (C-b) is an additional requirement. For location kernels it is automatic, as just noted.
\end{remark}
\section{The witness family and the confounding pair}\label{sec:wit}
Both theorems of Section~\ref{sec:results} concern members of a model class. Theorem~\ref{thm:I2} constrains every observationally equivalent pair in $\Mstar$, whereas Theorem~\ref{thm:I1} asserts the existence of a confounding pair in $\Mreg$. Each theorem is vacuous unless the class is non-empty, and in fact unless it contains genuinely non-additive members. Indeed, for an additive kernel $K(\cdot\mid s)=\mu(\cdot-s)$ the apparent endpoint is recovered by classical additive endpoint analysis \citep{GT2004}, and the non-convolution character that motivates the construction is lost. One explicit family meets both requirements. Its members lie in $\Mreg$, hence in $\Mstar$ by the inclusion $\Mreg\subseteq\Mstar$, so neither theorem is vacuous. The same family also supplies the scale-conjugation pair of Theorem~\ref{thm:I1}, assembled in Section~\ref{sec:pair}. That pair leaves the shape $\alpha$ unchanged while moving $L$ and $p_\tau$. The family realizes exactly the affine action $S\mapsto cS+a_0$ of \eqref{app:eq:affine}, that is, a latent severity composed with a random affine map of scale $c\neq1$ and a bounded jitter. The resulting kernel is non-additive since the conditional edge then moves with a slope other than $1$.

Recall from Section~\ref{ssec:objects} the objects attached to a member
$(F_S,K)$: the conditional lower and upper edges $\ell(s)$ and $u(s)$, the
width $w(s)=u(s)-\ell(s)$, the location offset $m(s)$, and the apparent
endpoint $a=\ell(0+)$. This section presents the family in full and shows that it lies in $\Mreg$. The Supplementary Material records two further members. One is a genuine \emph{ratio} witness, whose conditional width grows with $s$, and the other a unit-slope \emph{corner} witness with $c_-=c_+=1$ that realizes every edge index $\beta\ge0$. The full non-emptiness case split is recorded there as well.
\begin{convention}[Scope of this section]\label{conv:wit-scope}
The construction that follows verifies membership only. Each constructed pair satisfies A1--A6 and NA, and so lies in $\Mreg\subseteq\Mstar$. By itself it makes no identifiability claim. The confounding of distinct members that these pairs support, which is the core of the negative half, is assembled from them in Section~\ref{sec:pair}, whereas the shape half applies to them only through the inclusion, and neither conclusion is drawn here. The only quantitative by-product recorded is a consistency check, namely that the near-edge expansion of the constructed $Q$ reproduces the exponent $\alpha+\beta+2$ and, on the affine edge, the exact Beta-function constant of Lemma~\ref{lem:exponent}. This checks the Lemma against one instance.
\end{convention}
\subsection{The affine-random witness family}\label{sec:witness}
\begin{definition}[Affine-random witness]\label{def:witness}
Fix parameters
\[
\begin{gathered}
\alpha\in[0,\infty),\qquad L\in(0,\infty),\qquad c\in(0,\infty)\setminus\{1\},\qquad a_0\in[0,\infty),\\
\beta\in[0,\infty),\qquad c_W\in(0,\infty),\qquad 0\le w_-<w_+<\infty,\qquad s_{\max}\in(0,\infty).
\end{gathered}
\]
The \emph{affine-random witness} with parameters $(\alpha,L,c,a_0,\beta,c_W,w_-,w_+,s_{\max})$ is the pair $(F_S,K)$ specified by (a)--(d) below.
\begin{itemize}
\item[(a)] (\emph{Latent law.}) $F_S$ is a Borel probability law on $[0,\infty)$ with $\supp F_S\subseteq[0,s_{\max}]$ and $F_S(\{0\})=0$. It admits a window $s_0\in(0,\infty)$ on which $F_S$ has a density: there is a Borel function $f_S:(0,s_0)\to[0,\infty)$ with $F_S(B)=\int_B f_S(s)\,ds$ for every Borel $B\subseteq(0,s_0)$. This named version satisfies, pointwise (the reading of \eqref{eq:A1}),
\[
f_S(s)=L\,s^{\alpha}\bigl(1+o(1)\bigr)\qquad(s\downarrow0).
\]
\emph{Canonical latent law} (the special case used for the pair of Section~\ref{sec:pair}): given $\alpha$ and $s_{\max}$, take
\begin{equation}\label{eq:wit-canon}
F_S(s)=(s/s_{\max})^{\alpha+1}\ \ (0\le s\le s_{\max}),\qquad
f_S(s)=(\alpha+1)\,s_{\max}^{-(\alpha+1)}\,s^{\alpha}\ \ (0<s<s_{\max}),
\end{equation}
with window $s_0=s_{\max}$. Indeed, $\int_0^{s}(\alpha+1)s_{\max}^{-(\alpha+1)}t^{\alpha}\,dt=(s/s_{\max})^{\alpha+1}$ for $s\in[0,s_{\max}]$, so \eqref{eq:wit-canon} defines a Borel probability law concentrated on $[0,s_{\max}]$ with $F_S(\{0\})=0$ and with the stated density on the whole window $(0,s_{\max})$. The scale constant is \[ L=(\alpha+1)\,s_{\max}^{-(\alpha+1)}: \] the expansion $f_S(s)=L\,s^{\alpha}$ holds exactly on $(0,s_{\max})$ (the $o(1)$ vanishes identically). Moreover, $F_S$ is strictly increasing on $[0,s_{\max}]$.
\item[(b)] (\emph{Affine parameters.}) $c$ is the scale of the location map $g$ of step (d) and $a_0\ge0$ its intercept. The constraint $c\neq1$ is part of the definition. Note that $a_0$ is the \emph{affine intercept} and differs from the A5 apparent endpoint, which for this family is $a'=a_0+w_-=\ell(0+)$ (Lemma~\ref{lem:edges} and Proposition~\ref{prop:witness}). The two coincide only when $w_-=0$. (Here the prime in $a'$ is a fixed label for this edge intercept, not the member-$1$ index carried by $F_S'$, $K'$ elsewhere. Both members of the pair share this same apparent endpoint $a'$.)
\item[(c)] (\emph{Independent jitter.}) $W$ is a random variable with law $P_W$ on $\R$, independent of $S\sim F_S$ (a background space carrying such an independent pair exists by the product construction). In the reading of Convention~\ref{conv:W}, $w_-=\inf\supp P_W$ and $w_+=\sup\supp P_W$. Moreover, there is $\delta_W>0$ such that $P_W$ restricted to $[w_-,w_-+\delta_W]$ has a named density version $\rho_W$ with, pointwise,
\[
\rho_W(w)=c_W\,(w-w_-)^{\beta}\bigl(1+o(1)\bigr)\qquad(w\downarrow w_-).
\]
The lower-edge exponent of $W$ equals the class constant $\beta$ of A6, with constant $c_W$. This is the sole source of the index in A6 (condition \ref{it:mem-C1} of Proposition~\ref{prop:witness}).
\item[(d)] (\emph{Location map and kernel.}) Throughout the witness construction, $g$ denotes the location map and not the min-of-ratio severity functional that carries the same letter. On the latent support it is affine, and beyond that support it is extended by pure unit-slope translation:
\begin{equation}\label{eq:gdef}
g(s)=\begin{cases}
c\,s+a_0, & 0\le s\le s_{\max},\\[2pt]
c\,s_{\max}+a_0+(s-s_{\max}), & s>s_{\max},
\end{cases}
\end{equation}
and the kernel is
\[
K(\cdot\mid s):=\Law\bigl(g(s)+W\bigr),\qquad\text{i.e.}\qquad K(B\mid s)=\PP\bigl(g(s)+W\in B\bigr)\quad(s\ge0,\ B\text{ Borel}).
\]
Since the two branches of \eqref{eq:gdef} agree at $s_{\max}$ (both equal $c\,s_{\max}+a_0$), the map $g:[0,\infty)\to[0,\infty)$ is continuous and in particular Borel. $K$ is then a Markov kernel by Lemma~S8.2 of the Supplementary Material. By Lemma~S8.3 of the Supplementary Material, the observed law \eqref{eq:Q} is $Q=KF_S=\Law\bigl(g(S)+W\bigr)$. Because $\supp F_S\subseteq[0,s_{\max}]$ forces $S\le s_{\max}$ almost surely, $g(S)=cS+a_0$ almost surely and
\[
Q=\Law\bigl(cS+a_0+W\bigr).
\]
\end{itemize}
\end{definition}

The unit-slope branch of \eqref{eq:gdef} governs $K(\cdot\mid s)$ only for latent values $s>s_{\max}$, which form an $F_S$-null set. By Lemma~S8.3 of the Supplementary Material it therefore does not affect $Q$, and no $s\downarrow0$ quantity depends on it. Its purpose is global: beyond the latent support it holds the offset $m$ constant at the value $m(s_{\max})$, which keeps $\sup_{s\ge0}|m(s)|$ finite, in line with A3. By contrast, the raw affine offset $(c-1)s+a_0+\midW$ (with $\midW$ the jitter's support midpoint) diverges as $s\to\infty$ when $c\neq1$. In addition, the branch continues the edge $\ell=g+w_-$ with slope $1>0$, so that $\ell$ stays increasing on all of $[0,\infty)$, as the global-monotonicity clause of A5 requires. See Remark~S9.1 of the Supplementary Material and Remark~\ref{rem:bounded}.

\begin{lemma}[Closed-form edge functionals]\label{lem:edges}
Let $(F_S,K)$ be an affine-random witness with parameters $(\alpha,L,c,a_0,\beta,c_W,w_-,w_+,s_{\max})$, and let $\ell,u,w,m$ be the conditional edge functionals of Section~\ref{sec:setting}. Then:
\begin{enumerate}
\item\label{it:wit-g} $g$ is continuous and strictly increasing on $[0,\infty)$. Moreover, $g(s')-g(s)\ \ge\ \min(c,1)\,(s'-s)$ for all $0\le s\le s'$, and $a_0\le g(s)<\infty$ for every $s\ge0$.
\item\label{it:wit-supp} For every $s\ge0$ the conditional law $K(\cdot\mid s)$ is concentrated on the compact set $g(s)+[w_-,w_+]\subset[0,\infty)$, so in particular $K(\{+\infty\}\mid s)=0$. For each such $s$, the topological support of $K(\cdot\mid s)$, the same whether computed in $[0,\infty]$ or in $\R$, is the translate
\[
\supp K(\cdot\mid s)\ =\ g(s)+\supp P_W,
\]
a set whose infimum is $g(s)+w_-$ and whose supremum is $g(s)+w_+$. ($\supp P_W$ need not be all of $[w_-,w_+]$, since Convention~\ref{conv:W} prescribes only its infimum and supremum.)
\item\label{it:wit-edge} For every $s\ge0$,
\begin{equation}\label{eq:edgeforms}
\ell(s)=g(s)+w_-,\qquad u(s)=g(s)+w_+,\qquad w(s)=w_+-w_-\ \ (\text{constant in }s).
\end{equation}
\item\label{it:wit-m} For every $s\ge0$, $m(s)=g(s)-s+\midW$. Explicitly,
\begin{equation}\label{eq:mform}
m(s)=\begin{cases}
(c-1)\,s+a_0+\midW, & 0\le s\le s_{\max},\\[2pt]
(c-1)\,s_{\max}+a_0+\midW\ =\ m(s_{\max}), & s>s_{\max}.
\end{cases}
\end{equation}
\item\label{it:wit-mass} (\emph{$s$-independent edge mass.}) For every $s\ge0$ and every $h\ge0$,
\begin{equation}\label{eq:wit-mass}
K\bigl([\ell(s),\,\ell(s)+h]\,\big|\, s\bigr)\ =\ P_W\bigl([w_-,\,w_-+h]\bigr).
\end{equation}
\item\label{it:wit-asym} (\emph{Edge-mass asymptotics.}) For every $\eps\in(0,1)$ there is $\eta_W(\eps)\in(0,\delta_W]$ such that
\begin{equation}\label{eq:wit-band}
(1-\eps)\,\frac{c_W}{\beta+1}\,h^{\beta+1}\ \le\ P_W\bigl([w_-,\,w_-+h]\bigr)\ \le\ (1+\eps)\,\frac{c_W}{\beta+1}\,h^{\beta+1}
\qquad\text{for all }h\in(0,\eta_W(\eps)].
\end{equation}
Consequently, $P_W([w_-,w_-+h])=\dfrac{c_W}{\beta+1}\,h^{\beta+1}\bigl(1+o(1)\bigr)$ as $h\downarrow0$.
\end{enumerate}
\end{lemma}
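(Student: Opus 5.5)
The six items are elementary and I would prove them in order, each resting on the previous ones and on the explicit form \eqref{eq:gdef} of $g$ and Convention~\ref{conv:W}.

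\emph{Item (i).} On $[0,s_{\max}]$, $g$ is affine with slope $c>0$. On $(s_{\max},\infty)$ it is affine with slope $1$. The two branches agree at $s_{\max}$, so $g$ is continuous and piecewise affine with slopes in $\{c,1\}$. For $0\le s\le s'$, I would split $[s,s']$ at $s_{\max}$ when it straddles that point. Adding the increments on each piece gives $g(s')-g(s)\ge\min(c,1)(s'-s)>0$ for $s<s'$, which is strict monotonicity. Then $g(s)\ge g(0)=a_0$, and finiteness is obvious.

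\emph{Items (ii)--(iv).} Since $\supp P_W\subseteq[w_-,w_+]$, $K(\cdot\mid s)=\Law(g(s)+W)$ is concentrated on $g(s)+[w_-,w_+]$. This is a compact subset of $[0,\infty)$ because $g(s)\ge a_0\ge0$ and $w_-\ge0$, so $K(\{+\infty\}\mid s)=0$.

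The support claim is the one genuinely topological step. Translation $x\mapsto g(s)+x$ is a homeomorphism of $\R$, so the support of a pushforward under it is the image of the support. I would argue directly with open neighbourhoods: $y$ has every neighbourhood charged by $K(\cdot\mid s)$ if and only if $y-g(s)$ has every neighbourhood charged by $P_W$. To see that the support in $[0,\infty]$ agrees with the support in $\R$, note that the measure is carried by a compact subset of $[0,\infty)$. Hence the point $+\infty$ has a neighbourhood of measure zero. Points of $[0,\infty)$ have the same neighbourhood bases in both spaces, up to intersection with $[0,\infty)$, which carries all the mass.

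The infimum and supremum of the translate are $g(s)+w_\mp$, which gives \eqref{eq:edgeforms}, and $w$ is constant. Item (iv) is then the arithmetic $m=\tfrac12(\ell+u)-s=g(s)+\midW-s$, with the two branches of $g$ substituted. For $s>s_{\max}$ the terms $s-s_{\max}$ and $-s$ cancel, leaving $m(s_{\max})$.

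\emph{Item (v).} By (iii),
\[
K([\ell(s),\ell(s)+h]\mid s)=\PP\bigl(g(s)+W\in[g(s)+w_-,\,g(s)+w_-+h]\bigr)=P_W([w_-,w_-+h]).
\]
This holds for every $s$ by cancelling the deterministic shift $g(s)$.

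\emph{Item (vi).} Fix $\eps$ and take $\delta(\eps)\le\delta_W$ from the pointwise reading in Convention~\ref{conv:W}. For $h\le\delta(\eps)$, absolute continuity on $[w_-,w_-+\delta_W]$ gives $P_W([w_-,w_-+h])=\int_{w_-}^{w_-+h}\rho_W$. The point $\{w_-\}$ carries no atom, since $P_W$ restricted to that interval is absolutely continuous. Integrating the two-sided bound $(1\mp\eps)c_W(w-w_-)^\beta$ yields \eqref{eq:wit-band} with $\eta_W(\eps)=\delta(\eps)$. This mirrors Remark~\ref{rem:set-A1cdf}.

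\emph{Main obstacle.} The only step needing care is the support identity in (ii), specifically its independence of the ambient space $[0,\infty]$ versus $\R$. Even there, the argument is a routine neighbourhood comparison.
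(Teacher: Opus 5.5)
Your proposal is correct and follows the paper's proof essentially step for step: the same three-case increment argument for (i); translation of the support, with $+\infty$ excluded via a null neighbourhood and finite points compared through neighbourhood bases, for (ii); arithmetic on the two branches of $g$ for (iii)--(iv); cancellation of the shift $g(s)$ for (v); and integration of the pointwise density band near $w_-$ for (vi). The only detail you leave implicit is that $\supp P_W$ carries full mass, which the concentration claim in (ii) needs and which the paper states explicitly.
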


All computations involving the witness reduce to Lemma~\ref{lem:edges}. Every axiom check in the membership verification (Section~S9 of the Supplementary Material) uses the kernel only through these closed forms.

\begin{proposition}[Membership of the witness in $\Mreg$]\label{prop:witness}
Let $(F_S,K)$ be an affine-random witness with parameters $(\alpha,L,c,a_0,\beta,c_W,w_-,w_+,s_{\max})$ and A1-window $(0,s_0)$ (Definition~\ref{def:witness}). Fix shared class constants $(\beta,\bar b,c_-,c_+,\tau,\bar s_0,s_R,s_A)$ satisfying the standing ordering of Section~\ref{sec:setting}, namely $0<c_-\le c_+<\infty$ as in Convention~\ref{conv:corridor}, $\bar b<\infty$, $\bar s_0>0$, $s_R,s_A\in(0,\bar s_0]$ and $0<\tau<s_R$. Suppose that the following compatibility conditions hold:
\begin{enumerate}
\renewcommand{\theenumi}{(C\arabic{enumi})}
\item\label{it:mem-C1} The class edge index of A6 equals the jitter lower-edge exponent $\beta$ of Definition~\ref{def:witness}(c), as the shared symbol already indicates.
\item\label{it:mem-C2} $c\in[c_-,c_+]$.
\item\label{it:mem-C3} $\bar b\ \ge\ |c-1|\,s_{\max}+a_0+\midW$.
\item\label{it:mem-C4} $s_R\le s_{\max}$ and $s_0\ge\bar s_0$. (For the canonical latent law, where $s_0=s_{\max}$, the first clause is automatic: the standing ordering $s_R\le\bar s_0$ and the second clause give $s_R\le\bar s_0\le s_0=s_{\max}$. The clause used by the A5 verification below is $s_R\le s_{\max}$.)
\item\label{it:mem-C5} $c\neq1$ (part of Definition~\ref{def:witness}(b), repeated because the NA verification uses it).
\end{enumerate}
Then $(F_S,K)$ satisfies A1--A4, NA (so $(F_S,K)\in\Mclass$) and A5, A6 with these shared constants. That is, $(F_S,K)\in\Mreg$. Moreover, the A5 apparent endpoint is
\[
a\ =\ \ell(0+)\ =\ a'\ :=\ a_0+w_-,
\]
and A6 holds with the constant profile $c_K(s)\equiv c_W/(\beta+1)\in(0,\infty)$ and with an $o(1)$ that does not depend on $s$.
\end{proposition}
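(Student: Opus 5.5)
The plan is to verify the axioms one at a time, using the kernel only through the closed forms of Lemma~\ref{lem:edges}, and each compatibility condition (C1)--(C5) exactly where it is needed.

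\textbf{A1.} This is the defining property of the latent law in Definition~\ref{def:witness}(a): no atom at $0$, a density on the window $(0,s_0)$ with $f_S(s)=Ls^{\alpha}(1+o(1))$, and the floor $s_0\ge\bar s_0$ from (C4).

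\textbf{A2.} By Lemma~\ref{lem:edges}\ref{it:wit-supp}, each conditional law is concentrated on the compact set $g(s)+[w_-,w_+]$. Hence $K(\{+\infty\}\mid s)=0$ for every $s$, and the width $w(s)=w_+-w_-$ is finite by \ref{it:wit-edge}.

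\textbf{A3.} Use the explicit offset \eqref{eq:mform}. On $[0,s_{\max}]$ we have $|m(s)|\le|c-1|s+a_0+\midW\le|c-1|s_{\max}+a_0+\midW\le\bar b$, using $a_0\ge0$, $\midW>0$ and (C3). Beyond $s_{\max}$ the offset is frozen at $m(s_{\max})$, so the same bound holds. Moreover $m(0)=a_0+\midW>0$, so $m\not\equiv0$.

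\textbf{A4.} The width is constant, $w(s)\equiv w_+-w_->0$. Hence $w(s)/s\to\infty$ as $s\downarrow0$.

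\textbf{A5.} By \eqref{eq:edgeforms}, $\ell=g+w_-$, so $\ell(0+)=a_0+w_-=a'$ by continuity of $g$, and $a'\in[0,\infty)$. For $0\le s\le s'\le s_R\le s_{\max}$ (using (C4)), $\ell(s')-\ell(s)=c(s'-s)$. This lies in the corridor $[c_-(s'-s),c_+(s'-s)]$ by (C2). Global monotonicity of $\ell$ follows from Lemma~\ref{lem:edges}\ref{it:wit-g}, since $g$ is strictly increasing with slope $\min(c,1)>0$.

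\textbf{A6.} Combine the $s$-independent edge mass \eqref{eq:wit-mass} with the band bound \eqref{eq:wit-band}. Take $c_K(s)\equiv c_W/(\beta+1)$ and $\eta(\eps)=\eta_W(\eps)$. Then \eqref{eq:A6u} holds uniformly in $s$, with an $o(1)$ that does not depend on $s$. Because $c_K$ is a positive finite constant, its $\liminf$ and $\limsup$ coincide and lie in $(0,\infty)$. The exponent $\beta$ is the class index by (C1).

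\textbf{NA.} This is the step needing an actual argument; I would argue by contradiction. Suppose $K(\cdot\mid s)=\mu(\cdot-s)$ for $F_S$-a.e.\ $s$. Then the lower support edge satisfies $\ell(s)=s+\inf\supp\mu$ for $F_S$-a.e.\ $s$. By Lemma~\ref{lem:edges}, however, $\ell(s)=cs+a_0+w_-$ on $[0,s_{\max}]$. The two agree only at the single point where $(c-1)s=\inf\supp\mu-a_0-w_-$, using $c\neq1$ from (C5). That is a single point, so it is $F_S$-null because $F_S$ has a density near $0$ and no atoms there. On the other hand, $F_S$ charges every neighbourhood $(0,\sigma)$ (Remark~\ref{rem:set-A1cdf}), so the identity would have to hold on a set of positive $F_S$-measure. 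This is a contradiction.

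Two details need care in this step: showing that $\inf\supp\mu$ is finite, and handling the topological support of a translate. If $\mu$ has unbounded-below support, then the translate $\mu(\cdot-s)$ would charge negative reals, whereas $K(\cdot\mid s)$ lives on $[0,\infty)$. So $\inf\supp\mu$ is finite, and the support of $\mu(\cdot-s)$ is $s+\supp\mu$.

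With the axioms verified, the apparent endpoint $a=a'$ and the constant profile $c_K\equiv c_W/(\beta+1)$ are read off directly from the A5 and A6 steps.
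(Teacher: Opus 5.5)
Your proposal is correct and follows the paper's proof essentially step for step. Each axiom is read off the closed forms of Lemma~\ref{lem:edges}, with (C1)--(C5) invoked exactly where the paper invokes them, and your NA argument (an affine relation with slope $c-1\neq0$ has at most one root, yet must hold $F_S$-a.e.) is the paper's two-good-points subtraction in other words.

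Two small repairs are needed. First, $\eta_W(\eps)$ exists only for $\eps\in(0,1)$, so set $\eta(\eps):=\eta_W(\min(\eps,\tfrac12))$ as the paper does. Second, the root $s^\ast$ need not be $F_S$-null, since Definition~\ref{def:witness}(a) allows atoms in $[s_0,s_{\max}]$; run the contradiction on $(0,\min(\sigma,s_0))$ instead, a set with positive $F_S$-mass and no atoms.
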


Proposition~\ref{prop:witness} places the affine-random witness in $\Mreg$. That same witness is where the confounding pair of Theorem~\ref{thm:I1} lives, since the scale-conjugation pair of Section~\ref{sec:pair} is built from two of its members. Those members satisfy the \emph{full} edge axiom~A5 rather than merely the weakened A5$^-$, so they lie in the lean class $\Mstar$ as well, the class over which the shape theorem (Theorem~\ref{thm:I2}) holds. The pair is therefore a \emph{sharpness certificate} for that theorem, not a counterexample to it. It keeps $\alpha'=\alpha$ while moving $L$ and $p_\tau$, and this is the freedom that identifiability of $\alpha$ leaves open.
\subsection{Non-emptiness of the class}\label{ssec:wit-conclusion}

The affine-random witness verified above settles non-emptiness over the generic part of the parameter range. Fix class constants $\beta\ge0$, $0<c_-\le c_+<\infty$, $\bar b\in(0,\infty)$, and a member shape $\alpha\in[0,\infty)$. Whenever the corridor admits a scale $c\in[c_-,c_+]$ with $c\neq1$ (that is, unless $c_-=c_+=1$), Proposition~\ref{prop:witness} constructs an explicit \emph{non-additive} member of $\Mreg\subseteq\mathcal{M}_\star$ with the prescribed $\alpha$. Both $\mathcal{M}_\star$ and the regularity class $\Mreg$ inside it contain non-additive members, so Lemma~\ref{lem:exponent} and Theorem~\ref{thm:I2} are not vacuous.

Two questions remain before the non-emptiness account is complete. The first is the single corner $c_-=c_+=1$, where the scale constraint $c\neq1$ of the affine witness is unavailable and the edge slope is forced to equal one. The second is the exhibition of \emph{non-affine} members, whose conditional width grows with $s$. Such members realize $\beta\ge1$ through a ratio witness and any $\beta\ge0$ through a unit-slope construction, available whenever the corridor allows unit slope ($c_-\le1\le c_+$). Section~S12 of the Supplementary Material settles both questions by constructing and verifying a \emph{ratio} witness and a unit-slope \emph{corner} witness. It also records the exhaustive case split over $(\beta,c_-,c_+,\alpha)$ and checks the near-edge expansion of Lemma~\ref{lem:exponent} against each witness. With those members, the class is never confined to additive (translation) kernels, and whenever $\beta\ge1$ or $c_-\le1\le c_+$ it is not confined to global affine reparametrizations either.
\subsection{The scale-conjugation pair}\label{sec:pair}
\paragraph{Scale rather than shift.}
Axiom A1 leaves no latent location freedom. Every member has $F_S(\{0\})=0$ and a density that behaves near the endpoint $0$ as $f_S(s)=L\,s^{\alpha}(1+o(1))$ with $L>0$ \eqref{eq:A1}. A shifted latent law $\Law(S+\theta)$ with $\theta>0$ assigns mass $0$ to $(0,\theta)$, so any of its densities vanishes a.e.\ on a right-neighbourhood of $0$. This is incompatible with \eqref{eq:A1} for any $L\in(0,\infty)$, and the latent endpoint is therefore equal to $0$ across the entire class. Among location-scale deformations of the latent law, then, only the rescalings $S\mapsto\kappa S$, $\kappa\in(0,\infty)$ are compatible with A1, and these fix the endpoint while dilating the law around it. Apparent shifts do exist in the model, yet they enter only on the observed side, through the edge intercept $\ell(0)=a'=a_0+w_-$ of Lemma~\ref{lem:edges}, and never through the latent endpoint. The confound built below rescales the latent law and compensates through the kernel. Nothing is shifted, and A1 rules a shift out in any case.

\begin{definition}[The scale-conjugation pair]\label{def:pair}
\emph{Data.} Fix
\begin{gather*}
\alpha\in[0,\infty),\qquad s_{\max}\in(0,\infty),\qquad a_0\ge0,\\
c\in(0,\infty)\ \text{with}\ c\neq1,
\qquad
\kappa\in(0,\infty)\ \text{with}\ \kappa\neq1\ \text{and}\ \kappa\neq c,
\end{gather*}
together with a jitter law $P_W$ as in Convention~\ref{conv:W}: edges $0\le w_-<w_+<\infty$, lower-edge exponent $\beta\ge0$, and edge constant $c_W\in(0,\infty)$. Let $W\sim P_W$.

\emph{Member $0$.} Let $(F_S,K)$ be the affine-random witness of Definition~\ref{def:witness} with parameters $(\alpha,\allowbreak L,\allowbreak c,\allowbreak a_0,\allowbreak\beta,\allowbreak c_W,\allowbreak w_-,\allowbreak w_+,\allowbreak s_{\max})$ and the canonical latent law $F_S(s)=(s/s_{\max})^{\alpha+1}$ on $[0,s_{\max}]$, so that
\[
f_S(s)=(\alpha+1)\,s_{\max}^{-(\alpha+1)}\,s^{\alpha}\ \text{ on }(0,s_{\max}),
\qquad
L=(\alpha+1)\,s_{\max}^{-(\alpha+1)}.
\]
Its location map $g$ has slope $c$ on $[0,s_{\max}]$ and unit slope beyond, and its kernel is $K(\cdot\mid s)=\Law(g(s)+W)$. Write $Q:=KF_S$ for its observed law \eqref{eq:Q}.

\emph{Member $1$ (the $\kappa$-conjugate).} With $S\sim F_S$, set
\[
F_S'\;:=\;\Law(\kappa S),
\qquad
K'(\cdot\mid s)\;:=\;\Law\bigl(g'(s)+W\bigr)\quad(s\ge0),
\]
where
\begin{equation}\label{eq:pair-gprime}
g'(s)\;=\;
\begin{cases}
\dfrac{c}{\kappa}\,s+a_0, & 0\le s\le\kappa s_{\max},\\[6pt]
\dfrac{c}{\kappa}\,\kappa s_{\max}+a_0+(s-\kappa s_{\max}), & s>\kappa s_{\max},
\end{cases}
\end{equation}
i.e.\ $g'$ is the location map of step~(d) of Definition~\ref{def:witness} under the parameter substitution $(c,s_{\max})\mapsto(c/\kappa,\ \kappa s_{\max})$, with the same intercept $a_0$ and the same jitter $W$. Write $Q':=K'F_S'$.
\end{definition}

\begin{lemma}[Exact observational equivalence: $Q'=Q$]\label{lem:laws}
In the setting of Definition~\ref{def:pair},
\[
Q'\;=\;K'F_S'\;=\;KF_S\;=\;Q
\]
exactly, as Borel probability laws on $[0,\infty]$. Both laws charge only $[0,\infty)$, and indeed both assign full mass to the compact interval $[\,a_0+w_-,\ c\,s_{\max}+a_0+w_+\,]$. In particular, $(F_S,K)\sim(F_S',K')$ in the sense of Definition~\ref{def:ident}.
\end{lemma}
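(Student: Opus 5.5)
The plan is to compute both observed laws as pushforwards through explicit random variables and check that they coincide. By Lemma~S8.3 of the Supplementary Material (already invoked in Definition~\ref{def:witness}(d)), for any affine-random-type kernel $K(\cdot\mid s)=\Law(g(s)+W)$ with $W$ independent of $S$, the mixture is $KF_S=\Law(g(S)+W)$. For member~$0$ this gives $Q=\Law(g(S)+W)$. Since $\supp F_S\subseteq[0,s_{\max}]$, we have $g(S)=cS+a_0$ almost surely, so $Q=\Law(cS+a_0+W)$.

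For member~$1$, I will take $S'=\kappa S$ with $S\sim F_S$ and $W$ independent of $S$, hence independent of $S'$. Then $Q'=\Law(g'(S')+W)=\Law(g'(\kappa S)+W)$. The support of $F_S'$ lies in $[0,\kappa s_{\max}]$, because $\kappa S\le\kappa s_{\max}$ almost surely. On that range \eqref{eq:pair-gprime} gives
\[
g'(\kappa S)=\tfrac{c}{\kappa}\,\kappa S+a_0=cS+a_0
\]
almost surely. So $g'(\kappa S)+W=g(S)+W$ almost surely on a common probability space, and the laws coincide: $Q'=Q$. Two routine facts need checking before Lemma~S8.3 applies to member~$1$. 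First, $g'$ is continuous, since the two branches agree at $\kappa s_{\max}$; it is therefore Borel, so $K'$ is a Markov kernel by Lemma~S8.2. Second, $F_S'$ is a Borel probability law on $[0,\infty)$.

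Both laws charge only $[0,\infty)$ and sit in the stated compact interval. This follows because $cS+a_0+W$ takes values in $[a_0+w_-,\,c\,s_{\max}+a_0+w_+]$: we have $S\in[0,s_{\max}]$ and $W\in[w_-,w_+]$ almost surely, with $c>0$ and $a_0,w_-\ge0$. Consequently $Q(\{+\infty\})=Q'(\{+\infty\})=0$, and equality holds as laws on $[0,\infty]$. Observational equivalence is then immediate from Definition~\ref{def:ident}.

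I do not expect a serious obstacle. The one point needing care is measure-theoretic. We must build a single background space carrying $(S,W)$ independent, so that $(\kappa S,W)$ realizes $F_S'\otimes P_W$; this follows from the product construction. We must also confirm that the almost-sure identity $g'(\kappa S)=g(S)$ only uses the affine branches, so that the unit-slope extensions, which differ between $g$ and $g'$, live on null sets and never enter $Q$ or $Q'$.
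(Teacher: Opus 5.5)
Your proof is correct and is essentially the paper's argument. Both reduce each mixture to the law of $g(S)+W$ via Lemma~\ref{lem:mixture}, use only the affine branch on the latent support, and cancel $\tfrac{c}{\kappa}\cdot\kappa=c$. The paper's formal proof factors each observed law as $(\delta_{a_0}\ast\Law(cS))\ast P_W$ and matches the middle factors by composing dilations (Lemma~\ref{lem:push}(iii)), so each member may live on its own probability space; your single coupling $S'=\kappa S$ is exactly the pathwise mechanism the paper records in the remark right after that proof.
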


\begin{lemma}[The estimands under the pair]\label{lem:estimand}
In the setting of Definition~\ref{def:pair}, write $(\alpha,L)$ and $(\alpha',L')$ for the A1 shape/scale parameters of $F_S$ and $F_S'$ (single-valued by Lemma~\ref{lem:phiwell}). Then:
\begin{enumerate}
\item $\alpha'=\alpha$.
\item $L'=\kappa^{-(\alpha+1)}L$, and $L'\neq L$.
\item for every threshold $\tau$ with $0<\tau<s_{\max}\wedge\kappa s_{\max}$,
\[
p_\tau=F_S(\tau)=\Bigl(\frac{\tau}{s_{\max}}\Bigr)^{\alpha+1}\in(0,1),
\qquad
p'_\tau=F_{S'}(\tau)=\Bigl(\frac{\tau}{\kappa s_{\max}}\Bigr)^{\alpha+1}=\kappa^{-(\alpha+1)}\,p_\tau\in(0,1),
\]
and $p'_\tau\neq p_\tau$.
\end{enumerate}
Consequently $L'/L=p'_\tau/p_\tau=\kappa^{-(\alpha+1)}\neq1$, and the two estimand gaps
\begin{align}
\Delta_L&:=|L-L'|=L\,\bigl|1-\kappa^{-(\alpha+1)}\bigr|,\label{eq:pair-DL}\\
\Delta_p&:=|p_\tau-p'_\tau|=p_\tau\,\bigl|1-\kappa^{-(\alpha+1)}\bigr|\label{eq:pair-Dp}
\end{align}
are strictly positive.
\end{lemma}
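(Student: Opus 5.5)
The plan is to compute everything explicitly from the canonical latent law, since $F_S$ is given in closed form and $F_S'$ is a pushforward under a dilation. First I identify $F_S'$. For $S\sim F_S$ with $F_S(s)=(s/s_{\max})^{\alpha+1}$ on $[0,s_{\max}]$, the law of $\kappa S$ has CDF
\[
F_{S'}(t)=\PP(\kappa S\le t)=F_S(t/\kappa)=\bigl(t/(\kappa s_{\max})\bigr)^{\alpha+1}\qquad(0\le t\le\kappa s_{\max}).
\]
So $F_S'$ is again a canonical latent law, with $s_{\max}$ replaced by $\kappa s_{\max}$. In particular $F_S'(\{0\})=0$, and $F_S'$ has the density
\[
f_{S'}(t)=(\alpha+1)(\kappa s_{\max})^{-(\alpha+1)}t^{\alpha}
\]
on the window $(0,\kappa s_{\max})$. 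This is verified by differentiating, or equivalently by the change of variables $\int_B f_{S'}=\int_{B/\kappa}f_S$.

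Next I read off $(\alpha',L')$. The density version for $F_S'$ satisfies the A1 expansion exactly, with exponent $\alpha$ and coefficient $(\alpha+1)(\kappa s_{\max})^{-(\alpha+1)}=\kappa^{-(\alpha+1)}L$. The density version for $F_S$ gives $(\alpha,L)$ with $L=(\alpha+1)s_{\max}^{-(\alpha+1)}$. By Lemma~\ref{lem:phiwell}, the A1 parameters are single-valued whichever version is used. Hence $\alpha'=\alpha$ and $L'=\kappa^{-(\alpha+1)}L$, which proves (i) and the formula in (ii).

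For $L'\neq L$, I use $\kappa\neq1$ and $\alpha+1\ge1>0$. The map $x\mapsto x^{-(\alpha+1)}$ is strictly monotone on $(0,\infty)$, so $\kappa^{-(\alpha+1)}\neq1$, and since $L>0$ this gives $L'\neq L$.

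For (iii), take $0<\tau<s_{\max}\wedge\kappa s_{\max}$. Then $\tau$ lies strictly inside both supports, and the closed-form CDFs give
\[
p_\tau=(\tau/s_{\max})^{\alpha+1}\in(0,1),\qquad p'_\tau=(\tau/(\kappa s_{\max}))^{\alpha+1}=\kappa^{-(\alpha+1)}p_\tau\in(0,1).
\]
Both values are strictly between $0$ and $1$ because each base is in $(0,1)$ and the exponent is positive. The identity $F_S([0,\tau])=F_S(\tau)$ uses the absence of an atom at $0$, as in Definition~\ref{def:psi}. Then $p'_\tau\neq p_\tau$ follows exactly as for $L$.

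The ratios $L'/L=p'_\tau/p_\tau=\kappa^{-(\alpha+1)}$ and the gap formulas \eqref{eq:pair-DL}--\eqref{eq:pair-Dp} are then immediate algebra. Their strict positivity comes from $L,p_\tau>0$ together with $\kappa^{-(\alpha+1)}\neq1$.

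I expect no real obstacle. The only point needing care is the \emph{single-valuedness} step. The parameters $(\alpha',L')$ are defined through whatever version of the density A1 supplies, so the step from my explicit density version to "the" $(\alpha',L')$ must go through Lemma~\ref{lem:phiwell}. Equally, the pushforward density must be checked to be a genuine density of $F_S'$ on the whole window, not just to have the right CDF at isolated points. I will handle the latter by noting that the two measures agree on all intervals $(0,t]$ inside the window, hence on all Borel subsets by the $\pi$–$\lambda$ theorem. The hypothesis $\kappa\neq c$ in Definition~\ref{def:pair} plays no role here; it presumably serves only to make $c/\kappa\neq1$ for member~$1$'s membership.
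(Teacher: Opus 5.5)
Your proposal is correct and follows the paper's proof essentially step for step. You identify $F_S'$ as the canonical law with scale $\kappa s_{\max}$; the paper packages this step as Lemma~\ref{lem:member1}, via the dilation change of variables. You then read off $(\alpha',L')$ through Lemma~\ref{lem:phiwell}, evaluate the two closed-form CDFs at $\tau$, and use $\kappa\neq1$ to get $\kappa^{-(\alpha+1)}\neq1$. Your $\pi$--$\lambda$ check of the pushforward density is a harmless substitute for the paper's substitution-identity argument, and you are right that $\kappa\neq c$ plays no role in this lemma.
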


Note the order of quantifiers. The two members are built first, and only afterwards are the shared class constants chosen, wide enough to cover both. Specifically, the corridor $[c_-,c_+]$ is widened to contain the two edge slopes $c$ and $c/\kappa$, and the offset bound $\bar b$ is taken large enough to dominate the two offsets. The floors $\bar s_0=s_R=s_A$ are lowered to sit inside both A1-windows. This is the form required by Theorem~\ref{thm:I1} (``there exist admissible shared class constants \emph{and} two members of the resulting class''), so no smallness of $\kappa$ is needed. Any $\kappa\in(0,\infty)$ works, except for the two measure-zero exclusions $\kappa=1$ (which would make the pair trivial) and $\kappa=c$ (which would make the kernel of member $1$ additive, violating the compatibility condition (C5)). Having shown that the class is non-empty and constructed the confounding pair, we now compare the result with prior work.

\section{Related work}\label{sec:rel}

We prove identifiability of the latent lower-tail shape index $\alpha$ through a \emph{non-additive} kernel $K(\cdot\mid s)$ whose edge $\ell(s)$ \emph{moves} with the signal and whose relative spread $w(s)/s$ blows up at the endpoint, from the \emph{marginal} $Q=KF_S$ alone, with no location anchor. Identifiability here means constancy of $\alpha$ across the fibers of $(F_S,K)\mapsto KF_S$ (the classes of observationally equivalent pairs). It is a property of the model map, not of any estimator. Adjacent work differs from ours along three axes. The pushforward there is \emph{additive} rather than non-additive, and the object is a \emph{location} rather than a \emph{shape}. Its relative spread \emph{vanishes} at the endpoint as well, whereas ours diverges. The contaminated marginal does not expose $\alpha$ as its own edge exponent. Over $\mathcal{M}_\star$ the exponent it \emph{does} expose, $\alpha+\beta+2$, pins $\alpha$ regardless. We write $a:=\ell(0+)=\inf\supp Q$ and recall the near-edge law $Q([a,a+h])\asymp L\,h^{\alpha+\beta+2}$, where $\beta\ge0$ is the conditional edge index of $K$ shared across the class.

\subsection{Additive-noise deconvolution}\label{ssec:rel-deconv}

Recovering a latent law from additively contaminated $Y=X+\eps$ rests entirely on the error characteristic function: $\varphi_X=\varphi_Y/\varphi_\eps$ is identifiable precisely when the zero set of $\varphi_\eps$ has empty interior, at a rate set by its decay \citep{Fan1991,Meister2009}. The deconvolution kernel estimator inverts it \citep{StefanskiCarroll1990}. Real zeros, as for the uniform and other common compactly supported errors, force ridge regularization \citep{HallMeister2007}. This presupposes an \emph{additive, location-invariant} error, a fixed $\mu$ with $K(\cdot\mid s)=\mu(\cdot-s)$. Deconvolution identifies a \emph{density} and needs the error law known through $\varphi_\eps$. Under axiom~NA no such $\mu$ exists: there is no $\varphi_\eps$ to divide out, and the condition ``$\varphi_\eps$ has no interval of zeros'' is not violated but cannot be stated.

\subsection{Endpoint, boundary, and frontier estimation under error}\label{ssec:rel-endpoint}

The closest classical analogue is endpoint estimation under additive error \citep{GT2004}. There the target is the \emph{location} $\theta$ of $X$ (edge density $f(x)\ge L(\theta-x)^{\alpha}$), with $X$ observed as $Y=X+\eps$.
Bounded-support error $[-r,r]$ ($r$ known, edge $(r-x)^{\beta}$) gives minimax rate $n^{-1/(\alpha+\beta+2)}$, anchored through the single known edge $r$. Convolution tail-arithmetic combines indices $\alpha,\beta$ into a marginal edge $\alpha+\beta+1$ with Beta-function lead $B(\beta+1,\alpha+1)$. This rate exponent is the classical counterpart of ours. The $\alpha+\beta+2$ of \cite{GT2004}, which \emph{there} governs the \emph{location} $\theta$ under additive error, equals the exponent we read off $Q([a,a+h])$, with the Beta-function constant appearing in both. Yet \emph{here} the same number governs the \emph{shape} $\alpha$ through a non-additive, moving-edge kernel. In the additive setting the extra $+1$ over $\alpha+\beta+1$ integrates a fixed-support error against a \emph{static} endpoint. In ours, $\alpha+\beta+2=(\alpha+1)+(\beta+1)$ splits into integrating $s^{\alpha}$ across the $O(h)$ band whose \emph{moving} edge $\ell(s)$ reaches $[a,a+h]$, plus the edge-mass profile $h^{\beta+1}$ of axiom~A6. Two variants swap the anchor. An \emph{unknown} normal scale \citep{Leng2019,KSV2015} still needs the error form known and zero-mean, while \emph{unknown but symmetric} additive error \citep{FSV2020} trades the known edge for symmetry. Symmetry, like the known edge, is then one admissible anchor among several, not indispensable.

\subsection{Extreme-quantile estimation under erroneous or approximate observations}\label{ssec:rel-evt}

A recent strand asks how imperfectly observed values affect tail-shape inference. \cite{Pere2024ApproxErrExtreme} show the Hill and extreme-quantile estimators of heavy-tailed $X$ keep the \emph{same} asymptotically normal limit once the approximation error is negligible relative to the tail quantile scale at the $\sqrt{k}$ rate, $k$ the effective tail sample size. For a bounded multiplicative relative perturbation of the order statistics, \cite{Pere2025} bound its effect on the moment estimator of the extreme-value index. In \cite{Morozov2025}, extreme quantiles of a latent heterogeneity law are recovered through a noisy proxy with vanishing error and \emph{no} parametric error form, via a tail-equivalence condition under which the noisy order statistics inherit the latent limit. We take that tail-equivalence condition as our comparison point for EVT under latent heterogeneity. It is sharp, and once it fails, some dependence structure makes the noisy proxy miss the latent limit.

\subsection{Relation to the present paper}\label{ssec:rel-gap}

No prior result identifies a bounded extreme-value tail \emph{shape} index through a \emph{non-additive} kernel with a moving power-law edge and endpoint-diverging relative spread when only the marginal is observed and no location anchor is available. Deconvolution and its heteroscedastic variants keep the contamination additive and independent of the latent value, so the error scale may vary across observations but never with the severity itself \citep[Sect.~2.1]{DelaigleMeister2008}. In our regime the relative spread $w(s)/s$ instead diverges at the endpoint. The additive endpoint/frontier/boundary literature recovers a \emph{location} object under an anchor, compounding $\alpha$ to $\alpha+\beta+1$ or obscuring it. In \cite{GT2004} only that compound is visible at the marginal edge, and their endpoint estimator (anchored by the known error-support radius) adapts to the unknown $\alpha$, which is a nuisance there and the target here. The erroneous-observation strand is estimation, not identifiability: its contamination is additive-and-vanishing \citep{Pere2024ApproxErrExtreme,Morozov2025} or multiplicative \citep{Pere2025}, so the relative error must \emph{vanish}, the opposite of our setting. Each rests on side information our marginal-only scheme forbids. The first needs refinable approximations of each latent value and the second a growing per-unit sample size $T$, whereas the third needs $\sqrt n$-consistent location/scatter from elliptical structure. Furthermore, the tail-shape parameter is recovered by Hill or moment estimators \citep{Pere2024ApproxErrExtreme,Pere2025}, while \cite{Morozov2025} imposes a known lower bound on the extreme-value index in its rate conditions. All three keep the noisy extremes on the normalizing scale of the latent ones, so the two limits agree, whereas here the kernel shifts the near-edge exponent away from the latent one. In the applied traffic-conflict study the question is error propagation, not identifiability of a contamination kernel. The theorem here closes this gap: over $\mathcal{M}_\star$, $\alpha$ is point-identified from $Q=KF_S$, which fixes both the apparent endpoint $a=\inf\supp Q$ and the near-edge exponent $\alpha+\beta+2$, the shared class constant $\beta$ apportioning it to the latent shape.
On the negative half, a standard device is pushed to a degenerate extreme. The Le~Cam two-point method \citep{LeCam1986,Tsybakov2009} lower-bounds minimax risk by the testing affinity of two hypotheses. This affinity is a number between zero and one measuring their overlap, and it equals one exactly when no test can tell them apart. In the literature on endpoint estimation under error the method is used in the usual way: two \emph{nearby} laws, separated just enough that no test resolves them at the working sample size, which yields a \emph{rate} \citep{GT2004}. Our two witnesses are not nearby. They induce the \emph{identical} sampling law $Q^{\otimes n}$ at every $n$, so their affinity is exactly $1$ and the bound degenerates: the risk floor $\tfrac12\Delta$ of Lemma~\ref{lem:bridge} carries no power of $n$ and never decays. The confounding pair therefore certifies non-identification rather than slow estimation.
\section{Discussion and scope}\label{scope:sec}\label{sec:scope}
\begin{remark}[Scope of the identification result]\label{scope:main}
For every admissible choice of the class constants, Theorem~\ref{thm:I2} identifies $\alpha$ over $\Mstar$, and Corollary~\ref{cor:Mreg} carries it to $\Mreg$. This remark fixes the scope of that statement.

\emph{(i) Shape only, and provably so.} Theorem~\ref{thm:I2} identifies the latent lower-tail shape index $\alpha$, and nothing else about the tail. The scale $L$ of A1 and the fixed-scale exceedance $p_\tau$ are addressed by the other half of the paper, with the opposite conclusion: they are \emph{not} identifiable over $\Mreg$ without a further anchor (Theorem~\ref{thm:I1}). That statement is existential in the sense proved there. Namely, for suitable admissible shared class constants there exist two members of $\Mreg$ with \emph{exactly} equal observed laws and equal $\alpha$, while both $L$ and $p_\tau$ differ by the factor $\kappa^{-(\alpha+1)}\neq1$. The near-edge expansion pins the near-endpoint \emph{exponent} $\alpha+\beta+2$ and shows that the lead coefficient is proportional to $L$ (Remark~\ref{lean:affine}), but it does \emph{not} pin $L$ itself. Indeed, both the proportionality constant $\varkappa_\pm$ in \eqref{eq:kappas} and, in the affine case, the sharp constant $\bar c_K\,\Beta(\alpha+1,\beta+2)\,c_\ell^{-(\alpha+1)}$ depend on the kernel and are not extracted from $Q$ by this argument.

\emph{(ii) Identifiability is relative to the class.} What is identified is $\alpha$ \emph{given the class}, in particular given the shared edge index $\beta$. The observed law delivers a single number, the near-endpoint exponent $\alpha+\beta+2$ of \eqref{eq:exponent}. Splitting that number unambiguously between the latent shape $\alpha$ and the kernel edge index $\beta$ rests on the class assumption that $\beta$ is common across members. If $\beta$ were allowed to vary across members, the exponent alone would not separate the two. No claim is made (and none is needed) that $\alpha$ and $\beta$ are individually recoverable in that wider setting.

\emph{(iii) Identification, not estimation.} The shape theorem states that $\alpha$ is constant on the fibers of the map $(F_S,K)\mapsto Q$ restricted to the class: observationally equivalent members share $\alpha$. It is not an estimator and carries no sampling guarantee or rate. A natural plug-in estimator and a heuristic consistency argument are sketched in the Supplementary Material, yet a full consistency or rate theorem is left to future work.
\end{remark}
\begin{remark}[The $\kappa=1$ collapse on unbounded support]\label{rem:bounded}
Suppose, counterfactually, that the affine action $s\mapsto cs+a_0$ held on an \emph{unbounded} latent support, with no $s_{\max}$ and no off-support extension, the kernel being $\Law(cs+a_0+W)$ for all $s\ge0$. The offset computation of Lemma~\ref{lem:edges} would then read $m(s)=(c-1)s+a_0+\midW$ for \emph{all} $s\ge0$, an affine function of slope $c-1$. For $c\neq1$ it satisfies $|m(s)|\to\infty$ as $s\to\infty$, so that $\sup_{s\ge0}|m(s)|=\infty$ exceeds every finite $\bar b$ and axiom A3 fails. A3 would then force $c=1$ on member $0$ and $c/\kappa=1$ on member $1$. Together the two constraints give $\kappa=1$, i.e.\ only the \emph{trivial} pair $F_S'=F_S$, $K'=K$ survives. (Even that degenerate ``pair'' would leave the class, since $c=1$ turns the kernel into the translate family $K(\cdot\mid s)=\mu(\cdot-s)$ with $\mu=\Law(a_0+W)$, which violates NA. This is the content of condition (C5) of Proposition~\ref{prop:witness}.) Hence scale conjugation collapses on unbounded support, and no genuine confound exists there.

Our proofs use the bounded support together with the unit-slope extension in two places only, both inside the membership verification (Proposition~\ref{prop:witness}, invoked for both members through Proposition~S11.3 of the Supplementary Material):
\begin{enumerate}
\item \emph{the A3 verification, through (C3):} by \eqref{eq:mform}, the offset $m$ is affine with slope $c-1$ only up to the break at $s_{\max}$ and is \emph{constant} beyond it. Hence $\sup_{s\ge0}|m(s)|\le|c-1|\,s_{\max}+a_0+\midW$, which is the finite quantity that (C3) asks $\bar b$ to dominate. For member $1$ the same bound holds with $(c/\kappa,\kappa s_{\max})$ in place of $(c,s_{\max})$. Boundedness of the offset for a slope $c\neq1$ is available \emph{only} because the affine regime stops at the break.
\item \emph{the A5 global monotonicity:} beyond the break the location map, and with it the edge $\ell=g+w_-$ of \eqref{eq:edgeforms}, continues with slope $1>0$. It follows that $\ell$ is increasing on all of $[0,\infty)$, and no distant conditional law returns to the apparent endpoint. This is the global half of A5, which the two-point increment condition on $[0,s_R]$ does not cover.
\end{enumerate}
Both members of Definition~\ref{def:pair} carry their own break ($s_{\max}$ and $\kappa s_{\max}$ respectively) and their own unit-slope tail. That definition and Lemma~S11.1 of the Supplementary Material are built accordingly. It follows that the scale/exceedance confound of Theorem~\ref{thm:I1} is a \emph{bounded-support} phenomenon. A latent severity bound together with an off-support translation regime is what makes a non-unit edge slope, and with it the conjugation $c\mapsto c/\kappa$, admissible.
\end{remark}

\begin{remark}[What carries the confound, and what would close it]\label{rem:anchor}
The entire confound is carried by one degree of freedom: the kernel's edge slope, moved from $c$ to $c/\kappa$ to compensate the latent rescaling $S\mapsto\kappa S$. More precisely, what differs between the two kernels is the \emph{observable edge map} $\ell$. On the respective corridors, member $0$ has $\ell(s)=cs+a'$ and member $1$ has $\ell'(s)=(c/\kappa)s+a'$ (Lemma~\ref{lem:edges}). The slope moves, while the intercept $a'=a_0+w_-$ stays and the observed law $Q$ remains fixed (Lemma~\ref{lem:laws}). An \emph{anchor} axiom pinning the edge map $\ell$ (its slope and intercept) across the class would therefore close the degree of freedom that our pair uses. Pinning $\ell$ is the edge clause of Anchor~C (Assumption~\ref{ass:C}), and full repair also needs C's fixed-scale injectivity clause (C-b). Granted both clauses, the sufficiency direction of Conjecture~\ref{conj:dichotomy} is immediate (Remark~\ref{rem:forthcoming}). What remains open here is discharging (C-b) for non-location families, along with the necessity direction of the conjecture. The mechanism is the location-and-scale version, through a non-additive kernel, of the additive-location endpoint confound of Goldenshluger and Tsybakov \citeyearpar{GT2004}. In that setting a fixed error law shifts an endpoint, whereas here a dilated latent law and a contracted edge slope trade off against each other, leaving the endpoint, and everything else observable, unchanged.
\end{remark}
\begin{remark}[Scope of the non-identifiability result]\label{rem:scope}
The result has four scope boundaries. (i) \emph{Not the shape}: the exhibited pair keeps $\alpha'=\alpha$ (Lemma~\ref{lem:estimand}). Identifiability of $\alpha$ over $\Mreg$ is the positive half, Theorem~\ref{thm:I2} (with Corollary~\ref{cor:Mreg}), which the pair supports rather than overturns. (ii) \emph{Not the anchored horn} (the anchored branch of the identifiability dichotomy): Remark~\ref{rem:anchor} only locates the degree of freedom that an anchor axiom would close. Nothing is proved about anchored sub-fibers beyond the sufficiency argument of Remark~\ref{rem:forthcoming}, nor is (C-b) discharged for non-location families. (iii) \emph{Not estimator construction}: the statistical conclusions are the minimax floors and consistency failures of Lemma~\ref{lem:bridge} and Corollary~S10.2 of the Supplementary Material. No estimator or rate is given, and no positive consistency claim is made. (iv) \emph{Non-emptiness is supplied}: Proposition~S11.3 of the Supplementary Material exhibits two members of $\Mreg$ for the constructed shared constants, so the class quantified over is non-empty by construction. The result itself needs no non-emptiness beyond these constants. For the general record see Section~\ref{ssec:wit-conclusion}, whose affine witness covers every shared-constant regime except the corner $c_-=c_+=1$, where non-emptiness is settled by the unit-slope corner witness of Section~S12 of the Supplementary Material. The four constructions behind the result are Proposition~\ref{prop:witness} with Proposition~S11.3 of the Supplementary Material, Lemma~\ref{lem:laws}, Lemma~\ref{lem:estimand}, and the proof of Theorem~\ref{thm:I1} via Lemma~\ref{lem:bridge}.
\end{remark}
\section*{Data and code availability}
No datasets were generated or analysed during this work. Two numerical self-check
scripts accompany this submission as ancillary files. The first,
\texttt{i1\_selfcheck.py}, certifies the exact-rational scale-conjugation instance of
Theorem~\ref{thm:I1}, which is Section~S14.A, ``Numerical self-check''. The second,
\texttt{i2\_selfcheck.py}, corroborates the affine-case constant of
Lemma~\ref{lem:exponent}, which is Section~S15. Both are deterministic, write no
files, and print a PASS/FAIL summary of $12/12$ assertions and $4/4$ checks
respectively. \texttt{i1\_selfcheck.py} fixes its random seeds, whereas
\texttt{i2\_selfcheck.py} uses no randomness. Both run on Python~3.13 with
\texttt{numpy}, \texttt{scipy}, \texttt{mpmath} and \texttt{sympy}.
\phantomsection

\clearpage
\setcounter{section}{0}
\setcounter{equation}{0}
\renewcommand{\thesection}{S\arabic{section}}
\renewcommand{\theequation}{S\arabic{equation}}
\renewcommand{\theHsection}{S\arabic{section}}
\renewcommand{\theHequation}{S\arabic{equation}}
\renewcommand{\thesubsection}{\thesection.\Alph{subsection}}
\renewcommand{\theHsubsection}{S\arabic{section}.\Alph{subsection}}
\phantomsection\addcontentsline{toc}{section}{Supplementary Material}
\begin{center}
{\LARGE\bf Supplementary Material}\\[6pt]
{\large to ``Shape without scale: an identifiability dichotomy for a bounded tail\\ observed through a non-additive measurement kernel''}
\end{center}
\medskip
\noindent The material below is numbered with the prefix `S': Sections S1--S15, results as Lemma~S1.1, Proposition~S2.1, \dots, equations as (S1), (S2), \dots. Unprefixed pointers (Theorem~4.2, equation~(3), Section~3) refer to the main text above.
\medskip

\section{Support preliminaries}\label{sec:supp}

We record elementary facts about supports on the compactified half-line. Recall that $[0,\infty]$ with the order topology is a compact, second-countable metrizable space (homeomorphic to $[0,1]$ via $x\mapsto x/(1+x)$). For a Borel probability $\mu$ on such a space, the support $\supp\mu$ (the set of points all of whose open neighbourhoods have positive $\mu$-mass) is closed and carries full mass. Indeed, the complement of $\supp\mu$ is the union of all open $\mu$-null sets, which by second countability (Lindel\"of) is a countable union of null sets, hence null \citep{Kallenberg2002}.

\begin{lemma}[Edge facts]\label{lem:supp}
Fix $s\ge0$ and write $\mu:=K(\cdot\mid s)$, $E:=\supp\mu\cap[0,\infty)$ (nonempty by the setup). Then:
\begin{enumerate}
\item\label{it:attain} $\ell(s)=\inf E\in E$. In particular, $\ell(s)\in\supp\mu$.
\item\label{it:nomass} $\mu\bigl([0,t]\bigr)=0$ for every $t<\ell(s)$, and $\mu\bigl([0,\ell(s))\bigr)=0$.
\item\label{it:posmass} $\mu\bigl([\ell(s),\ell(s)+r)\bigr)>0$ for every $r>0$.
\item\label{it:noatom} If $s\in(0,s_R]$ and A6 holds, then $\mu(\{\ell(s)\})=0$.
\end{enumerate}
\end{lemma}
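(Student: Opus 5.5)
The plan is to prove the four items in order, each from the two general support facts just recalled. The support $\supp\mu$ is closed in $[0,\infty]$ and carries full mass, and every open neighbourhood of a support point has positive mass. Write $E=\supp\mu\cap[0,\infty)$.

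For \ref{it:attain}, I would note that $E$ is closed in $[0,\infty)$, being the trace of a closed subset of $[0,\infty]$. It is also nonempty and bounded below by $0$. So its infimum is finite and is a limit of points of $E$, hence lies in $E$ by closedness. Since $\ell(s)$ is defined as $\inf E$, this gives $\ell(s)\in E\subseteq\supp\mu$.

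For \ref{it:nomass}, the set $[0,\ell(s))$ is open in $[0,\infty]$ and disjoint from $\supp\mu$. The point $+\infty$ is not in it, and no finite support point lies below $\inf E$. Because $\supp\mu$ carries full mass, $\mu([0,\ell(s)))=0$, and for $t<\ell(s)$ the set $[0,t]$ is contained in $[0,\ell(s))$.

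For \ref{it:posmass}, the set $[0,\ell(s)+r)$ is an open neighbourhood of $\ell(s)$ in $[0,\infty]$, so it has positive mass because $\ell(s)\in\supp\mu$. Subtracting the null set from \ref{it:nomass} leaves $\mu([\ell(s),\ell(s)+r))>0$.

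For \ref{it:noatom}, I would use the uniform reading \eqref{eq:A6u} of A6 with, say, $\eps=1/2$. For every $\delta\in(0,\eta(1/2)]$ it gives
\[
\mu(\{\ell(s)\})\le \mu([\ell(s),\ell(s)+\delta])\le \tfrac32\,c_K(s)\,\delta^{\beta+1}.
\]
Letting $\delta\downarrow0$ forces the atom to vanish, provided $c_K(s)<\infty$.

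The only delicate point is this finiteness of $c_K(s)$. Convention~\ref{conv:A6} derives both finiteness and positivity of $c_K$ partly from item \ref{it:posmass}, so I must avoid circularity. Finiteness needs only the lower band $(1-\eps)c_K(s)\delta^{\beta+1}\le1$, which does not use this lemma. I would therefore argue finiteness directly from that lower band, independently of \ref{it:posmass}. With that, \ref{it:noatom} follows.
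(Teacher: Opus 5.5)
Your proposal is correct and follows the paper's proof essentially step for step: closedness of the support for (i), full mass of the support for (ii), an open neighbourhood of the support point $\ell(s)$ minus the null part below it for (iii), and the uniform A6 band with $\delta\downarrow0$ for (iv). The differences are cosmetic. You show $\mu([0,\ell(s)))=0$ directly rather than proving $\mu([0,t])=0$ and then using continuity from below. You also note explicitly that finiteness of $c_K(s)$ follows from the lower band alone and not from (iii); this spells out the non-circularity that the paper leaves implicit.
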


\begin{proof}
\ref{it:attain} $E$ is nonempty and bounded below, so $\ell(s)=\inf E<\infty$, and there are $x_n\in E$ with $x_n\to\ell(s)$. Since $\supp\mu$ is closed in $[0,\infty]$ and $\ell(s)\in[0,\infty)$, the limit lies in $\supp\mu\cap[0,\infty)=E$.

\ref{it:nomass} Let $t<\ell(s)$. Every $x\in[0,t]$ is a finite point with $x<\inf E$, hence $x\notin E$. Such an $x$ is not $+\infty$ either, so it does not lie in $\supp\mu$. Thus $[0,t]\cap\supp\mu=\varnothing$, and since $\supp\mu$ carries full mass, $\mu([0,t])\le\mu\bigl([0,\infty]\setminus\supp\mu\bigr)=0$. Then $\mu([0,\ell(s)))=\lim_{n}\mu([0,\ell(s)-\tfrac1n])=0$ by continuity from below.

\ref{it:posmass} The set $(\ell(s)-r,\ell(s)+r)\cap[0,\infty]$ is an open neighbourhood of the support point $\ell(s)$, so it has positive mass. By \ref{it:nomass}, the part below $\ell(s)$ is null, whence $\mu([\ell(s),\ell(s)+r))>0$.

\ref{it:noatom} By \eqref{eq:A6u} with any fixed $\eps\in(0,1)$, $\mu(\{\ell(s)\})\le\mu\bigl([\ell(s),\ell(s)+\delta]\bigr)\le(1+\eps)c_K(s)\delta^{\beta+1}$ for all $\delta\in(0,\eta(\eps)]$. Now let $\delta\downarrow0$ and use $c_K(s)<\infty$.
\end{proof}

\begin{lemma}[Geometry of the edge under A5$^-$]\label{lem:A5geom}
Under A5$^-$:
\begin{enumerate}
\item\label{it:corridor} (\emph{corridor}) $a+c_-s\;\le\;\ell(s)\;\le\;a+c_+s$ for all $s\in[0,s_R]$.
\item\label{it:far} (\emph{far field}) $\ell(s)\;\ge\;a+c_-s_R$ for all $s\ge s_R$.
\item\label{it:above} $\ell(s)\;\ge\;a+c_-\min(s,s_R)\;>\;a$ for all $s>0$.
\end{enumerate}
\end{lemma}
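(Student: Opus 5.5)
The three items follow directly from the one-point corridor \eqref{lean:corridor}, the convention $\ell(0):=a$, and the global monotonicity clause of A5$^-$ (Convention~\ref{lean:A5minus}). No appeal to A6 or A1 is needed. I will prove the items in order, each resting on the previous one.

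For \ref{it:corridor}, split into $s=0$ and $s\in(0,s_R]$. At $s=0$ both inequalities read $a\le\ell(0)\le a$, which holds with equality by the convention $\ell(0)=a$. For $s\in(0,s_R]$, add $a$ to each term of $c_-s\le\ell(s)-a\le c_+s$. This uses only that $a=\ell(0+)$ is a finite real number, which A5$^-$ guarantees.

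For \ref{it:far}, fix $s\ge s_R$. Global monotonicity of $\ell$ on $[0,\infty)$ gives $\ell(s)\ge\ell(s_R)$. The lower half of \ref{it:corridor} at the admissible point $s_R$ gives $\ell(s_R)\ge a+c_-s_R$. Chaining the two inequalities yields the claim.

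For \ref{it:above}, fix $s>0$. If $s\le s_R$, then $\min(s,s_R)=s$ and \ref{it:corridor} gives $\ell(s)\ge a+c_-s$. If $s>s_R$, then $\min(s,s_R)=s_R$ and \ref{it:far} applies. Strict positivity of $c_-\min(s,s_R)$ follows from $c_->0$ (Convention~\ref{conv:corridor}) together with $s>0$ and $s_R>0$, which gives $\ell(s)>a$.

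I expect no real obstacle. The only point needing care is the role of monotonicity: it must be the global version, on all of $[0,\infty)$, since the corridor controls $\ell$ only on $[0,s_R]$. I will also remark that the value $\ell(0)=a$ is a notational convention, so the $s=0$ case of \ref{it:corridor} is definitional rather than a constraint on $K(\cdot\mid0)$.
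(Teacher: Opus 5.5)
Your proposal is correct and follows the paper's proof essentially verbatim: item (i) from the one-point corridor on $(0,s_R]$ plus the convention $\ell(0)=a$, item (ii) from global monotonicity chained with (i) at $s_R$, and item (iii) by combining the two with $c_->0$. Your remark that monotonicity must be the global version, since the corridor only controls $\ell$ on $[0,s_R]$, is exactly the point the paper relies on.
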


\begin{proof}
\ref{it:corridor} is the one-point corridor of A5$^-$ (Convention~\ref{lean:A5minus}, \eqref{lean:corridor}) on $(0,s_R]$, together with the trivial case $s=0$, where $\ell(0)-a=0$ by the convention $\ell(0)=a$. For \ref{it:far}, let $s\ge s_R$. Monotonicity of $\ell$ on $[0,\infty)$ gives $\ell(s)\ge\ell(s_R)$, and $\ell(s_R)\ge a+c_-s_R$ by \ref{it:corridor}. \ref{it:above} combines the two, using $c_->0$ (Convention~\ref{conv:cpm}).
\end{proof}

Lemma~\ref{lem:A5geom} deliberately leaves one subtlety untouched. In A5$^-$, the value $\ell(0)$ is a \emph{notational convention} ($\ell(0):=a=\ell(0+)$), not a constraint on the kernel at $s=0$. The actual conditional law $K(\cdot\mid 0)$, and hence the true edge $\inf(\supp K(\cdot\mid0)\cap[0,\infty))$, is unconstrained by A5$^-$. This is harmless because A1 makes $\{0\}$ an $F_S$-null set, so $K(\cdot\mid0)$ never enters the mixture \eqref{eq:Q}. See the proof of Proposition~\ref{prop:endpoint}.

\section{The apparent endpoint: \texorpdfstring{$a=\inf\supp Q$}{a = inf supp Q}}\label{sec:endpoint}

\begin{proposition}[Apparent endpoint: uses A1 and A5$^-$ only]\label{prop:endpoint}
Let $(F_S,K)$ satisfy A1 and A5$^-$ (together with the setup conventions of Section~\ref{ssec:objects}). Then
\begin{enumerate}
\item\label{it:null} $Q([0,a])=0$.
\item\label{it:pos} $Q([a,a+h])>0$ for every $h>0$.
\item\label{it:inf} consequently $a=\min\supp Q=\inf\supp Q$.
\end{enumerate}
\end{proposition}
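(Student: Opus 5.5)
The plan is to compute everything through the mixture formula $Q(B)=\int K(B\mid s)\,F_S(ds)$ of \eqref{eq:Q}, and to control the integrand pointwise in $s$ using the edge facts of Lemma~\ref{lem:supp} and the corridor geometry of Lemma~\ref{lem:A5geom}. The atom-free clause of A1 lets us ignore $s=0$ throughout. This matters because $K(\cdot\mid 0)$ is unconstrained by A5$^-$, as remarked after Lemma~\ref{lem:A5geom}.

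For \ref{it:null}, fix $s>0$. Lemma~\ref{lem:A5geom}\ref{it:above} gives $\ell(s)\ge a+c_-\min(s,s_R)>a$. Since $a<\ell(s)$, Lemma~\ref{lem:supp}\ref{it:nomass} with $t=a$ yields $K([0,a]\mid s)=0$. Hence the integrand vanishes on $(0,\infty)$. The remaining point $\{0\}$ is $F_S$-null by A1, so $Q([0,a])=\int_{\{0\}}K([0,a]\mid s)\,F_S(ds)=0$.

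For \ref{it:pos}, fix $h>0$ and choose $\sigma\in(0,s_R]$ with $c_+\sigma\le h/2$. For $s\in(0,\sigma]$ the corridor of Lemma~\ref{lem:A5geom}\ref{it:corridor}, together with \ref{it:above}, gives $a<\ell(s)\le a+h/2$. Therefore $[\ell(s),\ell(s)+h/2)\subseteq[a,a+h]$. By Lemma~\ref{lem:supp}\ref{it:posmass}, $K([a,a+h]\mid s)\ge K([\ell(s),\ell(s)+h/2)\mid s)>0$. So the measurable integrand $s\mapsto K([a,a+h]\mid s)$ is strictly positive on $(0,\sigma]$. This set has positive $F_S$-mass, because $F_S((0,\sigma])=F_S(\sigma)>0$ by the byproduct recorded in Remark~\ref{rem:set-A1cdf}. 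The integral of a strictly positive measurable function over a set of positive measure is positive, so $Q([a,a+h])>0$. This is the only step needing care: the choice of $\sigma$ must make every edge $\ell(s)$ land strictly inside the target window. The upper corridor constant $c_+<\infty$ of Convention~\ref{conv:corridor} is exactly what provides this.

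For \ref{it:inf}, the set $[0,a)$ is open in $[0,\infty]$ and $Q$-null by \ref{it:null}, so it misses $\supp Q$; thus $\inf\supp Q\ge a$. Every open neighbourhood of $a$ in $[0,\infty]$ contains some $[a,a+h]$ with $h>0$, which has positive mass by \ref{it:pos}. Hence $a\in\supp Q$, and $a=\min\supp Q=\inf\supp Q$. The case $a=0$ needs no separate treatment.
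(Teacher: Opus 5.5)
Your proof is correct and follows the paper's argument essentially step for step: (i) uses $\ell(s)>a$ for all $s>0$ together with the no-atom clause of A1. (ii) uses the corridor to place $\ell(s)$ strictly inside the window for all $s$ in a small interval of positive $F_S$-mass. (iii) combines the two. The differences are cosmetic: you use the fixed margin $h/2$ where the paper uses $r(s)=a+h-\ell(s)$, and you obtain $F_S((0,\sigma])>0$ from the CDF byproduct of Remark~\ref{rem:set-A1cdf} rather than directly from the density lower bound $f_S(s)\ge\tfrac12Ls^{\alpha}$.
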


\begin{proof}
\ref{it:null} For every $s>0$ we have $\ell(s)>a$ by Lemma~\ref{lem:A5geom}\ref{it:above}, hence $K([0,a]\mid s)=0$ by Lemma~\ref{lem:supp}\ref{it:nomass} (applied with $t=a<\ell(s)$). Thus the measurable integrand $s\mapsto K([0,a]\mid s)$ vanishes on $(0,\infty)$, i.e.\ $F_S$-almost everywhere, because $F_S(\{0\})=0$ by A1. It follows that $Q([0,a])=\int K([0,a]\mid s)F_S(ds)=0$. (No information about $K(\cdot\mid0)$ is used, and none is available.)

\ref{it:pos} Fix $h>0$. Since $Q([a,a+h])$ is nondecreasing in $h$, we may and do assume $h<c_+\min(s_R,s_0)$. By A1 there is $s_1\in(0,s_0)$ with $f_S(s)\ge\tfrac12Ls^{\alpha}>0$ for all $s\in(0,s_1)$. Set
\[
J:=\bigl(0,\ \min(h/c_+,\,s_1)\bigr),\qquad F_S(J)=\int_J f_S(s)\,ds\;\ge\;\int_J\tfrac12Ls^\alpha ds\;>\;0 .
\]
For every $s\in J$ we have $s\le s_R$ and, by Lemma~\ref{lem:A5geom}\ref{it:corridor}, $a<\ell(s)\le a+c_+s<a+h$, so $r(s):=a+h-\ell(s)>0$ and, by Lemma~\ref{lem:supp}\ref{it:posmass},
\[
K\bigl([a,a+h]\mid s\bigr)\;\ge\;K\bigl([\ell(s),\ell(s)+r(s))\mid s\bigr)\;>\;0,
\]
the inclusion $[\ell(s),\ell(s)+r(s))=[\ell(s),a+h)\subseteq[a,a+h]$ being valid because $\ell(s)>a$. The integrand $s\mapsto K([a,a+h]\mid s)$ is therefore strictly positive at every point of $J$, and $F_S(J)>0$. Suppose $Q([a,a+h])=\int_{[0,\infty)}K([a,a+h]\mid s)F_S(ds)$ vanished. Then the integrand would vanish $F_S$-a.e., in particular $F_S$-a.e.\ on $J$, which is impossible. Hence $Q([a,a+h])>0$.

\ref{it:inf} By \ref{it:null}, $[0,a)$ is an open subset of $[0,\infty]$ with $Q([0,a))=0$, so no point of $[0,a)$ lies in $\supp Q$, i.e.\ $\inf\supp Q\ge a$. In the other direction, every open neighbourhood of $a$ in $[0,\infty]$ contains a set $[a,a+h)$ with $h>0$. Such a set has positive $Q$-mass, since $Q([a,a+h))\ge Q([a,a+h'])>0$ for any $h'\in(0,h)$ by the inclusion $[a,a+h']\subseteq[a,a+h)$ and \ref{it:pos}. Hence $a\in\supp Q$ and \ref{it:inf} follows.
\end{proof}

Proposition~\ref{prop:endpoint} establishes the apparent-endpoint identity $a=\inf\supp Q$ from A1 and A5$^-$ alone. A6 was not used. The identification of $a$ from $Q$, which underpins the ``determined by $Q$'' clause of Lemma~\ref{lem:exponent} and the deduction of Theorem~\ref{thm:I2}, is therefore available on all of $\Mstar$ independently of the near-edge expansion.

\section{Localization}\label{sec:local}

For the rest of the proof we set, for $h>0$ and $s\in(0,s_R]$,
\begin{equation}\label{eq:deltah}
\delta_h(s):=\bigl(h-(\ell(s)-a)\bigr)_+\;\in\;[0,h].
\end{equation}
We call $\delta_h(s)$ the \emph{depth}: how far the observation window $[a,a+h]$ reaches past the lower edge $\ell(s)$ of the conditional support at latent value $s$. It is positive while $\ell(s)$ lies below $a+h$, and $0$ once $\ell(s)$ has passed $a+h$.

\begin{lemma}[Exact localization and window identity]\label{lem:local}
Let $(F_S,K)$ satisfy A1, A5$^-$, A6 and let $0<h<c_-s_R$. Then:
\begin{enumerate}
\item\label{it:loc} $\displaystyle Q\bigl([a,a+h]\bigr)=\int_{(0,\,h/c_-]}K\bigl([a,a+h]\mid s\bigr)\,F_S(ds)$.
\item\label{it:window} for every $s\in(0,s_R]$,
\[
K\bigl([a,a+h]\mid s\bigr)=K\bigl([\ell(s),\,\ell(s)+\delta_h(s)]\mid s\bigr),
\]
both sides being $0$ when $\delta_h(s)=0$ (intervals are read with the convention $[x,x]=\{x\}$).
\item\label{it:sandwich} for every $s\in(0,s_R]$,
\[
\bigl(h-c_+s\bigr)_+\;\le\;\delta_h(s)\;\le\;\bigl(h-c_-s\bigr)_+ . \] In particular $\delta_h(s)=0$ for $s>h/c_-$, and $\delta_h(s)>0$ for $s<h/c_+$.
\end{enumerate}
If moreover $h<c_-\min(s_R,s_0)$, then \ref{it:loc} can be written as a Lebesgue integral,
\begin{equation}\label{eq:lebesgue}
Q\bigl([a,a+h]\bigr)=\int_0^{h/c_-}K\bigl([a,a+h]\mid s\bigr)\,f_S(s)\,ds .
\end{equation}
\end{lemma}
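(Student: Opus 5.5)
I will prove the three items in the order (iii), (ii), (i), since the window identity and the sandwich feed the localization. Throughout, the basic tools are the corridor and far-field bounds of Lemma~\ref{lem:A5geom} and the edge facts of Lemma~\ref{lem:supp}.

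\emph{The sandwich (iii).} For $s\in(0,s_R]$ the corridor of Lemma~\ref{lem:A5geom}\ref{it:corridor} gives $c_-s\le\ell(s)-a\le c_+s$. Hence
\[
h-c_+s\;\le\;h-(\ell(s)-a)\;\le\;h-c_-s .
\]
The map $x\mapsto x_+$ is monotone, so the bounds pass to positive parts. The two ``in particular'' clauses are then immediate. If $s>h/c_-$, the upper bound is $0$. If $s<h/c_+$, the lower bound $h-c_+s$ is strictly positive.

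\emph{The window identity (ii).} Fix $s\in(0,s_R]$. By Lemma~\ref{lem:A5geom}\ref{it:above}, $\ell(s)>a$, and by Lemma~\ref{lem:supp}\ref{it:nomass}, $K([0,\ell(s))\mid s)=0$. Therefore
\[
K([a,a+h]\mid s)=K\bigl([\ell(s),a+h]\mid s\bigr)\quad\text{when }\ell(s)\le a+h,
\]
and in that case $[\ell(s),a+h]=[\ell(s),\ell(s)+\delta_h(s)]$. The edge case $\ell(s)=a+h$ gives $\delta_h=0$ and the singleton $\{\ell(s)\}$. By Lemma~\ref{lem:supp}\ref{it:noatom}, A6 makes this singleton null, so both sides vanish. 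When $\ell(s)>a+h$, both sides vanish as well: the left side by Lemma~\ref{lem:supp}\ref{it:nomass} with $t=a+h$, and the right side is $K(\{\ell(s)\}\mid s)=0$ again by \ref{it:noatom}. This atom bookkeeping is the only delicate point, and it is exactly why the statement uses A6.

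\emph{The localization (i).} I split $Q([a,a+h])=\int K([a,a+h]\mid s)\,F_S(ds)$ over $\{0\}$, $(0,h/c_-]$, $(h/c_-,s_R]$ and $(s_R,\infty)$.
\begin{itemize}
\item The atom at $0$ is $F_S$-null by A1.
\item On $(h/c_-,s_R]$, we have $\ell(s)\ge a+c_-s>a+h$, so the integrand vanishes by Lemma~\ref{lem:supp}\ref{it:nomass}.
\item On $s>s_R$, the far-field bound gives $\ell(s)\ge a+c_-s_R>a+h$, since $h<c_-s_R$, so the integrand again vanishes.
\end{itemize}
The hypothesis $h<c_-s_R$ also ensures $h/c_-<s_R$, so the retained range lies inside the corridor region.

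\emph{The Lebesgue form \eqref{eq:lebesgue}.} If also $h<c_-s_0$, then $(0,h/c_-]\subseteq(0,s_0)$. On that set $F_S$ has density $f_S$, and the endpoint $\{h/c_-\}$ is Lebesgue-null, which gives \eqref{eq:lebesgue}. Measurability of the integrand comes from the kernel property. I expect no real obstacle beyond keeping the strict versus non-strict inequalities straight at $\ell(s)=a+h$.
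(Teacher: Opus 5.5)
Your proof is correct and follows the paper's argument essentially step for step. It uses the same four-piece split of the mixture integral with the corridor and far-field bounds for (i), and the same case analysis on the position of $\ell(s)$ relative to $a+h$ for (ii), with Lemma~\ref{lem:supp}\ref{it:nomass} and \ref{it:noatom} disposing of the null pieces. For (iii) it applies the corridor with monotonicity of the positive part, as the paper does. The only difference is the order of presentation, and your localization (i) does not actually use (ii) or (iii), so no particular ordering is needed.
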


\begin{proof}
\ref{it:loc} Split $\int_{[0,\infty)}=\int_{\{0\}}+\int_{(0,h/c_-]}+\int_{(h/c_-,\,s_R]}+\int_{(s_R,\infty)}$. The $\{0\}$ term vanishes since $F_S(\{0\})=0$ (A1). For $s\in(h/c_-,s_R]$, Lemma~\ref{lem:A5geom}\ref{it:corridor} gives $\ell(s)\ge a+c_-s>a+h$, so $[a,a+h]\subseteq[0,\ell(s))$ and $K([a,a+h]\mid s)=0$ by Lemma~\ref{lem:supp}\ref{it:nomass}. When $s>s_R$, Lemma~\ref{lem:A5geom}\ref{it:far} gives $\ell(s)\ge a+c_-s_R>a+h$ (here $h<c_-s_R$ enters), and the same conclusion follows. Note that $h/c_-<s_R$, so the four ranges are exhaustive and only $(0,h/c_-]$ survives.

\ref{it:window} Fix $s\in(0,s_R]$ and recall that $\ell(s)>a$ (Lemma~\ref{lem:A5geom}\ref{it:above}). Suppose first that $\ell(s)>a+h$, so that $\delta_h(s)=0$. Then $[a,a+h]\subseteq[0,\ell(s))$, so the left side is $0$, while the right side is $K(\{\ell(s)\}\mid s)=0$ by Lemma~\ref{lem:supp}\ref{it:noatom}. If instead $\ell(s)\le a+h$, so that $\ell(s)+\delta_h(s)=a+h$, decompose $[a,a+h]=[a,\ell(s))\,\dot\cup\,[\ell(s),a+h]$. The first piece lies in $[0,\ell(s))$ and is null by Lemma~\ref{lem:supp}\ref{it:nomass}, so
\[
K([a,a+h]\mid s)=K\bigl([\ell(s),a+h]\mid s\bigr)=K\bigl([\ell(s),\ell(s)+\delta_h(s)]\mid s\bigr),
\]
and if $\delta_h(s)=0$ (the case $\ell(s)=a+h$) this equals $K(\{\ell(s)\}\mid s)=0$.

\ref{it:sandwich} This is immediate from the corridor $c_-s\le\ell(s)-a\le c_+s$ on $(0,s_R]$ and the monotonicity of $x\mapsto(h-x)_+$.

Finally, we prove \eqref{eq:lebesgue}. If $h<c_-\min(s_R,s_0)$, then $(0,h/c_-]\subseteq(0,s_0)$, where $F_S(ds)=f_S(s)\,ds$ by A1.
\end{proof}

\section{The two-sided bound}\label{sec:bound}

We first record the elementary integral identity that produces both the exponent and the Beta constant.

\begin{proposition}[Beta identity]\label{prop:beta}
For every $c>0$, $h>0$, $\alpha\ge0$, $\beta\ge0$,
\begin{equation}\label{eq:betaid}
\int_0^{h/c}\bigl(h-cs\bigr)^{\beta+1}s^{\alpha}\,ds
\;=\;c^{-(\alpha+1)}\,\Beta(\alpha+1,\beta+2)\,h^{\alpha+\beta+2}.
\end{equation}
\end{proposition}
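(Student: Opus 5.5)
The plan is to reduce \eqref{eq:betaid} to the standard Euler Beta integral $\Beta(p,q)=\int_0^1 t^{p-1}(1-t)^{q-1}\,dt$, valid for $p,q>0$, by a single linear change of variables that sends the integration range $[0,h/c]$ onto $[0,1]$. First I check that the integral is well defined. On $(0,h/c)$ the integrand $(h-cs)^{\beta+1}s^{\alpha}$ is nonnegative and continuous. It is bounded by $h^{\beta+1}(h/c)^{\alpha}$, since $\alpha\ge0$ and $\beta+1>0$. It is therefore Lebesgue integrable, and no convergence issue arises at either endpoint.

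Next I substitute $s=(h/c)\,t$ with $t\in[0,1]$, so that $ds=(h/c)\,dt$. The two factors become
\[
h-cs=h(1-t),\qquad s^{\alpha}=(h/c)^{\alpha}t^{\alpha},
\]
and the integral turns into
\[
h^{\beta+1}\,(h/c)^{\alpha}\,(h/c)\int_0^1 t^{\alpha}(1-t)^{\beta+1}\,dt .
\]
The powers of $h$ collect to $h^{\alpha+\beta+2}$, and the powers of $c$ collect to $c^{-(\alpha+1)}$. The remaining integral is $\Beta(\alpha+1,\beta+2)$ by definition, with parameters $p=\alpha+1\ge1>0$ and $q=\beta+2\ge2>0$. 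This gives \eqref{eq:betaid} exactly.

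There is no genuine obstacle. The only points needing care are bookkeeping: the Jacobian contributes the extra factor $h/c$, and it is this factor that produces the exponent $\alpha+1$ on $c^{-1}$ and the $+2$ in $\alpha+\beta+2$. I would also remark that the integrand vanishes at $s=h/c$, so it makes no difference whether the range is read as open or closed. This matters because in later use the integrand is $(h-cs)_+^{\beta+1}$, and the range is truncated at $h/c$ through the depth sandwich of Lemma~\ref{lem:local}\ref{it:sandwich}.
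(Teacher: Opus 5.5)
Your proposal is correct and follows the paper's proof essentially verbatim: the same linear substitution $s=(h/c)t$, the same collection of powers into $c^{-(\alpha+1)}h^{\alpha+\beta+2}$, and the same identification of $\int_0^1 t^{\alpha}(1-t)^{\beta+1}\,dt$ as $\Beta(\alpha+1,\beta+2)$ with positive parameters. Your added check that the integrand is bounded and continuous, so the integral is proper, also matches the paper's closing remark.
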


\begin{proof}
Substitute $s=(h/c)t$, $t\in[0,1]$, $ds=(h/c)\,dt$, $h-cs=h(1-t)$:
\[
\int_0^{h/c}(h-cs)^{\beta+1}s^\alpha\,ds
=h^{\beta+1}\Bigl(\frac hc\Bigr)^{\alpha+1}\int_0^1(1-t)^{\beta+1}t^{\alpha}\,dt
=c^{-(\alpha+1)}h^{\alpha+\beta+2}\,\Beta(\alpha+1,\beta+2),
\]
since $\int_0^1t^{x-1}(1-t)^{y-1}dt=\Beta(x,y)$ with $x=\alpha+1\ge1>0$ and $y=\beta+2\ge2>0$. The integral is proper here, because the integrand is bounded and continuous on $[0,1]$.
\end{proof}

\begin{theorem}[Two-sided near-edge bound]\label{thm:twosided}
Let $(F_S,K)\in\Mstar$. Define $\varkappa_\pm$ by \eqref{eq:kappas} (explicit positive constants built from $\clo,\chigh,c_\pm,\alpha,\beta$). Then there is $h_0>0$ (depending on the member) such that
\[
\varkappa_-\,L\,h^{\alpha+\beta+2}\;\le\;Q\bigl([a,a+h]\bigr)\;\le\;\varkappa_+\,L\,h^{\alpha+\beta+2}
\qquad\text{for all }h\in(0,h_0).
\]
\end{theorem}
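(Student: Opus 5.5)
We start from the localization of Lemma~\ref{lem:local}. For $h<c_-\min(s_R,s_0)$ it gives
\[
Q([a,a+h])=\int_0^{h/c_-}K\bigl([\ell(s),\ell(s)+\delta_h(s)]\mid s\bigr)\,f_S(s)\,ds,
\]
and we sandwich the integrand by combining three inputs: A6 in the uniform form \eqref{eq:A6u}, the A1 density bounds, and the depth sandwich $(h-c_+s)_+\le\delta_h(s)\le(h-c_-s)_+$.

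Fix $\eps\in(0,1)$, to be chosen so that the resulting constants are at least $\tfrac12\clo$ and at most $2\chigh$. Choose $h_0$ small enough that four conditions hold:
\begin{itemize}
\item[(i)] $h_0<c_-\min(s_R,s_0)$;
\item[(ii)] $h_0\le\eta(\eps)$, so every depth $\delta_h(s)\in[0,h]$ lies in the uniform A6 range (depth $0$ contributes $0$ on both sides by Lemma~\ref{lem:supp}\ref{it:noatom});
\item[(iii)] $h_0/c_-$ is below the A1 threshold, so $(1-\eps)Ls^\alpha\le f_S(s)\le(1+\eps)Ls^\alpha$ on $(0,h_0/c_-)$;
\item[(iv)] $h_0/c_-$ is below a radius $\sigma$ on which $(1-\eps)\clo\le c_K(s)\le(1+\eps)\chigh$.
\end{itemize}
Condition (iv) uses only the definitions of $\liminf$ and $\limsup$ together with $0<\clo\le\chigh<\infty$.

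For the upper bound, on $(0,h/c_-]$ we have
\[
K(\cdot)\le(1+\eps)^2\chigh\,(h-c_-s)^{\beta+1}.
\]
Multiplying by the density bound and integrating, Proposition~\ref{prop:beta} with $c=c_-$ gives
\[
Q([a,a+h])\le(1+\eps)^3\chigh\,c_-^{-(\alpha+1)}\Beta(\alpha+1,\beta+2)\,L\,h^{\alpha+\beta+2}.
\]
This is at most $\varkappa_+Lh^{\alpha+\beta+2}$ once $(1+\eps)^3\le2$.

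For the lower bound, we restrict the integral to $(0,h/c_+)\subseteq(0,h/c_-]$, where $\delta_h(s)\ge h-c_+s>0$. Monotonicity of $\delta\mapsto K([\ell(s),\ell(s)+\delta]\mid s)$ then gives
\[
K(\cdot)\ge(1-\eps)^2\clo\,(h-c_+s)^{\beta+1}.
\]
Applying Proposition~\ref{prop:beta} with $c=c_+$ yields the factor $(1-\eps)^3\clo\,c_+^{-(\alpha+1)}\Beta(\alpha+1,\beta+2)$, which is at least $\varkappa_-$ once $(1-\eps)^3\ge\tfrac12$. Taking $\eps=0.2$, for example, satisfies both requirements.

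The consequences stated in Lemma~\ref{lem:exponent} then follow. Since $a=\inf\supp Q$ is Proposition~\ref{prop:endpoint}, taking logarithms in the two-sided bound and dividing by $\log h\to-\infty$ gives \eqref{eq:exponent}. In the affine case, the same argument with $\ell(s)-a=c_\ell s$ exactly makes $\delta_h(s)=(h-c_\ell s)_+$. Letting $\eps\downarrow0$ with $c_K\to\bar c_K$ then gives \eqref{eq:affine}.

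The main obstacle is step (iv), together with the interplay between uniformity and the $s$-range. A6 controls $c_K$ only through its $\liminf$ and $\limsup$ at $0$, so we must make sure the integration range $(0,h/c_-]$ shrinks into the neighbourhood where $c_K$ is pinned between $(1-\eps)\clo$ and $(1+\eps)\chigh$. We must also ensure that the $\eta(\eps)$ of \eqref{eq:A6u} is uniform in $s$, so that a single $h_0$ works for all relevant $s$. Measurability of the integrand is inherited from the kernel property, so the depth substitution under the integral causes no difficulty.
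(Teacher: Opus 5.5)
Your proposal is correct and follows essentially the same route as the paper. Both arguments localize via Lemma~\ref{lem:local}, apply the uniform A6 band at the $s$-dependent depth, use the $\liminf/\limsup$ window for $c_K$ together with the A1 density band, and finish with the Beta identity at $c=c_-$ (upper bound) and $c=c_+$ (lower bound), with $\eps=1/5$ absorbing $(1\pm\eps)^3$ into the factors $2$ and $\tfrac12$. The only cosmetic difference is in the lower bound: you first shrink the depth to $h-c_+s$ by monotonicity of the measure and then invoke A6, whereas the paper applies A6 at $\delta_h(s)$ and then uses monotonicity of $\delta\mapsto\delta^{\beta+1}$; both are valid.
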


\begin{proof}
Throughout the proof, fix
\[
\eps:=\tfrac15,\qquad\text{so that }(1+\eps)^3=1.728\le2\text{ and }(1-\eps)^3=0.512\ge\tfrac12 .
\]
We collect the four smallness thresholds that the axioms provide for this $\eps$:
\begin{itemize}
\item[(T1)] (\emph{A1 window}) there is $s_1\in(0,s_0]$ such that $(1-\eps)Ls^{\alpha}\le f_S(s)\le(1+\eps)Ls^{\alpha}$ for all $s\in(0,s_1)$. This is \eqref{eq:A1}.
\item[(T2)] (\emph{$c_K$ window}) there is $s_2\in(0,s_R]$ such that
\[
(1-\eps)\,\clo\;\le\;c_K(s)\;\le\;(1+\eps)\,\chigh\qquad\text{for all }s\in(0,s_2).
\]
Indeed, by definition of $\liminf/\limsup$ there is $s_2$ with $c_K(s)>\clo-\eps\,\clo$ and $c_K(s)<\chigh+\eps\,\chigh$ on $(0,s_2)$.
\item[(T3)] (\emph{A6 window}) $\eta(\eps)>0$ as in Convention~\ref{conv:A6}.
\item[(T4)] (\emph{geometry windows}) the corridor holds on $(0,s_R]$ and the density representation on $(0,s_0)$.
\end{itemize}
Set
\begin{equation}\label{eq:h0}
h_0:=\min\bigl\{c_-s_R,\;c_-s_0,\;c_-s_1,\;c_-s_2,\;\eta(\eps)\bigr\}>0,
\end{equation}
and let $0<h<h_0$. Then $h<c_-s_R$, so Lemma~\ref{lem:local} applies. Moreover, the contributing range satisfies
\begin{equation}\label{eq:range}
(0,h/c_-]\;\subseteq\;(0,\min(s_0,s_1,s_2,s_R)),
\end{equation}
and every depth satisfies $\delta_h(s)\le h<\eta(\eps)$. By Lemma~\ref{lem:local}\ref{it:loc}--\ref{it:window} and \eqref{eq:lebesgue},
\begin{equation}\label{eq:master}
Q\bigl([a,a+h]\bigr)=\int_0^{h/c_-}K\bigl([\ell(s),\ell(s)+\delta_h(s)]\mid s\bigr)\,f_S(s)\,ds .
\end{equation}

\emph{Upper bound.} Fix $s\in(0,h/c_-]$ and assume that $\delta_h(s)>0$. Then \eqref{eq:A6u} applies, since $s\in(0,s_R]$ and $0<\delta_h(s)\le h<\eta(\eps)$, and so does (T2), since $s<s_2$ by \eqref{eq:range}. Together with Lemma~\ref{lem:local}\ref{it:sandwich}, these bounds yield
\[
K\bigl([\ell(s),\ell(s)+\delta_h(s)]\mid s\bigr)
\;\le\;(1+\eps)\,c_K(s)\,\delta_h(s)^{\beta+1}
\;\le\;(1+\eps)^2\,\chigh\,\bigl(h-c_-s\bigr)^{\beta+1}.
\]
In the remaining case $\delta_h(s)=0$ the left side is $0$ and the inequality is trivial. With (T1), which is valid since $s<s_1$,
\[
Q\bigl([a,a+h]\bigr)\;\le\;(1+\eps)^3\,\chigh\,L\int_0^{h/c_-}(h-c_-s)^{\beta+1}s^{\alpha}\,ds
\;=\;(1+\eps)^3\,\chigh\,L\,c_-^{-(\alpha+1)}\Beta(\alpha+1,\beta+2)\,h^{\alpha+\beta+2},
\]
by Proposition~\ref{prop:beta} with $c=c_-$. Since $(1+\eps)^3\le2$, the upper bound holds with $\varkappa_+$ of \eqref{eq:kappas}.

\emph{Lower bound.} The integrand in \eqref{eq:master} is nonnegative, so we may restrict the integral to $(0,h/c_+)\subseteq(0,h/c_-]$. Fix $s\in(0,h/c_+)$. Then $\delta_h(s)\ge h-c_+s>0$ by Lemma~\ref{lem:local}\ref{it:sandwich}, so \eqref{eq:A6u} and (T2), along with monotonicity of $\delta\mapsto\delta^{\beta+1}$, give
\[
K\bigl([\ell(s),\ell(s)+\delta_h(s)]\mid s\bigr)
\;\ge\;(1-\eps)\,c_K(s)\,\delta_h(s)^{\beta+1}
\;\ge\;(1-\eps)^2\,\clo\,\bigl(h-c_+s\bigr)^{\beta+1}.
\]
By (T1) and Proposition~\ref{prop:beta} applied with $c=c_+$ (the integrals over $(0,h/c_+)$ and $[0,h/c_+]$ coincide),
\[
Q\bigl([a,a+h]\bigr)\;\ge\;(1-\eps)^3\,\clo\,L\int_0^{h/c_+}(h-c_+s)^{\beta+1}s^{\alpha}\,ds
\;=\;(1-\eps)^3\,\clo\,L\,c_+^{-(\alpha+1)}\Beta(\alpha+1,\beta+2)\,h^{\alpha+\beta+2},
\]
and since $(1-\eps)^3\ge\tfrac12$, the lower bound holds with $\varkappa_-$ of \eqref{eq:kappas}.
\end{proof}

\begin{remark}[Technical notes on the two-sided bound]\label{rem:uniformity}
\emph{Uniformity of A6, and why $\liminf/\limsup$ suffices.}\label{rem:localize} The two displays above invoke \eqref{eq:A6u} at the depth $\delta=\delta_h(s)$, which \emph{depends on $s$} and ranges over all of $(0,h)$ as $s$ runs through the contributing window $(0,h/c_-]$. That window itself moves with $h$. The single threshold $\eta(\eps)$, valid simultaneously for all $s\in(0,s_R]$, lets one $h$-condition ($h<\eta(\eps)$) validate every invocation at once. A merely pointwise A6, granting each $s$ a private $\eta(\eps,s)>0$, fails. The proof would then need $h\le\inf\{\eta(\eps,s):0<s\le h/c_-\}$, a \emph{self-referential} requirement, and a pointwise hypothesis cannot prevent $\eta(\eps,s)\downarrow0$ as $s\downarrow0$ fast enough that the infimum vanishes for every window. Restricting uniformity to \emph{compact subsets} of $(0,s_R]$ fails for the same reason, the contributing window accumulating at $s=0$. Uniformity up to the endpoint, $\eta$ free of $s$, is therefore the load-bearing form of A6, not a convenience. Only the $\liminf/\limsup$ of $c_K$ at $0$ is needed. Indeed, the corridor (A5) localizes the contributing range to $(0,h/c_-]$ \emph{before} any bound on $c_K$ is invoked, so the sharp two-sided window (T2) near $0$ suffices. No positive lower bound on $c_K$ away from $0$ is required, and $c_K$ may decay to $0$ along $s\uparrow s_R$.

\emph{Edge curvature.}\label{rem:curvature} Within the corridor, $\ell$ need not be affine. Its shape enters the estimate \emph{only} through the depth $\delta_h(s)=(h-(\ell(s)-a))_+$, which Lemma~\ref{lem:local}\ref{it:sandwich} sandwiches between the exactly affine envelopes $(h-c_+s)_+$ and $(h-c_-s)_+$. By Proposition~\ref{prop:beta} both integrate to the \emph{same} power $h^{\alpha+\beta+2}$, with constants $c_+^{-(\alpha+1)}$ and $c_-^{-(\alpha+1)}$, so curvature perturbs only the constant, never the exponent. The spread $\varkappa_+/\varkappa_-=4\,(\chigh/\clo)(c_+/c_-)^{\alpha+1}$ widens with the corridor, and the bookkeeping factor $4$ is pure slack, driven to $1$ as $\eps\downarrow0$. When $c_-=c_+$ and $c_K$ has a limit the two envelopes coincide, and the bound collapses to the asymptotic equality of Proposition~\ref{prop:affine}.

\emph{Measurability.} Every integral above pairs the measurable function $s\mapsto K([a,a+h]\mid s)$ with $F_S$, split along fixed intervals and bounded above and below by the \emph{explicit} continuous envelopes $s\mapsto\mathrm{const}\cdot(h-c_\mp s)_+^{\beta+1}s^{\alpha}$. Here $\ell$ and $c_K$ enter only through pointwise inequalities between measurable functions, never as integrands. No measurability of $s\mapsto\ell(s)$ or $s\mapsto c_K(s)$ is therefore required.
\end{remark}

\section{The near-endpoint exponent}\label{sec:exponent}

\begin{corollary}[Exponent, determined by $Q$]\label{cor:exponent}
Let $(F_S,K)\in\Mstar$. Then
\[
\lim_{h\downarrow0}\frac{\log Q([a,a+h])}{\log h}=\alpha+\beta+2 .
\]
Moreover the left-hand side is a functional of $Q$ alone: $a=\inf\supp Q$ by Proposition~\ref{prop:endpoint}, and $h\mapsto Q([a,a+h])$ is then determined by $Q$.
\end{corollary}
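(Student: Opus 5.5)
The plan is to read the limit off the two-sided bound of Theorem~\ref{thm:twosided} by taking logarithms. Fix $(F_S,K)\in\Mstar$ and let $h_0>0$ be the member-dependent threshold of that theorem. We may assume $h_0<1$, so that $\log h<0$ throughout. For $h\in(0,h_0)$ the quantity $Q([a,a+h])$ is strictly positive: this follows from the lower bound, since $\varkappa_->0$ and $L>0$, and also directly from Proposition~\ref{prop:endpoint}. Hence its logarithm is finite, and
\[
\log(\varkappa_-L)+(\alpha+\beta+2)\log h\;\le\;\log Q([a,a+h])\;\le\;\log(\varkappa_+L)+(\alpha+\beta+2)\log h .
\]

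Next, divide through by $\log h<0$; this reverses both inequalities. It gives
\[
\alpha+\beta+2+\frac{\log(\varkappa_+L)}{\log h}\;\le\;\frac{\log Q([a,a+h])}{\log h}\;\le\;\alpha+\beta+2+\frac{\log(\varkappa_-L)}{\log h}.
\]
The constants $\varkappa_\pm L$ lie in $(0,\infty)$ and do not depend on $h$. Indeed, $\clo>0$ and $\chigh<\infty$ by A6, and $0<c_-\le c_+<\infty$ by Convention~\ref{conv:cpm}, so \eqref{eq:kappas} gives finite positive values. Both correction terms therefore tend to $0$ as $h\downarrow0$, because $|\log h|\to\infty$. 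The squeeze then yields the limit $\alpha+\beta+2$.

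For the second assertion, observe that the quantity whose limit is taken involves the member only through $a$ and $Q$. Proposition~\ref{prop:endpoint}\ref{it:inf} gives $a=\inf\supp Q$, which is a functional of $Q$ alone and uses only A1 and A5$^-$. Once $a$ is fixed this way, each value $Q([a,a+h])$ is just the $Q$-measure of an explicit Borel set. So the whole function $h\mapsto\log Q([a,a+h])/\log h$, and hence its limit, is determined by $Q$. In particular, two members with the same observed law $Q$ produce the same function and the same limit.

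No step is a genuine obstacle. The one point needing care is that the constants $\varkappa_\pm$ must be independent of $h$, so that they disappear under the $1/\log h$ normalization. The explicit formula \eqref{eq:kappas} guarantees this.
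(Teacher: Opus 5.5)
Your proof is correct and follows the paper's argument. You take logarithms in the two-sided bound of Theorem~\ref{thm:twosided} on $(0,\min(h_0,1))$ and let the $h$-free constants $\log(\varkappa_\pm L)$ vanish against $|\log h|\to\infty$, and for the second assertion you use $a=\inf\supp Q$ from Proposition~\ref{prop:endpoint}. The paper instead writes this via the bounded remainder $R(h)=\log\bigl(Q([a,a+h])/h^{\alpha+\beta+2}\bigr)$, but it explicitly notes your divide-by-$\log h$ squeeze as an equivalent form.
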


\begin{proof}
Let $h\in(0,\min(h_0,1))$ with $h_0$ from Theorem~\ref{thm:twosided}. Then $Q([a,a+h])\ge\varkappa_-Lh^{\alpha+\beta+2}>0$, so the logarithm is defined, and $\log h<0$. Set
\[
R(h):=\log\frac{Q([a,a+h])}{h^{\alpha+\beta+2}},\qquad\text{so that}\qquad
\frac{\log Q([a,a+h])}{\log h}=(\alpha+\beta+2)+\frac{R(h)}{\log h}.
\]
Taking logarithms in \eqref{eq:twosided} confines $R(h)$ to the \emph{fixed} compact interval $[\log(\varkappa_-L),\,\log(\varkappa_+L)]$ for all $h\in(0,\min(h_0,1))$. Hence
\[
\Bigl|\frac{R(h)}{\log h}\Bigr|\;\le\;\frac{\max\bigl(|\log(\varkappa_-L)|,|\log(\varkappa_+L)|\bigr)}{|\log h|}\;\xrightarrow[h\downarrow0]{}\;0,
\]
since $|\log h|\to\infty$, and the limit \eqref{eq:exponent} follows. (Equivalently one may divide the logarithmic form of \eqref{eq:twosided} by $\log h<0$, which reverses the inequalities, and then squeeze. The form used above makes that sign bookkeeping unnecessary.)
\end{proof}

Note what the corollary does \emph{and does not} extract from $Q$: the exponent $\alpha+\beta+2$ is a $Q$-functional, whereas the constants $\varkappa_\pm L$ are not pinned by $Q$ (they are kernel-dependent). This is consistent with the scope stated in the main text (Section~\ref{scope:sec}). No identifiability claim about $L$ is made here. The recovery of $L$ is the opposite-direction question, settled by Theorem~\ref{thm:I1} of the main text, which shows that $L$ is \emph{not} identifiable over $\Mreg$ without an anchor.

\section{The affine sharpening}\label{sec:affine}

\begin{proposition}[Exactly affine edge]\label{prop:affine}
Let $(F_S,K)\in\Mstar$ and assume in addition that $\ell(s)=a+c_\ell s$ for all $s\in[0,s_R]$ and that $c_K(s)\to\bar c_K\in(0,\infty)$ as $s\downarrow0$. Then $c_\ell\in[c_-,c_+]$, and
\[
Q\bigl([a,a+h]\bigr)=\bar c_K\,\Beta(\alpha+1,\beta+2)\,L\,c_\ell^{-(\alpha+1)}\,h^{\alpha+\beta+2}\bigl(1+o(1)\bigr)\qquad(h\downarrow0).
\]
\end{proposition}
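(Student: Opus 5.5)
The plan is to rerun the proof of Theorem~\ref{thm:twosided} with a general $\eps$ in place of $\eps=\tfrac15$. The affine hypothesis collapses the depth sandwich of Lemma~\ref{lem:local}\ref{it:sandwich} to an identity, so both bounds share a single envelope.

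\emph{Step 1: $c_\ell\in[c_-,c_+]$.} Substitute $\ell(s)-a=c_\ell s$ into the one-point corridor \eqref{lean:corridor} and divide by $s>0$. This gives $c_-\le c_\ell\le c_+$, and in particular $c_\ell>0$.

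\emph{Step 2: exact depth.} For $s\in(0,s_R]$ the depth is $\delta_h(s)=(h-c_\ell s)_+$. It is positive exactly when $s<h/c_\ell$. Since $h/c_\ell\le h/c_-$, the master representation \eqref{eq:master} reduces, for $h<c_-\min(s_R,s_0)$, to
\[
Q([a,a+h])=\int_0^{h/c_\ell}K\bigl([\ell(s),\ell(s)+(h-c_\ell s)]\mid s\bigr)f_S(s)\,ds .
\]

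\emph{Step 3: squeeze with arbitrary $\eps$.} Fix $\eps\in(0,1)$. The threshold (T1) is unchanged. In place of (T2), use the hypothesis $c_K(s)\to\bar c_K$: choose $s_2$ so that $(1-\eps)\bar c_K\le c_K(s)\le(1+\eps)\bar c_K$ on $(0,s_2)$. Take $\eta(\eps)$ from the uniform reading \eqref{eq:A6u}. Let $h_0(\eps)$ be the minimum in \eqref{eq:h0}, with $c_-$ kept there so that the contributing range stays inside all windows.

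For $h<h_0(\eps)$, every depth satisfies $\delta\le h<\eta(\eps)$. The integrand is then bounded between $(1\mp\eps)^3\bar c_K L(h-c_\ell s)^{\beta+1}s^\alpha$. Integrating with Proposition~\ref{prop:beta} at $c=c_\ell$ gives
\[
(1-\eps)^3\le \frac{Q([a,a+h])}{\bar c_K\,\Beta(\alpha+1,\beta+2)\,L\,c_\ell^{-(\alpha+1)}h^{\alpha+\beta+2}}\le(1+\eps)^3 .
\]
Because $\eps$ is arbitrary, the ratio tends to $1$, which is the claimed $(1+o(1))$ form.

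\emph{Main obstacle.} The only delicate point is that A6 must be applied at the $s$-dependent depth $h-c_\ell s$, which ranges over all of $(0,h)$. This is exactly the uniformity issue of Remark~\ref{rem:uniformity}. The uniform threshold $\eta(\eps)$ handles it, so the obstacle reduces to checking that one $h_0(\eps)$ validates (T1), A6, the $c_K$ window and the localization of Lemma~\ref{lem:local} simultaneously.
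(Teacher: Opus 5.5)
Your proposal is correct and follows the paper's proof essentially step for step. The corridor gives $c_\ell\in[c_-,c_+]$, and the affine edge makes the depth exactly $(h-c_\ell s)_+$, so the integral in \eqref{eq:master} collapses to $(0,h/c_\ell]$. Rerunning the thresholds (T1)--(T3) with an arbitrary $\eps$ then squeezes the ratio between $(1-\eps)^3$ and $(1+\eps)^3$; the paper restricts to $\eps\in(0,\tfrac15]$, which is an inessential difference.
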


\begin{proof}
Since the member lies in $\Mstar$, A5$^-$ holds. The corridor (Lemma~\ref{lem:A5geom}\ref{it:corridor}), applied to $\ell(s)=a+c_\ell s$, gives $c_-s\le c_\ell s\le c_+s$ for $s\in(0,s_R]$, i.e.\ $c_\ell\in[c_-,c_+]$, and in particular $c_\ell>0$. Also $c_K(s)\to\bar c_K$ implies $\clo=\chigh=\bar c_K$.

Let $\eps\in(0,\tfrac15]$ be arbitrary. Take the thresholds (T1)--(T3) of the proof of Theorem~\ref{thm:twosided} for this $\eps$, with (T2) now reading $(1-\eps)\bar c_K\le c_K(s)\le(1+\eps)\bar c_K$ on $(0,s_2)$ (definition of the limit). Set $h_0(\eps):=\min\{c_-s_R,c_-s_0,c_-s_1,c_-s_2,\eta(\eps)\}$ as in \eqref{eq:h0}, and let $0<h<h_0(\eps)$. The identity \eqref{eq:master} holds verbatim, and now the depth is \emph{exact}:
\[
\delta_h(s)=\bigl(h-c_\ell s\bigr)_+\qquad(s\in(0,s_R]),
\]
vanishing precisely for $s\ge h/c_\ell$. By Lemma~\ref{lem:local}\ref{it:window} the integrand of \eqref{eq:master} therefore vanishes for $s\ge h/c_\ell$, so the integral effectively runs over $(0,h/c_\ell]$. Since $h/c_\ell\le h/c_-$, this range sits inside the range of \eqref{eq:master} and inside all windows \eqref{eq:range}. For $s\in(0,h/c_\ell)$ the two A6 bounds \eqref{eq:A6u} and (T2) give
\[
(1-\eps)^2\,\bar c_K\,(h-c_\ell s)^{\beta+1}\;\le\;K\bigl([\ell(s),\ell(s)+\delta_h(s)]\mid s\bigr)\;\le\;(1+\eps)^2\,\bar c_K\,(h-c_\ell s)^{\beta+1},
\]
and combining with (T1) and Proposition~\ref{prop:beta} with $c=c_\ell$,
\[
(1-\eps)^3\;\le\;\frac{Q([a,a+h])}{\bar c_K\,\Beta(\alpha+1,\beta+2)\,L\,c_\ell^{-(\alpha+1)}\,h^{\alpha+\beta+2}}\;\le\;(1+\eps)^3 .
\]
Since $\eps\in(0,\tfrac15]$ was arbitrary and $h_0(\eps)>0$ for each such $\eps$, the ratio tends to $1$ as $h\downarrow0$, which is the assertion.
\end{proof}

\begin{proof}[Proof of Lemma~\ref{lem:exponent}]
Combine Theorem~\ref{thm:twosided} (two-sided bound, constants \eqref{eq:kappas}), Proposition~\ref{prop:endpoint} ($a=\inf\supp Q$), Corollary~\ref{cor:exponent} (exponent, determined by $Q$), and Proposition~\ref{prop:affine} (affine case).
\end{proof}

\begin{remark}[The choice of $\mathcal{M}_\star$, and the role of A2--A4 and NA]\label{lean:freeupgrade}
Because $\Mreg\subseteq\mathcal{M}_\star$, an identifiability statement quantified over $\mathcal{M}_\star$ is \emph{strictly stronger} than the same statement over $\Mreg$. Over $\mathcal{M}_\star$ the conclusion is asserted for every regular member \emph{and} for the many additional members that drop A2--A4, NA, and the interior increments of A5. The axiom budget of Section~\ref{sec:budget} certifies this upgrade at no cost. Indeed, the proof of Sections~\ref{sec:supp}--\ref{sec:I2} consumes \emph{only} A1, A5$^-$, A6 (and the setup well-posedness convention), so it transfers verbatim from $\Mreg$ to $\mathcal{M}_\star$.

This does not demote A2--A4 and NA to irrelevance. They are the \emph{modeling} axioms that make the identifiability question non-trivial, and they earn their place in the motivation rather than in the proof. Non-additivity (NA) is what removes the classical tool. Since $K(\cdot\mid s)$ is not a translate $\mu(\cdot-s)$ of a fixed error law, there is no error characteristic function to divide out and Fourier deconvolution does not apply (cf.\ the deconvolution literature \citealp{Fan1991,StefanskiCarroll1990,Meister2009}, whose identifying device is that error characteristic function). Tail-amplified heteroscedasticity (A4), $w(s)/s\to\infty$, places the worst contamination exactly where the estimand lives, so the question cannot be reduced to a vanishing-noise limit. The bounded-support/no-escape clause (A2) and the bias clause (A3) are what locate the problem in the regime where the near-edge expansion is the right instrument. That regime is characterized by a moving, power-law conditional edge near a shifted apparent endpoint $a$. A member of $\mathcal{M}_\star$ that violated these would be a mathematically admissible but modeling-vacuous object. That the conclusion survives for it is a bonus, not the point. The content of the result is that, on the modeling-relevant subclass $\Mreg$, where deconvolution is unavailable, the shape $\alpha$ is nonetheless pinned.
\end{remark}
\section{Proof of Theorem~\ref{thm:I2} (shape identifiability)}\label{sec:I2}

\begin{proof}[Proof of Theorem~\ref{thm:I2}]
First, $\phi=\alpha$ is a well-defined functional on $\Mstar$. Suppose densities $f_S$ and $\tilde f_S$ of $F_S$ on $(0,s_0)$ satisfy \eqref{eq:A1} with $(L,\alpha)$ and with $(\tilde L,\tilde\alpha)$ respectively, $L,\tilde L\in(0,\infty)$. Two versions of the density agree Lebesgue-a.e., so we may pick a sequence $s_n\downarrow0$ along which $f_S(s_n)=\tilde f_S(s_n)$. Then
\[
L\,s_n^{\alpha}\bigl(1+o(1)\bigr)\;=\;f_S(s_n)\;=\;\tilde f_S(s_n)\;=\;\tilde L\,s_n^{\tilde\alpha}\bigl(1+o(1)\bigr),
\qquad\text{hence}\qquad
s_n^{\,\alpha-\tilde\alpha}\;\longrightarrow\;\frac{\tilde L}{L}\in(0,\infty).
\]
If $\alpha>\tilde\alpha$ the left side tends to $0$, and if $\alpha<\tilde\alpha$ it tends to $+\infty$. Both cases contradict $\tilde L/L\in(0,\infty)$. Hence $\alpha=\tilde\alpha$ (and then also $L=\tilde L$): the exponent read off through \eqref{eq:A1} is an unambiguous functional of $F_S$.

Now let $(F_S,K),(F'_S,K')\in\Mstar$ be observationally equivalent: $Q:=KF_S=K'F'_S=:Q'$. Both members satisfy A1 and A5$^-$, so Proposition~\ref{prop:endpoint} applies to each:
\[
a=\inf\supp Q=\inf\supp Q',
\]
i.e.\ both apparent endpoints are the same functional of the same law. Write $a^*$ for this common value and $G(h):=Q([a^*,a^*+h])=Q'([a^*,a^*+h])$, the same function of $h$ for both members. By Corollary~\ref{cor:exponent} applied to each member,
\[
\alpha+\beta+2=\lim_{h\downarrow0}\frac{\log G(h)}{\log h}=\alpha'+\beta'+2 .
\]
Every member of $\Mstar$ satisfies A6 with the same $\beta$, by the definition of the class. Both members carry the \emph{same} edge index $\beta'=\beta$. Hence $\alpha=\alpha'$.
\end{proof}

\begin{remark}[The role of the shared $\beta$]
Identifiability of $\alpha$ is relative to the class $\Mstar$ (hence to $\Mreg$, by Proposition~\ref{lean:inclusion}), and the definition of $\Mstar$ fixes $\beta$ across members. The observed exponent $\alpha+\beta+2$ is a single number extracted from $Q$. Apportioning it unambiguously between the latent shape ($\alpha$) and the kernel edge index ($\beta$) rests on the class assumption. No claim is made (and none is needed here) that $\alpha$ and $\beta$ are separately recoverable when $\beta$ is allowed to vary across members.
\end{remark}

\begin{remark}[Explicit recovery map]
The proof exhibits the recovery map in closed form: on $\Mstar$,
\[
\alpha\;=\;\lim_{h\downarrow0}\frac{\log Q\bigl([\,\inf\supp Q,\ \inf\supp Q+h\,]\bigr)}{\log h}\;-\;\beta\;-\;2 ,
\]
a functional of the observed law $Q$ and the class constant $\beta$ alone, with the same value on each observational-equivalence class. (This is an identification formula, not an estimator, in keeping with the scope of Section~\ref{scope:sec}.)
\end{remark}

\section{Measure-theoretic preliminaries}\label{sec:prelim}

This section proves the measure-theoretic facts consumed by the constructions of Sections~\ref{sec:witness}--\ref{sec:pair}. The behaviour of laws on $\R$ under translation, dilation and convolution is recorded in Lemma~\ref{lem:push}. Lemma~\ref{lem:kernelmeas} establishes the Markov-kernel property of a location family driven by an independent jitter, and Lemma~\ref{lem:mixture} identifies the resulting mixture \eqref{eq:Q} with the law of $g(S)+W$. We use two standard notions throughout. First, for a Borel map $T:E\to E'$ between Borel subsets of metrizable spaces and a Borel probability measure $P$ on $E$, the \emph{pushforward} $T_{\#}P$ is the Borel probability on $E'$ defined by $(T_{\#}P)(B):=P(T^{-1}B)$. If $X\sim P$ then $T(X)\sim T_{\#}P$, directly from the definition of the law. Second, the \emph{support} $\supp\mu$ of a Borel measure $\mu$ on a second-countable metrizable space is the set of points all of whose open neighbourhoods have positive $\mu$-mass. It is closed, since its complement, the union of all open $\mu$-null sets, is open. By second countability that complement is a \emph{countable} union of open null sets (Lindel\"of), hence null, so $\supp\mu$ carries full mass \citep{Kallenberg2002}.

\begin{lemma}[Translation, dilation, convolution]\label{lem:push}
Let $P$ be a Borel probability measure on $\R$ and $X\sim P$. For $t\in\R$ and $\lambda>0$ write $T_t(x):=t+x$ and $D_\lambda(x):=\lambda x$, homeomorphisms of $\R$.
\begin{enumerate}
\item\label{it:pre-supp} (\emph{supports}) $\supp\Law(t+X)=t+\supp P$ and $\supp\Law(\lambda X)=\lambda\,\supp P$.
\item\label{it:pre-cov} (\emph{CDF and density change of variables}) Let $S\sim F_S$ be a $[0,\infty)$-valued random variable and $\kappa>0$. Then
\[
F_{\kappa S}(x):=\PP(\kappa S\le x)=F_S(x/\kappa)\quad\text{for every }x\in\R,
\qquad\text{and}\qquad
\PP(\kappa S=0)=\PP(S=0).
\]
If moreover $F_S$ restricted to $(0,s_0)$ has a density $f_S$ (i.e.\ $F_S(B)=\int_Bf_S(u)\,du$ for every Borel $B\subseteq(0,s_0)$), then $\Law(\kappa S)$ restricted to $(0,\kappa s_0)$ has the density
\[
x\;\longmapsto\;\kappa^{-1}f_S(x/\kappa),\qquad x\in(0,\kappa s_0).
\]
\item\label{it:pre-comp} (\emph{composition of dilations}) For $c,\kappa>0$, $\ \Law\bigl(\tfrac{c}{\kappa}\,(\kappa S)\bigr)=\Law(cS)$.
\item\label{it:pre-conv} (\emph{convolution}) If $X$ and $Y$ are independent real random variables, then $\Law(X+Y)=\Law(X)*\Law(Y)$, where $(\mu*\nu)(B):=\int_{\R}\mu(B-y)\,\nu(dy)$ for Borel $B\subseteq\R$. In particular $\Law(X+Y)$ depends on $(X,Y)$ only through the pair of marginal laws $(\Law(X),\Law(Y))$, and for deterministic $t\in\R$, $\Law(t+X)=\delta_t*\Law(X)$.
\end{enumerate}
\end{lemma}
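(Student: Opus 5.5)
The plan is to prove the four items of Lemma~\ref{lem:push} in order, each directly from the pushforward definition $T_{\#}P(B)=P(T^{-1}B)$. No earlier result of the paper is needed beyond the definitions of support and of law recorded just above the statement.

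For \ref{it:pre-supp}, note that $T_t$ and $D_\lambda$ are homeomorphisms of $\R$, so they map open neighbourhoods bijectively onto open neighbourhoods. A point $y$ lies in $\supp\Law(T(X))$ iff every open $U\ni y$ has $P(T^{-1}U)>0$. Since $T^{-1}U$ ranges exactly over the open neighbourhoods of $T^{-1}y$, this holds iff $T^{-1}y\in\supp P$. Hence $\supp T_{\#}P=T(\supp P)$, which gives both identities at once.

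For \ref{it:pre-cov}, the CDF identity is $\{\kappa S\le x\}=\{S\le x/\kappa\}$ (with $\kappa>0$), and $\{\kappa S=0\}=\{S=0\}$. For the density claim, fix a Borel $B\subseteq(0,\kappa s_0)$. Then $\PP(\kappa S\in B)=F_S(\kappa^{-1}B)$, and $\kappa^{-1}B\subseteq(0,s_0)$, so this equals $\int_{\kappa^{-1}B}f_S(u)\,du$. The linear substitution $u=x/\kappa$ turns this into $\int_B\kappa^{-1}f_S(x/\kappa)\,dx$. The change of variables for Lebesgue measure under a dilation is standard: Lebesgue measure scales by $\kappa$, first for indicators, then for simple functions, then by monotone convergence.

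For \ref{it:pre-comp}, observe pointwise that $\tfrac{c}{\kappa}(\kappa S)=cS$ as random variables, so the two laws coincide trivially. Equivalently, $D_{c/\kappa}\circ D_\kappa=D_c$ and pushforwards compose.

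For \ref{it:pre-conv}, independence means the joint law is $\Law(X)\otimes\Law(Y)$. Then $\PP(X+Y\in B)=\int\!\!\int\mathbf 1_B(x+y)\,\mu(dx)\,\nu(dy)$. Tonelli applies because the integrand is a nonnegative Borel function of $(x,y)$, as $(x,y)\mapsto x+y$ is continuous. The inner integral equals $\mu(B-y)$, and its measurability in $y$ is supplied by Tonelli. The dependence on marginals alone is then visible from the formula. The deterministic case follows by taking $Y\equiv t$, independent of everything, with $\nu=\delta_t$.

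The only step needing any care is the measurability and Fubini justification in \ref{it:pre-conv}, together with the density change of variables in \ref{it:pre-cov}. Both are routine, so I expect no real obstacle.
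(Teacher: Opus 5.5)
Your proposal is correct and follows the paper's proof essentially step for step: the homeomorphism argument for supports, the event identities plus the dilation substitution (indicators, simple functions, monotone convergence) for the density, composition of dilations, and Tonelli on the product law for the convolution. One cosmetic point in the deterministic case of item~(iv): taking $Y\equiv t$ gives $\Law(X)*\delta_t$ rather than the stated $\delta_t*\Law(X)$, so either put the constant as the first summand or check, as the paper does, that both equal $B\mapsto\Law(X)(B-t)$.
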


\begin{proof}
\ref{it:pre-supp} We first show that for any homeomorphism $T$ of $\R$, $\supp(T_{\#}P)=T(\supp P)$. Indeed, fix $x\in\R$. As $U$ ranges over the open neighbourhoods of $x$, the set $T^{-1}U$ ranges exactly over the open neighbourhoods of $T^{-1}x$. This holds because $T^{-1}$ is continuous and open, and every open $V\ni T^{-1}x$ arises as $V=T^{-1}(TV)$ with $TV$ open $\ni x$. Hence every open $U\ni x$ has $(T_{\#}P)(U)=P(T^{-1}U)>0$ if and only if every open $V\ni T^{-1}x$ has $P(V)>0$. That is, $x\in\supp(T_{\#}P)\iff T^{-1}x\in\supp P\iff x\in T(\supp P)$. Now $\Law(t+X)=(T_t)_{\#}P$ and $\Law(\lambda X)=(D_\lambda)_{\#}P$ (pushforward describes the law of a transformed variable), and $T_t,D_\lambda$ are homeomorphisms with inverses $T_{-t},D_{1/\lambda}$. Therefore $\supp\Law(t+X)=T_t(\supp P)=t+\supp P$ and $\supp\Law(\lambda X)=D_\lambda(\supp P)=\lambda\,\supp P$.

\ref{it:pre-cov} Since $\kappa>0$, for every $x\in\R$ the events $\{\kappa S\le x\}$ and $\{S\le x/\kappa\}$ coincide, whence $F_{\kappa S}(x)=F_S(x/\kappa)$. Likewise $\{\kappa S=0\}=\{S=0\}$, so $\PP(\kappa S=0)=\PP(S=0)$. For the density claim we use the substitution identity
\begin{equation}\label{eq:pre-sub}
\int_{\R}g(u)\,du\;=\;\kappa^{-1}\int_{\R}g(x/\kappa)\,dx
\qquad\text{for every Borel }g:\R\to[0,\infty],
\end{equation}
which holds for $g=\mathbf 1_A$ ($A$ Borel) by the dilation covariance of Lebesgue measure, $\mathrm{Leb}(\kappa A)=\kappa\,\mathrm{Leb}(A)$ (note $\mathbf 1_A(x/\kappa)=\mathbf 1_{\kappa A}(x)$). Linearity then extends \eqref{eq:pre-sub} to nonnegative simple $g$, and monotone convergence to all nonnegative Borel $g$. Let $B\subseteq(0,\kappa s_0)$ be Borel. Then $\kappa^{-1}B:=D_{1/\kappa}(B)\subseteq(0,s_0)$ is Borel, and
\[
\Law(\kappa S)(B)=\PP(\kappa S\in B)=\PP\bigl(S\in\kappa^{-1}B\bigr)=F_S\bigl(\kappa^{-1}B\bigr)
=\int_{\R}\mathbf 1_{\kappa^{-1}B}(u)\,f_S(u)\,du,
\]
using the density hypothesis on the Borel set $\kappa^{-1}B\subseteq(0,s_0)$ (with $f_S$ extended by $0$ off $(0,s_0)$). Applying \eqref{eq:pre-sub} to $g:=\mathbf 1_{\kappa^{-1}B}\,f_S$ and noting $\mathbf 1_{\kappa^{-1}B}(x/\kappa)=\mathbf 1_B(x)$,
\[
\Law(\kappa S)(B)=\kappa^{-1}\int_{\R}\mathbf 1_B(x)\,f_S(x/\kappa)\,dx=\int_B\kappa^{-1}f_S(x/\kappa)\,dx,
\]
which is the asserted density statement (the map $x\mapsto\kappa^{-1}f_S(x/\kappa)$ is Borel, as a constant multiple of the composition of $f_S$ with a continuous map).

\ref{it:pre-comp} For Borel maps $T,U:\R\to\R$ and Borel $B$, $\bigl((T\circ U)_{\#}P\bigr)(B)=P\bigl(U^{-1}(T^{-1}B)\bigr)=\bigl(T_{\#}(U_{\#}P)\bigr)(B)$, i.e.\ pushforwards compose. Since $D_{c/\kappa}\circ D_{\kappa}=D_c$ as maps ($\tfrac{c}{\kappa}(\kappa x)=cx$),
\[
\Law\bigl(\tfrac{c}{\kappa}(\kappa S)\bigr)=(D_{c/\kappa})_{\#}\bigl((D_\kappa)_{\#}F_S\bigr)=(D_{c/\kappa}\circ D_\kappa)_{\#}F_S=(D_c)_{\#}F_S=\Law(cS).
\]

\ref{it:pre-conv} Write $P_X:=\Law(X)$ and $P_Y:=\Law(Y)$, and fix Borel $B\subseteq\R$. Independence means $\Law(X,Y)=P_X\otimes P_Y$ on $\R^2$ \citep{Kallenberg2002}. The set $A:=\{(x,y):x+y\in B\}$ is Borel in $\R^2$, being the preimage of $B$ under the continuous addition map. Second countability gives $\mathcal{B}(\R^2)=\mathcal{B}(\R)\otimes\mathcal{B}(\R)$, so $A$ is product-measurable. By Tonelli's theorem for the finite product measure $P_X\otimes P_Y$,
\begin{multline*}
\Law(X+Y)(B)=\PP\bigl((X,Y)\in A\bigr)=(P_X\otimes P_Y)(A)\\
=\int_{\R}\Bigl[\int_{\R}\mathbf 1_B(x+y)\,P_X(dx)\Bigr]P_Y(dy)
=\int_{\R}P_X(B-y)\,P_Y(dy),
\end{multline*}
since $\{x:x+y\in B\}=B-y$. The right-hand side is $(P_X*P_Y)(B)$ and is a functional of $(P_X,P_Y)$ alone, proving both claims of \ref{it:pre-conv} for general $(X,Y)$. For the special case, a deterministic $Y\equiv t$ is independent of every $X$ and has law $\delta_t$, so $\Law(t+X)=P_X*\delta_t$. Moreover $(P_X*\delta_t)(B)=\int P_X(B-y)\,\delta_t(dy)=P_X(B-t)=(\delta_t*P_X)(B)$. The last equality holds because $\int\delta_t(B-x)P_X(dx)=\int\mathbf 1_{B-t}(x)P_X(dx)=P_X(B-t)$ as well (convolution is symmetric in its arguments).
\end{proof}

The next lemma builds the kernels of the affine-random witness family: location families $s\mapsto\Law(g(s)+W)$ driven by a single jitter. Throughout, for Borel $B\subseteq[0,\infty]$ we write $B_0:=B\cap[0,\infty)$, a Borel subset of $\R$. For real $x$ we read $\mathbf 1_B(x):=\mathbf 1_{B_0}(x)$ (a finite point $x$ belongs to $B$ iff it belongs to $B_0$).

\begin{lemma}[Location families driven by a jitter are Markov kernels]\label{lem:kernelmeas}
Let $g:[0,\infty)\to[0,\infty)$ be Borel, and let $W$ be a real random variable whose law $P_W$ satisfies $\supp P_W\subseteq[w_-,w_+]$ with $0\le w_-\le w_+<\infty$ (in particular the jitter of Convention~\ref{conv:W} qualifies). For $s\in[0,\infty)$ and Borel $B\subseteq[0,\infty]$ set
\begin{equation}\label{eq:pre-Kdef}
K(B\mid s):=\int_{\R}\mathbf 1_B\bigl(g(s)+w\bigr)\,P_W(dw)\;=\;P_W\bigl(B_0-g(s)\bigr).
\end{equation}
Then $K$ is a Markov kernel from $[0,\infty)$ to $[0,\infty]$:
\begin{enumerate}
\item\label{it:pre-prob} for each $s\ge0$, $K(\cdot\mid s)$ is a Borel probability measure on $[0,\infty]$, and it coincides with $\Law(g(s)+W)$: it is concentrated on the compact set $g(s)+\supp P_W\subseteq[g(s)+w_-,\,g(s)+w_+]\subseteq[0,\infty)$. In particular $K(\{+\infty\}\mid s)=0$ for \emph{every} $s\ge0$.
\item\label{it:pre-meas} for each Borel $B\subseteq[0,\infty]$, the map $s\mapsto K(B\mid s)$ is Borel measurable on $[0,\infty)$.
\end{enumerate}
\end{lemma}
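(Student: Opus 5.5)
The two claims of Lemma~\ref{lem:kernelmeas} can be handled separately. Both rest on the identity $K(B\mid s)=P_W(B_0-g(s))$ in \eqref{eq:pre-Kdef}, together with the convention $\mathbf 1_B(x)=\mathbf 1_{B_0}(x)$ for finite $x$.

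For \ref{it:pre-prob}, fix $s\ge0$ and view $\mathbf 1_B(g(s)+w)$ as the preimage of $B_0$ under the continuous map $w\mapsto g(s)+w$. This gives $K(B\mid s)=\PP(g(s)+W\in B)$, so $K(\cdot\mid s)$ is the law of the $[0,\infty)$-valued random variable $g(s)+W$, pushed into $[0,\infty]$ by inclusion. Finiteness of $g(s)+W$ holds almost surely: $W$ lies in $\supp P_W\subseteq[w_-,w_+]$ with probability one, and $g(s)\ge0$. Countable additivity and total mass one then follow from those of $P_W$. Indeed, $B\mapsto B_0-g(s)$ preserves disjoint unions, and $[0,\infty]_0-g(s)=[-g(s),\infty)\supseteq[w_-,w_+]$, which carries full $P_W$-mass since $w_-\ge0$. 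Lemma~\ref{lem:push}\ref{it:pre-supp} gives $\supp K(\cdot\mid s)=g(s)+\supp P_W$, a closed subset of the compact interval $[g(s)+w_-,g(s)+w_+]\subseteq[0,\infty)$, hence compact. The point $+\infty$ lies outside this set, so $K(\{+\infty\}\mid s)=P_W(\varnothing)=0$.

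For \ref{it:pre-meas}, the plan is to show that $(s,w)\mapsto\mathbf 1_{B_0}(g(s)+w)$ is jointly Borel on $[0,\infty)\times\R$, and then apply Tonelli's theorem (the measurability part) to integrate out $w$ against the probability $P_W$. Joint measurability comes from composition. The map $(s,w)\mapsto(g(s),w)$ is Borel into $\R^2$ because each coordinate is Borel, and here one uses $\mathcal B([0,\infty)\times\R)=\mathcal B([0,\infty))\otimes\mathcal B(\R)$ by second countability. Addition $\R^2\to\R$ is continuous, and $B_0$ is Borel in $\R$. Tonelli then yields that $s\mapsto\int\mathbf 1_{B_0}(g(s)+w)\,P_W(dw)=K(B\mid s)$ is Borel.

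As an alternative route to the same conclusion, one can check measurability first on intervals $B_0=(-\infty,x]\cap[0,\infty)$, where $K(B\mid s)=F_W(x-g(s))-F_W(-g(s)-)$ is a Borel function of the Borel map $g$. A Dynkin $\pi$--$\lambda$ argument then extends this to all Borel $B$. I will use the Tonelli route as primary, since it avoids bookkeeping with the atom at $+\infty$.

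The main obstacle is only bookkeeping. One must keep straight that Borel sets of $[0,\infty]$ restrict to Borel sets of $\R$ through $B_0$, and that the product $\sigma$-algebra identification holds. I would cite Kallenberg for both facts rather than reprove them.
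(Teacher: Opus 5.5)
Your proposal is correct and follows essentially the same route as the paper. For (i) you identify $K(\cdot\mid s)$ with the law of $g(s)+W$ through the sets $B_0-g(s)$, and you use that the support carries full mass and lies in $[w_-,w_+]\subseteq[-g(s),\infty)$. For (ii) you show that $(s,w)\mapsto\mathbf 1_{B_0}(g(s)+w)$ is jointly Borel and integrate out $w$ by the measurability half of Tonelli, exactly as the paper does.

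One small wording slip: $g(s)+W$ is always finite, so what the bound $\supp P_W\subseteq[w_-,w_+]$ with $w_-\ge0$ actually gives you is $g(s)+W\ge0$ almost surely, not finiteness.
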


\begin{proof}
First, \eqref{eq:pre-Kdef} is well defined and its two expressions agree. For real $x$ we have $\mathbf 1_B(x)=\mathbf 1_{B_0}(x)$ by the reading above, so $\mathbf 1_B(g(s)+w)=\mathbf 1_{B_0-g(s)}(w)$. As a translate of the Borel set $B_0$, the set $B_0-g(s)$ is a Borel subset of $\R$. The integral is thus the $P_W$-mass of a Borel set.

\ref{it:pre-prob} Fix $s\ge0$. \emph{Measure property.} $K(\varnothing\mid s)=0$, and for pairwise disjoint Borel $B_1,B_2,\dots\subseteq[0,\infty]$ the indicators satisfy $\sum_{j\le N}\mathbf 1_{B_j}(g(s)+w)\uparrow\mathbf 1_{\bigcup_jB_j}(g(s)+w)$ pointwise in $w$ as $N\to\infty$ (the sets $(B_j)_0-g(s)$ are pairwise disjoint). Monotone convergence then gives $\sum_jK(B_j\mid s)=K(\bigcup_jB_j\mid s)$, so $K(\cdot\mid s)$ is a Borel measure on $[0,\infty]$. \emph{Total mass.} Since $g(s)\ge0$ and $w_-\ge0$, the set $[0,\infty)-g(s)=[-g(s),\infty)$ contains $[w_-,w_+]\supseteq\supp P_W$. Recall from the head of this section that the support carries full mass: $P_W(\supp P_W)=1$. Therefore $K([0,\infty]\mid s)\ge K([0,\infty)\mid s)=P_W([-g(s),\infty))=1$, so $K(\cdot\mid s)$ is a probability measure. \emph{Concentration.} The support $\supp P_W$ is closed and contained in the compact set $[w_-,w_+]$, hence compact. Its image $C:=g(s)+\supp P_W$ under the homeomorphism $T_{g(s)}$ is compact as well, and $C\subseteq[g(s)+w_-,g(s)+w_+]\subseteq[0,\infty)$. Moreover $C_0-g(s)=C-g(s)=\supp P_W$, so $K(C\mid s)=P_W(\supp P_W)=1$. In particular $K(\{+\infty\}\mid s)=1-K([0,\infty)\mid s)=0$ for every $s\ge0$. \emph{Identification with $\Law(g(s)+W)$.} The real random variable $g(s)+W$ lies in $C\subseteq[0,\infty)$ almost surely (as $\PP(W\in\supp P_W)=1$), and for every Borel $B\subseteq[0,\infty]$,
\[
\PP\bigl(g(s)+W\in B\bigr)=\PP\bigl(g(s)+W\in B_0\bigr)=\PP\bigl(W\in B_0-g(s)\bigr)=P_W\bigl(B_0-g(s)\bigr)=K(B\mid s),
\]
the first equality because $g(s)+W$ is real-valued (never $+\infty$). Thus $K(\cdot\mid s)$ is the law of $g(s)+W$ on $[0,\infty]$, with all of its mass on the finite part.

\ref{it:pre-meas} The map $h:[0,\infty)\times\R\to\R$, $h(s,w):=g(s)+w$, is Borel. Indeed, $h$ is the sum of the maps $(s,w)\mapsto g(s)$ and $(s,w)\mapsto w$. The first is the composition $g\circ\pi_1$ of the Borel $g$ with the continuous first projection, and the second is continuous. Since sums of Borel functions are Borel, the claim follows. Fix Borel $B\subseteq[0,\infty]$. Then $(s,w)\mapsto\mathbf 1_{B_0}(h(s,w))=\mathbf 1_{h^{-1}(B_0)}(s,w)$ is jointly Borel, being the indicator of the Borel set $h^{-1}(B_0)$, and it is nonnegative. Since $P_W$ is finite (hence $\sigma$-finite), the measurability part of Tonelli's theorem \citep{Kallenberg2002} yields that the partial integral
\[
s\;\longmapsto\;\int_{\R}\mathbf 1_{B_0}\bigl(h(s,w)\bigr)\,P_W(dw)\;=\;K(B\mid s)
\]
is Borel measurable on $[0,\infty)$. This is the kernel measurability property, which, together with \ref{it:pre-prob}, makes $K$ a Markov kernel.
\end{proof}

\begin{lemma}[The mixture is the law of $g(S)+W$]\label{lem:mixture}
In the setting of Lemma~\ref{lem:kernelmeas}, let $S\sim F_S$ (a $[0,\infty)$-valued random variable) and $W\sim P_W$ be \emph{independent} on a common probability space, and let $K$ be the kernel \eqref{eq:pre-Kdef}. Then
\[
KF_S=\Law\bigl(g(S)+W\bigr)\qquad\text{in }\Pclass\bigl([0,\infty]\bigr),
\]
with $KF_S$ the mixture \eqref{eq:Q}. Moreover, if $\tilde g:[0,\infty)\to[0,\infty)$ is Borel with $\tilde g=g$ $F_S$-almost everywhere, and $\tilde K$ denotes the kernel \eqref{eq:pre-Kdef} built from $\tilde g$ (and the same $P_W$), then
\[
\tilde KF_S=KF_S=\Law\bigl(g(S)+W\bigr)=\Law\bigl(\tilde g(S)+W\bigr).
\]
In particular the mixture is unchanged by modifying the location map on an $F_S$-null set.
\end{lemma}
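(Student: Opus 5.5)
The first identity will come from disintegrating the joint law of the independent pair $(S,W)$, and the second from the fact that the mixture integrates against $F_S$ and so cannot see an $F_S$-null set. Throughout I work on $[0,\infty]$ with the reading $B_0:=B\cap[0,\infty)$ fixed before Lemma~\ref{lem:kernelmeas}.

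\emph{Step 1: the identity $KF_S=\Law(g(S)+W)$.} Fix a Borel set $B\subseteq[0,\infty]$.
\begin{enumerate}
\item By Lemma~\ref{lem:kernelmeas}\ref{it:pre-prob}, $g(S)+W$ is finite and nonnegative almost surely. Hence $\PP(g(S)+W\in B)=\PP(g(S)+W\in B_0)$.
\item Independence gives $\Law(S,W)=F_S\otimes P_W$ on $[0,\infty)\times\R$.
\item The map $(s,w)\mapsto\mathbf 1_{B_0}(g(s)+w)$ is jointly Borel. This was shown in the proof of Lemma~\ref{lem:kernelmeas}\ref{it:pre-meas}.
\item Tonelli's theorem then gives
\[
\PP\bigl(g(S)+W\in B_0\bigr)=\int_{[0,\infty)}\Bigl[\int_{\R}\mathbf 1_{B_0}\bigl(g(s)+w\bigr)\,P_W(dw)\Bigr]F_S(ds).
\]
\item By \eqref{eq:pre-Kdef}, the inner integral is exactly $K(B\mid s)$. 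The outer integral is therefore $(KF_S)(B)$ by \eqref{eq:Q}.
\end{enumerate}
Both sides are Borel probability measures on $[0,\infty]$ that agree on every Borel set, so they coincide.

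\emph{Step 2: modification on a null set.} Let $N:=\{s:\tilde g(s)\neq g(s)\}$. This set is Borel and $F_S(N)=0$. For every Borel $B$ and every $s\notin N$, \eqref{eq:pre-Kdef} gives $\tilde K(B\mid s)=K(B\mid s)$. The two integrands defining $\tilde KF_S(B)$ and $KF_S(B)$ therefore agree $F_S$-a.e., so $\tilde KF_S=KF_S$.

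Applying Step 1 to $\tilde g$, which is allowed because Lemma~\ref{lem:kernelmeas} only needs a Borel location map into $[0,\infty)$, gives $\tilde KF_S=\Law(\tilde g(S)+W)$. Independently, $\tilde g(S)=g(S)$ almost surely, since $\PP(S\in N)=F_S(N)=0$. This gives $\Law(\tilde g(S)+W)=\Law(g(S)+W)$ directly as well.

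\emph{Main obstacle.} The only delicate step is the product-measurability needed for Tonelli, together with the handling of the point $+\infty$. The first is already settled by the joint Borel measurability established in Lemma~\ref{lem:kernelmeas}. The second is handled by restricting every Borel set to $B_0$ and noting that no mass sits at $+\infty$. Everything else is routine.
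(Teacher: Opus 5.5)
Your proposal is correct and follows essentially the same route as the paper: Tonelli's theorem for $F_S\otimes P_W$ on the Borel set $\{(s,w):g(s)+w\in B_0\}$, then the null set $N=\{g\neq\tilde g\}$, applying the first part to $\tilde g$, and the alternative almost-sure equality $\tilde g(S)=g(S)$. One cosmetic point: Lemma~\ref{lem:kernelmeas}\ref{it:pre-prob} is stated for fixed $s$, so the almost-sure nonnegativity of $g(S)+W$ is better justified directly from $g\ge0$ and $W\ge w_-\ge0$ a.s., as the paper does.
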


\begin{proof}
As in the proof of Lemma~\ref{lem:kernelmeas}\ref{it:pre-prob}, the real random variable $g(S)+W$ lies in $[0,\infty)$ almost surely, since $g(S)\ge0$ everywhere and $W\ge w_-\ge0$ a.s. Its law on $[0,\infty]$ therefore reads $B\mapsto\PP(g(S)+W\in B_0)$ for Borel $B\subseteq[0,\infty]$, and it puts no mass on $\{+\infty\}$. Fix such a $B$. Independence of $S$ and $W$ means $\Law(S,W)=F_S\otimes P_W$ on $[0,\infty)\times\R$ \citep{Kallenberg2002}. With $h(s,w)=g(s)+w$ as in the proof of Lemma~\ref{lem:kernelmeas}, the set $h^{-1}(B_0)$ is Borel, so Tonelli's theorem for the finite product measure $F_S\otimes P_W$ gives
\begin{multline*}
\PP\bigl(g(S)+W\in B_0\bigr)
=(F_S\otimes P_W)\bigl(h^{-1}(B_0)\bigr)\\
=\int_{[0,\infty)}\Bigl[\int_{\R}\mathbf 1_{B_0}\bigl(g(s)+w\bigr)\,P_W(dw)\Bigr]F_S(ds)
=\int_{[0,\infty)}K(B\mid s)\,F_S(ds),
\end{multline*}
and the right-hand side is $(KF_S)(B)$ by \eqref{eq:Q}. Since $B$ was an arbitrary Borel subset of $[0,\infty]$, $\Law(g(S)+W)=KF_S$.

For the almost-everywhere clause, let $N:=\{s\ge0:g(s)\neq\tilde g(s)\}$. It is the set where the Borel function $g-\tilde g$ is nonzero, hence Borel, and $F_S(N)=0$ by hypothesis. For every $s\notin N$ the defining integral \eqref{eq:pre-Kdef} depends on the location map only through its value at $s$, so $\tilde K(\cdot\mid s)=K(\cdot\mid s)$ on $N^{c}$. Hence, for every Borel $B\subseteq[0,\infty]$,
\[
(\tilde KF_S)(B)=\int_{[0,\infty)}\tilde K(B\mid s)\,F_S(ds)
=\int_{N^{c}}\tilde K(B\mid s)\,F_S(ds)
=\int_{N^{c}}K(B\mid s)\,F_S(ds)
=(KF_S)(B),
\]
the second and fourth equalities because $N$ is $F_S$-null (removing a null set does not change an integral) and the third because the integrands agree on $N^{c}$. Finally $\Law(\tilde g(S)+W)=\tilde KF_S$ by the first part applied to $\tilde g$. Alternatively, $\PP(S\in N)=F_S(N)=0$, so $\tilde g(S)+W=g(S)+W$ almost surely, and almost-surely equal random variables have the same law. Chaining the four displays proves the lemma.
\end{proof}

These three lemmas are consumed as follows. Lemma~\ref{lem:kernelmeas} legitimizes the kernels of the affine-random witness family (Section~\ref{sec:witness}). Next, Lemma~\ref{lem:push} supplies the edge and density computations behind the family's closed forms and behind the estimand transformation of the conjugate member (Sections~\ref{sec:witness} and~\ref{sec:pair}). Finally, Lemma~\ref{lem:mixture} converts kernel mixtures into laws of sums, which is how the exact coincidence $Q'=Q$ of Theorem~\ref{thm:I1} is verified in Section~\ref{sec:pair-proofs}.

\section{Membership: the witness lies in \texorpdfstring{$\Mreg$}{M-reg}}\label{sec:membership}
\begin{proof}[Proof of Lemma~\ref{lem:edges}]
\ref{it:wit-g} Each branch of \eqref{eq:gdef} is affine, hence continuous, and the branches agree at $s_{\max}$. Thus $g$ is continuous on $[0,\infty)$. For the increments, take $0\le s\le s'$ and distinguish three cases. If $s'\le s_{\max}$, then $g(s')-g(s)=c(s'-s)\ge\min(c,1)(s'-s)$. If $s\ge s_{\max}$, then $g(s')-g(s)=s'-s\ge\min(c,1)(s'-s)$. If $s\le s_{\max}\le s'$, then
\[
g(s')-g(s)=c\,(s_{\max}-s)+(s'-s_{\max})\ \ge\ \min(c,1)\bigl[(s_{\max}-s)+(s'-s_{\max})\bigr]=\min(c,1)\,(s'-s).
\]
Since $c>0$, $\min(c,1)>0$, so $g$ is strictly increasing. Finally $g(0)=a_0$ and monotonicity give $g(s)\ge a_0\ge0$ for all $s\ge0$, and $g(s)<\infty$ since each branch is a finite affine expression.

\ref{it:wit-supp} Fix $s\ge0$. First, $P_W(\supp P_W)=1$ (full mass of the support, cf.\ Lemma~\ref{lem:push}). In particular $\supp P_W\neq\varnothing$. Every point of $\supp P_W$ lies between the infimum $w_-$ and the supremum $w_+$ of that set (Convention~\ref{conv:W}), so $\supp P_W\subseteq[w_-,w_+]$ and $W\in[w_-,w_+]$ almost surely. Consequently $g(s)+W\in g(s)+[w_-,w_+]$ almost surely, and this set is compact and contained in $[0,\infty)$ because $g(s)+w_-\ge a_0+0\ge0$ by \ref{it:wit-g} and $w_-\ge0$. Thus $K(\cdot\mid s)=\Law(g(s)+W)$ is concentrated on $g(s)+[w_-,w_+]\subset[0,\infty)$, and $K(\{+\infty\}\mid s)=0$.

The measure $K(\cdot\mid s)$ is the pushforward of $P_W$ under the translation $x\mapsto g(s)+x$, a homeomorphism of $\R$. By the support identity of Lemma~\ref{lem:push}, its support computed in $\R$ is therefore $g(s)+\supp P_W$. Computed in $[0,\infty]$, the support is the same set. Indeed, $+\infty$ is excluded because its neighbourhood $(g(s)+w_+,\,\infty]$ is $K(\cdot\mid s)$-null, whereas each finite $x\in[0,\infty)$ has, in $\R$ and in $[0,\infty]$, neighbourhood bases whose elements differ only by subsets of $\R\setminus[0,\infty)$, which are null under the law of $g(s)+W$ on $\R$. The positive-mass criterion defining the support then selects the same points $x\in[0,\infty)$ in either ambient space. Finally, translation by $g(s)$ is an order isomorphism of $\R$, so for the nonempty set $A:=\supp P_W$ we get $\inf\,(g(s)+A)=g(s)+\inf A=g(s)+w_-$ and $\sup\,(g(s)+A)=g(s)+\sup A=g(s)+w_+$.

\ref{it:wit-edge} By \ref{it:wit-supp}, $\supp K(\cdot\mid s)\subseteq[0,\infty)$, so $\supp K(\cdot\mid s)\cap[0,\infty)=\supp K(\cdot\mid s)$ and the edge functionals of Section~\ref{sec:setting} evaluate to
\[
\ell(s)=\inf\supp K(\cdot\mid s)=g(s)+w_-,\qquad u(s)=\sup\supp K(\cdot\mid s)=g(s)+w_+,
\]
both finite. Hence $w(s)=u(s)-\ell(s)=w_+-w_-$ for every $s\ge0$, which is \eqref{eq:edgeforms}.

\ref{it:wit-m} By \eqref{eq:edgeforms}, $m(s)=\tfrac12\bigl(\ell(s)+u(s)\bigr)-s=g(s)+\tfrac12(w_-+w_+)-s=g(s)-s+\midW$. Now substitute the two branches of \eqref{eq:gdef}. For $0\le s\le s_{\max}$ this gives $g(s)-s=(c-1)s+a_0$, while for $s>s_{\max}$ it gives $g(s)-s=c\,s_{\max}+a_0+(s-s_{\max})-s=(c-1)s_{\max}+a_0$, a constant equal to the value of the first branch at $s_{\max}$. This is \eqref{eq:mform}.

\ref{it:wit-mass} Fix $s\ge0$ and $h\ge0$. For every $x\in\R$,
\[
g(s)+x\in\bigl[g(s)+w_-,\ g(s)+w_-+h\bigr]\iff x\in[w_-,\,w_-+h]
\]
(subtract $g(s)$ from all three members). Hence, using $\ell(s)=g(s)+w_-$ from \eqref{eq:edgeforms}, the two events coincide:
\begin{multline*}
K\bigl([\ell(s),\ell(s)+h]\mid s\bigr)
=\PP\bigl(g(s)+W\in[g(s)+w_-,\,g(s)+w_-+h]\bigr)\\
=\PP\bigl(W\in[w_-,\,w_-+h]\bigr)
=P_W\bigl([w_-,\,w_-+h]\bigr),
\end{multline*}
which is \eqref{eq:wit-mass}. The edge mass is thus one and the same function of $h$ for every $s\ge0$.

\ref{it:wit-asym} Fix $\eps\in(0,1)$. By Definition~\ref{def:witness}(c) the named density version $\rho_W$ satisfies $\rho_W(w_-+t)/(c_W t^{\beta})\to1$ as $t\downarrow0$ pointwise (Convention~\ref{conv:W}). Unwinding this limit yields $\eta_W(\eps)\in(0,\delta_W]$ such that
\[
(1-\eps)\,c_W\,t^{\beta}\ \le\ \rho_W(w_-+t)\ \le\ (1+\eps)\,c_W\,t^{\beta}
\qquad\text{for all }t\in(0,\eta_W(\eps)].
\]
Let $h\in(0,\eta_W(\eps)]$. Since $[w_-,w_-+h]\subseteq[w_-,w_-+\delta_W]$, Convention~\ref{conv:W} gives
\[
P_W\bigl([w_-,w_-+h]\bigr)=\int_{[w_-,\,w_-+h]}\rho_W(w)\,dw=\int_0^h \rho_W(w_-+t)\,dt,
\]
by translation invariance of Lebesgue measure. The singleton $\{w_-\}$, resp.\ $\{t=0\}$, is Lebesgue-null and does not affect the integrals (in particular $P_W(\{w_-\})=0$). Integrating the pointwise band over $t\in(0,h]$ and using $\int_0^h t^{\beta}\,dt=h^{\beta+1}/(\beta+1)$ (valid for $\beta\ge0$, the integrand being continuous and bounded on $(0,h]$) yields
\[
(1-\eps)\,\frac{c_W}{\beta+1}\,h^{\beta+1}\ \le\ P_W\bigl([w_-,w_-+h]\bigr)\ \le\ (1+\eps)\,\frac{c_W}{\beta+1}\,h^{\beta+1},
\]
which is \eqref{eq:wit-band}. Since $\eps\in(0,1)$ was arbitrary, $P_W([w_-,w_-+h])\big/\bigl(\tfrac{c_W}{\beta+1}h^{\beta+1}\bigr)\to1$ as $h\downarrow0$, i.e.\ the stated $(1+o(1))$ form.
\end{proof}

\begin{proof}[Proof of Proposition~\ref{prop:witness}]
Throughout, $\ell,u,w,m$ are the edge functionals of Section~\ref{sec:setting}, evaluated in closed form by Lemma~\ref{lem:edges}, and $\midW=\tfrac12(w_-+w_+)$. We record once that $0\le w_-<w_+<\infty$ forces
\[
w_+-w_->0
\qquad\text{and}\qquad
\midW=\tfrac12(w_-+w_+)\ \ge\ \tfrac12 w_+\ >\ 0 .
\]
We verify the axioms one at a time.

\smallskip\noindent\emph{A1 (latent bounded tail).}
Definition~\ref{def:witness}(a) states exactly the content of A1. By that definition, $F_S$ is a Borel probability law on $[0,\infty)$ with $F_S(\{0\})=0$, and on the window $(0,s_0)$ it has the named density version $f_S$ satisfying \eqref{eq:A1} pointwise with the member constants $L\in(0,\infty)$ and $\alpha\in[0,\infty)$. The class requirement that the A1-window reach the shared floor, $s_0\ge\bar s_0$, is the second clause of \ref{it:mem-C4}. Thus A1 holds.

\smallskip\noindent\emph{A2 (bounded conditional support, no escape near the endpoint).}
By Lemma~\ref{lem:edges}\ref{it:wit-edge}, $w(s)=w_+-w_-<\infty$ for \emph{every} $s\ge0$. Part~\ref{it:wit-supp} of the same lemma (via Lemma~\ref{lem:kernelmeas}, the conditional law being concentrated on the compact set $g(s)+[w_-,w_+]\subset[0,\infty)$) gives $K(\{+\infty\}\mid s)=0$ for \emph{every} $s\ge0$, in particular for all $s\in[0,s_A]$. Thus A2 holds.

\smallskip\noindent\emph{A3 (bias present and bounded).}
\emph{Bounded:} by \eqref{eq:mform}, for $s\in[0,s_{\max}]$,
\[
|m(s)|=\bigl|(c-1)s+a_0+\midW\bigr|\ \le\ |c-1|\,s+a_0+\midW\ \le\ |c-1|\,s_{\max}+a_0+\midW,
\]
using the triangle inequality, $a_0+\midW\ge0$, and $0\le s\le s_{\max}$. For $s>s_{\max}$ we have $m(s)=m(s_{\max})$, so the same bound applies. Hence
\[
\sup_{s\ge0}|m(s)|\ \le\ |c-1|\,s_{\max}+a_0+\midW\ \le\ \bar b\ <\ \infty
\]
by \ref{it:mem-C3}. \emph{Present:} $m(0)=a_0+\midW\ge\midW>0$ by the display opening this proof, so $m(0)\neq0$ and $m\not\equiv0$. (In fact $m$ is even non-constant on $[0,s_{\max}]$, its slope there being $c-1\neq0$ by \ref{it:mem-C5}, yet the positivity of $m(0)$ alone suffices for A3.) Thus A3 holds.

\smallskip\noindent\emph{A4 (tail-amplified heteroscedasticity).}
By \eqref{eq:edgeforms}, $w(s)/s=(w_+-w_-)/s$ for $s>0$, with the fixed numerator $w_+-w_->0$. Given $M>0$, every $s\in\bigl(0,(w_+-w_-)/M\bigr)$ satisfies $w(s)/s>M$. Hence $\lim_{s\downarrow0}w(s)/s=+\infty$, and A4 holds.

\smallskip\noindent\emph{A5 (edge-regular location transfer, no dipping).}
Again by \eqref{eq:edgeforms}, $\ell(s)=g(s)+w_-$ for every $s\ge0$. Lemma~\ref{lem:edges}\ref{it:wit-g} therefore shows $\ell$ to be continuous and strictly increasing on $[0,\infty)$, in particular \emph{nondecreasing on all of $[0,\infty)$}, which is the global-monotonicity clause of A5. No distant conditional dip back toward the apparent endpoint is possible, since $\ell$ has slope $c\ge c_->0$ on $[0,s_{\max}]$ by \ref{it:mem-C2} and Convention~\ref{conv:corridor}, slope $1>0$ on $[s_{\max},\infty)$, and the branches agree at $s_{\max}$. \emph{Existence of the limit:} for $s\in(0,s_{\max}]$, $\ell(s)=c\,s+a_0+w_-$, so
\[
a:=\ell(0+)=\lim_{s\downarrow0}\,(c\,s+a_0+w_-)=a_0+w_-=a'\ \in[0,\infty)
\]
exists. Moreover, the actual edge at $s=0$ is $\ell(0)=g(0)+w_-=a_0+w_-$, so the A5 convention $\ell(0):=a$ agrees with the true value here (for this family nothing needs to be overridden). \emph{Two-point increment condition:} let $0\le s\le s'\le s_R$. By the first clause of \ref{it:mem-C4}, $s'\le s_R\le s_{\max}$, so both points lie on the affine branch of \eqref{eq:gdef} and
\[
\ell(s')-\ell(s)=\bigl(c\,s'+a_0+w_-\bigr)-\bigl(c\,s+a_0+w_-\bigr)=c\,(s'-s);
\]
by \ref{it:mem-C2} and Convention~\ref{conv:corridor},
\[
c_-\,(s'-s)\ \le\ \ell(s')-\ell(s)\ \le\ c_+\,(s'-s),
\]
which is \eqref{eq:A5}. Thus A5 holds, with apparent endpoint $a=a'=a_0+w_-$. (Taking $s=0$ gives the corridor form $\ell(s')-a'=c\,s'$ on $[0,s_R]$. The corridor of A5 is anchored at the \emph{apparent endpoint} $a'=\ell(0+)$, \emph{not} at the affine intercept $a_0$, and the two differ by $w_-$. Substituting $a_0$ for $a'$ would produce the spurious relation $\ell(s')-a_0=c\,s'+w_-$.)

\smallskip\noindent\emph{A6 (common conditional edge index).}
By Lemma~\ref{lem:edges}\ref{it:wit-mass}, for every $s\ge0$ and $h\ge0$,
\[
K\bigl([\ell(s),\ell(s)+h]\mid s\bigr)=P_W\bigl([w_-,w_-+h]\bigr):
\]
the edge mass depends on $h$ alone, not on $s$. Define $c_K:(0,s_R]\to(0,\infty)$ as the constant map $c_K(s):=c_W/(\beta+1)$. We now verify the uniform reading \eqref{eq:A6u} of Convention~\ref{conv:A6read}. Let $\eps>0$ and set $\eps_0:=\min(\eps,\tfrac12)\in(0,1)$ and $\eta(\eps):=\eta_W(\eps_0)$, with $\eta_W$ from Lemma~\ref{lem:edges}\ref{it:wit-asym}. For every $s\in(0,s_R]$ and every $h\in(0,\eta(\eps)]$, combining the display above with \eqref{eq:wit-band} at $\eps_0$ gives
\begin{multline}\label{eq:mem-band}
(1-\eps)\,c_K(s)\,h^{\beta+1}\ \le\ (1-\eps_0)\,\frac{c_W}{\beta+1}\,h^{\beta+1}\ \le\ K\bigl([\ell(s),\ell(s)+h]\mid s\bigr)\\
\le\ (1+\eps_0)\,\frac{c_W}{\beta+1}\,h^{\beta+1}\ \le\ (1+\eps)\,c_K(s)\,h^{\beta+1},
\end{multline}
the outer inequalities holding because $1-\eps\le1-\eps_0$ and $1+\eps_0\le1+\eps$. Since the threshold $\eta(\eps)$ does not depend on $s$, \eqref{eq:mem-band} is exactly \eqref{eq:A6u}. The expansion $K([\ell(s),\ell(s)+h]\mid s)=c_K(s)\,h^{\beta+1}(1+o(1))$ as $h\downarrow0$ thus holds \emph{uniformly} over $s\in(0,s_R]$, with an $o(1)$ that does not depend on $s$. The exponent is $\beta+1$ with $\beta$ the shared class edge index by \ref{it:mem-C1}, and the profile $c_K\equiv c_W/(\beta+1)$ satisfies
\[
\liminf_{s\downarrow0}c_K(s)=\limsup_{s\downarrow0}c_K(s)=\frac{c_W}{\beta+1}\in(0,\infty).
\]
Thus A6 holds.

\smallskip\noindent\emph{NA (non-additivity).}
Suppose, for contradiction, that there were $\mu\in\Pclass(\R)$ with $K(\cdot\mid s)=\mu(\cdot-s)$ for $F_S$-a.e.\ $s$. Here $\mu(\cdot-s)$ denotes the translate $B\mapsto\mu(B-s)$, i.e.\ the law of $s+Z$ for $Z\sim\mu$. The equality is read as in the NA statement of Section~\ref{sec:setting}: $K(\{+\infty\}\mid s)=0$ and $K(B\mid s)=\mu(B-s)$ for every Borel $B\subseteq[0,\infty)$. This extends to equality of Borel measures on all of $\R$. Taking $B=[0,\infty)$ gives $\mu([-s,\infty))=K([0,\infty)\mid s)=1$, so the translate $\mu(\cdot-s)$ charges no Borel subset of $(-\infty,0)$, while $K(\cdot\mid s)$ is concentrated on a compact subset of $[0,\infty)$ (Lemma~\ref{lem:edges}\ref{it:wit-supp}). The two measures therefore agree on every Borel subset of $\R$, and in particular their supports in $\R$ coincide. Let $N\subseteq[0,\infty)$ be a Borel $F_S$-null set such that the equality holds for every $s\in[0,\infty)\setminus N$.

\emph{Step 1 (a positive-density zone).} By Definition~\ref{def:witness}(a) the named version $f_S$ satisfies, pointwise, $f_S(s)/(L s^{\alpha})\to1$ as $s\downarrow0$. Reading that limit with tolerance $\tfrac12$ yields $\delta_1\in(0,s_0)$ such that $f_S(s)\ge\tfrac{L}{2}\,s^{\alpha}$ for all $s\in(0,\delta_1]$. Set $\delta_*:=\min(\delta_1,s_{\max})\in(0,s_{\max}]$. Then $(0,\delta_*]\subseteq(0,s_0)$ and
\[
f_S(s)\ \ge\ \tfrac{L}{2}\,s^{\alpha}\ >\ 0\qquad\text{for all }s\in(0,\delta_*].
\]

\emph{Step 2 (two good points below $s_{\max}$).} We claim $\mathrm{Leb}\bigl(N\cap(0,\delta_*]\bigr)=0$. Suppose not. Since $F_S$ has density $f_S$ on $(0,s_0)\supseteq(0,\delta_*]$,
\[
F_S(N)\ \ge\ F_S\bigl(N\cap(0,\delta_*]\bigr)=\int_{N\cap(0,\delta_*]}f_S(s)\,ds\ \ge\ \tfrac{L}{2}\int_{N\cap(0,\delta_*]}s^{\alpha}\,ds\ >\ 0,
\]
where the last integral is strictly positive because its integrand is strictly positive at every point of the domain and the domain has positive Lebesgue measure. Indeed, with $A_n:=\bigl(N\cap(0,\delta_*]\bigr)\cap[\tfrac1n,\infty)$, continuity from below gives $\mathrm{Leb}(A_n)\uparrow\mathrm{Leb}(N\cap(0,\delta_*])>0$, so $\mathrm{Leb}(A_n)>0$ for some $n$, whence $\int_{N\cap(0,\delta_*]}s^{\alpha}\,ds\ge n^{-\alpha}\,\mathrm{Leb}(A_n)>0$ (here $\alpha\ge0$). This contradicts $F_S(N)=0$, proving the claim. Consequently $E:=(0,\delta_*]\setminus N$ has $\mathrm{Leb}(E)=\delta_*>0$. A set of positive Lebesgue measure is infinite, so we may pick two points $s_1,s_2\in E$ with $0<s_1<s_2\le\delta_*\le s_{\max}$, and at both of them $K(\cdot\mid s_i)=\mu(\cdot-s_i)$.

\emph{Step 3 (edge matching forces $c=1$).} Fix $i\in\{1,2\}$. The translate $\mu(\cdot-s_i)$ is the pushforward of $\mu$ under the translation $x\mapsto x+s_i$, a homeomorphism of $\R$, so by the support identity of Lemma~\ref{lem:push},
\[
\supp\mu(\cdot-s_i)\ =\ s_i+\supp\mu .
\]
Equal Borel measures have equal topological supports, the support being a functional of the measure alone. Lemma~\ref{lem:edges}\ref{it:wit-supp}, in which the support is computed indifferently in $\R$ or $[0,\infty]$, therefore gives
\[
s_i+\supp\mu\ =\ \supp K(\cdot\mid s_i)\ =\ g(s_i)+\supp P_W\ \subseteq\ g(s_i)+[w_-,w_+].
\]
The left side is a translate of $\supp\mu$, whereas the right side is nonempty (it is the support of a probability measure, which carries full mass) and bounded. Hence $\supp\mu$ is nonempty and bounded, and $e:=\inf\supp\mu\in\R$ is finite. Translation preserves infima, $\inf(s_i+A)=s_i+\inf A$. Taking them and using $\ell(s_i)=g(s_i)+w_-$ together with the affine branch of \eqref{eq:gdef}, valid since $s_i\le s_{\max}$, we obtain
\[
s_i+e\ =\ \inf\supp K(\cdot\mid s_i)\ =\ \ell(s_i)\ =\ c\,s_i+a_0+w_-\ =\ c\,s_i+a'\qquad(i=1,2).
\]
Subtracting the equation for $i=1$ from the one for $i=2$ eliminates $e$ and $a'$:
\[
s_2-s_1\ =\ c\,(s_2-s_1),\qquad\text{i.e.}\qquad (1-c)\,(s_2-s_1)=0 .
\]
Since $s_2>s_1$, this forces $c=1$, contradicting \ref{it:mem-C5}. Hence no such $\mu$ exists and NA holds.

\smallskip
All of A1--A4 and NA hold, so $(F_S,K)\in\Mclass$. Since A5 and A6 hold with the shared constants as well, $(F_S,K)\in\Mreg$, with $a=a'=a_0+w_-$ and $c_K\equiv c_W/(\beta+1)$ as stated.
\end{proof}

\begin{remark}[What consumes the off-support branch]\label{rem:mem-null}
The unit-slope branch of \eqref{eq:gdef} governs $K(\cdot\mid s)$ only at latent values $s>s_{\max}$, an $F_S$-null set, so by Lemma~\ref{lem:mixture} it leaves the observed law $Q$ unchanged. By \ref{it:mem-C4}, every $s\downarrow0$ or near-endpoint clause of the axioms reads the kernel only at $s\le s_{\max}$. Those clauses are A1, A4, the A5 increments on $[0,s_R]$ and A6 on $(0,s_R]$, along with the NA argument, which chose its two points below $s_{\max}$. In the verification above the extension is consumed at exactly two places. One is the A3 bound $\sup_{s\ge0}|m(s)|\le\bar b$, which holds because \eqref{eq:mform} freezes $m$ at $m(s_{\max})$ beyond $s_{\max}$ (the raw affine offset $(c-1)s+a_0+\midW$ diverges as $s\to\infty$ when $c\neq1$, so without the extension A3 would fail). The other is the A5 global-monotonicity clause on all of $[0,\infty)$, which holds because the extension continues $\ell$ with slope $1>0$. (A2 also quantifies over every $s\ge0$ but is insensitive to the branch, the width being constant.) This division of labour is revisited in Remark~\ref{rem:bounded}.
\end{remark}

\section{The estimation bridge}\label{sec:bridge}
\paragraph{The estimation setting.}
A member $(F_S,K)$ of the class under study is fixed, and the statistician observes an i.i.d.\ sample
\[
\tilde S_1,\dots,\tilde S_n\ \overset{\text{iid}}{\sim}\ Q=KF_S
\]
from its observed law \eqref{eq:Q}, a Borel probability measure on $[0,\infty]$. Nothing else is observed, neither a covariate nor a paired $(S,\tilde S)$. Here the asymptotics are in the sample size $n$ alone: the pair $(F_S,K)$ is held fixed, so the contamination does not vanish as $n\to\infty$. The data vector $\tilde S_{1:n}:=(\tilde S_1,\dots,\tilde S_n)$ takes values in $[0,\infty]^n$ and has law $Q^{\otimes n}$, the $n$-fold product of $Q$ with itself. Two conventions make this precise.

First, the sample space. The space $[0,\infty]$ is compact metrizable (homeomorphic to $[0,1]$, e.g.\ via $x\mapsto x/(1+x)$), hence second countable. Consequently the Borel $\sigma$-field of the product topology on $[0,\infty]^n$ coincides with the $n$-fold product $\sigma$-field, $\mathcal{B}([0,\infty]^n)=\mathcal{B}([0,\infty])^{\otimes n}$. Indeed, the product $\sigma$-field is generated by the sets $\pi_j^{-1}(B)$, where $\pi_j$ is the $j$-th coordinate projection and $B\in\mathcal{B}([0,\infty])$. Each $\pi_j$ is continuous, hence Borel measurable for the product topology, so every generator, and with it the whole product $\sigma$-field, is contained in $\mathcal{B}([0,\infty]^n)$. Conversely, by second countability every open subset of $[0,\infty]^n$ is a countable union of open boxes $U_1\times\cdots\times U_n$, and each open box is a measurable rectangle, so it lies in the product $\sigma$-field. Since $\mathcal{B}([0,\infty]^n)$ is generated by the open sets, it is contained in the product $\sigma$-field as well. ``Borel'' on $[0,\infty]^n$ is therefore unambiguous. Write $Q^{\otimes n}$ for the product probability measure on this $\sigma$-field, namely the unique probability measure satisfying
\[
Q^{\otimes n}\bigl(B_1\times\cdots\times B_n\bigr)\;=\;\prod_{j=1}^{n}Q(B_j)
\qquad\text{for all }B_1,\dots,B_n\in\mathcal{B}([0,\infty])
\]
(for existence see \citealp{Kallenberg2002}, and for uniqueness the $\pi$-system argument recorded in Step~1 of the proof of Lemma~\ref{lem:bridge} below).

Second, integrals. We write $\E_{Q^{\otimes n}}$ and $\PP_{Q^{\otimes n}}$ for expectation and probability under $Q^{\otimes n}$, abbreviating $\hat\phi_n:=\hat\phi_n(\tilde S_{1:n})$ inside both. Every expectation occurring below is the integral of a nonnegative Borel function of the sample and is read in $[0,\infty]$, where monotonicity and additivity of the integral remain valid. Suprema, limits and inequalities between such (possibly infinite) quantities are likewise read in $[0,\infty]$. Inside a supremum over a class of pairs, $Q=KF_S$ always denotes the observed law of the running member.

\paragraph{Estimators.}
An \emph{estimator} of a real-valued functional is an arbitrary sequence $\hat\phi=(\hat\phi_n)_{n\ge1}$ of Borel maps
\[
\hat\phi_n:\;[0,\infty]^n\;\longrightarrow\;\R ,
\]
the estimate at sample size $n$ being $\hat\phi_n(\tilde S_{1:n})$. No further structure is assumed: no continuity or boundedness, and no restriction on how $\hat\phi_n$ is built. It may depend on $n$, on all the shared class constants, indeed on complete knowledge of the two witnesses to be constructed for Theorem~\ref{thm:I1}. The impossibility proved below is information-theoretic, not a defect of particular procedures.

Before consistency can even be formulated, the target $\phi(F_S)$ must be a single real number. For $\phi=p_\tau$ this is immediate from Definition~\ref{def:psi}. The case $\phi=L$ (and that of the shape $\alpha$) needs an argument, because A1 introduces $(L,\alpha)$ through an existence statement: \emph{some} version of the density near $0$ satisfies \eqref{eq:A1} with \emph{some} admissible constants. Different versions, or different constants for the same version, might conceivably return different values.

\begin{proof}[Proof of Lemma~\ref{lem:phiwell}]
Throughout, ``$f$ is a density of $F_S$ on $(0,r)$'' means $F_S(B)=\int_Bf\,d\lambda$ for every Borel $B\subseteq(0,r)$, with $\lambda$ Lebesgue measure.

\emph{The two versions agree almost everywhere near $0$.} Put $s_\ast:=\min(s_0,\tilde s_0)>0$. For every Borel $B\subseteq(0,s_\ast)$,
\[
\int_Bf_S\,d\lambda\;=\;F_S(B)\;=\;\int_B\tilde f_S\,d\lambda ,
\]
and both functions are $\lambda$-integrable on $(0,s_\ast)$, their integrals there being $F_S\bigl((0,s_\ast)\bigr)\le1$. Hence $g:=f_S-\tilde f_S$ is $\lambda$-integrable on $(0,s_\ast)$ with $\int_Bg\,d\lambda=0$ for every Borel $B\subseteq(0,s_\ast)$. Taking $B=\{g>0\}\cap(0,s_\ast)$ and $B=\{g<0\}\cap(0,s_\ast)$ gives $\int_{(0,s_\ast)}g_+\,d\lambda=\int_{(0,s_\ast)}g_-\,d\lambda=0$, so $g=0$ $\lambda$-a.e.\ on $(0,s_\ast)$. There is thus a $\lambda$-null set $N\subseteq(0,s_\ast)$ with $f_S=\tilde f_S$ on $(0,s_\ast)\setminus N$.

\emph{A test sequence inside the agreement set.} For each integer $k\ge1$ the set $\bigl(0,\min(1/k,s_\ast)\bigr)\setminus N$ has Lebesgue measure $\min(1/k,s_\ast)>0$ and is therefore nonempty. Choose a point $s_k$ in it. Then $s_k\in(0,s_\ast)$, $0<s_k<1/k$, and $f_S(s_k)=\tilde f_S(s_k)$ for every $k$. In particular $s_k\to0$ as $k\to\infty$.

\emph{Comparison of the two expansions along the test sequence.} Define $\eps_1(s):=f_S(s)/(Ls^{\alpha})-1$ for $s\in(0,s_0)$ and $\eps_2(s):=\tilde f_S(s)/(\tilde Ls^{\tilde\alpha})-1$ for $s\in(0,\tilde s_0)$. Both denominators are strictly positive ($L,\tilde L>0$ and $s^{\alpha},s^{\tilde\alpha}>0$ for $s>0$). The hypotheses amount to $\eps_1(s)\to0$ and $\eps_2(s)\to0$ as $s\downarrow0$. Since $s_k\to0$ with $s_k>0$, also $\eps_1(s_k)\to0$ and $\eps_2(s_k)\to0$. Equating the two expressions for the common value $f_S(s_k)=\tilde f_S(s_k)$,
\[
L\,s_k^{\alpha}\bigl(1+\eps_1(s_k)\bigr)\;=\;\tilde L\,s_k^{\tilde\alpha}\bigl(1+\eps_2(s_k)\bigr),
\qquad k\ge1 .
\]
For all $k$ large enough that $1+\eps_1(s_k)>0$ and $1+\eps_2(s_k)>0$ (both factors tend to $1$), division is legitimate and yields
\[
s_k^{\,\alpha-\tilde\alpha}
\;=\;\frac{\tilde L}{L}\cdot\frac{1+\eps_2(s_k)}{1+\eps_1(s_k)}
\;\longrightarrow\;\frac{\tilde L}{L}\in(0,\infty)\qquad(k\to\infty).
\]
If $\alpha>\tilde\alpha$, then $s_k^{\,\alpha-\tilde\alpha}\to0$ (continuity of $x\mapsto x^{p}$ at $x=0$ for the exponent $p=\alpha-\tilde\alpha>0$), and uniqueness of limits in $\R$ forces $\tilde L/L=0$, contradicting $\tilde L/L>0$. When instead $\alpha<\tilde\alpha$, $s_k^{\,\alpha-\tilde\alpha}=1/s_k^{\,\tilde\alpha-\alpha}\to+\infty$, and a sequence tending to $+\infty$ cannot converge to the finite limit $\tilde L/L$. Hence $\alpha=\tilde\alpha$. The display then reads $1\to\tilde L/L$, whose left side is the constant $1$, so $\tilde L=L$.

\emph{Conclusion.} If every member of $\mathcal{N}$ satisfies A1, then for each member A1 supplies at least one choice of version, constants and neighbourhood as in the hypotheses. By the uniqueness just proved, all such choices return one and the same pair $(\alpha,L)$, so the assignment is total and single-valued on $\mathcal{N}$. For $p_\tau$ no auxiliary choice enters at all. The value $p_\tau=F_S([0,\tau])$ evaluates the measure $F_S$ at the fixed Borel set $[0,\tau]$, $\tau$ being a shared class constant. By the no-atom clause $F_S(\{0\})=0$ of A1 it coincides with $F_S((0,\tau])$, i.e.\ with the distribution-function value $F_S(\tau)$, as recorded in Definition~\ref{def:psi}. Both functionals ignore the kernel coordinate $K$, whence the notation $\phi(F_S)$.
\end{proof}

\begin{definition}[Modes of consistency]\label{def:consistency}
Let $\mathcal{N}$ be a class of pairs on which $\phi$ is a single-valued real-valued functional (for $\phi\in\{L,p_\tau\}$ and classes whose members satisfy A1, see Lemma~\ref{lem:phiwell}), and let $\hat\phi=(\hat\phi_n)_{n\ge1}$ be an estimator. Then $\hat\phi$ is called:
\begin{enumerate}
\item \emph{uniformly consistent in risk} for $\phi$ over $\mathcal{N}$ if
\[
\sup_{(F_S,K)\in\mathcal{N}}\E_{Q^{\otimes n}}\bigl|\hat\phi_n-\phi(F_S)\bigr|
\;\longrightarrow\;0\qquad(n\to\infty);
\]
\item \emph{uniformly consistent in probability} for $\phi$ over $\mathcal{N}$ if for every $\eps>0$,
\[
\sup_{(F_S,K)\in\mathcal{N}}\PP_{Q^{\otimes n}}\bigl(\bigl|\hat\phi_n-\phi(F_S)\bigr|\ge\eps\bigr)
\;\longrightarrow\;0\qquad(n\to\infty);
\]
\item \emph{pointwise consistent} at a member $(F_S,K)\in\mathcal{N}$ if $\hat\phi_n\to\phi(F_S)$ in $Q^{\otimes n}$-probability, i.e.\ if $\PP_{Q^{\otimes n}}\bigl(|\hat\phi_n-\phi(F_S)|\ge\eps\bigr)\to0$ as $n\to\infty$ for every $\eps>0$.
\end{enumerate}
Here $Q=KF_S$ is the observed law \eqref{eq:Q} of the running member. The composition $|\hat\phi_n-\phi(F_S)|$ of the Borel map $\hat\phi_n$ with the continuous map $x\mapsto|x-\phi(F_S)|$ is a nonnegative Borel function of $\tilde S_{1:n}$. All the expectations and probabilities above are therefore defined, the former possibly equal to $+\infty$.
\end{definition}

By Markov's inequality (integrate the pointwise bound $\eps\,\mathbf{1}\bigl\{|\hat\phi_n-\phi(F_S)|\ge\eps\bigr\}\le|\hat\phi_n-\phi(F_S)|$ and divide by $\eps>0$),
\[
\PP_{Q^{\otimes n}}\bigl(|\hat\phi_n-\phi(F_S)|\ge\eps\bigr)\;\le\;\eps^{-1}\,\E_{Q^{\otimes n}}\bigl|\hat\phi_n-\phi(F_S)\bigr|
\qquad(\eps>0),
\]
the bound holding vacuously when its right side is $+\infty$. Taking suprema over $\mathcal{N}$ and letting $n\to\infty$ shows that (i) implies (ii). Moreover, (ii) implies pointwise consistency at every member, the supremum dominating each member's term. In Lemma~\ref{lem:bridge} the clause ``no estimator sequence is uniformly consistent'' may accordingly be read in either uniform sense. The proof below refutes (i) directly and (ii) through Corollary~\ref{cor:prob}. It goes further, ruling out pointwise consistency at one of two explicitly named members. That is the strongest form of failure.

\begin{proof}[Proof of Lemma~\ref{lem:bridge}]
Let $(F^0,K^0),(F^1,K^1)\in\mathcal{N}$ be the two witnesses supplied by the hypothesis: $(F^0,K^0)\sim(F^1,K^1)$ and
\[
\Delta\;:=\;\bigl|\phi(F^0)-\phi(F^1)\bigr|\;>\;0 .
\]
By hypothesis the values $\phi(F^0),\phi(F^1)\in\R$ exist and differ. That each is a single unambiguous real number is the content of Lemma~\ref{lem:phiwell}, valid for $\phi\in\{L,p_\tau\}$ on any class whose members satisfy A1, in particular on $\Mreg$, the class of the application. For $i\in\{0,1\}$ call \emph{configuration $i$} the hypothesis that the data-generating member is $(F^i,K^i)$. Fix a sample size $n\ge1$.

\emph{Step 1: both configurations sample from the same law.} By Definition~\ref{def:ident}, observational equivalence means equality of the observed laws as Borel probability measures on $[0,\infty]$:
\[
Q^0\;:=\;K^0F^0\;=\;K^1F^1\;=:\;Q^1 ;
\]
write $Q$ for the common law. The two sampling laws then coincide as well. Indeed, $(Q^0)^{\otimes n}$ and $(Q^1)^{\otimes n}$ assign the same mass $\prod_{j=1}^{n}Q(B_j)$ to every measurable rectangle $B_1\times\cdots\times B_n$ with $B_j\in\mathcal{B}([0,\infty])$. The rectangles form a $\pi$-system (they are closed under intersections, coordinatewise) which contains the full space and generates $\mathcal{B}([0,\infty]^n)$, and two probability measures agreeing on such a $\pi$-system are equal (Dynkin's uniqueness theorem, e.g.\ \citealp{Kallenberg2002}). Thus
\[
(Q^0)^{\otimes n}\;=\;(Q^1)^{\otimes n}\;=\;Q^{\otimes n}\qquad\text{for every }n\ge1:
\]
under either configuration the data $\tilde S_{1:n}$ have the law $Q^{\otimes n}$. Consequently, for every nonnegative Borel function $h:[0,\infty]^n\to[0,\infty]$ the integral $\int h\,d(Q^i)^{\otimes n}$ is the same for $i=0$ and $i=1$. Any fixed Borel map $\hat\phi_n:[0,\infty]^n\to\R$, one map common to both configurations, likewise has one and the same distribution under both, namely the pushforward $Q^{\otimes n}\circ\hat\phi_n^{-1}$. The truth index $i$ enters the estimation problem only through the target value $\phi(F^i)$, never through the data.

\emph{Step 2: the risk floor.} Let $\hat\phi_n:[0,\infty]^n\to\R$ be an arbitrary Borel map. Both witnesses belong to $\mathcal{N}$, and both sample from $Q^{\otimes n}$ by Step 1, so
\begin{align*}
\sup_{(F_S,K)\in\mathcal{N}}\E_{Q^{\otimes n}}\bigl|\hat\phi_n-\phi(F_S)\bigr|
\;&\ge\;\max_{i\in\{0,1\}}\;\E_{Q^{\otimes n}}\bigl|\hat\phi_n-\phi(F^i)\bigr|\\
&\ge\;\tfrac12\,\E_{Q^{\otimes n}}\bigl|\hat\phi_n-\phi(F^0)\bigr|
\;+\;\tfrac12\,\E_{Q^{\otimes n}}\bigl|\hat\phi_n-\phi(F^1)\bigr|\\
&=\;\tfrac12\,\E_{Q^{\otimes n}}\Bigl[\,\bigl|\hat\phi_n-\phi(F^0)\bigr|+\bigl|\hat\phi_n-\phi(F^1)\bigr|\,\Bigr]
\;\ge\;\tfrac12\,\Delta .
\end{align*}
In order of appearance, the four relations are justified as follows. The first inequality restricts the supremum to the two witnesses, both members of $\mathcal{N}$, and rewrites the risk of witness $i$. A priori that risk is the integral of the nonnegative Borel function $\omega\mapsto|\hat\phi_n(\omega)-\phi(F^i)|$ with respect to its own sampling law $(Q^i)^{\otimes n}$, and Step 1 turns it into the integral of that same function with respect to the common law $Q^{\otimes n}$. The second uses $\max(x,y)\ge\tfrac12(x+y)$ for $x,y\in[0,\infty]$, valid also when a summand is infinite. Additivity of the $[0,\infty]$-valued integral on nonnegative Borel integrands gives the equality. The final inequality integrates the two-centre triangle inequality
\[
\bigl|x-\phi(F^0)\bigr|+\bigl|x-\phi(F^1)\bigr|\;\ge\;\bigl|\phi(F^0)-\phi(F^1)\bigr|\;=\;\Delta
\qquad(x\in\R),
\]
valid for every real $x$ since $|\phi(F^0)-\phi(F^1)|=|(x-\phi(F^1))-(x-\phi(F^0))|\le|x-\phi(F^0)|+|x-\phi(F^1)|$. This inequality is applied at $x=\hat\phi_n(\omega)$ for each sample point $\omega\in[0,\infty]^n$ and combined with monotonicity of the integral and $Q^{\otimes n}\bigl([0,\infty]^n\bigr)=1$. The chain is the first displayed inequality of Lemma~\ref{lem:bridge}. Since $\hat\phi_n$ was an arbitrary Borel map, the bound survives the infimum,
\[
\inf_{\hat\phi_n}\;\sup_{(F_S,K)\in\mathcal{N}}\;\E_{Q^{\otimes n}}\bigl|\hat\phi_n-\phi(F_S)\bigr|\;\ge\;\tfrac12\,\Delta\;>\;0
\qquad\text{for every }n\ge1,
\]
which is the second. The floor $\Delta/2$ is a strictly positive constant free of $n$. A sequence bounded below by it cannot tend to $0$, so no estimator is uniformly consistent in risk for $\phi$ over $\mathcal{N}$ (Definition~\ref{def:consistency}(i)). The in-probability mode, Definition~\ref{def:consistency}(ii), fails as well, by Corollary~\ref{cor:prob} below, whose proof reuses Step~1 only, so no circularity arises. Thus ``uniformly consistent'' is refuted in both senses of Definition~\ref{def:consistency}.

\emph{Step 3: pointwise consistency fails at one of the witnesses.} Suppose, for contradiction, that a single estimator $(\hat\phi_n)_{n\ge1}$ were pointwise consistent at \emph{both} witnesses (Definition~\ref{def:consistency}(iii)). By Step 1 the two configurations share, for each $n$, the sampling law $Q^{\otimes n}$. The two assumed convergences therefore concern a single sequence of random variables under one and the same sequence of laws:
\[
\hat\phi_n\longrightarrow\phi(F^0)
\quad\text{and}\quad
\hat\phi_n\longrightarrow\phi(F^1),
\qquad\text{both in }Q^{\otimes n}\text{-probability}.
\]
Limits in probability are unique up to null events, and the two limits here are constants, so they would have to coincide, contradicting $\Delta>0$. In detail, no uniqueness theorem need be quoted. At every point of $[0,\infty]^n$ the two-centre triangle inequality $\Delta\le|\hat\phi_n-\phi(F^0)|+|\hat\phi_n-\phi(F^1)|$ shows that at least one of the two events $\bigl\{|\hat\phi_n-\phi(F^i)|\ge\Delta/2\bigr\}$, $i\in\{0,1\}$, occurs. If neither event occurred, the right side would be $<\Delta/2+\Delta/2=\Delta$. Hence, for every $n$,
\[
1\;\le\;\PP_{Q^{\otimes n}}\bigl(|\hat\phi_n-\phi(F^0)|\ge\Delta/2\bigr)
\;+\;\PP_{Q^{\otimes n}}\bigl(|\hat\phi_n-\phi(F^1)|\ge\Delta/2\bigr).
\]
Letting $n\to\infty$ and applying the two assumed consistencies with the fixed $\eps:=\Delta/2>0$ sends the right side to $0$, giving the absurdity $1\le0$. Hence pointwise consistency cannot hold at both witnesses, as claimed.

Lemma~\ref{lem:bridge} is named after Le~Cam's two-point method. There one lower-bounds a minimax risk by the separation of two parameter values multiplied by a testing affinity between the two induced observation laws. The bound degrades as the laws pull apart in total variation (cf.\ \citealp[Ch.~2]{Tsybakov2009}, \citealp{LeCam1986}). Here the method appears in its degenerate extreme: the two observation laws are not merely close but equal, $(Q^0)^{\otimes n}=(Q^1)^{\otimes n}$ for every $n$, so their total-variation distance is $0$, and so is their Kullback--Leibler divergence. No sample size then gives the statistician any power to separate the configurations, and the two-point bound returns the full separation $\Delta/2$, uniformly in $n$.
\end{proof}

\begin{corollary}[In-probability two-point floor]\label{cor:prob}
Assume the hypotheses of Lemma~\ref{lem:bridge}: witnesses $(F^0,K^0)\sim(F^1,K^1)$ in $\mathcal{N}$ with $\phi(F^0)\neq\phi(F^1)$. Write $Q$ for the common observed law $K^0F^0=K^1F^1$ (Definition~\ref{def:ident}) and $\Delta:=|\phi(F^0)-\phi(F^1)|>0$ for the gap. Then for every $n\ge1$, every Borel map $\hat\phi_n:[0,\infty]^n\to\R$, and every $\eps\in(0,\Delta/2)$,
\[
\max_{i\in\{0,1\}}\;\PP_{Q^{\otimes n}}\bigl(\bigl|\hat\phi_n-\phi(F^i)\bigr|\ge\eps\bigr)\;\ge\;\tfrac12 .
\]
Consequently $\sup_{(F_S,K)\in\mathcal{N}}\PP_{Q^{\otimes n}}\bigl(|\hat\phi_n-\phi(F_S)|\ge\eps\bigr)\ge\tfrac12$ for every $n$, every estimator, and every $\eps\in(0,\Delta/2)$. No estimator is therefore uniformly consistent in probability for $\phi$ over $\mathcal{N}$ (Definition~\ref{def:consistency}(ii)).
\end{corollary}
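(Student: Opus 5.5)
The corollary follows from Step~1 of the proof of Lemma~\ref{lem:bridge} together with a pointwise covering argument. I do not intend to route it through the risk bound and Markov's inequality, since that would only yield a floor of order $\eps^{-1}\Delta/2$ rather than the constant $\tfrac12$.

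First, I would reuse Step~1 verbatim. Observational equivalence gives $K^0F^0=K^1F^1=Q$. Dynkin's $\pi$-$\lambda$ uniqueness on measurable rectangles then gives $(Q^0)^{\otimes n}=(Q^1)^{\otimes n}=Q^{\otimes n}$. Hence the two events
\[
E_i:=\bigl\{|\hat\phi_n-\phi(F^i)|\ge\eps\bigr\},\qquad i\in\{0,1\},
\]
are Borel subsets of $[0,\infty]^n$, because $\hat\phi_n$ is Borel and $x\mapsto|x-\phi(F^i)|$ is continuous. Both events are evaluated under the single law $Q^{\otimes n}$. This is precisely why the truth index $i$ enters only through the centre $\phi(F^i)$.

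Second, I would show that $E_0\cup E_1=[0,\infty]^n$. If some sample point $\omega$ lay in neither event, the two-centre triangle inequality would give
\[
\Delta\le|\hat\phi_n(\omega)-\phi(F^0)|+|\hat\phi_n(\omega)-\phi(F^1)|<2\eps<\Delta,
\]
a contradiction. The strict inequality uses $\eps<\Delta/2$. It follows that $1\le Q^{\otimes n}(E_0)+Q^{\otimes n}(E_1)\le 2\max_i Q^{\otimes n}(E_i)$, which is the displayed bound.

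Third, both witnesses lie in $\mathcal{N}$, and for each witness its own sampling law is $Q^{\otimes n}$. The supremum over $\mathcal{N}$ therefore dominates the maximum over $i\in\{0,1\}$, so it is also at least $\tfrac12$ for every $n$. A sequence bounded below by $\tfrac12$ cannot tend to $0$. This refutes Definition~\ref{def:consistency}(ii) for any fixed $\eps\in(0,\Delta/2)$, which suffices because (ii) must hold for every $\eps>0$.

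There is no real obstacle here. The only points needing care are the following:
\begin{itemize}
\item The two probabilities in the maximum must be read under the common law. This is exactly what Step~1 provides, and it is why that step is cited rather than re-derived.
\item The constraint $\eps<\Delta/2$ is what makes the covering strict.
\item Measurability of $E_i$ must hold so that the probabilities are defined.
\end{itemize}
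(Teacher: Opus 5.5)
Your proposal is correct and follows the paper's proof essentially verbatim: the paper shows that the ``good'' events $A_i=\{|\hat\phi_n-\phi(F^i)|<\eps\}$ are disjoint by the two-centre triangle inequality, which is the complement form of your covering $E_0\cup E_1=[0,\infty]^n$. It then bounds the maximum below by $\tfrac12$ under the common law supplied by Step~1, and dominates that maximum by the supremum over $\mathcal{N}$, just as you do.
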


\begin{proof}
By Step 1 of the proof of Lemma~\ref{lem:bridge}, both configurations induce the single sampling law $Q^{\otimes n}$ on $[0,\infty]^n$, under which all probabilities in this proof are computed. Consider the events
\[
A_i\;:=\;\bigl\{\,\omega\in[0,\infty]^n:\ \bigl|\hat\phi_n(\omega)-\phi(F^i)\bigr|<\eps\,\bigr\},
\qquad i\in\{0,1\},
\]
Borel as preimages of open intervals under the Borel map $\hat\phi_n$. They are disjoint: at any point $\omega\in A_0\cap A_1$ the two-centre triangle inequality would give
\[
\Delta\;=\;\bigl|\phi(F^0)-\phi(F^1)\bigr|\;\le\;\bigl|\hat\phi_n(\omega)-\phi(F^0)\bigr|+\bigl|\hat\phi_n(\omega)-\phi(F^1)\bigr|\;<\;2\eps\;<\;\Delta ,
\]
which is absurd, so $A_0\cap A_1=\varnothing$. Therefore $Q^{\otimes n}(A_0)+Q^{\otimes n}(A_1)=Q^{\otimes n}(A_0\cup A_1)\le1$, whence
\[
\PP_{Q^{\otimes n}}\bigl(|\hat\phi_n-\phi(F^0)|\ge\eps\bigr)
+\PP_{Q^{\otimes n}}\bigl(|\hat\phi_n-\phi(F^1)|\ge\eps\bigr)
\;=\;\bigl(1-Q^{\otimes n}(A_0)\bigr)+\bigl(1-Q^{\otimes n}(A_1)\bigr)\;\ge\;1 ,
\]
and the larger of two nonnegative numbers summing to at least $1$ is at least $\tfrac12$ (if $x+y\ge1$ then $\max(x,y)\ge\tfrac12(x+y)\ge\tfrac12$). This is the asserted floor. It remains to prove the final clause. Both witnesses lie in $\mathcal{N}$ and sample from $Q^{\otimes n}$ (Step 1), so the supremum over the class dominates the maximum over the pair, giving $\sup_{(F_S,K)\in\mathcal{N}}\PP_{Q^{\otimes n}}(|\hat\phi_n-\phi(F_S)|\ge\eps)\ge\tfrac12$ for every $n$ and every estimator. Since $\Delta>0$, the range $(0,\Delta/2)$ is nonempty. For any fixed $\eps$ in that range, this supremum is bounded below by $\tfrac12$ along all $n$ and therefore does not tend to $0$, whereas Definition~\ref{def:consistency}(ii) demands that it do so for \emph{every} $\eps>0$. Hence uniform consistency in probability fails.
\end{proof}

\begin{remark}[Scope of the bridge]\label{rem:br-generality}
The proofs of Lemma~\ref{lem:bridge} and Corollary~\ref{cor:prob} use nothing about $\Mreg$ or about the specific functionals $L$ and $p_\tau$ beyond two facts. First, the two witnesses are members of the class $\mathcal{N}$ over which the supremum is taken. Second, $\phi$ is single-valued on $\mathcal{N}$. Classwide single-valuedness is what makes the suprema in Lemma~\ref{lem:bridge} and Definition~\ref{def:consistency} well-formed expressions, whereas the inequalities themselves evaluate $\phi$ only at the two witnesses. The axioms A1--A6 and NA never enter, nor do the shared class constants or the structure of the kernels. Both statements therefore hold verbatim for an arbitrary class $\mathcal{N}$ of pairs and an arbitrary real-valued functional $\phi$ that is single-valued on $\mathcal{N}$. In the application developed here, $\mathcal{N}=\Mreg$ and $\phi\in\{L,p_\tau\}$. Single-valuedness is Lemma~\ref{lem:phiwell}, the sole point at which this section touches A1. The witnesses are supplied by the scale-conjugation pair constructed in the proof of Theorem~\ref{thm:I1}.
\end{remark}

\section{Pair proofs and the certified instance}\label{sec:pair-proofs}
Member~$1$, the $\kappa$-conjugate, carries its \emph{own} unit-slope off-support extension, breaking at $\kappa s_{\max}$, not at $s_{\max}$. This extension is essential. Had $K'$ been built from the bare affine map $s\mapsto(c/\kappa)s+a_0$ on all of $[0,\infty)$, its location offset $m'(s)=(c/\kappa-1)s+a_0+\midW$ would be unbounded (the slope $c/\kappa-1$ is nonzero because $\kappa\neq c$), and axiom A3 would fail. The extension instead caps the offset at the break and keeps the edge globally increasing. Remark~\ref{rem:bounded} isolates this point. Note also that the extension branches of $g$ and $g'$ live on the respective $F_S$- and $F_S'$-null sets $(s_{\max},\infty)$ and $(\kappa s_{\max},\infty)$. They will therefore be invisible to the observed laws (Lemma~\ref{lem:laws}), while doing their work in the membership checks (Proposition~\ref{prop:constants}).

\begin{lemma}[Member $1$ is an affine-random witness]\label{lem:member1}
In the setting of Definition~\ref{def:pair}, the pair $(F_S',K')$ is the affine-random witness of Definition~\ref{def:witness} with parameters
\[
\bigl(\alpha,\ L',\ c/\kappa,\ a_0,\ \beta,\ c_W,\ w_-,\ w_+,\ \kappa s_{\max}\bigr),
\qquad
L'\;=\;\kappa^{-(\alpha+1)}L\;=\;(\alpha+1)\,(\kappa s_{\max})^{-(\alpha+1)},
\]
and with \emph{canonical} latent law: $F_{S'}(x)=\bigl(x/(\kappa s_{\max})\bigr)^{\alpha+1}$ on $[0,\kappa s_{\max}]$. Its A1 data are exact (the $o(1)$ in \eqref{eq:A1} vanishes identically) with A1-window $s_0'=\kappa s_{\max}$, and its edge slope $c/\kappa$ satisfies $c/\kappa\in(0,\infty)$ and $c/\kappa\neq1$.
\end{lemma}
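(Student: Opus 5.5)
The plan is to check, item by item, that $(F_S',K')$ matches clauses (a)--(d) of Definition~\ref{def:witness} under the parameter substitution $(L,c,s_{\max})\mapsto(L',c/\kappa,\kappa s_{\max})$, with $\alpha,a_0,\beta,c_W,w_\pm$ and the jitter $W$ unchanged.

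\emph{Latent law.} With $S\sim F_S$ canonical, Lemma~\ref{lem:push}\ref{it:pre-cov} gives
\[
F_{\kappa S}(x)=F_S(x/\kappa)=\bigl(x/(\kappa s_{\max})\bigr)^{\alpha+1}\qquad\text{for }x\in[0,\kappa s_{\max}].
\]
It also gives $F_{\kappa S}(x)=0$ for $x<0$ and $=1$ for $x\ge\kappa s_{\max}$. Hence $F_S'$ is exactly the canonical law \eqref{eq:wit-canon} with $s_{\max}$ replaced by $\kappa s_{\max}$. In particular:
\begin{itemize}
\item $\supp F_S'\subseteq[0,\kappa s_{\max}]$, and $F_S'(\{0\})=\PP(S=0)=0$.
\item The density on the window $(0,\kappa s_{\max})$ is $\kappa^{-1}f_S(x/\kappa)=(\alpha+1)(\kappa s_{\max})^{-(\alpha+1)}x^{\alpha}$, again by Lemma~\ref{lem:push}\ref{it:pre-cov}.
\item The expansion \eqref{eq:A1} therefore holds exactly, with $o(1)\equiv0$ and window $s_0'=\kappa s_{\max}$.
\end{itemize}
Reading off the coefficient gives $L'=(\alpha+1)(\kappa s_{\max})^{-(\alpha+1)}=\kappa^{-(\alpha+1)}L$, using $L=(\alpha+1)s_{\max}^{-(\alpha+1)}$.

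\emph{Parameters and jitter.} Since $c,\kappa\in(0,\infty)$, we have $c/\kappa\in(0,\infty)$. The exclusion $\kappa\neq c$ in Definition~\ref{def:pair} gives $c/\kappa\neq1$, which is the constraint demanded in Definition~\ref{def:witness}(b). The intercept $a_0\ge0$ is unchanged. The jitter $W$ is the same variable, with the same edges $w_\pm$, exponent $\beta$ and constant $c_W$, so clause (c) holds verbatim.

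\emph{Independence.} The one point that needs care is that $W$ must be independent of $\kappa S$. This follows because $\kappa S$ is a Borel function of $S$, and $S$ is independent of $W$. Alternatively, one can take the product space carrying $F_S'\otimes P_W$, as clause (c) allows.

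\emph{Location map and kernel.} Comparing \eqref{eq:pair-gprime} with \eqref{eq:gdef}, $g'$ is exactly the location map of step (d) under $(c,s_{\max})\mapsto(c/\kappa,\kappa s_{\max})$. In particular:
\begin{itemize}
\item the affine branch has slope $c/\kappa$ on $[0,\kappa s_{\max}]$;
\item the unit-slope continuation starts from $(c/\kappa)\kappa s_{\max}+a_0$.
\end{itemize}
The kernel $K'(\cdot\mid s)=\Law(g'(s)+W)$ then coincides with the kernel of step (d) for these parameters.

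I expect no real obstacle; every step is a direct substitution. The only places needing attention are the independence bookkeeping for $W$ noted above and confirming that the A1 constants are single-valued, which Lemma~\ref{lem:phiwell} supplies.
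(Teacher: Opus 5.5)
Your proposal is correct and follows the paper's proof essentially step for step. You use the dilation change of variables of Lemma~\ref{lem:push}\ref{it:pre-cov} to obtain the canonical law with scale $\kappa s_{\max}$, its exact density and $L'=\kappa^{-(\alpha+1)}L$, and you then compare \eqref{eq:pair-gprime} with \eqref{eq:gdef}, using $\kappa\neq c$, to identify the kernel. Your extra independence remark is harmless bookkeeping that the paper leaves implicit, since the kernel $\Law(g'(s)+W)$ itself involves no coupling with the latent variable.
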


\begin{proof}
\emph{The latent law.} Write $S'\sim F_S'=\Law(\kappa S)$. By Lemma~\ref{lem:push}(ii) (distribution function under a dilation), $F_{S'}(x)=F_S(x/\kappa)$ for every $x\ge0$. Since $F_S$ is the canonical law of Definition~\ref{def:pair},
\begin{gather*}
F_{S'}(x)=F_S(x/\kappa)=\Bigl(\frac{x/\kappa}{s_{\max}}\Bigr)^{\alpha+1}=\Bigl(\frac{x}{\kappa s_{\max}}\Bigr)^{\alpha+1}
\qquad\text{for }0\le x\le\kappa s_{\max},\\
F_{S'}(x)=1\qquad\text{for }x>\kappa s_{\max},
\end{gather*}
which is the canonical latent distribution function with scale parameter $\kappa s_{\max}$ in place of $s_{\max}$. In particular $\supp F_S'\subseteq[0,\kappa s_{\max}]$ (also directly from Lemma~\ref{lem:push}(i): the support of $\Law(\kappa S)$ is $\kappa\cdot\supp F_S$). By Lemma~\ref{lem:push}(ii) again (density change of variables), $F_S'$ has on $(0,\kappa s_{\max})$ the density
\[
f_{S'}(x)\;=\;\kappa^{-1}f_S(x/\kappa)
\;=\;\kappa^{-1}(\alpha+1)\,s_{\max}^{-(\alpha+1)}\Bigl(\frac{x}{\kappa}\Bigr)^{\alpha}
\;=\;(\alpha+1)\,(\kappa s_{\max})^{-(\alpha+1)}\,x^{\alpha},
\]
i.e.\ the canonical density with scale $\kappa s_{\max}$. Its leading coefficient is
\[
L'=(\alpha+1)(\kappa s_{\max})^{-(\alpha+1)}=\kappa^{-(\alpha+1)}(\alpha+1)s_{\max}^{-(\alpha+1)}=\kappa^{-(\alpha+1)}L .
\]
There is no atom at the endpoint: $F_S'(\{0\})=\PP(\kappa S=0)=\PP(S=0)=F_S(\{0\})=0$, because $\kappa>0$ makes $\{\kappa S=0\}=\{S=0\}$. Thus $F_S'$ satisfies clause~(a) of Definition~\ref{def:witness} in its canonical form with scale parameter $\kappa s_{\max}$. The density is \emph{exactly} $L'x^{\alpha}$ on the full window $(0,s_0')$ with $s_0'=\kappa s_{\max}$, so the $o(1)$ of \eqref{eq:A1} is identically zero, and the shape and scale parameters are $\alpha$ and $L'$.

\emph{The kernel.} By \eqref{eq:pair-gprime}, $g'$ is the step-(d) location map of Definition~\ref{def:witness} for the scale $c/\kappa$ and the break point $\kappa s_{\max}$. It is affine with slope $c/\kappa$ and intercept $a_0$ on $[0,\kappa s_{\max}]$, and continues with unit slope beyond, the two branches agreeing at $s=\kappa s_{\max}$ (both equal $c\,s_{\max}+a_0$ there, since $(c/\kappa)\kappa s_{\max}=c\,s_{\max}$). Admissibility of the scale parameter is immediate: $c/\kappa\in(0,\infty)$ as a ratio of two elements of $(0,\infty)$, and $c/\kappa=1$ would mean $\kappa=c$, excluded by Definition~\ref{def:pair}. The randomization is the same jitter variable $W$, with the same Convention~\ref{conv:W} data $(\beta,c_W,w_-,w_+)$. Hence $K'(\cdot\mid s)=\Law(g'(s)+W)$ is the step-(d) kernel for these parameters. Collecting parameters, $(F_S',K')$ is the affine-random witness with parameter vector $(\alpha,L',c/\kappa,a_0,\beta,c_W,w_-,w_+,\kappa s_{\max})$ and canonical latent law.
\end{proof}

\begin{proof}[Proof of Lemma~\ref{lem:laws}]
All computations are at the level of laws.

\emph{Step 1: member $0$ as a convolution.} Let $S\sim F_S$ and $W\sim P_W$ be independent. By Lemma~\ref{lem:mixture}, $Q=KF_S=\Law(g(S)+W)$. Since $\supp F_S\subseteq[0,s_{\max}]$ and the support carries full mass, $\PP(S\in[0,s_{\max}])=1$, and on $[0,s_{\max}]$ the location map is its affine branch, $g(s)=cs+a_0$. Hence $g(S)=cS+a_0$ almost surely, and almost-surely-equal random variables share their law, so
\[
Q=\Law(cS+a_0+W).
\]
(Only the on-support branch of $g$ is ever seen by the mixture, since the unit-slope branch lives on the $F_S$-null set $(s_{\max},\infty)$.) The variables $cS$ and $W$ are independent (Borel images of the independent pair $(S,W)$). By Lemma~\ref{lem:push}(iv), the law of a sum of independent variables is the convolution of their laws and is determined by those laws alone, so
\[
Q\;=\;\bigl(\delta_{a_0}\ast\Law(cS)\bigr)\ast P_W .
\]

\emph{Step 2: member $1$ as a convolution.} Let $S'\sim F_S'$ and $W'\sim P_W$ be independent, on any probability space. Lemma~\ref{lem:mixture}, applied to the pair $(F_S',K')$ with location map $g'$, gives $Q'=K'F_S'=\Law(g'(S')+W')$, and the value of the right-hand side depends only on the two marginal laws $(F_S',P_W)$, not on the realization. By Lemma~\ref{lem:member1}, $\supp F_S'\subseteq[0,\kappa s_{\max}]$, so $\PP(S'\in[0,\kappa s_{\max}])=1$. On $[0,\kappa s_{\max}]$, \eqref{eq:pair-gprime} reads $g'(s)=(c/\kappa)s+a_0$. Just as in Step~1,
\[
Q'\;=\;\Law\bigl(\tfrac{c}{\kappa}S'+a_0+W'\bigr)\;=\;\bigl(\delta_{a_0}\ast\Law\bigl(\tfrac{c}{\kappa}S'\bigr)\bigr)\ast P_W .
\]

\emph{Step 3: the middle factors coincide.} By definition $F_S'=\Law(\kappa S)$, i.e.\ $F_S'$ is the pushforward of $F_S$ under the dilation $x\mapsto\kappa x$. In turn, $\Law\bigl(\tfrac{c}{\kappa}S'\bigr)$ is the pushforward of $F_S'$ under the dilation $x\mapsto\tfrac{c}{\kappa}x$. The composite of the dilations by $\kappa$ and by $c/\kappa$ is the dilation by $\tfrac{c}{\kappa}\cdot\kappa=c$, so by Lemma~\ref{lem:push}(iii) on composition of pushforwards,
\[
\Law\bigl(\tfrac{c}{\kappa}S'\bigr)\;=\;\Law\bigl(\tfrac{c}{\kappa}(\kappa S)\bigr)\;=\;\Law(cS).
\]

\emph{Conclusion.} The convolutions in Steps~1 and~2 have identical factors, $\delta_{a_0}$, $\Law(cS)=\Law(\tfrac{c}{\kappa}S')$, and $P_W$, composed with the same bracketing. Since convolution is an operation on the factor laws (Lemma~\ref{lem:push}(iv)), the two laws of real random variables coincide. This equality of $\R$-laws transfers to the laws on $[0,\infty]$. By Lemma~\ref{lem:mixture}, $Q(B)=\PP(cS+a_0+W\in B_0)$ and $Q'(B)=\PP(\tfrac{c}{\kappa}S'+a_0+W'\in B_0)$ for every Borel $B\subseteq[0,\infty]$, the two random variables lying in $[0,\infty)$ almost surely, so $Q(B)=Q'(B)$ for every such $B$. That is, $Q'=Q$ in $\Pclass([0,\infty])$. Finally, $S\in[0,s_{\max}]$ and $W\in[w_-,w_+]$ almost surely (each support carries full mass and is contained in the stated interval), so $cS+a_0+W\in[a_0+w_-,\ c\,s_{\max}+a_0+w_+]$ almost surely. Hence $Q=Q'$ assigns full mass to this compact subset of $[0,\infty)$, and in particular no mass to $\{+\infty\}$. Observational equivalence $(F_S,K)\sim(F_S',K')$ is now the definition (Definition~\ref{def:ident}) applied to $KF_S=K'F_S'$.
\end{proof}

\begin{remark}[The pathwise mechanism behind $Q'=Q$]
The equality of laws has a one-line coupling explanation. Realize both members on a single probability space: take $S\sim F_S$ and $W\sim P_W$ independent, and \emph{define} $S':=\kappa S$, a legitimate realization of $F_S'=\Law(\kappa S)$. This makes $W$ independent of $(S,S')$ (indeed $\sigma(S')\subseteq\sigma(S)$). Then, pointwise on this space,
\[
\frac{c}{\kappa}\,S'+a_0+W\;=\;\frac{c}{\kappa}\,(\kappa S)+a_0+W\;=\;c\,S+a_0+W ,
\]
so the two observations are not merely equal in law but are \emph{the same random variable} almost surely: the latent dilation by $\kappa$ and the kernel contraction by $1/\kappa$ cancel path by path. Both sides use only the on-support (affine) branches of $g$ and $g'$, since $S\in[0,s_{\max}]$ and $S'\in[0,\kappa s_{\max}]$ almost surely, as in the proof of Lemma~\ref{lem:laws}.
\end{remark}

\begin{proof}[Proof of Lemma~\ref{lem:estimand}]
\emph{A preliminary fact: $\kappa^{-(\alpha+1)}\neq1$.} When $\kappa\in(0,\infty)\setminus\{1\}$, we have $\log\kappa\neq0$, so the map $t\mapsto\kappa^{t}=e^{t\log\kappa}$ is strictly monotone on $\R$. Since $\kappa^{0}=1$ and $\alpha+1\ge1>0$, it follows that $\kappa^{\alpha+1}\neq1$, hence $\kappa^{-(\alpha+1)}=1/\kappa^{\alpha+1}\neq1$. (Equivalently, $\kappa^{\alpha+1}=1$ with $\kappa>0$ forces $(\alpha+1)\log\kappa=0$, so $\log\kappa=0$ and therefore $\kappa=1$, which is excluded.)

(i)--(ii) By Definition~\ref{def:pair} and Lemma~\ref{lem:member1}, $F_S$ and $F_S'$ have exact canonical densities on their full A1-windows:
\[
f_S(s)=L\,s^{\alpha}\ \text{ on }(0,s_{\max}),
\qquad
f_{S'}(s)=L'\,s^{\alpha}\ \text{ on }(0,\kappa s_{\max}),
\qquad
L'=\kappa^{-(\alpha+1)}L .
\]
Each display is an instance of \eqref{eq:A1} with $o(1)\equiv0$, exhibiting the A1 shape/scale parameters $(\alpha,L)$ for member $0$ and $(\alpha,L')$ for member $1$. By Lemma~\ref{lem:phiwell} the A1 parameters are single-valued functionals of the latent law, so these parameters \emph{are} the values, namely $\alpha'=\alpha$ and $L'=\kappa^{-(\alpha+1)}L$. Since $L>0$ and $\kappa^{-(\alpha+1)}\neq1$, we get $L'\neq L$.

(iii) Fix $\tau\in(0,\,s_{\max}\wedge\kappa s_{\max})$. By Definition~\ref{def:psi} (and the absence of atoms at $0$), $p_\tau=F_S([0,\tau])=F_S(\tau)$ and $p'_\tau=F_{S'}([0,\tau])=F_{S'}(\tau)$. These are direct evaluations of the two canonical distribution functions. Since $0<\tau<s_{\max}$,
\[
p_\tau=F_S(\tau)=\Bigl(\frac{\tau}{s_{\max}}\Bigr)^{\alpha+1}\in(0,1)
\]
(the base lies in $(0,1)$ and $x\mapsto x^{\alpha+1}$ maps $(0,1)$ into $(0,1)$). Likewise $0<\tau<\kappa s_{\max}$ places $\tau$ strictly inside member $1$'s support window, so by Lemma~\ref{lem:member1} (equivalently, by $F_{S'}(\tau)=F_S(\tau/\kappa)$ from Lemma~\ref{lem:push}(ii) together with $0<\tau/\kappa<s_{\max}$),
\[
p'_\tau=F_{S'}(\tau)=\Bigl(\frac{\tau}{\kappa s_{\max}}\Bigr)^{\alpha+1}
=\kappa^{-(\alpha+1)}\Bigl(\frac{\tau}{s_{\max}}\Bigr)^{\alpha+1}
=\kappa^{-(\alpha+1)}\,p_\tau\in(0,1).
\]
As $p_\tau>0$ and $\kappa^{-(\alpha+1)}\neq1$, we conclude $p'_\tau\neq p_\tau$.

The ratio identity $L'/L=p'_\tau/p_\tau=\kappa^{-(\alpha+1)}$ collects (ii) and (iii), division being permitted because $L,p_\tau>0$. The gaps \eqref{eq:pair-DL}--\eqref{eq:pair-Dp} are then $\Delta_L=L|1-\kappa^{-(\alpha+1)}|>0$ and $\Delta_p=p_\tau|1-\kappa^{-(\alpha+1)}|>0$, each a product of strictly positive factors.
\end{proof}

\begin{proposition}[One set of shared class constants dominating both members]\label{prop:constants}
Given the data of Definition~\ref{def:pair}, define
\begin{equation}\label{eq:pair-const}
\begin{aligned}
\beta\ &:=\ \text{the lower-edge exponent of }P_W\ \text{(Convention~\ref{conv:W})},\\
c_-\ &:=\ \min\{c,\ c/\kappa\},\qquad c_+\ :=\ \max\{c,\ c/\kappa\},\\
\bar b\ &:=\ \max\bigl(|c-1|,\ |c/\kappa-1|\bigr)\,\bigl(s_{\max}\vee\kappa s_{\max}\bigr)\;+\;a_0\;+\;\midW,\\
\bar s_0\ &:=\ s_R\ :=\ s_A\ :=\ s_{\max}\wedge\kappa s_{\max},
\end{aligned}
\end{equation}
and fix any $\tau\in(0,s_R)$. Then:
\begin{enumerate}
\item (\emph{admissibility}) $(\beta,\bar b,c_-,c_+,\tau,\bar s_0,s_R,s_A)$ is an admissible tuple of shared class constants: $\beta\ge0$, $0<c_-\le c_+<\infty$, $\bar b\in(0,\infty)$, $\bar s_0>0$, $s_R,s_A\in(0,\bar s_0]$, and $0<\tau<s_R$.
\item (\emph{membership}) both members of Definition~\ref{def:pair} satisfy the compatibility conditions (C1)--(C5) of Proposition~\ref{prop:witness} relative to these constants. Consequently
\[
(F_S,K)\in\Mreg
\qquad\text{and}\qquad
(F_S',K')\in\Mreg ,
\]
the \emph{same} class $\Mreg$, formed with the \emph{same} shared constants \eqref{eq:pair-const}.
\end{enumerate}
\end{proposition}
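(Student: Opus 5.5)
The proof splits into admissibility and membership, and both reduce to elementary inequalities among the pair data $(c,\kappa,s_{\max},a_0,w_\pm)$ together with Proposition~\ref{prop:witness} and Lemma~\ref{lem:member1}.

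\emph{Admissibility.} $\beta\ge0$ holds by Convention~\ref{conv:W}. Since $c>0$ and $\kappa>0$, both $c$ and $c/\kappa$ are positive and finite, so $0<c_-\le c_+<\infty$. For $\bar b$, note that $\midW>0$ (Convention~\ref{conv:W}) and the other summands are nonnegative and finite, hence $\bar b\in(0,\infty)$. The three radii coincide with $s_{\max}\wedge\kappa s_{\max}>0$, so $s_R,s_A\in(0,\bar s_0]$ trivially. Finally, $\tau\in(0,s_R)$ by choice.

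\emph{Membership of member $0$.} I check (C1)--(C5) for the parameters $(\alpha,L,c,a_0,\beta,c_W,w_-,w_+,s_{\max})$ with the canonical law, so $s_0=s_{\max}$.
\begin{itemize}
\item[(C1)] This holds by the definition of $\beta$ in \eqref{eq:pair-const}.
\item[(C2)] We have $c\in\{c,c/\kappa\}\subseteq[c_-,c_+]$.
\item[(C3)] Since $|c-1|\le\max(|c-1|,|c/\kappa-1|)$ and $s_{\max}\le s_{\max}\vee\kappa s_{\max}$, it follows that $|c-1|s_{\max}+a_0+\midW\le\bar b$.
\item[(C4)] We have $s_R=s_{\max}\wedge\kappa s_{\max}\le s_{\max}$ and $s_0=s_{\max}\ge\bar s_0$.
\item[(C5)] $c\neq1$ by Definition~\ref{def:pair}.
\end{itemize}
Proposition~\ref{prop:witness} then gives $(F_S,K)\in\Mreg$.

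\emph{Membership of member $1$.} By Lemma~\ref{lem:member1}, $(F_S',K')$ is an affine-random witness with parameters $(\alpha,L',c/\kappa,a_0,\beta,c_W,w_-,w_+,\kappa s_{\max})$, canonical law, and window $s_0'=\kappa s_{\max}$. The same checks go through with $(c,s_{\max})$ replaced by $(c/\kappa,\kappa s_{\max})$:
\begin{itemize}
\item[(C1)] The jitter, and hence $\beta$, is unchanged.
\item[(C2)] $c/\kappa\in[c_-,c_+]$.
\item[(C3)] By the max structure of $\bar b$, $|c/\kappa-1|\kappa s_{\max}+a_0+\midW\le\bar b$.
\item[(C4)] $s_R\le\kappa s_{\max}=s_0'$ and $s_0'\ge\bar s_0$.
\item[(C5)] $c/\kappa\neq1$ because $\kappa\neq c$, as recorded in Lemma~\ref{lem:member1}.
\end{itemize}
Hence $(F_S',K')\in\Mreg$ for the same tuple.

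The only delicate point is bookkeeping rather than any obstacle. The shared constants must dominate both members at once. This is why the radii are taken as the minimum of the two breaks (so that $s_R\le s_{\max}$ and $s_R\le\kappa s_{\max}$ both hold for (C4)) and why $\bar b$ pairs the larger slope deviation with the larger break. With $\bar b$ built this way, (C3) holds for each member regardless of which of $c$ and $c/\kappa$ is farther from $1$ or which break is larger.
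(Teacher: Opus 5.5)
Your proposal is correct and matches the paper's proof essentially step for step. You check admissibility item by item, with $\bar b>0$ coming from $\midW>0$. You then verify (C1)--(C5) for member $0$ directly and for member $1$ via Lemma~\ref{lem:member1}, and close both memberships with Proposition~\ref{prop:witness}.
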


\begin{proof}
(i) \emph{Admissibility, item by item.} $\beta\ge0$ is part of Convention~\ref{conv:W}. Both $c$ and $c/\kappa$ lie in $(0,\infty)$ ($c,\kappa\in(0,\infty)$), so their minimum and maximum satisfy $0<c_-\le c_+<\infty$ (Convention~\ref{conv:corridor}). Consider the constant $\bar b$. Each summand in \eqref{eq:pair-const} is finite ($s_{\max},\kappa s_{\max}<\infty$ and $a_0,\midW<\infty$), so $\bar b<\infty$. For the lower bound, $\bar b\ge a_0+\midW\ge\midW=\tfrac12(w_-+w_+)\ge\tfrac12 w_+>0$, using $w_+>w_-\ge0$. Next, $\bar s_0=s_{\max}\wedge\kappa s_{\max}>0$ as the minimum of two elements of $(0,\infty)$, and $s_R=s_A=\bar s_0$ gives $s_R,s_A\in(0,\bar s_0]$. Finally $\tau\in(0,s_R)$ by its choice, the interval being nonempty since $s_R>0$.

(ii) \emph{Member $0$.} By Definition~\ref{def:pair} it is the affine-random witness with canonical latent law, A1-window $s_0=s_{\max}$ (the canonical density is exact on all of $(0,s_{\max})$), and parameters
\[
\bigl(\alpha,\ L,\ c,\ a_0,\ \beta,\ c_W,\ w_-,\ w_+,\ s_{\max}\bigr).
\]
We check (C1)--(C5) of Proposition~\ref{prop:witness}:
\begin{itemize}
\item[(C1)] Its jitter is $W$, whose lower-edge exponent is $\beta$. The class constant was \emph{defined} in \eqref{eq:pair-const} as exactly this exponent.
\item[(C2)] $c_-=\min\{c,c/\kappa\}\le c\le\max\{c,c/\kappa\}=c_+$, so $c\in[c_-,c_+]$.
\item[(C3)] $|c-1|\le\max(|c-1|,|c/\kappa-1|)$ and $0<s_{\max}\le s_{\max}\vee\kappa s_{\max}$. Multiplying these inequalities between nonnegative numbers and adding $a_0+\midW$ gives
\[
|c-1|\,s_{\max}+a_0+\midW\ \le\ \max\bigl(|c-1|,|c/\kappa-1|\bigr)\bigl(s_{\max}\vee\kappa s_{\max}\bigr)+a_0+\midW\ =\ \bar b .
\]
\item[(C4)] $s_R=s_{\max}\wedge\kappa s_{\max}\le s_{\max}$, and $s_0=s_{\max}\ge s_{\max}\wedge\kappa s_{\max}=\bar s_0$.
\item[(C5)] $c\neq1$ is a datum of Definition~\ref{def:pair}.
\end{itemize}

\emph{Member $1$.} By Lemma~\ref{lem:member1} it is the affine-random witness with canonical latent law, A1-window $s_0'=\kappa s_{\max}$, and parameters
\[
\bigl(\alpha,\ \kappa^{-(\alpha+1)}L,\ c/\kappa,\ a_0,\ \beta,\ c_W,\ w_-,\ w_+,\ \kappa s_{\max}\bigr).
\]
The same five conditions are checked in turn:
\begin{itemize}
\item[(C1)] The jitter is the \emph{same} variable $W$, with lower-edge exponent $\beta$.
\item[(C2)] $c_-\le c/\kappa\le c_+$ by the construction of $c_\pm$ in \eqref{eq:pair-const}.
\item[(C3)] $|c/\kappa-1|\le\max(|c-1|,|c/\kappa-1|)$ and $0<\kappa s_{\max}\le s_{\max}\vee\kappa s_{\max}$, so, as for member $0$,
\[
|c/\kappa-1|\,\kappa s_{\max}+a_0+\midW\ \le\ \bar b .
\]
\item[(C4)] $s_R=s_{\max}\wedge\kappa s_{\max}\le\kappa s_{\max}$, and $s_0'=\kappa s_{\max}\ge\bar s_0$.
\item[(C5)] $c/\kappa\neq1$, since $\kappa\neq c$ is a datum of Definition~\ref{def:pair}.
\end{itemize}
In both cases Proposition~\ref{prop:witness} applies and yields membership in one and the same class $\Mreg$, built from the shared constants \eqref{eq:pair-const} with the fixed threshold $\tau$.
\end{proof}

\begin{proof}[Proof of Theorem~\ref{thm:I1}]
Fix any data admissible for Definition~\ref{def:pair}. Such data exist, since Example~\ref{ex:numeric} below exhibits a fully explicit choice. Let $(F_S,K)$, $(F_S',K')$ be the resulting scale-conjugation pair. By Proposition~\ref{prop:constants}, the tuple $(\beta,\bar b,c_-,c_+,\tau,\bar s_0,s_R,s_A)$ of \eqref{eq:pair-const}, with any fixed $\tau\in(0,s_R)$, is an admissible tuple of shared class constants, and \emph{both} members lie in the class $\Mreg$ formed with these constants. This exhibits the constants and the two members required by the theorem. It remains to verify the displayed properties and draw the conclusions.

\emph{Identical observed law.} By Lemma~\ref{lem:laws}, $KF_S=K'F_S'$ exactly, as Borel laws on $[0,\infty]$. That is, $(F_S,K)\sim(F_S',K')$ in the sense of Definition~\ref{def:ident}.

\emph{Different estimand.} The class constant $\tau$ satisfies $0<\tau<s_R=s_{\max}\wedge\kappa s_{\max}$, which is precisely the hypothesis of Lemma~\ref{lem:estimand}(iii). That lemma therefore gives
\[
\alpha'=\alpha,
\qquad
L'=\kappa^{-(\alpha+1)}L\neq L,
\qquad
p'_\tau=\kappa^{-(\alpha+1)}p_\tau\neq p_\tau ,
\]
with gaps $\Delta_L=L|1-\kappa^{-(\alpha+1)}|>0$ and $\Delta_p=p_\tau|1-\kappa^{-(\alpha+1)}|>0$ as in \eqref{eq:pair-DL}--\eqref{eq:pair-Dp}. This proves the first display of the theorem.

\emph{Non-identifiability.} By Lemma~\ref{lem:phiwell}, each $\phi\in\{L,p_\tau\}$ is a single-valued real functional on $\Mreg$. Take $\phi=L$. The pair just constructed consists of two members of $\Mreg$ with $(F_S,K)\sim(F_S',K')$ and $\phi(F_S)=L\neq L'=\phi(F_S')$, so the implication defining identifiability in Definition~\ref{def:ident} fails, and $L$ is not identifiable over $\Mreg$. For $\phi=p_\tau$ the same two members give $\phi(F_S)=p_\tau\neq p'_\tau=\phi(F_S')$, so $p_\tau$ is not identifiable over $\Mreg$ either.

\emph{No uniformly consistent estimator.} Apply Lemma~\ref{lem:bridge} to the same pair, taken as the two witnesses $(F^0,K^0):=(F_S,K)$ and $(F^1,K^1):=(F_S',K')$, once with $\phi=L$ (gap $\Delta=\Delta_L>0$) and once with $\phi=p_\tau$ (gap $\Delta=\Delta_p>0$). Its hypotheses hold: both witnesses lie in $\Mreg$, $\phi$ is single-valued on $\Mreg$ (Lemma~\ref{lem:phiwell}), and the gaps are strictly positive. For every sample size $n$,
\[
\inf_{\hat\phi_n}\ \sup_{(F_S,K)\in\Mreg}\ \E_{Q^{\otimes n}}\bigl|\hat\phi_n-\phi(F_S)\bigr|
\ \ge\ \frac{\Delta_L}{2}\quad(\phi=L),
\qquad\text{respectively}\quad
\ge\ \frac{\Delta_p}{2}\quad(\phi=p_\tau):
\]
the minimax risk is bounded below by a strictly positive constant free of $n$, so no estimator is uniformly consistent in risk for $L$ or for $p_\tau$ over $\Mreg$. The in-probability mode fails as well. By Corollary~\ref{cor:prob}, for every $n$, every estimator, and every $\eps\in(0,\Delta/2)$ (with $\Delta=\Delta_L$, resp.\ $\Delta_p$), the error probability $\sup_{\Mreg}\PP_{Q^{\otimes n}}(|\hat\phi_n-\phi(F_S)|\ge\eps)$ is at least $\tfrac12$. Finally, by the pointwise addendum of Lemma~\ref{lem:bridge}, every estimator fails to be even pointwise consistent at one of the two exhibited witnesses. This proves all assertions of the theorem.
\end{proof}

\begin{example}[An explicit instance, in exact rationals]\label{ex:numeric}
Take the data
\[
\alpha=1,\qquad s_{\max}=3,\qquad c=\tfrac45,\qquad a_0=\tfrac{3}{10},\qquad
W\sim\mathrm{Unif}\bigl[0,\tfrac15\bigr],\qquad \kappa=\tfrac34 .
\]
The exclusions of Definition~\ref{def:pair} hold: $c=\tfrac45\neq1$, $\kappa=\tfrac34\neq1$, and $\kappa=\tfrac34\neq\tfrac45=c$. Convention~\ref{conv:W} is met by the uniform jitter, with $w_-=0$, $w_+=\tfrac15$ and density $\rho_W\equiv5$ on $[0,\tfrac15]$. Near the lower edge $\rho_W(w)=5\,(w-0)^{0}$ \emph{exactly}, so $\beta=0$ and $c_W=5$ with identically vanishing $o(1)$. Hence $\midW=\tfrac{1}{10}$, and the apparent endpoint of either member is $a'=a_0+w_-=\tfrac{3}{10}$.

\emph{The two members.} Member $0$ has the canonical latent law on $[0,3]$ and edge slope $c=\tfrac45$ with break at $s_{\max}=3$. By Lemma~\ref{lem:member1}, member $1$ has the canonical latent law on $[0,\kappa s_{\max}]=[0,\tfrac94]$ and edge slope
\[
\frac{c}{\kappa}=\frac{4/5}{3/4}=\frac{16}{15}\ (\approx1.067),
\qquad\text{with break at }\kappa s_{\max}=\tfrac94\ (=2.250).
\]
The common observed scale is
\[
c\,s_{\max}=\tfrac45\cdot3=\tfrac{12}{5}
=\tfrac{16}{15}\cdot\tfrac94=\tfrac{c}{\kappa}\,\bigl(\kappa s_{\max}\bigr)\ (=2.400),
\]
the algebraic identity underlying Lemma~\ref{lem:laws}. Here the common observed law $Q=Q'$ assigns full mass to $[a_0+w_-,\,c\,s_{\max}+a_0+w_+]=[\tfrac{3}{10},\tfrac{29}{10}]$.

\emph{Shared constants} (Proposition~\ref{prop:constants}, formula \eqref{eq:pair-const}):
\[
\beta=0,\qquad
c_-=\min\bigl\{\tfrac45,\tfrac{16}{15}\bigr\}=\tfrac45,\qquad
c_+=\tfrac{16}{15},\qquad
\bar s_0=s_R=s_A=3\wedge\tfrac94=\tfrac94,\qquad
\tau:=\tfrac12\in\bigl(0,\tfrac94\bigr),
\]
\[
\bar b=\max\bigl(\bigl|\tfrac45-1\bigr|,\bigl|\tfrac{16}{15}-1\bigr|\bigr)\cdot\bigl(3\vee\tfrac94\bigr)+\tfrac{3}{10}+\tfrac{1}{10}
=\max\bigl(\tfrac15,\tfrac1{15}\bigr)\cdot3+\tfrac25
=\tfrac35+\tfrac25=1 .
\]

\emph{Estimands} (Lemma~\ref{lem:estimand}, with $\alpha+1=2$ and $\kappa^{-2}=\tfrac{16}{9}$):
\[
L=2\cdot3^{-2}=\tfrac29\ (\approx0.2222),
\qquad
L'=\kappa^{-2}L=2\cdot\bigl(\tfrac94\bigr)^{-2}=\tfrac{32}{81}\ (\approx0.3951),
\]
\[
p_\tau=\Bigl(\frac{1/2}{3}\Bigr)^{2}=\tfrac{1}{36}\ (\approx0.02778),
\qquad
p'_\tau=\Bigl(\frac{1/2}{9/4}\Bigr)^{2}=\bigl(\tfrac29\bigr)^{2}=\tfrac{4}{81}\ (\approx0.04938),
\]
with the common ratio $L'/L=p'_\tau/p_\tau=\kappa^{-2}=\tfrac{16}{9}$ ($\approx1.778$), a relative discrepancy of $\kappa^{-2}-1=\tfrac79$ ($\approx77.78\%$) in both coordinates, while $\alpha'=\alpha=1$. The member-dependent factor of the affine lead coefficient \eqref{eq:affine} does not move: $L\,c_\ell^{-(\alpha+1)}=\tfrac29\bigl(\tfrac45\bigr)^{-2}=\tfrac{25}{72}$ for member~$0$ and $\tfrac{32}{81}\bigl(\tfrac{16}{15}\bigr)^{-2}=\tfrac{25}{72}$ for member~$1$, so the full lead coefficient $\bar c_K\,\Beta(\alpha+1,\beta+2)\,L\,c_\ell^{-(\alpha+1)}=5\cdot\tfrac16\cdot\tfrac{25}{72}=\tfrac{125}{432}$ is shared, as the coincidence $Q'=Q$ requires. The two-point risk floors of Lemma~\ref{lem:bridge} are
\[
\frac{\Delta_L}{2}=\frac{L}{2}\Bigl(\frac{16}{9}-1\Bigr)=\frac{7}{81}\ (\approx0.08642),
\qquad
\frac{\Delta_p}{2}=\frac{p_\tau}{2}\Bigl(\frac{16}{9}-1\Bigr)=\frac{7}{648}\ (\approx0.01080),
\]
valid at every sample size $n$.

The machine verification of this instance is reported in Section~\ref{app:num} below. It covers exact rational identities, quadrature and Monte Carlo agreement of the two observed laws, and membership spot checks.
\end{example}

\begin{remark}[A genuine non-additive member, not deconvolution]\label{rem:wit-aff-nonadd}
The affine-random witness of Definition~\ref{def:witness} is not a disguised additive model. With $c\neq1$ the contamination acts on the severity scale as the affine map $S\mapsto cS+a_0$ (plus bounded spread $W$), and there is \emph{no error characteristic function to divide out}. Indeed, in $\varphi_{\tilde S}(t)=e^{ia_0t}\varphi_S(ct)\varphi_W(t)$, the latent argument is dilated to $ct$, so $\varphi_{\tilde S}/\varphi_W$ is $e^{ia_0t}\varphi_S(ct)$, not $\varphi_S(t)$. The classical additive quotient $\varphi_Y/\varphi_\eps=\varphi_X$ is unavailable because the model is not a convolution $K(\cdot\mid s)=\mu(\cdot-s)$ (NA). Yet the latent endpoint $0$ maps to the apparent endpoint $a'=a_0+w_-$, and the latent shape $\alpha$ survives in the observed near-edge exponent (consistency check below). In extreme-value terms, an affine reparametrization of a power-law tail is again a power-law tail of the \emph{same} exponent (moving the endpoint and rescaling the lead constant, but not the index).
\end{remark}

\begin{remark}[Consistency check: the Lemma exponent on the affine witness]\label{rem:wit-aff-check}
We confirm that the observed law $Q=KF_S$ of this witness exhibits the exponent $\alpha+\beta+2$ of Lemma~\ref{lem:exponent}. On this exactly-affine, constant-$c_K$ instance, $Q$ also reproduces the exact Beta constant. For small $h>0$, a conditional $K(\cdot\mid s)$ charges $[a',a'+h]$ only when $\ell(s)\le a'+h$, i.e.\ $cs\le h$, i.e.\ $s\le h/c$. In that range the part of $[a',a'+h]$ below $\ell(s)$ is null, so
\[
K\bigl([a',a'+h]\mid s\bigr)
=K\bigl([\ell(s),\,a'+h]\mid s\bigr)
=\PP\bigl(W\in[w_-,\,w_-+(h-cs)]\bigr)
=\frac{c_W}{\beta+1}\,(h-cs)^{\beta+1}\bigl(1+o(1)\bigr),
\]
using \eqref{eq:set-rhoW} with depth $\delta_h(s)=(h-cs)_+$. Integration against $f_S(s)=Ls^{\alpha}(1+o(1))$ over the contributing range $s\in(0,h/c]$, together with the Beta identity $\int_0^{h/c}(h-cs)^{\beta+1}s^{\alpha}\,ds =c^{-(\alpha+1)}B(\alpha+1,\beta+2)\,h^{\alpha+\beta+2}$ (Proposition~\ref{prop:beta}), yields
\[
Q\bigl([a',a'+h]\bigr)
=\frac{c_W}{\beta+1}\,B(\alpha+1,\beta+2)\,L\,c^{-(\alpha+1)}\,
h^{\alpha+\beta+2}\bigl(1+o(1)\bigr)\qquad(h\downarrow0).
\]
This is the affine-edge sharpening \eqref{eq:affine} of Lemma~\ref{lem:exponent} with $c_\ell=c$ and $\bar c_K=c_W/(\beta+1)$. Equivalently, using $\tfrac{1}{\beta+1}B(\alpha+1,\beta+2)=\tfrac{1}{\beta+1}\cdot\tfrac{\Gamma(\alpha+1)\Gamma(\beta+2)}{\Gamma(\alpha+\beta+3)}=\tfrac{\Gamma(\alpha+1)\Gamma(\beta+1)}{\Gamma(\alpha+\beta+3)}$ since $\Gamma(\beta+2)=(\beta+1)\Gamma(\beta+1)$, the lead coefficient takes the symmetric form $c_W\,\dfrac{\Gamma(\alpha+1)\Gamma(\beta+1)}{\Gamma(\alpha+\beta+3)}\,L\,c^{-(\alpha+1)}$.
In particular $\lim_{h\downarrow0}\log Q([a',a'+h])/\log h=\alpha+\beta+2$, the
exponent of the Lemma. The computation checks the engine against an instance. No identifiability claim is made.
\end{remark}

\section{Further non-emptiness witnesses and the case split}\label{supp:wit}

The main text exhibits the affine-random witness in full and records that it settles non-emptiness wherever the corridor admits a scale $c\neq1$. Here we construct the two further witnesses that complete the picture, namely a genuine \emph{ratio} witness and a unit-slope \emph{corner} witness. The exhaustive non-emptiness case split is recorded as well. Membership is verified on every axiom, exactly as for the affine-random witness in Section~\ref{sec:membership}. As there, no claim about identifiability is made or used here.

\subsection{The ratio witness: growing conditional width (non-affine)}\label{ssec:wit-ratio}

Being a global affine image of a fixed law, the affine witness might leave the impression that $\Mreg$ contains only affine reparametrizations (a small step from the additive case). It does not. We now construct a member whose conditional law is \emph{not} a global affine image of any fixed law: a ratio whose conditional support \emph{widens} with $s$. This is the application-faithful shape (a closing-speed-normalized gap), and it carries the moving power-law edge of A6 from first principles.

\paragraph{Construction.}
Freeze a relative speed $v_0>0$. Let the gap jitter $\eps_d$ and speed jitter
$\eps_v$ be independent random variables, independent of the latent $S$, with
bounded supports
\[
\eps_d\in[e_d^-,e_d^+],\qquad \eps_v\in[e_v^-,e_v^+]\qquad(e_d^-<e_d^+,\ e_v^-<e_v^+),
\]
and corner densities
\begin{equation}\label{eq:wit-ratio-jitter}
\rho_d(x)=C_d\,(x-e_d^-)^{\gamma_d}\bigl(1+o(1)\bigr)\ \ (x\downarrow e_d^-),
\qquad
\rho_v(y)=C_v\,(e_v^+-y)^{\gamma_v}\bigl(1+o(1)\bigr)\ \ (y\uparrow e_v^+),
\end{equation}
with $C_d,C_v\in(0,\infty)$ and edge exponents $\gamma_d,\gamma_v\ge0$. Take $\delta_d,\delta_v\in\R$ to be deterministic offsets, and impose the two no-clipping floors
\begin{equation}\label{eq:wit-ratio-floors}
A_-:=\delta_d+e_d^->0\quad(\text{numerator floor}),
\qquad
B_-:=v_0+\delta_v+e_v^->0\quad(\text{denominator floor}),
\end{equation}
and set $A_+:=\delta_d+e_d^+$, $B_+:=v_0+\delta_v+e_v^+$ (so $0<A_-\le A_+$ and
$0<B_-\le B_+$). Let $S$ have a latent law $F_S$ on $[0,s_{\max}]$ exactly as in A1 (\eqref{eq:wit-canon}, with density $f_S(s)=Ls^{\alpha}(1+o(1))$ near $0$). Define
the observed score and kernel by
\begin{equation}\label{eq:wit-ratio-model}
\tilde S=\frac{s\,v_0+\delta_d+\eps_d}{v_0+\delta_v+\eps_v}\quad\text{given }S=s\in[0,s_{\max}],
\qquad
K(\cdot\mid s):=\mathrm{Law}\!\left(\frac{s\,v_0+\delta_d+\eps_d}{v_0+\delta_v+\eps_v}\right),
\end{equation}
and, off the latent support, extend by pure translation: for $s>s_{\max}$ put
$K(B\mid s):=K\bigl(B-(s-s_{\max})\,\big|\,s_{\max}\bigr)$, the law at $s_{\max}$
shifted up by $s-s_{\max}$. As in the affine witness, this extension lives on the $F_S$-null set $(s_{\max},\infty)$, so it leaves $Q=KF_S$ and every $s\downarrow0$ quantity unchanged. Its only role is to keep the offset $m$ bounded on $[0,\infty)$ (A3 below). The map $(x,y)\mapsto(sv_0+\delta_d+x)/(v_0+\delta_v+y)$ is continuous on the (compact) jitter support, where the denominator is $\ge B_->0$. Hence $K(\cdot\mid s)$ is a well-defined Borel law and $s\mapsto K(B\mid s)$ is measurable (the off-support translation preserving both). The pair $(F_S,K)$ is the
\emph{ratio witness} with parameters
$(\alpha,L,v_0,\delta_d,\delta_v,e_d^\pm,e_v^\pm,\gamma_d,\gamma_v,C_d,C_v)$. We
again tune parameters inside the class constants $0<c_-\le c_+<\infty$,
$\beta\ge0$, $\bar b$, as recorded where each is used below.

\paragraph{Edges in closed form, and the growing width.}
On the latent support $s\in[0,s_{\max}]$, the numerator $sv_0+\delta_d+\eps_d$ ranges over $[sv_0+A_-,\,sv_0+A_+]$ and the denominator $v_0+\delta_v+\eps_v$ over $[B_-,B_+]$, both positive. Since the two jitters are independent, all four corners are attained on the product support. The ratio is increasing in the numerator and decreasing in the denominator, so the conditional support is the interval
\begin{equation}\label{eq:wit-ratio-supp}
\supp K(\cdot\mid s)\cap[0,\infty)
=\left[\frac{sv_0+A_-}{B_+},\ \frac{sv_0+A_+}{B_-}\right]\qquad(0\le s\le s_{\max}).
\end{equation}
Hence the edges are
\begin{equation}\label{eq:wit-ratio-edges}
\ell(s)=\frac{v_0}{B_+}\,s+\frac{A_-}{B_+}=a+c_\ell s,
\qquad
u(s)=\frac{v_0}{B_-}\,s+\frac{A_+}{B_-}=:u_0+c_u s,
\end{equation}
with apparent endpoint, lower slope, and upper slope
\begin{equation}\label{eq:wit-ratio-coeff}
a=\frac{A_-}{B_+}\ge0,\qquad
c_\ell=\frac{v_0}{B_+}>0,\qquad
c_u=\frac{v_0}{B_-}>0.
\end{equation}
The lower edge is exactly affine, but the width is
\begin{equation}\label{eq:wit-ratio-width}
w(s)=u(s)-\ell(s)=w(0)+(c_u-c_\ell)\,s,
\qquad
c_u-c_\ell=\frac{v_0\,(e_v^+-e_v^-)}{B_-B_+}>0,
\end{equation}
with $w(0)=A_+/B_--A_-/B_+>0$. Thus $w(s)$ \emph{strictly increases} with $s$
(as soon as $e_v^-<e_v^+$, i.e.\ the speed jitter is non-degenerate). The location offset is
$m(s)=\tfrac12(\ell(s)+u(s))-s=\tfrac12(a+u_0)+\bigl(\tfrac12(c_\ell+c_u)-1\bigr)s$,
affine in $s$ on $[0,s_{\max}]$. For $s>s_{\max}$ the translation extension shifts both edges by $s-s_{\max}$. The width and the offset are therefore \emph{constant} off the support: $w(s)\equiv w(s_{\max})$ and $m(s)\equiv m(s_{\max})$.

\begin{proposition}[The ratio witness lies in $\Mreg$]\label{prop:wit-ratio}
Let $(F_S,K)$ be the ratio witness
\eqref{eq:wit-ratio-jitter}--\eqref{eq:wit-ratio-model} with the floors
\eqref{eq:wit-ratio-floors}, non-degenerate speed jitter $e_v^-<e_v^+$, and
parameters tuned so that $c_\ell=v_0/B_+\in[c_-,c_+]$ and
$\sup_{0\le s\le s_{\max}}|m(s)|\le\bar b$. Then $(F_S,K)$ satisfies A1--A6 and
NA, hence $(F_S,K)\in\Mreg\subseteq\mathcal{M}_\star$. Moreover its conditional
edge index is
\[
\beta=\gamma_d+\gamma_v+1,
\]
so the product-uniform case $\gamma_d=\gamma_v=0$ gives the canonical
$\beta=1$.
\end{proposition}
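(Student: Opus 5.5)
The plan is to verify the axioms one at a time, as in the proof of Proposition~\ref{prop:witness}. Every check reads the kernel only through the closed forms \eqref{eq:wit-ratio-supp}--\eqref{eq:wit-ratio-width} on $[0,s_{\max}]$, and through the translation extension beyond $s_{\max}$. I take $s_R\le s_{\max}$ as a compatibility condition, as in (C4).

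\textbf{Easy axioms.}
\begin{itemize}
\item A1 is inherited from the canonical latent law \eqref{eq:wit-canon}.
\item A2 holds because each $K(\cdot\mid s)$ is the law of a continuous function on a compact jitter support with denominator $\ge B_->0$. So $w(s)<\infty$ and there is no mass at $+\infty$. The translation extension preserves both facts.
\item A3 (boundedness) is the tuning hypothesis on $[0,s_{\max}]$, combined with $m\equiv m(s_{\max})$ beyond $s_{\max}$. For $m\not\equiv0$ I use $m(0)=\tfrac12(a+u_0)>0$, which holds since $A_->0$.
\item A4 follows from $w(0)>0$, since then $w(s)/s\to\infty$.
\item A5 follows from \eqref{eq:wit-ratio-edges}. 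We have $\ell(s')-\ell(s)=c_\ell(s'-s)$ on $[0,s_R]\subseteq[0,s_{\max}]$ with $c_\ell\in[c_-,c_+]$. Beyond $s_{\max}$ the edge continues with slope $1$, so $\ell$ is increasing on $[0,\infty)$.
\item NA: if $K(\cdot\mid s)=\mu(\cdot-s)$ for $F_S$-a.e.\ $s$, then supports would be translates, so the width $w(s)$ would be constant along a.e.\ $s$ near $0$. Following Step~2 of the NA argument in Proposition~\ref{prop:witness}, I pick two such points $s_1<s_2\le s_{\max}$ in the positive-density zone. Then $w(s_2)-w(s_1)=(c_u-c_\ell)(s_2-s_1)>0$ by \eqref{eq:wit-ratio-width}, a contradiction. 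This argument works even if $c_\ell=1$, which is why the non-degeneracy $e_v^-<e_v^+$ is assumed.
\end{itemize}

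\textbf{A6: reduction to a triangle.} The substantive step is A6. Write $N(s):=sv_0+A_-$, and introduce the corner coordinates $X:=\eps_d-e_d^-\ge0$ and $Y:=e_v^+-\eps_v\ge0$, which are independent with densities $\sim C_dX^{\gamma_d}$ and $\sim C_vY^{\gamma_v}$ at $0$. The observed score is $(N+X)/(B_+-Y)$. Since $\ell(s)=N/B_+$, the event $\{\tilde S\le \ell(s)+h\}$ rearranges exactly (the denominator is positive) to
\[
X+\lambda_h(s)\,Y\;\le\;hB_+,\qquad \lambda_h(s):=\ell(s)+h .
\]
This is a triangle with legs $hB_+$ and $hB_+/\lambda_h(s)$. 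Since $\lambda_h(s)\ge a=A_-/B_+>0$, the triangle lies inside $[0,hB_+]\times[0,hB_+/a]$ uniformly in $s$. Once $h$ is below the two jitter windows, the pointwise corner asymptotics \eqref{eq:wit-ratio-jitter} therefore hold with an $s$-free $\eps$-band on the whole triangle. Integrating $C_dC_vx^{\gamma_d}y^{\gamma_v}$ over the triangle by the scaling substitution of Proposition~\ref{prop:beta} gives
\[
K\bigl([\ell(s),\ell(s)+h]\mid s\bigr)=C_dC_v\,\frac{\Gamma(\gamma_d+1)\Gamma(\gamma_v+1)}{\Gamma(\gamma_d+\gamma_v+3)}\,B_+^{\gamma_d+\gamma_v+2}\,\lambda_h(s)^{-(\gamma_v+1)}\,h^{\gamma_d+\gamma_v+2}\bigl(1+o(1)\bigr).
\]
The exponent is $\gamma_d+\gamma_v+2=\beta+1$, so $\beta=\gamma_d+\gamma_v+1$.

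\textbf{Main obstacle: uniformity.} I expect the hard part to be the uniform reading \eqref{eq:A6u} of A6. The $h$-dependence inside $\lambda_h(s)$ must be removed uniformly over $s\in(0,s_R]$. I will define $c_K(s)$ with $\ell(s)$ in place of $\lambda_h(s)$. The correction factor is then $\bigl(\ell(s)/(\ell(s)+h)\bigr)^{\gamma_v+1}$, which lies between $(1+h/a)^{-(\gamma_v+1)}$ and $1$, so it tends to $1$ uniformly in $s$ because $\ell(s)\ge a>0$. The same positivity $\ell(s)\in[a,a+c_\ell s_R]$ gives $0<\liminf c_K\le\limsup c_K<\infty$; in fact $c_K(s)\to c_K(0+)$, which is positive and finite. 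Note also that the triangle uses exactly the lower corner of $\eps_d$ and the upper corner of $\eps_v$, which is why those are the edges specified in \eqref{eq:wit-ratio-jitter}. In the product-uniform case $\gamma_d=\gamma_v=0$ this gives $\beta=1$.
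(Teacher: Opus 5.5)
Your proposal is correct and follows the paper's proof essentially step for step. You use the same axiom-by-axiom verification from the closed forms, the same corner Dirichlet integral giving $c_K(s)\propto\ell(s)^{-(\gamma_v+1)}$ and $\beta=\gamma_d+\gamma_v+1$, and the same width-based NA argument. Your one difference is that you rearrange the exceedance event exactly into the triangle $X+(\ell(s)+h)Y\le hB_+$ from the start; this is the region the paper itself rescales in its uniformity step. Working with it directly, instead of the paper's first-order corner expansion, makes the uniform reading of A6 cleaner: you only need to control the density bands and the factor $(1+h/\ell(s))^{-(\gamma_v+1)}$.
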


\begin{proof}
We verify the axioms, using \eqref{eq:wit-ratio-supp}--\eqref{eq:wit-ratio-width}.

\smallskip
\emph{A1.} The construction gives $F_S(\{0\})=0$ and $f_S(s)=Ls^{\alpha}(1+o(1))$ on $(0,s_0)$.

\smallskip
\emph{Well-posedness.} For $s\in[0,s_{\max}]$, by \eqref{eq:wit-ratio-floors} the denominator is $\ge B_->0$ and the numerator $\ge sv_0+A_->0$, so $K([0,\infty)\mid s)=1$ and $\ell(s)\in[0,\infty)$. When $s>s_{\max}$, the kernel is a nonnegative shift of the law at $s_{\max}$, so its support stays in $[0,\infty)$ and $K([0,\infty)\mid s)=1$. Thus $\ell(s)$ is well defined for every $s$.

\smallskip
\emph{A2.} By \eqref{eq:wit-ratio-supp} the conditional support is, for $s\in[0,s_{\max}]$, the bounded interval $[(sv_0+A_-)/B_+,(sv_0+A_+)/B_-]$. For $s>s_{\max}$ it is that interval at $s_{\max}$ shifted by $s-s_{\max}$, again bounded. Hence $w(s)<\infty$ for every $s$, and $K(\{+\infty\}\mid s)=0$: on the support the ratio is bounded above by $(sv_0+A_+)/B_-<\infty$, the denominator floor $B_->0$ forbids escape to $+\infty$, and the off-support shift is finite.

\smallskip
\emph{A3.} On the latent support $s\in[0,s_{\max}]$, $m(s)=\tfrac12(a+u_0)+(\tfrac12(c_\ell+c_u)-1)s$ is affine, hence bounded by $\bar b_0:=\tfrac12(a+u_0)+|\tfrac12(c_\ell+c_u)-1|\,s_{\max}$. Off the support, $s>s_{\max}$, the translation extension makes $m$ \emph{constant}, $m(s)\equiv m(s_{\max})$ with $|m(s)|\le\bar b_0$. The bound is therefore global, $\sup_{s\ge0}|m(s)|\le\bar b_0<\infty$, and we take $\bar b\ge\bar b_0$. Moreover $m\not\equiv0$: at $s=0$, $m(0)=\tfrac12(a+u_0)=\tfrac12\bigl(A_-/B_++A_+/B_-\bigr)>0$ since $A_\pm,B_\pm>0$. (The deterministic gap/speed offsets, carried through the ratio, thus produce a strictly positive apparent bias at the endpoint.)

\smallskip
\emph{A4.} By \eqref{eq:wit-ratio-width}, $w(0)=A_+/B_--A_-/B_+>0$ (since the
numerator jitter has positive width, $A_-<A_+$, while $B_-\le B_+$), and
$w(s)=w(0)+(c_u-c_\ell)s\to w(0)>0$ as $s\downarrow0$. Hence
$w(s)/s\to+\infty$.

\smallskip
\emph{A5.} The limit $a=\ell(0+)=A_-/B_+\in[0,\infty)$ exists. Set $\ell(0):=a$. On $0\le s\le s'\le s_R\ (\le s_{\max})$ the increment is $\ell(s')-\ell(s)=c_\ell(s'-s)$, and $c_\ell\in[c_-,c_+]$ by the standing tuning. The two-point increment condition $c_-(s'-s)\le c_\ell(s'-s)\le c_+(s'-s)$ therefore holds for all $0\le s\le s'\le s_R$. Finally $\ell$ is nondecreasing on all of $[0,\infty)$. Its slope is $c_\ell>0$ on $[0,s_{\max}]$ and $1>0$ for $s>s_{\max}$ (the translation extension), and the two pieces agree at $s_{\max}$, so $\ell$ is strictly increasing throughout. Full A5 holds (hence A5${}^-$).

\smallskip
\emph{A6 (and the derivation of $\beta$).}
Fix small $h>0$. We compute $K([\ell(s),\ell(s)+h]\mid s)$ near the lower-edge corner of the jitter. That corner is attained when $\eps_d$ is at its lower edge $e_d^-$ and $\eps_v$ at its upper edge $e_v^+$ (numerator minimal, denominator maximal). Introduce the corner coordinates
\[
x':=\eps_d-e_d^-\in[0,e_d^+-e_d^-],\qquad
y':=e_v^+-\eps_v\in[0,e_v^+-e_v^-],
\]
both $\to0^+$ at the corner. By \eqref{eq:wit-ratio-jitter} their joint density near $(0,0)$ factorizes as $\rho_d(e_d^-+x')\rho_v(e_v^+-y')=C_dC_v\,x'^{\,\gamma_d}y'^{\,\gamma_v}(1+o(1))$.
Write $N:=sv_0+A_-+x'$ and $D:=B_+-y'$ for numerator and denominator. The score is then $N/D$, and the lower edge is $\ell(s)=(sv_0+A_-)/B_+=A_-/B_+ + (v_0/B_+)s$. A
first-order expansion at the corner gives, with $r_0:=\ell(s)$ and to leading
order in $(x',y')$,
\[
\tilde S-\ell(s)
=\frac{sv_0+A_-+x'}{B_+-y'}-\frac{sv_0+A_-}{B_+}
=\frac{x'}{B_+}+\frac{(sv_0+A_-)}{B_+^2}\,y'+o(|x'|+|y'|)
=\frac{x'+r_0\,y'}{B_+}\bigl(1+o(1)\bigr),
\]
using $(sv_0+A_-)/B_+^2=r_0/B_+$. Hence, to leading order, the event
$\{\tilde S\le\ell(s)+h\}\cap\{\tilde S\ge\ell(s)\}$ is the corner simplex
$\{x',y'\ge0:\ x'+r_0y'\le B_+h\}$, and
\[
K\bigl([\ell(s),\ell(s)+h]\mid s\bigr)
=\iint_{x'+r_0y'\le B_+h,\ x',y'\ge0}C_dC_v\,x'^{\,\gamma_d}y'^{\,\gamma_v}\,dx'\,dy'\,
\bigl(1+o(1)\bigr).
\]
The two-dimensional corner (Dirichlet-type) integral evaluates in closed form:
for $t>0$, $r>0$, $g_d,g_v\ge0$,
\begin{equation}\label{eq:wit-dirichlet}
\iint_{x+ry\le t,\ x,y\ge0}x^{g_d}y^{g_v}\,dx\,dy
=r^{-(g_v+1)}\,\frac{\Gamma(g_d+1)\,\Gamma(g_v+1)}{\Gamma(g_d+g_v+3)}\,
t^{\,g_d+g_v+2}
\end{equation}
(integrate $x$ over $[0,t-ry]$ to get $(t-ry)^{g_d+1}/(g_d+1)$ and then $y$ over $[0,t/r]$ via the Beta integral). Applying
\eqref{eq:wit-dirichlet} with $t=B_+h$, $r=r_0=\ell(s)$, $g_d=\gamma_d$,
$g_v=\gamma_v$,
\[
K\bigl([\ell(s),\ell(s)+h]\mid s\bigr)
=C_dC_v\,\frac{\Gamma(\gamma_d+1)\Gamma(\gamma_v+1)}{\Gamma(\gamma_d+\gamma_v+3)}\,
\ell(s)^{-(\gamma_v+1)}\,B_+^{\,\gamma_d+\gamma_v+2}\,
h^{\,\gamma_d+\gamma_v+2}\bigl(1+o(1)\bigr).
\]
This is of the form $c_K(s)\,h^{\beta+1}(1+o(1))$ with
\begin{equation}\label{eq:wit-ratio-beta-cK}
\beta=\gamma_d+\gamma_v+1,
\qquad
c_K(s)=C_dC_v\,\frac{\Gamma(\gamma_d+1)\Gamma(\gamma_v+1)}{\Gamma(\gamma_d+\gamma_v+3)}\,
B_+^{\,\gamma_d+\gamma_v+2}\,\ell(s)^{-(\gamma_v+1)} .
\end{equation}
As $s\downarrow0$, $\ell(s)\to a=A_-/B_+>0$, so $c_K(s)\to C_dC_v\,\Gamma(\gamma_d+1)\Gamma(\gamma_v+1)\Gamma(\gamma_d+\gamma_v+3)^{-1} B_+^{\gamma_d+\gamma_v+2}(A_-/B_+)^{-(\gamma_v+1)}\in(0,\infty)$. In particular $0<\liminf_{s\downarrow0}c_K(s)\le\limsup_{s\downarrow0}c_K(s)<\infty$. The expansion is uniform over $s\in(0,s_R]$, as A6 requires. Indeed, $\ell(s)$ stays in the compact interval $[a,a+c_\ell s_R]\subset(0,\infty)$, on which $r_0=\ell(s)$ is bounded away from $0$ and $\infty$. There the corner remainder $o(1)$ in $(x',y')$ is controlled uniformly (the jitter law is fixed and $s$-independent, entering only through the bounded parameter $r_0$). More explicitly, the rescaling $x'=B_+h\,\xi$, $y'=(B_+h/r_0)\,\eta$ carries the exact exceedance region $\{x'+(r_0+h)y'\le B_+h\}$ onto $\{\xi+(1+h/r_0)\eta\le1\}$, which converges to the fixed unit simplex $\{\xi+\eta\le1,\ \xi,\eta\ge0\}$. The same rescaling factors the weight as $x'^{\,\gamma_d}y'^{\,\gamma_v}=(B_+h)^{\gamma_d}(B_+h/r_0)^{\gamma_v}\xi^{\,\gamma_d}\eta^{\,\gamma_v}$. On that fixed compact simplex the corner-linearization remainder and the density remainder of \eqref{eq:wit-ratio-jitter} are dominated by a single modulus tending to $0$, so the pointwise $o(1)$'s integrate to the single multiplicative $(1+o(1))$ written outside the integral. The strict numerator floor $A_->0$ is imposed to keep $a=A_-/B_+>0$, hence $c_K$ bounded. Were $A_-=0$, the edge would pass through the origin and $c_K(s)\asymp\ell(s)^{-(\gamma_v+1)}\to\infty$, violating A6. A6 therefore holds with the common class index $\beta=\gamma_d+\gamma_v+1$.

\smallskip
\emph{NA.}
By \eqref{eq:wit-ratio-edges} the lower and upper edges advance with slopes $c_\ell=v_0/B_+$ and $c_u=v_0/B_-$. Since $e_v^-<e_v^+$, \eqref{eq:wit-ratio-width} gives $c_u-c_\ell=v_0(e_v^+-e_v^-)/(B_-B_+)>0$. A translate family $K(\cdot\mid s)=\mu(\cdot-s)$ would advance \emph{both} edges with the common slope $1$ and keep the width $w(s)$ constant. Here the width is non-constant (strictly increasing), so no fixed $\mu$ can serve even two distinct $s$ in the support of $F_S$. A fortiori, NA holds. (Equivalently, $c_\ell\neq c_u$ already rules out a rigid translation, which would force $c_\ell=c_u=1$.)

\smallskip
All of A1--A6 and NA hold, so $(F_S,K)\in\Mreg\subseteq\mathcal{M}_\star$.
\end{proof}

\begin{remark}[The ratio witness: non-affine and faithful to the application]\label{rem:wit-ratio-nonaff}
The defining non-affine feature is \eqref{eq:wit-ratio-width}: the conditional support width $w(s)=w(0)+(c_u-c_\ell)s$ grows linearly in $s$ with strictly positive slope $c_u-c_\ell$. An affine image $S\mapsto cS+a+($fixed spread$)$, in particular the affine-random witness of Section~\ref{sec:witness}, has \emph{constant} width, so no global affine map (let alone a translation) carries a fixed law onto this kernel. The mechanism is division by the perturbed closing speed $v_0+\delta_v+\eps_v$: the same speed-jitter band $[e_v^-,e_v^+]$ produces a relative spread that scales with the numerator $sv_0+\delta_d+\eps_d$, hence with $s$. This is the construction of a closing-speed-normalized surrogate severity, a ``detector whose resolution floor does not shrink with the signal''. The fuller derivation belongs to the main text's motivating application (Section~\ref{sec:app}): it connects $(\delta_d,\delta_v)$ to a sensing bias differential, the jitter to tracking error, and $\beta=\gamma_d+\gamma_v+1$ to the gap/speed corner exponents. None of it is needed for the axiom verification above.
\end{remark}

\begin{remark}[Consistency check on the ratio witness]\label{rem:wit-ratio-check}
The ratio witness has an affine lower edge ($\ell(s)=a+c_\ell s$) but a
non-constant $c_K(s)=c_K(0+)\,(\ell(s)/a)^{-(\gamma_v+1)}=c_K(0+)(1+(c_\ell/a)s)^{-(\gamma_v+1)}$
with finite positive limit $c_K(0+)$ as $s\downarrow0$. Therefore the affine-edge sharpening of Lemma~\ref{lem:exponent} applies. Here the computation is the affine consistency check (Remark~\ref{rem:wit-aff-check}) with $c_\ell=v_0/B_+$ and $\bar c_K=c_K(0+)$ in place of the affine edge slope and edge-mass constant, giving
\[
Q\bigl([a,a+h]\bigr)=\bar c_K\,B(\alpha+1,\beta+2)\,L\,c_\ell^{-(\alpha+1)}\,
h^{\alpha+\beta+2}\bigl(1+o(1)\bigr),\qquad \beta=\gamma_d+\gamma_v+1,
\]
so $\lim_{h\downarrow0}\log Q([a,a+h])/\log h=\alpha+\beta+2=\alpha+\gamma_d+\gamma_v+3$. This is again the Lemma's exponent, now on a non-affine instance, and it carries no identifiability content (Convention~\ref{conv:wit-scope}).
\end{remark}

\subsection{The corner witness: unit slope, growing width}\label{ssec:wit-corner}

Neither witness above reaches the corner $c_-=c_+=1$ with $\beta\in[0,1)$. The affine witness breaks additivity through its tilted edge and so needs $c\neq1$, which is excluded at the corner, where A5 forces the edge slope to be exactly $1$. In contrast, the ratio witness owes its non-additivity to a strictly growing width, yet cannot realize $\beta<1$ ($\beta=\gamma_d+\gamma_v+1\ge1$). We supply a third member that does, breaking additivity through a \emph{growing width} rather than a tilted edge.

\paragraph{Construction.}
Fix class constants $\beta\ge0$ and a corridor with $c_-\le1\le c_+$ (in particular the corner $c_-=c_+=1$), together with a member shape $\alpha\in[0,\infty)$. Let
$S$ have the latent law $F_S$ on $[0,s_{\max}]$ exactly as in the affine witness
\eqref{eq:wit-canon} (density $f_S(s)=Ls^{\alpha}(1+o(1))$ near $0$,
$F_S(\{0\})=0$, $s_0=s_{\max}$). Take an intercept $a\ge0$, a strictly growing
width map
\begin{equation}\label{eq:wit-corner-width}
r(s):=r_0+\rho\,s\qquad(r_0>0,\ \rho>0),
\end{equation}
and a jitter $V\in[0,1]$ independent of $S$ with distribution $\PP(V\le b)=b^{\beta+1}$ on $[0,1]$. This is the $\mathrm{Beta}(\beta+1,1)$ law, with density $(\beta+1)b^{\beta}$ and no atom at $0$ (equivalently $V=U^{1/(\beta+1)}$, $U\sim\mathrm{Unif}[0,1]$). Define the kernel by
\begin{equation}\label{eq:wit-corner-model}
K(\cdot\mid s):=\mathrm{Law}\bigl(a+s+r(s)\,V\bigr)\qquad(0\le s\le s_{\max}),
\end{equation}
and, off the latent support, extend by pure translation as in the affine and
ratio witnesses: for $s>s_{\max}$ put $K(B\mid s):=K\bigl(B-(s-s_{\max})\mid
s_{\max}\bigr)$. The map $(s,v)\mapsto a+s+r(s)\,v$ is continuous, hence Borel, so $K(\cdot\mid s)$ is a well-defined Borel law. For each Borel $B$, the map $s\mapsto K(B\mid s)=\int_{[0,1]}\mathbf 1_B\bigl(a+s+r(s)\,v\bigr)\,P_V(dv)$ is measurable by the measurability half of Tonelli's theorem \citep{Kallenberg2002} (the off-support translation preserving both properties). Hence $K$ is a Markov kernel. Lemma~\ref{lem:kernelmeas}
does not apply here: the jitter is \emph{scaled} by $r(s)$, not merely translated.
The pair $(F_S,K)$ is the \emph{corner witness}.

Since $V\in[0,1]$, the conditional support is the interval
$[a+s,\,a+s+r(s)]$, so for every $s\ge0$
\begin{equation}\label{eq:wit-corner-edges}
\ell(s)=a+s,\qquad u(s)=a+s+r(s),\qquad w(s)=r(s),\qquad m(s)=a+\tfrac12 r(s),
\end{equation}
on $[0,s_{\max}]$. Off the support, the translation extension continues $\ell(s)=a+s$ (slope $1$, continuous at $s_{\max}$) and freezes $w(s)\equiv r(s_{\max})$, $m(s)\equiv m(s_{\max})$, just as in the affine witness. The lower edge thus has slope exactly $1$ everywhere, while the width
$w(s)=r_0+\rho s$ strictly grows.

\begin{proposition}[The corner witness lies in $\Mreg$]\label{prop:wit-corner}
Let $(F_S,K)$ be the corner witness \eqref{eq:wit-corner-width}--\eqref{eq:wit-corner-model} with $c_-\le1\le c_+$, any $\beta\ge0$, and parameters tuned so that $a+\tfrac12 r(s_{\max})\le\bar b$ (e.g.\ $a=0$ and $r_0=\rho\,s_{\max}=\bar b$, giving $\sup|m|=\bar b$, with strict slack for $r_0=\rho\,s_{\max}=\bar b/2$). Then $(F_S,K)$
satisfies A1--A6 and NA, hence $(F_S,K)\in\Mreg\subseteq\Mstar$, realizing the
prescribed $\alpha$ and $\beta$. The neighbourhood floors are met by the
construction itself, $\bar s_0:=s_R:=s_A:=s_{\max}$.
\end{proposition}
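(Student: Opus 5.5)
The plan is to verify the axioms one at a time, reading the kernel off the closed forms \eqref{eq:wit-corner-edges}, exactly as in the proofs of Propositions~\ref{prop:witness} and~\ref{prop:wit-ratio}. For \emph{A1}, I would invoke the canonical law \eqref{eq:wit-canon}, whose density is exact on $(0,s_{\max})$ and satisfies $s_0=s_{\max}=\bar s_0$. For \emph{well-posedness and A2}, I would use that $a+s+r(s)V\in[a+s,a+s+r(s)]\subset[0,\infty)$ almost surely, and that the off-support translate is a finite shift. This gives $K(\{+\infty\}\mid s)=0$ for every $s$ and finite width. The support is the full interval $[a+s,a+s+r(s)]$ because $V$ has positive density $(\beta+1)b^\beta$ on $(0,1]$. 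Its image under the homeomorphism $v\mapsto a+s+r(s)v$, with $r(s)>0$, is therefore the whole interval, which yields \eqref{eq:wit-corner-edges}.

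\emph{A3} follows from $m(s)=a+\tfrac12 r(s)$. This is increasing on $[0,s_{\max}]$ and frozen beyond, so $\sup|m|=a+\tfrac12r(s_{\max})\le\bar b$ by the tuning. Presence holds since $m(0)=a+r_0/2>0$. \emph{A4}: $w(s)=r_0+\rho s\to r_0>0$, so $w(s)/s\to\infty$. \emph{A5}: the edge $\ell(s)=a+s$ has slope exactly $1$ on all of $[0,\infty)$, since the translation extension is continuous at $s_{\max}$. The increment condition reads $c_-(s'-s)\le s'-s\le c_+(s'-s)$ on $[0,s_R]$, and this holds because $c_-\le1\le c_+$. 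I also need $s_R=s_{\max}$, so that the corridor only sees the on-support branch.

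\emph{A6} is the one computation. For $s\in(0,s_{\max}]$ and $h\ge0$, the mass is
\[
K([\ell(s),\ell(s)+h]\mid s)=\PP\bigl(r(s)V\le h\bigr)=\min\bigl(1,(h/r(s))^{\beta+1}\bigr).
\]
This equals $r(s)^{-(\beta+1)}h^{\beta+1}$ \emph{exactly} as soon as $h\le r(s)$. Since $r(s)\ge r_0$, this holds for all $h\le r_0$, uniformly in $s$. So I would take $c_K(s)=r(s)^{-(\beta+1)}$ and $\eta(\eps)=r_0$, with zero remainder. Then $c_K(s)\to r_0^{-(\beta+1)}\in(0,\infty)$, which gives the liminf/limsup clause. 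The index is the prescribed $\beta$.

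\emph{NA} is the step that needs care, because at unit slope the edge no longer breaks additivity. I would copy Steps~1--2 of the NA proof in Proposition~\ref{prop:witness}. That argument produces two good points $0<s_1<s_2\le s_{\max}$ outside the exceptional null set, using the positive density near $0$. If $K(\cdot\mid s_i)=\mu(\cdot-s_i)$, then the supports satisfy $s_i+\supp\mu=[a+s_i,a+s_i+r(s_i)]$. So $\supp\mu$ equals both $[a,a+r(s_1)]$ and $[a,a+r(s_2)]$, forcing $r(s_1)=r(s_2)$. This contradicts $\rho>0$, so the growing width replaces the tilted edge as the obstruction. The remaining care is confirming that the measurability and support claims hold even though Lemma~\ref{lem:kernelmeas} does not apply. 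I would handle this with Tonelli, as in the construction paragraph, and with the homeomorphism support identity of Lemma~\ref{lem:push}, applied to the affine map $v\mapsto a+s+r(s)v$.
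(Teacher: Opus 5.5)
Your proposal is correct and follows the paper's proof essentially axiom by axiom. The key steps match: $\sup|m| = a+\tfrac12 r(s_{\max})$ by the tuning, unit-slope increments bracketed by $c_-\le 1\le c_+$, the exact $\mathrm{Beta}(\beta+1,1)$ edge mass $r(s)^{-(\beta+1)}h^{\beta+1}$ with $\eta(\eps)=r_0$, and NA obtained from the growing width forcing $r(s_1)=r(s_2)$. The only differences are extra care on your side. You justify the full-interval support through the positive density of $V$. You also obtain the two NA test points through the null-set argument of Steps~1--2 of Proposition~\ref{prop:witness}, where the paper simply appeals to the uncountable support of $F_S$.
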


\begin{proof}
Throughout, $\ell,u,w,m$ are as in \eqref{eq:wit-corner-edges}. We reuse the affine witness's discharge of every step that depends only on the off-support translation patch (Proposition~\ref{prop:witness}).

\emph{A1} holds by construction (any $\alpha$), being the affine witness's latent
law \eqref{eq:wit-canon}. \emph{Well-posedness:} $a+s+r(s)V\ge a+s\ge0$, and
the off-support shift is upward, so $K([0,\infty)\mid s)=1$ and $\ell(s)$ is well
defined for every $s$. \emph{A2:} $w(s)=r(s)<\infty$ on $[0,s_{\max}]$ and $w(s)=r(s_{\max})<\infty$ beyond. Since $V$ is bounded and the shift finite, $K(\{+\infty\}\mid s)=0$ for all $s$. \emph{A4:}
$w(s)/s=(r_0+\rho s)/s\to+\infty$ as $s\downarrow0$ because $r_0>0$.

\emph{A3.} By \eqref{eq:wit-corner-edges}, $m(s)=a+\tfrac12 r(s)$ is affine and increasing on $[0,s_{\max}]$, hence $\sup_{[0,s_{\max}]}|m|=a+\tfrac12 r(s_{\max})=:\bar b_0$. Off the support, $m\equiv m(s_{\max})$, so $\sup_{s\ge0}|m(s)|=\bar b_0<\infty$, a global bound, as A3 requires. We take $\bar b\ge\bar b_0$, attainable for any prescribed $\bar b>0$ by the stated tuning. Moreover $m(0)=a+\tfrac12 r_0>0$ (as $r_0>0$), so $m\not\equiv0$.

\emph{A5 (full).} The limit $a=\ell(0+)$ exists. With $\ell(0):=a$, for
$0\le s\le s'\le s_R\ (=s_{\max})$ the increment is $\ell(s')-\ell(s)=s'-s$, so
\[
c_-\,(s'-s)\ \le\ s'-s\ \le\ c_+\,(s'-s)
\]
holds for all such $s\le s'$ \emph{iff} $c_-\le1\le c_+$, with equality at the corner $c_-=c_+=1$. A5 permits that corner, its inequalities \eqref{eq:A5} being non-strict. Taking $s=0$ gives the one-point corridor. Finally $\ell(s)=a+s$ is strictly increasing, hence nondecreasing, on all of $[0,\infty)$ (slope $1$ on $[0,s_{\max}]$ and beyond, the pieces agreeing at $s_{\max}$). Full A5
holds.

\emph{A6.} For $s\in(0,s_R]$ and $0\le h\le r(s)$, since $V\in[0,1]$,
\begin{equation}\label{eq:wit-corner-A6}
K\bigl([\ell(s),\ell(s)+h]\mid s\bigr)
=\PP\bigl(r(s)\,V\le h\bigr)
=\PP\bigl(V\le h/r(s)\bigr)
=r(s)^{-(\beta+1)}\,h^{\beta+1},
\end{equation}
\emph{exactly}, by the $\mathrm{Beta}(\beta+1,1)$ CDF. Thus A6 holds with edge index $\beta$ and $c_K(s)=r(s)^{-(\beta+1)}$, continuous with $c_K(s)\to r_0^{-(\beta+1)}\in(0,\infty)$ as $s\downarrow0$. In particular $0<\liminf_{s\downarrow0}c_K=\limsup_{s\downarrow0}c_K<\infty$. For the uniformity of Convention~\ref{conv:A6}, take $\eta(\eps):=r_0$ for every $\eps>0$. Since $r(s)=r_0+\rho s\ge r_0$ for all $s\in(0,s_R]$, every $\delta\in(0,r_0]$ satisfies $\delta\le r_0\le r(s)$, so \eqref{eq:wit-corner-A6} applies with $h=\delta$. Consequently, for every such $\delta$, the ratio $K([\ell(s),\ell(s)+\delta]\mid s)/(c_K(s)\delta^{\beta+1})$ equals $1$ for all $s\in(0,s_R]$ simultaneously, the $o(1)$ vanishing identically on the window $\delta\le r_0$. The exact power law holds for \emph{every} $\beta\ge0$, in
particular for $\beta\in[0,1)$. (Here $c_K$ is independent of $a$, so the choice $a=0$ is harmless, unlike in the ratio witness, whose $c_K$ blows up at $A_-=0$.)

\emph{NA.} Suppose a fixed $\mu\in\Pclass(\R)$ had $K(\cdot\mid s)=\mu(\cdot-s)$
for $F_S$-a.e.\ $s$. Equality as laws for two distinct admissible values $s_1\neq s_2$ in the (uncountable, hence non-null) support of $F_S$ forces $\supp\mu=\supp K(\cdot\mid s_i)-s_i=[a,\,a+r(s_i)]$, whence $r(s_1)=r(s_2)$. Since $\rho>0$, this gives $s_1=s_2$, a contradiction. (The edge slope is exactly $1$, identical to a translate, so non-additivity here cannot come from the edge, unlike in the affine and ratio witnesses. It comes instead from the growing width $w(s)=r(s)$, which no translate, having constant width, can match.) Hence NA
holds.

\smallskip
All of A1--A6 and NA hold, so $(F_S,K)\in\Mreg\subseteq\Mstar$.
\end{proof}

\subsection{Non-emptiness over the full parameter range}\label{supp:wit-record}

\begin{proposition}[Non-emptiness of $\mathcal{M}_\star$ and presence of non-additive, non-affine members]\label{prop:wit-nonempty}
For every choice of class constants $\beta\ge0$, $0<c_-\le c_+<\infty$, $\bar b\in(0,\infty)$, and every member shape $\alpha\in[0,\infty)$, the classes $\Mreg$ and $\mathcal{M}_\star$ are non-empty and contain a member with the given $\alpha$. In each case the witness realizes the neighbourhood floors $\bar s_0,s_R,s_A$, and one may take $\bar s_0=s_R=s_A=s_{\max}$. The genuinely shared class constants are $\beta,c_-,c_+,\bar b$. Indeed:
\begin{enumerate}
\item the affine-random witness of Proposition~\ref{prop:witness}, with any $c\in[c_-,c_+]\setminus\{1\}$ and any jitter satisfying \eqref{eq:set-rhoW} with the prescribed $\beta$, is a member that is \emph{non-additive}.
\item the ratio witness of Proposition~\ref{prop:wit-ratio}, with non-degenerate speed jitter, $c_\ell=v_0/B_+\in[c_-,c_+]$, and $\gamma_d+\gamma_v+1=\beta$ (available whenever $\beta\ge1$), is a member that is \emph{non-additive and non-affine} (its conditional width grows with $s$).
\item the corner witness of Proposition~\ref{prop:wit-corner}, with unit edge
slope, growing width, and any $\beta\ge0$, is a member that is \emph{non-additive
and non-affine} and is available for every corridor with $c_-\le1\le c_+$.
\end{enumerate}
The three witnesses cover every admissible quadruple $(\beta,c_-,c_+,\alpha)$. If $c_-=c_+=1$, use the corner witness~(3) (any $\beta\ge0$). Otherwise $[c_-,c_+]$ contains some $c\neq1$, and~(1) applies with that $c$ (any $\beta\ge0$), while~(2) supplies a non-affine alternative whenever $\beta\ge1$. In particular Lemma~\ref{lem:exponent} and Theorem~\ref{thm:I2} are not vacuous. The class is never confined to additive (translation) kernels, and whenever $\beta\ge1$ or $c_-\le1\le c_+$ it is not confined to global affine reparametrizations either.
\end{proposition}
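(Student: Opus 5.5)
The plan is to treat Proposition~\ref{prop:wit-nonempty} as a bookkeeping assembly of the three membership results already proved, Propositions~\ref{prop:witness}, \ref{prop:wit-ratio} and~\ref{prop:wit-corner}. Fix the quadruple $(\beta,c_-,c_+,\alpha)$ and $\bar b>0$, and split into two cases according to whether the corridor $[c_-,c_+]$ contains a point $c\neq1$.

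In the first case $[c_-,c_+]$ contains some $c\neq1$; this is everything except $c_-=c_+=1$, and $c$ can be found because if $c_-<c_+$ the interval is nondegenerate, while if $c_-=c_+\neq1$ one takes $c=c_-$. I would instantiate the affine-random witness with that $c$. For the latent law I take the canonical law \eqref{eq:wit-canon} with shape $\alpha$. For the jitter I take $P_W$ with density $(\beta+1)w^{\beta}\eta^{-(\beta+1)}$ on $[0,\eta]$, which satisfies \eqref{eq:set-rhoW} exactly with $w_-=0$. The compatibility conditions are then checked one by one. (C1) and (C2) hold by construction, and (C5) holds since $c\neq1$. For (C3) I need $|c-1|s_{\max}+a_0+\midW\le\bar b$; I choose $a_0=0$, $\eta\le\bar b/2$ so that $\midW\le\bar b/4$, and $s_{\max}\le\bar b/(4|c-1|)$. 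For (C4) I set $\bar s_0=s_R=s_A=s_{\max}$ and pick $\tau\in(0,s_R)$. Proposition~\ref{prop:witness} then gives membership in $\Mreg$, and Proposition~\ref{lean:inclusion} gives membership in $\Mstar$. Non-additivity is the NA clause already verified there.

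In the second case $c_-=c_+=1$, and I use the corner witness with $a=0$ and $r_0=\rho s_{\max}=\bar b/2$. By Proposition~\ref{prop:wit-corner} this lies in $\Mreg$ for any $\beta\ge0$, and since $c_-\le1\le c_+$ in general, it is also available whenever the corridor merely contains $1$. For the ratio alternative (item~(2)), when $\beta\ge1$ I pick $\gamma_d,\gamma_v\ge0$ with $\gamma_d+\gamma_v=\beta-1$ and then tune the parameters. I choose $B_+$ with $v_0/B_+=c\in[c_-,c_+]$, by scaling $v_0$ given the offsets. I then shrink $s_{\max}$ and the offsets so that the affine bound $\bar b_0$ on $m$ is at most $\bar b$. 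This is the step I expect to be the main obstacle. The quantity $m(0)=\tfrac12(A_-/B_++A_+/B_-)$ has to be made small while the floors $A_->0$ and $B_->0$ are kept and $c_\ell$ is held fixed. I would first try letting the numerator scale $A_\pm\to0^+$ with the denominators fixed, which leaves $c_\ell$ unchanged. The constant $c_K(0+)\propto a^{-(\gamma_v+1)}$ then grows, but it stays finite and positive, so A6 is unaffected.

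The non-affineness claims follow from the growing widths \eqref{eq:wit-ratio-width} and \eqref{eq:wit-corner-edges}, since an affine image of a fixed law has constant width. Non-vacuity of Lemma~\ref{lem:exponent} and Theorem~\ref{thm:I2} is then immediate from non-emptiness.
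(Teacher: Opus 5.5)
Your proposal is correct and follows the paper's own route. Like the paper, you split into the corner $c_-=c_+=1$, handled by the corner witness, and a corridor containing some $c\neq1$, handled by the affine witness with the ratio witness as a non-affine alternative for $\beta\ge1$, and you assemble the result from Propositions~\ref{prop:witness}, \ref{prop:wit-ratio} and~\ref{prop:wit-corner}; the only difference is that you make explicit the parameter tunings the paper treats as immediate (for instance $a_0=0$, $\midW\le\bar b/4$ and $s_{\max}\le\bar b/(4|c-1|)$ for (C3), and $A_\pm\to0^+$ with small $s_{\max}$ for the ratio offset bound, where the sign of $\delta_v+e_v^+$ has to be chosen according to whether $c<1$, $c=1$ or $c>1$ before solving $v_0/B_+=c$).
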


\begin{proof}
The claim is immediate from Propositions~\ref{prop:witness}, \ref{prop:wit-ratio}, and~\ref{prop:wit-corner}. Each of them constructs, for the given constants and any prescribed $\alpha$, an explicit pair in $\Mreg\subseteq\mathcal{M}_\star$, with the floors $\bar s_0,s_R,s_A$ set by the construction (e.g.\ $s_R=s_{\max}$). The case split below exhibits a member for every admissible quadruple.

\emph{Corner $c_-=c_+=1$.} The affine witness needs $c\in[c_-,c_+]\setminus\{1\}=\varnothing$ and the ratio witness needs $\beta\ge1$, so neither applies for $\beta\in[0,1)$. Here the corner witness~(3) covers all $\beta\ge0$, its A5 corridor $c_-(s'-s)\le s'-s\le c_+(s'-s)$ holding with equality.

\emph{Non-corner $\neg(c_-=c_+=1)$.} Then $[c_-,c_+]$ contains a value $c\neq1$: either $c_-<c_+$, so that the interval is uncountable and at most one of its points equals $1$, or $c_-=c_+\neq1$, in which case that single value is $\neq1$.
Witness~(1) with this $c$ realizes any $\beta\ge0$. To realize a prescribed common $\beta$ one selects the jitter edge exponent: $\beta$ directly via \eqref{eq:set-rhoW} for~(1), or $\gamma_d+\gamma_v=\beta-1$ via \eqref{eq:wit-ratio-jitter} (e.g.\ $\gamma_d=\beta-1,\gamma_v=0$, or $\gamma_d=\gamma_v=(\beta-1)/2$) for~(2) when $\beta\ge1$. Whenever $c_-\le1\le c_+$, the corner witness~(3) is the non-affine carrier available for every $\beta\ge0$.

Witnesses~(1)--(3) fail to be translates (NA, proved in each proposition), while~(2) and~(3) also fail to be global affine images (Remark~\ref{rem:wit-ratio-nonaff} and the growing width \eqref{eq:wit-corner-edges}).
\end{proof}

\begin{remark}[Comparison of the three witnesses]\label{rem:wit-compare}
All members share the near-edge geometry the engine reads (an affine lower edge $\ell(s)=a+c_\ell s$ and a power-law edge-mass profile of index $\beta$), and all preserve the latent shape $\alpha$ in the observed exponent $\alpha+\beta+2$. For the corner witness the affine-edge sharpening of Lemma~\ref{lem:exponent} applies with $c_\ell=1$ and $\bar c_K=r_0^{-(\beta+1)}$, and gives $Q([a,a+h])=r_0^{-(\beta+1)}\,\Beta(\alpha+1,\beta+2)\,L\,h^{\alpha+\beta+2}(1+o(1))$. That computation is the counterpart of Remark~\ref{rem:wit-aff-check} for the affine witness and of Remark~\ref{rem:wit-ratio-check} for the ratio witness. Globally, the three witnesses differ. A rigid affine image of a fixed law, the affine-random witness has constant width $w(s)\equiv w_+-w_-$ and is the cleanest non-additive carrier. The ratio witness has a \emph{growing} width $w(s)=w(0)+(c_u-c_\ell)s$ with edge slope $c_\ell=v_0/B_+\in[c_-,c_+]$, and it requires $\beta\ge1$. With \emph{unit} edge slope ($c_\ell=1$) and growing width $w(s)=r_0+\rho s$, the corner witness is the member that covers the corner $c_-=c_+=1$ for \emph{every} $\beta\ge0$. Their coexistence shows that the edge-regularity axioms A5--A6 isolate exactly the near-endpoint behaviour the theorem needs while leaving the global conditional shape free. Additive, affine, and genuinely nonlinear kernels all qualify.
\end{remark}

\section{An estimation remark}\label{est:sec}

Section~\ref{supp:est} below gives a plug-in estimator for $\alpha$ and a heuristic consistency argument for it, along with a discussion of the window and endpoint choices. We prove identifiability here, not a rate. Section~\ref{supp:num} also reports the numerical self-check (the script \texttt{i2\_selfcheck.py}) corroborating the affine-case Beta constant of Lemma~\ref{lem:exponent} and the exponent $\alpha+\beta+2$ under a curved edge and an oscillating mass prefactor $c_K$. A second script, \texttt{i1\_selfcheck.py} (Section~\ref{app:num}), certifies the confounding instance of Theorem~\ref{thm:I1}.
\subsection{A plug-in estimator for \texorpdfstring{$\alpha$}{alpha} and heuristic consistency}\label{supp:est}

\begin{remark}[A natural estimator of $\alpha$ and heuristic consistency]\label{est:remark}
The recovery map is constructive and suggests a natural plug-in estimator. Recall from Theorem~\ref{thm:I2} and Lemma~\ref{lem:exponent} that, on the class,
\[
\alpha\;=\;\lim_{h\downarrow0}\frac{\log Q\bigl([\,a,\,a+h\,]\bigr)}{\log h}\;-\;\beta\;-\;2,\qquad a=\inf\supp Q,
\]
with $\beta$ the known class constant. Given an i.i.d.\ sample $\tilde S_1,\dots,\tilde S_n\sim Q$ with empirical measure $Q_n$, the estimator replaces $a$ and the local log-log slope by sample analogues:
\begin{enumerate}
\item estimate the apparent endpoint by the smallest order statistic (or a low sample quantile), $\widehat a:=\min_{1\le i\le n}\tilde S_i$.
\item over a window $h$ in a vanishing range, regress $\log Q_n\bigl([\widehat a,\widehat a+h]\bigr)$ on $\log h$ and read off the slope $\widehat m$ (equivalently, a two-point or local-polynomial log-log slope on a grid of small $h$).
\item set $\widehat\alpha:=\widehat m-\beta-2$.
\end{enumerate}

\emph{Heuristic consistency.} The population target of $\widehat m$ is the left-endpoint log-log slope of $Q$, which the two-sided bound \eqref{eq:twosided} of Lemma~\ref{lem:exponent} pins exactly. Taking logarithms confines $\log Q([a,a+h])-(\alpha+\beta+2)\log h$ to a fixed bounded interval as $h\downarrow0$, so the slope converges to $\alpha+\beta+2$ regardless of any oscillation of the lead constant (the squeeze of Corollary~\ref{cor:exponent}). The transfer from $Q$ to $Q_n$ rests on two facts. First, $\widehat a\to a$ since $a=\inf\supp Q$. Second, $\sup_h|Q_n([\widehat a,\widehat a+h])-Q([a,a+h])|\to0$ by Glivenko--Cantelli (e.g.\ \citealp{vdV1998,Kallenberg2002}) with the continuity of $h\mapsto Q([a,a+h])$ at $0$. Provided the $h$-window shrinks slowly enough that $Q_n([\widehat a,\widehat a+h])$ stays informative, $\widehat m$ inherits the limit $\alpha+\beta+2$, whence $\widehat\alpha\to\alpha$. Two points remain delicate. One is the joint choice of the vanishing $h$-window and the rate $\widehat a\to a$ (endpoint estimation under contamination is itself nonstandard, cf.\ \citealp{GT2004}). The other is the bias from the $o(1)$ terms of A1 and A6 at finite $h$.

\emph{No rate is claimed.} We prove neither consistency nor a rate for $\widehat\alpha$. The argument above only indicates that the recovery map is empirically accessible. A complete consistency and rate analysis, including the bias-variance trade-off, is left to future work.
\end{remark}

\section{Axiom budget}\label{sec:budget}

\begin{remark}[Axiom budget]\label{rem:budget}
The proof of the shape theorem consumes only the latent-tail axiom~A1, the $s=0$ corridor and global monotonicity of~A5, the conditional edge-index axiom~A6, and the setup well-posedness convention. Interior increments of A5 are never used, which is exactly the weakening to $\Mstar$ (Section~\ref{lean:sec}). Axiom~A6 enters at the $s$-dependent depth $\delta_h(s)\le h$, uniformly over the moving window $(0,h/c_-]$, and through the $\liminf/\limsup$ control of $c_K$ at~$0$. The upper edge $u$ never appears, so A2 beyond the setup, A3, A4 and~NA are never consumed. Those axioms delimit the class and make the question non-trivial (cf.\ Remark~\ref{rem:whyreg}), and no location anchor is imposed. In tabular form:
\begin{center}
\begin{tabular}{@{}l p{0.50\textwidth}@{}}
\hline
result & consumes \\
\hline
Lemma~\ref{lem:supp} (edge facts) & setup well-posedness, plus A6 for part \ref{it:noatom} \\
Lemma~\ref{lem:A5geom} (edge geometry) & A5$^-$ ($s=0$ slice $+$ monotonicity), $c_->0$ \\
Proposition~\ref{prop:endpoint} ($a=\inf\supp Q$) & A1, A5$^-$ \\
Lemma~\ref{lem:local} (localization, window, sandwich) & A1, A5$^-$, A6 (no atom at edge) \\
Theorem~\ref{thm:twosided} (two-sided bound) & A1, A5$^-$, A6 \\
Corollary~\ref{cor:exponent} (exponent from $Q$) & Theorem~\ref{thm:twosided} $+$ Proposition~\ref{prop:endpoint} \\
Proposition~\ref{prop:affine} (affine constant) & A1, A5$^-$, A6 \\
Theorem~\ref{thm:I2} (identifiability of $\alpha$) & Corollary~\ref{cor:exponent} $+$ Proposition~\ref{prop:endpoint} $+$ shared $\beta$ \\
\hline
never consumed & A2 (beyond setup), A3, A4, NA, location anchor (none imposed) \\
\hline
\end{tabular}
\end{center}
\end{remark}

\begin{remark}[Sharpness of the two-sided form]\label{rem:sharp}
The result is a two-sided bound, not an asymptotic equality, and that is also all the argument yields. Indeed, the axioms permit the edge slope to wander inside $[c_-,c_+]$ and $c_K$ to oscillate between $\clo$ and $\chigh$ without converging. The proof correspondingly pins only the $\liminf/\limsup$ of $Q([a,a+h])/h^{\alpha+\beta+2}$, inside $[\varkappa_-L,\varkappa_+L]$. No convergence of that ratio is claimed. (Section~\ref{supp:num} illustrates numerically that the ratio can hover inside the band without settling, on a model with oscillating $c_K$ for which no claim of class membership is made or needed.) The log-log \emph{ratio} \eqref{eq:exponent} converges regardless, because the possible oscillation is confined to a fixed multiplicative band, which is crushed by $\log h\to-\infty$. That is why the exponent, and hence $\alpha$, survives as a $Q$-functional even though the lead constant is not pinned by the argument.
\end{remark}
\begin{remark}[Hypothesis accounting]\label{rem:budget-neg}
All the standard measure theory used is isolated in the three preliminary lemmas of Section~\ref{sec:prelim}. Beyond that, the proof of Theorem~\ref{thm:I1} touches the axioms in two distinct modes.

\emph{Verified for the exhibited members.} Proposition~\ref{prop:witness} (applied twice, through Proposition~\ref{prop:constants}) \emph{verifies} every axiom for the two members, and the verification localizes cleanly in the structure of the witness:
\begin{itemize}
\item \textbf{A1}: carried by the canonical latent laws, whose densities are exact powers with no $o(1)$ error (Definition~\ref{def:witness}(a), Lemma~\ref{lem:member1}).
\item \textbf{A2, A4}: carried by the bounded jitter. The conditional width is the constant $w_+-w_->0$ (Lemma~\ref{lem:edges}), finite for A2 and of infinite relative size at the endpoint for A4, and no conditional mass escapes to $+\infty$ at any $s$.
\item \textbf{A3}: carried by the \emph{bounded} latent support together with the unit-slope off-support extension of the location maps, via condition \ref{it:mem-C3}. The offset $m$ is affine only up to the break and constant beyond \eqref{eq:mform}, and $m(0)=a_0+\midW>0$ supplies $m\not\equiv0$.
\item \textbf{A5}: carried by the affine edge $\ell(s)=c\,s+a'$ on $[0,s_R]$ with slope inside the corridor (condition \ref{it:mem-C2}), extended with slope $1$ beyond the break, whence global strict increase.
\item \textbf{A6}: carried by translation invariance of the kernel. The edge mass $K([\ell(s),\ell(s)+h]\mid s)=P_W([w_-,w_-+h])$ does not depend on $s$ (Lemma~\ref{lem:edges}), so the uniformity is trivial and $c_K\equiv c_W/(\beta+1)$, with the class index $\beta$ supplied by the jitter (condition \ref{it:mem-C1}).
\item \textbf{NA}: carried by $c\neq1$, resp.\ $c/\kappa\neq1$ (condition \ref{it:mem-C5}), through the two-point edge-matching argument, which consumes the positive-density zone of A1 to produce two good latent points.
\end{itemize}

\emph{Consumed by the impossibility argument.} The argument itself consumes four ingredients. Definition~\ref{def:ident} fixes what a refutation of identifiability is. Single-valuedness of $\phi\in\{L,p_\tau\}$ comes from Definition~\ref{def:psi} together with Lemma~\ref{lem:phiwell}, and it consumes A1 only. The shared-constant ordering $0<\tau<s_R\le\bar s_0\le s_{\max}\wedge\kappa s_{\max}$ places both $\tau$ and $\tau/\kappa$ inside the canonical supports (Lemma~\ref{lem:estimand}). Lastly, the argument invokes Lemma~\ref{lem:bridge} with Corollary~\ref{cor:prob}, which consume nothing about $\Mreg$ beyond membership of the two witnesses and equality of their observed laws (Remark~\ref{rem:br-generality}).

The same accounting in a table:
\begin{center}
\begin{tabular}{l l}
\hline
result & consumes \\
\hline
Lemmas~\ref{lem:push}, \ref{lem:kernelmeas}, \ref{lem:mixture} & measure theory only \\
Lemma~\ref{lem:edges} (closed-form edges) & Definition~\ref{def:witness}, Convention~\ref{conv:W} \\
Proposition~\ref{prop:witness} (membership) & Lemma~\ref{lem:edges} $+$ \ref{it:mem-C1}--\ref{it:mem-C5}, \emph{verifying} A1--A6, NA \\
Lemma~\ref{lem:phiwell} (single-valuedness) & A1 \\
Lemma~\ref{lem:bridge}, Corollary~\ref{cor:prob} & Definition~\ref{def:ident} together with membership of two witnesses \\
Lemma~\ref{lem:member1} (conjugate is a witness) & Lemma~\ref{lem:push} along with the canonical latent law \\
Lemma~\ref{lem:laws} ($Q'=Q$) & Lemmas~\ref{lem:mixture}, \ref{lem:push} \\
Lemma~\ref{lem:estimand} (what moves) & canonical latent law, with $0<\tau<s_R\le s_{\max}\wedge\kappa s_{\max}$ \\
Proposition~\ref{prop:constants} (shared constants) & arithmetic and Proposition~\ref{prop:witness} twice \\
Theorem~\ref{thm:I1} & Proposition~\ref{prop:constants}, Lemmas~\ref{lem:laws}, \ref{lem:estimand}, \\
& \quad Definition~\ref{def:ident}, Lemma~\ref{lem:bridge}, Corollary~\ref{cor:prob} \\
\hline
never consumed & the anchor axiom (absent by design, see Remark~\ref{rem:anchor}), \\
& \quad and the shape identification of Theorem~\ref{thm:I2} \\
\hline
\end{tabular}
\end{center}

Two reading notes are in order. First, the axioms A2, A4, A6 and NA are \emph{verified} but never \emph{consumed} by the equality $Q'=Q$ or by the bridge. They delimit the class and make the impossibility statement non-vacuous: NA rules out the classical deconvolution route (cf.\ \citealp{Fan1991,StefanskiCarroll1990,Meister2009}), A4 keeps the contamination from vanishing at the endpoint, and A6 pins the common edge index $\beta$ that the shape half (Theorem~\ref{thm:I2}) consumes. Second, the satisfiability of A3 and A5 \emph{jointly with} $c\neq1$ and $c/\kappa\neq1$ is exactly where bounded support and the unit-slope extension are load-bearing. Remark~\ref{rem:bounded} calls this the ``bounded-support phenomenon''.
\end{remark}
This section records which axioms each half of the dichotomy consumes: the shape half (Theorem~\ref{thm:I2}) in Remark~\ref{rem:budget}, and the negative half (Theorem~\ref{thm:I1}) in Remark~\ref{rem:budget-neg}.
\subsection{Numerical self-check}\label{app:num}
The explicit instance of Example~\ref{ex:numeric} was verified end-to-end by the script \texttt{i1\_selfcheck.py}. That run used Python~3.13.12 with \texttt{numpy}~2.4.4, \texttt{scipy}~1.17.1, \texttt{mpmath}~1.3.0 and \texttt{sympy}~1.14.0, under the fixed seeds $20260707$ and $314159265$ for the two Monte Carlo sampling streams and $271828182$ for the kernel-edge checks. All parameters of the instance are exact rationals ($\alpha=1$, $s_{\max}=3$, $c=4/5$, $a_0=3/10$, $W\sim\mathrm{Unif}[0,1/5]$, $\kappa=3/4$, with shared constants $\beta=0$, $\bar b=1$, $c_-=4/5$, $c_+=16/15$, $\tau=1/2$, $\bar s_0=s_R=s_A=9/4$), so every structural identity below is verified in exact rational arithmetic (\texttt{fractions.Fraction}). Floating point enters only through quadrature and simulation. All programmed checks pass: twelve assertions grouped into the six items below.

\begin{itemize}
\item \emph{Exact parameter identity (check~1).} Here $c\,s_{\max}=\frac45\cdot3=\frac{12}{5}$ and $(c/\kappa)(\kappa s_{\max})=\frac{16}{15}\cdot\frac94=\frac{12}{5}$, so the common observed scale underlying Lemma~\ref{lem:laws} holds exactly. The non-degeneracy exclusions of Definition~\ref{def:pair} also hold: $\kappa=3/4\notin\{1,\,c=4/5\}$, $c=4/5\neq1$, $c/\kappa=16/15\neq1$.

\item \emph{Estimand values (check~2, exact per Lemma~\ref{lem:estimand}).} The values are $L=2/9$ ($0.2222$), $L'=32/81$ ($0.3951$), $p_\tau=1/36$ ($0.02778$) and $p'_\tau=4/81$ ($0.04938$). Both ratios equal $\kappa^{-2}=16/9$ ($1.778$) exactly, and the mechanism identity $p'_\tau=F_S(\tau/\kappa)=\bigl(\frac{2/3}{3}\bigr)^{2}=4/81$ is confirmed as a rational identity. The relative discrepancy is $\kappa^{-2}-1=7/9$ ($77.78\%$), while the two-point risk floors of Lemma~\ref{lem:bridge} are $\Delta_L/2=7/81$ ($0.08642$) and $\Delta_p/2=7/648$ ($0.01080$).

\item \emph{Observed-law equality, quadrature (check~3).} The two observed CDFs,
\[
F_0(x)=5\int_0^{1/5}F_S\Bigl(\frac{x-\frac{3}{10}-w}{c}\Bigr)\,dw,
\qquad
F_1(x)=5\int_0^{1/5}F_S'\Bigl(\frac{x-\frac{3}{10}-w}{c/\kappa}\Bigr)\,dw,
\]
were evaluated by adaptive quadrature independently from each member's own parameterization on a $2001$-point grid spanning $[0.2,\,3.4]$ (the observed support is $[\frac{3}{10},\frac{29}{10}]$, with margins on both sides). The maximal absolute difference over the grid was $4.4\times10^{-16}$, against the required tolerance $10^{-10}$. A closed form for the common CDF was also derived twice, with identical results. One derivation used exact rational antidifferentiation of the clipped quadratic $G(t)=\min\{(t/\frac{12}{5})^2,1\}\mathbf{1}\{t>0\}$ through the uniform window. The other used regime-wise symbolic integration in \texttt{sympy} over the three window positions (window reaching below the latent edge, fully interior, reaching beyond $c\,s_{\max}$). Writing $y=x-a$ with $a=\frac{3}{10}$,
\[
F(x)=\frac{125\,y^{3}}{432}\ \ \text{on }0\le y\le\tfrac15,
\qquad
F(x)=\frac{75y^{2}-15y+1}{432}=\frac{y^{2}-\frac{y}{5}+\frac{1}{75}}{(12/5)^{2}}\ \ \text{on }\tfrac15\le y\le\tfrac{12}{5},
\]
\[
F(x)=-\frac{125\,y^{3}}{432}+\frac{25y^{2}}{144}+\frac{715\,y}{144}-\frac{3455}{432}\ \ \text{on }\tfrac{12}{5}\le y\le\tfrac{13}{5},
\]
with $F=0$ below and $F=1$ above. The pieces agree at the regime boundaries, and $F(\frac{3}{10})=0$, $F(\frac12)=1/432$, $F(\frac{29}{10})=1$ exactly. At $25$ rational check points, the quadrature CDFs match this closed form to $2.2\times10^{-16}$ (member~$0$) and $3.3\times10^{-16}$ (member~$1$), against the required tolerance $10^{-12}$. Over the full grid, the member-$0$ quadrature agrees with the closed form to $1.4\times10^{-15}$.

\item \emph{Observed-law equality, Monte Carlo (check~4).} Each member was sampled with $N=10^{6}$ draws by inverse CDF ($S=3\sqrt{U}$, $S'=\frac94\sqrt{U'}$) from independent streams, then transformed by $X=cS+\frac{3}{10}+W$ and $X'=(c/\kappa)S'+\frac{3}{10}+W'$. The two-sample Kolmogorov--Smirnov test gives $D=5.96\times10^{-4}$, $p=0.9942$. For the one-sample test of the member-$1$ sample against the member-$0$ analytic CDF above, $D=8.54\times10^{-4}$ and $p=0.4583$ (both required $p>0.001$).

\item \emph{Membership spot checks (check~5, via Proposition~\ref{prop:witness} with the shared-constant recipe of Proposition~\ref{prop:constants}).} Offsets (A3): $m_0(s)=-\frac15s+\frac25$ on $[0,3]$, constant $-\frac15$ beyond, and $m_1(s)=\frac{1}{15}s+\frac25$ on $[0,\frac94]$, constant $\frac{11}{20}$ beyond. Hence $\sup_{s\ge0}|m_0(s)|=2/5$ and $\sup_{s\ge0}|m_1(s)|=11/20$, both $\le\bar b=1$, confirmed exactly at the affine endpoints (where an affine $|m|$ peaks) and on a $3001$-point grid over $[0,30]$. The recipe bound $\max(|c-1|,|c/\kappa-1|)\max(s_{\max},\kappa s_{\max})+a_0+\midW$ of Proposition~\ref{prop:constants} equals $\bar b=1$ exactly. Corridor (A5): $c_-\le c\le c_+$ and $c_-\le c/\kappa\le c_+$ hold exactly, and tightly ($c=c_-$, $c/\kappa=c_+$). Ordering: $0<\tau=\frac12<s_R=\frac94\le\bar s_0=\frac94\le\min(s_{\max},\kappa s_{\max})=\frac94$, and $s_A=\frac94\in(0,\bar s_0]$. Moreover, $\tau=\frac12$ and $\tau/\kappa=\frac23$ both lie in $(0,s_{\max})=(0,3)$, as Lemma~\ref{lem:estimand} requires. Edge mass (A6): for both kernels, all $s\in\{\frac{1}{100},\frac12,\frac{17}{10}\}$, and all twenty $h\in\{j/100:j=1,\dots,20\}\subset(0,\frac15]$, the mass $K([\ell(s),\ell(s)+h]\mid s)$ equals $\PP(W\le h)=5h$ exactly, i.e.\ $c_K\equiv c_W/(\beta+1)=5$ with \emph{no} $(1+o(1))$ correction for the uniform jitter. That mass is computed in rational arithmetic through the conditional CDF of the kernel at that $(s,h)$, so that translation invariance is an output of the check, not an input. Monte Carlo confirmations at $(s,h)=(\frac12,\frac{1}{10})$ for member~$0$ and $(\frac{17}{10},\frac{1}{20})$ for member~$1$, with $10^{5}$ draws each, gave $z$-scores $+0.16$ and $-1.41$ (required $|z|<4$). Lower edges: the minima of $10^{5}$ draws of $g_i(s)+W$ exceeded $\ell_0(s)=\frac45s+\frac{3}{10}$, resp.\ $\ell_1(s)=\frac{16}{15}s+\frac{3}{10}$, by amounts between $4.0\times10^{-7}$ and $2.7\times10^{-6}$ at $s\in\{0.3,\,2\}$ (tolerance $10^{-3}$, against an expected gap of $\frac15/(10^5+1)\approx2\times10^{-6}$).

\item \emph{What moves and what does not (check~6).} The latent laws do differ: $F_S(\tau)=1/36\neq4/81=F_S'(\tau)$, $F_S(\frac52)=25/36\neq1=F_S'(\frac52)$, and the Kolmogorov distance between the two latent CDFs is $7/16=0.4375$ exactly (attained at $s=\frac94$, the endpoint of the member-$1$ support, and grid-confirmed to $10^{-12}$). By checks~3--4, the observed laws agree to all tested precision, so nothing in the data distinguishes the two members. The estimand pair $(L,p_\tau)$, by contrast, shifts by the factor $\kappa^{-2}=16/9$.
\end{itemize}

Taken together, the six checks confirm Theorem~\ref{thm:I1} on a fully explicit member pair of $\Mreg$. The instance shows exact observational equivalence with invariant shape $\alpha=1$ and a relative discrepancy of $\kappa^{-2}-1=7/9$ ($77.8\%$) in both $L$ and $p_\tau$. Beyond the twelve assertions, the same script reports three further cross-checks. The invariant lead coefficient $125/432$ of the pair (Remark~\ref{rem:sharpness}) is exactly the cubic coefficient of the common observed CDF, and $Q([a,a+h])=(125/432)\,h^{3}$ for every $0<h\le\tfrac15$. Hence the confounding pair of the negative half realizes the near-edge expansion of Lemma~\ref{lem:exponent} with an invariant lead constant.

\section{Numerical corroboration}\label{supp:num}

We corroborate the affine-case constant of Lemma~\ref{lem:exponent} by one-dimensional quadrature: at $\alpha=1.3$, $\beta=0.7$, $c_\ell=0.8$, $\bar c_K=1$, $L=1$, $h=0.02$,
\[
\int_0^{h/c_\ell}\bar c_K\,(h-c_\ell s)^{\beta+1}\,L\,s^{\alpha}\,ds
\quad\text{against}\quad
\bar c_K\,\Beta(\alpha+1,\beta+2)\,L\,c_\ell^{-(\alpha+1)}\,h^{\alpha+\beta+2}.
\]
This is the identity \eqref{eq:betaid} (Proposition~\ref{prop:beta}), exact for every $h>0$. The script \texttt{i2\_selfcheck.py} (Python~3.13, \texttt{scipy~1.17}/\texttt{mpmath}) runs four independent checks, each cross-validated by multiple quadrature methods, and all pass. Both the exact affine-case constant above and the substitution identity $\int_0^{1/c}(1-ct)^{\beta+1}t^{\alpha}dt=c^{-(\alpha+1)}\Beta(\alpha+1,\beta+2)$ reproduce the closed Euler-Beta form to relative error $\sim10^{-15}$, including the corners $\alpha=0$, $\beta=0$. A curved edge in the corridor ($\ell(s)=a+0.6s+0.8s^2$, $[c_-,c_+]=[0.6,1.4]$, $c_K\equiv1$) drives $Q/(Lh^4)$ to $\Beta(2.3,2.7)\,0.6^{-2.3}=0.24313343$ inside the sandwich band $[\Beta\,1.4^{-2.3},\Beta\,0.6^{-2.3}]$, while the log-log slope of $Q([a,a+h])$ tends to $\alpha+\beta+2=4$. Curvature moves the constant, never the exponent (Remark~\ref{rem:curvature}). Finally, for an oscillating $c_K(s)=1+0.3\cos\log s$ ($\clo=0.7$, $\chigh=1.3$, no limit), the ratio $Q/(Lh^4)$ stays in $[\clo\Beta\,c_+^{-2.3},\ \chigh\Beta\,c_-^{-2.3}]$ and \emph{does not converge}, yet the log-log \emph{ratio} $\log Q/\log h$ still drifts to $4$ like $1/|\log h|$. That is the behaviour of Corollary~\ref{cor:exponent} and Remark~\ref{rem:sharp}.

Two independent scripts check the two halves of the dichotomy numerically. In twelve machine-checked assertions, \texttt{i1\_selfcheck.py} verifies the confounding pair's certified instance, namely the exact-rational estimand shift and the exact coincidence $Q'=Q$ of the observed laws. The second script, \texttt{i2\_selfcheck.py}, corroborates the affine-case constant of Lemma~\ref{lem:exponent} by quadrature.


\addcontentsline{toc}{section}{Data and code availability}

\begin{thebibliography}{99}

\bibitem[Goldenshluger and Tsybakov(2004)]{GT2004} A.~Goldenshluger and A.~Tsybakov. Estimating the endpoint of a distribution in the presence of additive observation errors. \emph{Statistics \& Probability Letters} 68(1):39--49, 2004.

\bibitem[Leng et al.(2019)]{Leng2019} X.~Leng, L.~Peng, X.~Wang, and C.~Zhou. Endpoint estimation for observations with normal measurement errors. \emph{Extremes} 22(1):71--96, 2019.

\bibitem[Florens et al.(2020)]{FSV2020} J.-P.~Florens, L.~Simar, and I.~Van Keilegom. Estimation of the boundary of a variable observed with symmetric error. \emph{Journal of the American Statistical Association} 115(529):425--441, 2020.

\bibitem[Kneip et al.(2015)]{KSV2015} A.~Kneip, L.~Simar, and I.~Van Keilegom. Frontier estimation in the presence of measurement error with unknown variance. \emph{Journal of Econometrics} 184(2):379--393, 2015.

\bibitem[Hall and Meister(2007)]{HallMeister2007} P.~Hall and A.~Meister. A ridge-parameter approach to deconvolution. \emph{The Annals of Statistics} 35(4):1535--1558, 2007.

\bibitem[Fan(1991)]{Fan1991} J.~Fan. On the optimal rates of convergence for nonparametric deconvolution problems. \emph{The Annals of Statistics} 19(3):1257--1272, 1991.

\bibitem[Meister(2009)]{Meister2009} A.~Meister. \emph{Deconvolution Problems in Nonparametric Statistics}. Lecture Notes in Statistics 193, Springer, 2009.

\bibitem[Stefanski and Carroll(1990)]{StefanskiCarroll1990} L.~A.~Stefanski and R.~J.~Carroll. Deconvolving kernel density estimators. \emph{Statistics} 21(2):169--184, 1990.

\bibitem[Delaigle and Meister(2008)]{DelaigleMeister2008} A.~Delaigle and A.~Meister. Density estimation with heteroscedastic error. \emph{Bernoulli} 14(2):562--579, 2008.

\bibitem[Pere et al.(2024)]{Pere2024ApproxErrExtreme} J.~Pere, B.~Avelin, V.~Garino, P.~Ilmonen, and L.~Viitasaari. On the impact of approximation errors on extreme quantile estimation with applications to functional data analysis. arXiv:2307.03581v2 [math.ST], 2024.

\bibitem[Pere et al.(2025)]{Pere2025} J.~Pere, P.~Ilmonen, and L.~Viitasaari. Moment estimator-based extreme quantile estimation with erroneous observations: application to elliptical extreme quantile region estimation. arXiv:2502.08510 [math.ST], 2025.

\bibitem[Morozov(2026)]{Morozov2025} V.~Morozov. Inference on extreme quantiles of unobserved individual heterogeneity. \emph{Econometric Theory}, First View, pp.~1--52, 2026. Available at \url{https://doi.org/10.1017/S0266466625100315}.

\bibitem[Hayward(1972)]{Hayward1972} J.~C.~Hayward. Near-miss determination through use of a scale of danger. \emph{Highway Research Record}, pp.~24--34, 1972.

\bibitem[Songchitruksa and Tarko(2006)]{SongchitruksaTarko2006} P.~Songchitruksa and A.~P.~Tarko. The extreme value theory approach to safety estimation. \emph{Accident Analysis and Prevention} 38:811--822, 2006.

\bibitem[Zheng et al.(2014)]{Zheng2014} L.~Zheng, K.~Ismail, and X.~Meng. Traffic conflict techniques for road safety analysis: open questions and some insights. \emph{Canadian Journal of Civil Engineering} 41:633--641, 2014.

\bibitem[Tarko(2018)]{Tarko2018} A.~P.~Tarko. Estimating the expected number of crashes with traffic conflicts and the Lomax distribution: a theoretical and numerical exploration. \emph{Accident Analysis and Prevention} 113:63--73, 2018.

\bibitem[Joo et al.(2024)]{Joo2024} Y.-J.~Joo, Z.~Islam, M.~Abdel-Aty, and D.-K.~Kim. Propagation of positional measurement errors in traffic conflict analysis. SSRN working paper~4847528, 2024, \url{https://doi.org/10.2139/ssrn.4847528}. Published in \emph{Transportation Research Record}, 2025.

\bibitem[Krajewski et al.(2018)]{Krajewski2018} R.~Krajewski, J.~Bock, L.~Kloeker, and L.~Eckstein. The highD dataset: A drone dataset of naturalistic vehicle trajectories on German highways for validation of highly automated driving systems. In \emph{Proc.\ 21st IEEE International Conference on Intelligent Transportation Systems (ITSC)}, pp.~2118--2125, 2018.

\bibitem[Hartley and Zisserman(2004)]{HartleyZisserman2004} R.~Hartley and A.~Zisserman. \emph{Multiple View Geometry in Computer Vision}. 2nd ed., Cambridge University Press, 2004.

\bibitem[Coles(2001)]{Coles2001} S.~Coles. \emph{An Introduction to Statistical Modeling of Extreme Values}. Springer, 2001.

\bibitem[de Haan and Ferreira(2006)]{deHaanFerreira2006} L.~de~Haan and A.~Ferreira. \emph{Extreme Value Theory: An Introduction}. Springer, 2006.

\bibitem[Le Cam(1986)]{LeCam1986} L.~Le~Cam. \emph{Asymptotic Methods in Statistical Decision Theory}. Springer, 1986.

\bibitem[van der Vaart(1998)]{vdV1998} A.~W.~van~der~Vaart. \emph{Asymptotic Statistics}. Cambridge University Press, 1998 (Ch.~2 for stochastic convergence and Thm.~19.1 for Glivenko--Cantelli).

\bibitem[Tsybakov(2009)]{Tsybakov2009} A.~B.~Tsybakov. \emph{Introduction to Nonparametric Estimation}. Springer, 2009 (Ch.~2, lower bounds via two hypotheses).

\bibitem[Bingham et al.(1987)]{BGT1987} N.~H.~Bingham, C.~M.~Goldie, and J.~L.~Teugels. \emph{Regular Variation}. Encyclopedia of Mathematics and its Applications 27, Cambridge University Press, 1987.

\bibitem[Kallenberg(2002)]{Kallenberg2002} O.~Kallenberg. \emph{Foundations of Modern Probability}. 2nd ed., Springer, 2002.

\bibitem[Newey and Powell(2003)]{NeweyPowell2003} W.~K.~Newey and J.~L.~Powell. Instrumental variable estimation of nonparametric models. \emph{Econometrica} 71(5):1565--1578, 2003.

\end{thebibliography}
\end{document}